\newtheorem{theorem}{Theorem}[section]
\newtheorem{corollary}[theorem]{Corollary}
\newtheorem{lemma}[theorem]{Lemma}
\newtheorem{prop}[theorem]{Proposition}
\newtheorem{claim}[theorem]{Claim}
\theoremstyle{definition}
\newtheorem{example}[theorem]{Example}
\newtheorem{remark}[theorem]{Remark}
\newtheorem{construction}[theorem]{Construction}
\newtheorem{question}[theorem]{Question}
\newtheorem*{ack}{Acknowledgments}
\newcommand{\Z}{\mathbb{Z}}
\newcommand{\Q}{\mathbb{Q}}
\newcommand{\R}{\mathbb{R}}
\newcommand{\C}{\mathbb{C}}
\renewcommand{\L}{\mathbb{L}}
\newcommand{\CP}{\mathbb{CP}}
\renewcommand{\k}{\Bbbk}
\newcommand{\K}{\mathbb{K}}
\DeclareMathAlphabet{\pazocal}{OMS}{zplm}{m}{n}
\newcommand{\XX}{{\pazocal X}}
\newcommand{\RR}{{\mathcal R}}
\newcommand{\VV}{{\mathcal V}}
\newcommand{\F}{{\mathcal{F}}}
\newcommand{\M}{{\mathcal{M}}}
\newcommand{\g}{{\mathfrak{g}}}
\newcommand{\h}{{\mathfrak{h}}}
\newcommand{\gl}{{\mathfrak{gl}}}
\renewcommand{\sl}{{\mathfrak{sl}}}
\newcommand{\m}{{\mathfrak{m}}}
\DeclareMathOperator{\rank}{rank}
\DeclareMathOperator{\gr}{gr}
\DeclareMathOperator{\im}{im}
\DeclareMathOperator{\id}{id}
\DeclareMathOperator{\GL}{GL}
\DeclareMathOperator{\SL}{SL}
\DeclareMathOperator{\SU}{SU}
\DeclareMathOperator{\Sp}{Sp}
\DeclareMathOperator{\Hom}{{Hom}}
\DeclareMathOperator{\spn}{span}
\DeclareMathOperator{\proj}{pr}
\DeclareMathOperator{\ad}{ad}
\DeclareMathOperator{\Heis}{\mathcal{H}}
\DeclareMathOperator{\PD}{PD}
\DeclareMathOperator{\ideal}{ideal}
\DeclareMathOperator{\wt}{wt}
\DeclareMathOperator{\orb}{orb}
\newcommand{\surj}{\twoheadrightarrow}
\newcommand{\inj}{\hookrightarrow}
\newcommand{\isom}{\xrightarrow{\,\simeq\,}}
\def\dot{\mathchar"013A}  
\newcommand{\hdot}{{\raise2pt\hbox to0.35em{\Huge $\dot$}}} 
\newcommand{\bwedge}{\mbox{\small $\bigwedge$}}
\newcommand{\cdga}{\textsc{cdga }}
\newcommand{\dga}{\textsc{cdga}}
\begin{document}
%\date{February 28, 2015}
%\date{September 10, 2015}
%\date{November 28, 2015}
%\date{June 18, 2016}
%\date{March 27, 2017}

\title[Compact Lie group actions and finite models]{%
The topology of compact Lie group actions through the lens of finite models}

\author[Stefan Papadima]{Stefan Papadima$^1$}
\address{Simion Stoilow Institute of Mathematics, 
P.O. Box 1-764,
RO-014700 Bucharest, Romania}
\email{\href{mailto:Stefan.Papadima@imar.ro}{Stefan.Papadima@imar.ro}}
%\urladdr{\href{http://www.imar.ro/~spapadim/}%
%{www.imar.ro/~spapadim/}}
\thanks{$^1$Partially supported by the Romanian Ministry of 
National Education, CNCS-UEFISCDI, grant PNII-ID-PCE-2012-4-0156}

\author[Alexander~I.~Suciu]{Alexander~I.~Suciu$^2$}
\address{Department of Mathematics,
Northeastern University,
Boston, MA 02115, USA}
\email{\href{mailto:a.suciu@neu.edu}{a.suciu@neu.edu}}
\urladdr{\href{http://www.northeastern.edu/suciu/}%
{www.northeastern.edu/suciu/}}
\thanks{$^2$Partially supported by 
NSA grant H98230-13-1-0225 and the Simons Foundation 
collaborative grant 354156}

\subjclass[2010]{Primary
55P62,  % Rational homotopy theory
Secondary
14F35,  %  Homotopy theory; fundamental groups
17B70,   % Graded Lie (super)algebras
53C25,  % Special Riemannian manifolds (Einstein, Sasakian, etc.)
55R40,  % Homology of classifying spaces, characteristic classes 
57S25. % Groups acting on specific manifolds
}

\keywords{Differential graded algebra, formality, holonomy Lie algebra, 
Malcev completion, representation variety, cohomology jump loci, compact 
Lie group action, characteristic classes, regular sequence, Poincar\'{e} duality, 
Sasakian geometry, isolated surface singularity.}

\begin{abstract}
Given a compact, connected Lie group $K$, we use principal $K$-bundles 
to construct manifolds with prescribed finite-dimensional algebraic models.
Conversely, let $M$ be a compact, connected, smooth manifold which 
supports an almost free $K$-action.  Under a partial formality assumption 
on the orbit space and a regularity assumption on the characteristic classes 
of the action, we describe an algebraic model for $M$ with commensurate 
finiteness and partial formality properties.  The existence of such a model 
has various implications on the structure of the cohomology jump loci of 
$M$ and of the representation  varieties of $\pi_1(M)$.  As an application, 
we show that compact Sasakian manifolds of dimension $2n+1$ are 
$(n-1)$-formal, and that their fundamental groups are filtered-formal.  
Further applications to the study of weighted-homogeneous isolated 
surface singularities are also given.
\end{abstract}

\maketitle
\setcounter{tocdepth}{1}
\tableofcontents

\section{Introduction and statement of results}
\label{sect:intro}

\subsection{Algebraic models for spaces}
\label{subsec:intro1}

The rational homotopy type of a nilpotent CW-space of finite type can be reconstructed 
from algebraic models associated to it.  If the space in question is a 
smooth manifold $M$, the standard model is the de Rham algebra 
$\Omega^{\hdot}_{\rm dR}(M)$ of smooth forms on the manifold, 
endowed with a wedge product and differential satisfying the 
graded Leibniz rule.  Sullivan \cite{Su77} associated 
to any space $X$ a commutative, differential graded $\C$-algebra (for short, 
a $\dga$), denoted by $\Omega^{\hdot}(X)$, which serves as the reference 
algebraic model for the space.  In particular, $H^{\hdot}(\Omega(X))=H^{\hdot}(X,\C)$.

We are interested here in algebraic models of a connected CW-complex 
$X$ which satisfy certain finiteness 
and formality conditions.  As shown in \cite{DP-ccm}, models with finiteness 
properties contain valuable topological information related to the structure 
around the origin of both representation varieties of fundamental groups 
and the loci where the corresponding twisted homology of spaces (in degrees 
up to a fixed $q\ge 1$) jumps. This requires the finiteness of the $q$-skeleton of $X$, but {\em no}  
nilpotence condition. When finiteness of models comes from stronger formality 
properties of $X$, additional nice features hold.   

More precisely, we say that $A^{\hdot}$ is {\em $q$-finite}\/ if $A$ is connected 
(i.e., $A^0=\C$) and $A^i$ is finite-dimensional, for each $i\le q$.  
Likewise, we say that $(A^{\hdot},d)$ is 
a {\em $q$-model}\/ for $X$ if $A$ has the same $q$-type as $\Omega^{\hdot}(X)$, 
i.e., there is a zig-zag of morphisms connecting these two $\dga$s, with each such 
morphism inducing isomorphisms in homology up to degree $q$ and a 
monomorphism in degree $q+1$. Furthermore, we say that 
$(A,d)$ is {\em $q$-formal}\/ if it can be connected to $(H^{\hdot}(A),d=0)$ by such 
a zig-zag of $\dga$ morphisms. Finally, we say $X$ is $q$-formal if $\Omega^{\hdot}(X)$ 
has this property. All these notions have obvious analogues when $q=\infty$, 
in which case we drop the prefix $q$. 

Let $K$ be a compact, connected real Lie group. If a connected space $X$ 
admits a finite model, it is known that any principal $K$-bundle over $X$ 
has the same property. Our first result (proved in Theorem \ref{thm:krealiz} 
and Corollary \ref{cor:newpd}) uses this fact to provide a systematic way 
of constructing a rich variety of new, interesting spaces admitting finite models.
To state this result, we identify the cohomology algebra $H^{\hdot}(K,\Q)$ 
with an exterior algebra on odd-degree generators, $\bwedge (t_1,\dots ,t_{r})$, 
where $r=\rank K$.

\begin{theorem}
\label{thm:krealiz-intro}
Let $X$ be a connected, finite CW-space. If $X$ has a finite, rational model $B$, 
then any Hirsch extension $A=B\otimes_{\tau} \bwedge (t_1,\dots ,t_{r})$
can be realized as a finite, rational model of some principal $K$-bundle $Y$ over $X$. 
Moreover, if the $\dga$ $B$ satisfies Poincar\'{e} duality in dimension $n$, then $A$ 
has the same property in dimension $n+ \dim K$. 
\end{theorem}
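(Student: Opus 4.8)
The plan is to establish the two assertions in turn.

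\emph{Realization.} Recall that $H^{\hdot}(BK;\Q)=\Q[y_1,\dots,y_r]$ with $\deg y_i=\deg t_i+1$, that $BK$ is formal (its rational cohomology being free graded-commutative on the even-degree classes $y_i$), and that in the Serre spectral sequence of $K\to EK\to BK$ the generator $t_i$ transgresses to $y_i$. Hence, by the standard model for the total space of a principal bundle, a principal $K$-bundle $p\colon Y\to X$ with classifying map $f\colon X\to BK$ has the finite rational model $B\otimes\bwedge(t_1,\dots,t_r)$ whose differential restricts to $d_B$ on $B$ and sends $t_i$ to a cocycle of $B$ representing $f^{*}(y_i)\in H^{\deg y_i}(X;\Q)=H^{\deg y_i}(B)$; concretely this is the pushout, along a $\dga$ representative $\phi$ of $f$ with $\phi(y_i)$ that cocycle, of the acyclic Koszul model $\bigl(\Q[y_1,\dots,y_r]\otimes\bwedge(t_1,\dots,t_r),\ y_i\mapsto 0,\ t_i\mapsto y_i\bigr)$ of $EK$ over $BK$, which is a relative Sullivan algebra, so the pushout does compute a model of the homotopy pullback $Y=f^{*}EK$ (no nilpotence issue arises, $BK$ being simply connected). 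Now, given $A=B\otimes_{\tau}\bwedge(t_1,\dots,t_r)$ with $dt_i=\tau(y_i)$, set $c_i:=[\tau(y_i)]\in H^{\deg y_i}(B)$. It suffices to produce a positive integer $N$ and a principal $K$-bundle $Y\to X$ with $f^{*}(y_i)=Nc_i$ for all $i$: taking $N\tau(y_i)$ for the cocycle representing $f^{*}(y_i)$, the model above reads $B\otimes\bwedge(t_1,\dots,t_r)$, $dt_i=N\tau(y_i)$, and the automorphism $t_i\mapsto t_i/N$ of the graded algebra $B\otimes\bwedge(t_1,\dots,t_r)$ is a $\dga$ isomorphism onto $A$. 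Since $B$ is finite and $\bwedge(t_1,\dots,t_r)$ is finite-dimensional, $A$ is finite, so $A$ is indeed a finite rational model of $Y$.

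\emph{The main obstacle} is the realization of characteristic classes just used: every tuple $(c_i)\in\bigoplus_i H^{\deg y_i}(X;\Q)$ must, after a common positive-integer rescaling, equal $(f^{*}y_i)$ for some principal $K$-bundle over the finite complex $X$. I would argue this by localization and obstruction theory. Choose $k\ge 1$ so that each $\bar y_i:=ky_i$ is the reduction of an integral class, yielding a map $h\colon BK\to W:=\prod_i K(\Z,\deg y_i)$; since $h$ is a rational-cohomology isomorphism between simply connected spaces of finite type, its homotopy fibre has finite homotopy groups. Hence, $X$ being finite, every element of $[X,W]=\bigoplus_i H^{\deg y_i}(X;\Z)$ has a positive-integer multiple in the image of $[X,BK]\to[X,W]$: climbing the Postnikov tower of $h$, each stage poses an obstruction lying in a finite group $H^{*}(X;\pi_{*}(\mathrm{hofib}\,h))$, and multiplying the class by a suitable integer kills it. Clearing denominators in $(c_i)$ and applying this produces $f\colon X\to BK$ with $f^{*}(y_i)=f^{*}(\bar y_i)/k$ the desired common rational multiple of $c_i$, which an extra rescaling makes integral. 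This realization step is the only genuinely non-formal point in the first assertion; everything else is bookkeeping.

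\emph{Poincar\'e duality.} Since $\tau$ affects only the differential, $A^{\hdot}=B^{\hdot}\otimes E$ as graded algebras, where $E:=\bwedge(t_1,\dots,t_r)$. The exterior algebra $E$ on generators of odd degrees $\deg t_1,\dots,\deg t_r$ is a Poincar\'e duality algebra of dimension $\sum_i\deg t_i=\dim K$, with orientation $\varepsilon_E$ the functional reading off the coefficient of $t_1\cdots t_r$; and a tensor product of Poincar\'e duality algebras of dimensions $n$ and $\dim K$ is again one, of dimension $n+\dim K$. Thus $A^{\hdot}$, equipped with $\varepsilon_A:=\varepsilon_B\otimes\varepsilon_E\colon A^{n+\dim K}\to\Q$, has $A^i=0$ for $i>n+\dim K$ and all multiplication pairings $A^i\otimes A^{n+\dim K-i}\to A^{n+\dim K}\xrightarrow{\,\varepsilon_A\,}\Q$ nondegenerate. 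It remains to check that $\varepsilon_A\circ d$ vanishes on $A^{n+\dim K-1}$, a brief degree count: a spanning element there is either $b\otimes t_1\cdots t_r$ with $b\in B^{n-1}$ — whereupon the $\tau$-part of $d(b\otimes t_1\cdots t_r)$ lowers the $E$-degree and is annihilated by $\varepsilon_E$, while the $d_B$-part contributes $\varepsilon_B(d_Bb)=0$ since $\varepsilon_B\circ d_B=0$ on $B^{n-1}$ — or else $b\otimes\omega$ with $b\in B^{n}$ and $\omega$ a product of $r-1$ of the $t_i$ of total degree $\dim K-1$ (so the omitted generator has degree $1$), and then $d(b\otimes\omega)=0$ outright, because $d_Bb\in B^{n+1}=0$ and every remaining term carries a factor $b\cdot\tau(y_j)\in B^{>n}=0$. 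Therefore $(A,d)$ satisfies Poincar\'e duality in dimension $n+\dim K$, with orientation $\varepsilon_A$, which completes the proof.
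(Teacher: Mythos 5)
Your overall strategy is the same as the paper's: realize the classes $[\tau(t_i)]$, up to nonzero rational multiples, as the characteristic classes of a principal $K$-bundle over $X$; absorb the rescaling by a diagonal automorphism of the Hirsch extension (this is exactly the second clause of Lemma \ref{lem:hirsch}, which the paper uses with possibly different factors $\lambda_\alpha$ per generator); and verify the Poincar\'e duality claim by a degree count. Your Poincar\'e duality paragraph is correct and essentially identical to Lemma \ref{lem:circlepd} and Corollary \ref{cor:newpd} (every potentially nonzero term of $d$ on $A^{n+\dim K-1}$ lands in $B^{>n}=0$ or comes from $d_BB^{n-1}=0$), and your use of the relative Koszul model of $EK\to BK$ to get the Hirsch extension model of the pullback is a legitimate substitute for the paper's appeal to Theorem \ref{thm:rs}.

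The substantive divergence is the realization of characteristic classes, which the paper does not prove but cites (\cite[Corollary 3.4]{Pa}); you attempt a self-contained proof, and that attempt has a gap at precisely the step you flag as the crux. The telescope/finiteness argument does work at the \emph{first} stage of the Moore--Postnikov tower of $h\colon BK\to W$: writing $\mu_N$ for the $N$-th power map of the H-space $W$, one has $\widetilde{H}^{\hdot}(W_{(0)};A)=0$ for finite $A$, so some $\mu_N^{*}$ annihilates the first $k$-invariant, and $Nu$ lifts one stage for every $u$. But the obstruction at the next stage is a class pulled back from the intermediate space $E_1$, which carries no multiplication compatible with that of $W$; replacing $u$ by a further multiple $N'Nu$ restarts the lifting problem from the bottom, and there is no naturality statement relating the new stage-two obstruction to the old one (the obstruction cocycles are not additive in the map, and the $E_j$ are not H-spaces). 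So ``multiplying the class by a suitable integer kills it'' is justified only at the first stage, and the induction up the tower does not close as written. The fact you need is true --- it is exactly the content of the cited \cite[Corollary 3.4]{Pa}, resting on Sullivan's comparison of $[X,Y]$ with $[X,Y_{(0)}]$ for finite $X$ --- but your argument for it is incomplete; either supply that comparison or cite the result, as the paper does.
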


As is well-known, the formality property of a space $X$ is not necessarily 
inherited by a principal $K$-bundle $Y$ over $X$. Our next result 
gives a useful algebraic criterion on the characteristic classes
of the bundle which implies the $q$-formality of $Y$.

Let $H^{\hdot}$ be a connected, commutative 
graded algebra, and let  $\{e_\alpha\}$ be a sequence of homogeneous elements 
of degree $n_\alpha>0$.   We say that this sequence is {\em $q$-regular}\/ if for 
each $\alpha$, the class of $e_{\alpha}$ in the quotient algebra $\overline{H}_{\alpha}=
H/\sum_{\beta <\alpha} e_{\beta} H$ has trivial annihilator up to degree $q-n_{\alpha}+2$.

\begin{theorem}
\label{thm:intro1}
Suppose $e_1,\dots ,e_r$ is an even-degree, $q$-regular sequence in $H^{\hdot}$.  
Then the Hirsch extension $A=H \otimes_{\tau} \bwedge (t_1,\dots ,t_r)$ with 
$d=0$ on $H$  and $dt_{\alpha}=\tau (t_{\alpha})=e_{\alpha}$ has 
the same $q$-type as $(H/\sum_{\alpha} e_{\alpha} H, d=0)$.
In particular, $A$ is $q$-formal.
\end{theorem}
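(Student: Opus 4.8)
The plan is to proceed by induction on $r$, the length of the regular sequence, building the Hirsch extension one generator at a time and tracking the cohomology at each stage. Set $H_0 = H$ and, for $1 \le \alpha \le r$, let $H_\alpha = H/\sum_{\beta \le \alpha} e_\beta H$, so that $H_\alpha = \overline{H}_\alpha / e_\alpha \overline{H}_\alpha$ in the notation of the $q$-regularity hypothesis. The single-step claim I would isolate is this: if $e$ is an even-degree element of a connected commutative graded algebra $B$ whose annihilator in $B$ vanishes up to degree $q - \deg e + 2$, then the Hirsch extension $(B \otimes_\tau \bwedge(t), d)$ with $dt = e$ has the same $q$-type as $(B/eB, d=0)$. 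Granting this, one applies it successively with $B = H$ (no differential on $H$, so the previous differentials are zero) — but here one must be careful: after the first step the partial Hirsch extension $H \otimes_\tau \bwedge(t_1,\dots,t_{\alpha-1})$ is no longer formal in general, so the induction hypothesis must be framed in terms of $q$-type (a zig-zag to $(H_{\alpha-1}, 0)$) rather than literal equality, and the single-step lemma must be applied to $B = H_{\alpha-1}$ with $e = e_\alpha$, whose image in $H_{\alpha-1}$ is what has the annihilator condition.

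The key computation is the single-step lemma, and the natural tool is the multiplicative Zeuthen–Serre style filtration spectral sequence (equivalently, just writing out the Koszul-type complex). As a bigraded object, $B \otimes \bwedge(t)$ splits as $B \oplus B\cdot t$, and the differential sends $b \cdot t \mapsto be$; hence the cohomology of $(B \otimes_\tau \bwedge(t), d)$ in total degree $n$ is $\left( B^n / e B^{n - \deg e} \right) \oplus \left( \ann_B(e) \cap B^{n - \deg e + 1} \right) t$, where the first summand sits in $t$-degree $0$ and the second in $t$-degree $1$. The annihilator hypothesis says the second summand vanishes whenever $n - \deg e + 1 \le q - \deg e + 2$, i.e. $n \le q+1$; so through degree $q+1$ the cohomology is exactly $B/eB$ concentrated in the appropriate degrees. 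What remains is to promote this additive computation to a zig-zag of \dga\ morphisms realizing the $q$-type. For this I would produce an explicit map: the projection $B \otimes_\tau \bwedge(t) \to B/eB$ sending $t \mapsto 0$ is a \dga\ morphism (since $de = 0$ and $e \mapsto 0$ in $B/eB$), and the cohomology calculation above shows it is an isomorphism in degrees $\le q$ and injective in degree $q+1$. That single morphism is the entire zig-zag — no intermediate stages are needed — which is exactly the definition of having the same $q$-type.

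Assembling the induction: suppose $A_{\alpha-1} = H \otimes_\tau \bwedge(t_1,\dots,t_{\alpha-1})$ has the same $q$-type as $(H_{\alpha-1},0)$, witnessed by a zig-zag $Z$. The class $e_\alpha$ of $e_\alpha$ in $H_{\alpha-1}$ has trivial annihilator up to degree $q - n_\alpha + 2$ by the $q$-regularity hypothesis (note $\overline{H}_\alpha = H/\sum_{\beta<\alpha} e_\beta H$ equals $H_{\alpha-1}$). Adjoining $t_\alpha$ with $dt_\alpha = e_\alpha$ gives $A_\alpha$; applying the single-step lemma to $B = H_{\alpha-1}$ yields that $H_{\alpha-1} \otimes_\tau \bwedge(t_\alpha)$ has the same $q$-type as $(H_\alpha, 0)$. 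The issue here — and I expect this to be the main obstacle — is descending along the zig-zag $Z$: one needs a standard lifting lemma guaranteeing that a Hirsch extension of the $q$-model $(H_{\alpha-1},0)$ by a $q$-cocycle pulls back, through the $q$-type zig-zag, to a Hirsch extension of $A_{\alpha-1}$ with the same $q$-type, and that this pulled-back extension is $q$-equivalent to $A_\alpha$. This is where one invokes the (by now routine but technically delicate) fact that $q$-equivalences are compatible with Hirsch extensions along chosen cocycle representatives; it requires checking that the cohomology class $e_\alpha \in H^{n_\alpha}(A_{\alpha-1})$ corresponds under $Z$ to the class of $e_\alpha$ in $H^{n_\alpha}(H_{\alpha-1}) = H_{\alpha-1}^{n_\alpha}$, which is true by construction since $n_\alpha \le q$ (we may assume $n_\alpha \le q$, else the regularity condition is vacuous and $e_\alpha$ contributes nothing in low degrees). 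Composing the zig-zags over all $\alpha$ and taking $\alpha = r$ gives that $A = A_r$ has the same $q$-type as $(H_r, 0) = (H/\sum_\alpha e_\alpha H, 0)$; since the target has zero differential, $A$ is $q$-formal. $\qed$
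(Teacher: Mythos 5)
Your proposal is correct in its main line, and the single\-/step computation at its heart is right, but it is organized differently from the paper's argument, and the difference matters in one edge case. The paper (in the proof of Theorem \ref{thm:lefred}) does not build the quotient one generator at a time and then transport Hirsch extensions across zig-zags. It writes down, once and for all, the $\dga$ projection $\psi\colon A \to (H/\sum_\alpha e_\alpha H, d=0)$ with $\psi(t_\alpha)=0$, notes that $\psi^*\circ\varphi^*=\id$ (where $\varphi\colon H\inj A$ is the inclusion), so that $\psi$ is a $q$-equivalence if and only if $\varphi^*$ is surjective up to degree $q+1$, and proves that surjectivity by induction along the actual tower $A_\alpha=H\otimes_\tau\bwedge(t_1,\dots,t_\alpha)$: by the Gysin sequence \eqref{eq=helem}, surjectivity of $H^i(A_\alpha)\to H^i(A_{\alpha+1})$ is equivalent to injectivity of $[e_{\alpha+1}]\cdot$ on $H^{i-m_{\alpha+1}}(A_\alpha)$, and that injectivity is borrowed from the quotient ring via the commuting square $\psi_\alpha^*\circ([e_{\alpha+1}]\cdot)\circ\varphi_\alpha^*=e_{\alpha+1}\cdot$\,. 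You instead compute the cohomology of the one-variable extension explicitly on the zero-differential quotient $H_{\alpha-1}$ (your Koszul-type decomposition of $H^n$ is correct) and then pull the extension back to $A_{\alpha-1}$ via Lemma \ref{lem=qhirsch}. The paper's organization never needs that transport lemma --- hence never needs $q\ge \deg e_\alpha$ --- and yields a single $q$-equivalence rather than a long zig-zag; yours has the virtue of computing each elementary extension's cohomology completely rather than tracking only a surjectivity statement.

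The one genuine soft spot is your dismissal of the case $n_\alpha>q$. The claim that the regularity condition is then vacuous is not right: for $n_\alpha=q+1$ it still requires $\bar e_\alpha$ to have trivial annihilator in degrees $0$ and $1$, and $t_\alpha$ lives in degree $q$, squarely inside the range relevant to $q$-type; for $n_\alpha=q+2$ it requires $\bar e_\alpha\ne 0$, which is exactly what prevents $t_\alpha$ (in degree $q+1$) from creating a spurious class in $H^{q+1}$. In these cases Lemma \ref{lem=qhirsch} does not apply as stated ($q\ge m+1$ fails), so your transport step needs a separate, if doable, argument. The paper's commuting-square mechanism handles these degrees uniformly, because $\psi_\alpha$ is an honest $\dga$ map in all degrees and only the injectivity of $e_{\alpha+1}\cdot$ on the quotient is degree-restricted.
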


For $q=\infty$, this is a classical property of Koszul complexes from commutative 
algebra, see \cite[Proposition 3.6]{H87}.  Work of Borel \cite{Borel} and 
Chevalley \cite{Chev} supplies examples of fully regular sequences in 
graded polynomial rings which arise in the context of compact connected 
Lie groups and finite reflection groups. 

The above notion of partial regularity seems well adapted to the 
study of cohomology rings of finite-dimensional CW-complexes. 
The Hard Lefschetz Theorem provides natural 
examples of one-element partially regular sequences in the cohomology rings of 
compact K\"{a}hler manifolds, known to be formal by work of Deligne et al.~\cite{DGMS}. 
Theorem  \ref{thm:intro1} may be applied to analyze the partial formality of 
arbitrary homogeneous spaces of compact, connected Lie groups. 
As shown in Example \ref{ex=homnonf}, the resulting estimates 
are sharp in certain cases. 

\subsection{Almost free $K$-actions}
\label{subsec:intro2}

We now switch to a topological context which is broader than principal bundles. 
Consider a compact, connected, smooth manifold $M$ (without boundary) 
on which a compact, connected Lie group $K$ acts smoothly. 
The $K$-action on $M$ is said to be {\em almost free}\/ if all its 
isotropy groups are finite.  Let $K\to EK\times M \to M_K$ 
be the Borel construction on $M$, and let $\tau\colon H^{\hdot}(K,\C) 
\to H^{\hdot +1}(M_K,\C)$ be the transgression
in the Serre spectral sequence of this bundle.  

Let $N=M/K$ be the orbit space.  
The projection map $\proj \colon M_K \to N$ induces an isomorphism 
 $\proj^* \colon H^{\hdot}(N,\C) \to H^{\hdot}(M_K,\C)$.  
We identify the cohomology algebra 
$H^{\hdot}(K,\C)$ with the exterior algebra on odd-degree generators 
$t_1,\dots ,t_{r}$, as before, and we let 
$e_{\alpha}=(\proj^*)^{-1}(\tau(t_{\alpha})) \in H^{m_{\alpha} +1} (N,\C)$ 
be the corresponding characteristic classes.

It is known that a model for $M$   
can be built from the Sullivan model $\Omega^{\hdot}(N)$ and the 
above characteristic classes, via a suitable Hirsch extension with 
$\bwedge P_K$, where $P_K=\spn\{t_1,\dots ,t_{r}\}$. 
Our next result uses this fact, together with Theorem \ref{thm:intro1}, to 
construct a $q$-model for $M$ which is both $q$-finite and 
$q$-formal, under suitable partial formality conditions on the 
orbit space $N$ and regularity conditions on the sequence 
$\{ e_{\alpha}\}$, and for $q$ in a certain range. 

\begin{theorem}
\label{thm:intro2}
Suppose the $K$-action on $M$ is almost free, the orbit space $N=M/K$ 
is $k$-formal, for some $k> \max \{ m_{\alpha} \}$, and the characteristic 
classes $e_1,\dots ,e_r$ form a $q$-regular sequence in the graded ring 
$H^{\hdot}=H^{\hdot}(N,\C)$, for some $q\le k$.  
Then the $\cdga$
$\big(H^{\hdot} \big/\sum_{\alpha} e_{\alpha} H^{\hdot}\big., d=0\big)$ 
is a finite $q$-model for $M$.  In particular, $M$ is $q$-formal.
\end{theorem}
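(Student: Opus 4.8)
The plan is to assemble three ingredients: (i) the known construction of a Sullivan-type model for $M$ as a Hirsch extension of a model for the orbit space $N$, built using the characteristic classes $e_\alpha$; (ii) the hypothesis that $N$ is $k$-formal, which lets us replace the Sullivan model $\Omega^{\hdot}(N)$ by the cohomology algebra $H^{\hdot}=H^{\hdot}(N,\C)$ up to degree $k$; and (iii) Theorem \ref{thm:intro1}, which collapses the resulting Hirsch extension $H^{\hdot}\otimes_\tau \bwedge(t_1,\dots,t_r)$ down to the quotient $\big(H^{\hdot}\big/\sum_\alpha e_\alpha H^{\hdot},\,d=0\big)$ through a zig-zag of $q$-equivalences, using the $q$-regularity of $\{e_\alpha\}$. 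Composing these zig-zags exhibits the displayed $\cdga$ as a $q$-model for $M$; its $q$-finiteness is automatic since $H^{\hdot}(N,\C)$ is $k$-finite (as $N$ is a compact manifold-like space and $k\ge q$) and quotients preserve finite-dimensionality in each degree; and $q$-formality of $M$ follows because the model is its own cohomology in the relevant range.

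First I would make precise the model in (i): via the Borel construction $M\to M_K\to N$ (using $\proj^*\colon H^{\hdot}(N,\C)\xrightarrow{\simeq} H^{\hdot}(M_K,\C)$) and the fact that for an almost free $K$-action the map $M_K\to M$ is a rational homotopy equivalence, one obtains a relative Sullivan model of the form $(\Omega^{\hdot}(N)\otimes \bwedge P_K, D)$ with $P_K=\spn\{t_1,\dots,t_r\}$ and $D t_\alpha$ representing the transgression class $\tau(t_\alpha)$, hence cohomologous to (the pullback of) $e_\alpha$. Next I would use $k$-formality of $N$: by definition there is a zig-zag of $\dga$ maps between $\Omega^{\hdot}(N)$ and $(H^{\hdot},0)$ inducing isomorphisms in degrees $\le k$ and a monomorphism in degree $k+1$. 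Since $k>\max\{m_\alpha\}$, each characteristic class $e_\alpha$ lives in a degree $m_\alpha+1\le k$ where these maps are isomorphisms, so the Hirsch extension data can be transported along the zig-zag; this is where I would invoke a lemma (presumably available earlier in the paper, or a standard lifting argument for Hirsch extensions) saying that a zig-zag of $\dga$-morphisms that is an iso up to degree $k$ and a mono in degree $k+1$ induces a corresponding zig-zag of $k$-equivalences between the Hirsch extensions built from matched classes. The output is that $(\Omega^{\hdot}(N)\otimes \bwedge P_K, D)$ has the same $k$-type, hence the same $q$-type (as $q\le k$), as $A=H^{\hdot}\otimes_\tau \bwedge(t_1,\dots,t_r)$ with $d=0$ on $H^{\hdot}$ and $dt_\alpha=e_\alpha$.

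Now I would apply Theorem \ref{thm:intro1} verbatim: the $e_\alpha$ have even degree and form a $q$-regular sequence in $H^{\hdot}$, so $A$ has the same $q$-type as $\big(H^{\hdot}\big/\sum_\alpha e_\alpha H^{\hdot},\,d=0\big)$, and in particular $A$ is $q$-formal. Concatenating the three zig-zags — from $\Omega^{\hdot}(M)$ to the Borel-construction model, from there to $A$, and from $A$ to the quotient $\cdga$ — and noting that a composite of zig-zags each of which is an iso in homology up to degree $q$ and a mono in degree $q+1$ is again of this type, yields that $\big(H^{\hdot}\big/\sum_\alpha e_\alpha H^{\hdot},\,d=0\big)$ is a $q$-model for $M$. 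It is $q$-finite because each $H^i(N,\C)$ is finite-dimensional and taking the quotient by an ideal only decreases dimensions; and since this model has zero differential and equals its own cohomology, $M$ is $q$-formal. The main obstacle I anticipate is step two: carefully checking that $k$-formality of $N$, which only controls the zig-zag through degree $k+1$, suffices to match the Hirsch extensions as $q$-models — one must verify that the transgression classes are transported correctly (they sit in degree $\le k$, which is exactly why the hypothesis reads $k>\max\{m_\alpha\}$ rather than $k\ge\max\{m_\alpha\}$, to leave room for the degree-$(q+1)$ monomorphism condition) and that no obstruction arises in degree $q+1$ when passing from a $k$-equivalence of base models to a $q$-equivalence of total models.
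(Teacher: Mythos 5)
Your proposal is correct and follows essentially the same route as the paper's proof of Theorem \ref{thm:nformal}: the Allday--Halperin Hirsch extension model $\Omega(N)\otimes_{\tau}\bwedge P$ from Theorem \ref{thm:rs}, transport of the extension along the $k$-formality zig-zag, and then Theorem \ref{thm:lefred}. The one obstacle you flag at the end is exactly what the paper handles via Lemma \ref{lem=qhirsch} together with the observation (citing \cite[Proposition 3.1]{Ma}) that the zig-zag of $k$-equivalences may be chosen to induce the \emph{identity} on cohomology up to degree $k$, so that the transported classes $\sigma(t_{\alpha})$ are literally the characteristic classes $e_{\alpha}$.
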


In Example \ref{ex=notsharp}, we construct for an arbitrary $K$ a family of almost free
$K$-manifolds for which we are able to describe a lower bound for partial formality using the 
above result. In Example \ref{ex:hopf-nonformal}, this type of estimate becomes optimal.

The $1$-formality property of a connected CW-complex $X$ with 
finite $1$-skeleton depends only on its fundamental group, $\pi=\pi_1(X)$.  
Furthermore, the $1$-formality of $\pi$ is equivalent to 
the Malcev Lie algebra $\m(\pi)$ constructed by Quillen in \cite{Qu} 
being filtered-isomorphic to the degree 
completion of a quadratic, finitely generated Lie algebra $L$.  If $L$ is 
merely assumed to have homogeneous relations, then the group $\pi$ 
is said to be {\em filtered formal}. 

\begin{theorem}
\label{thm:intro3}
Let $\pi=\pi_1(M)$ be the fundamental group of a compact, connected 
manifold $M$ on which a compact, connected Lie group $K$ acts almost 
freely, with $2$-formal orbit space. Then:
\begin{enumerate}
\item \label{z1}
The group $\pi$ is filtered-formal. 
More precisely, the Malcev Lie algebra $\m(\pi)$ is isomorphic 
to the degree completion of $\L/\mathfrak{r}$, where $\L$ is 
the free Lie algebra on $H_1(\pi, \C)$ and $\mathfrak{r}$ 
is a homogeneous ideal generated in degrees $2$ and $3$. 

\item \label{z2}
For every complex linear algebraic group $G$, the germ at 
the trivial representation $1$ of the representation variety 
$\Hom_{\gr}(\pi,G)$ is defined by quadrics and cubics only.
\end{enumerate}
\end{theorem}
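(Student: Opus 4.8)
The plan is to realize $M$ by a finite Hirsch-type model built over the cohomology of the orbit space $N$, to read off from it a presentation of the Malcev Lie algebra $\m(\pi)$, and then to convert that presentation into equations for the representation varieties. \emph{Step $1$: a finite $1$-model of Hirsch type.} As recalled just before the statement, $M$ carries a Sullivan model $\Omega^{\hdot}(N)\otimes_{\tau}\bwedge P_K$, with $P_K=\spn\{t_1,\dots,t_r\}$ placed in odd degrees and $d t_\alpha$ representing the characteristic class $e_\alpha\in H^{m_\alpha+1}(N,\C)$. Since $N$ is $2$-formal, $(H^{\hdot}(N,\C),d=0)$ is a $2$-finite $2$-model for $N$, and I would transfer this Hirsch extension along the corresponding zig-zag; the transfer is legitimate for the generators $t_\alpha$ of degree $1$, whose transgressions lie in $H^2(N,\C)$, a degree in which the base comparison is an isomorphism, while the higher-degree generators may simply be discarded since they do not affect $\pi_1$. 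This produces a finite $1$-model
\[
A \;=\;\big(H^{\hdot}(N,\C),\,d=0\big)\otimes_{\tau}\bwedge U,\qquad d t_\alpha=e_\alpha ,
\]
for $M$, where $U\subseteq P_K$ is spanned by the degree-$1$ generators. As $\m(\pi)$ depends only on a $1$-finite $1$-model of $M$, I work with $A$; and splitting off the tensor sub-cdga generated by the closed $t_\alpha$ (those with $e_\alpha=0$), which contributes a free abelian direct summand to $\m(\pi)$ and to $H_1(\pi,\C)$, I reduce to the case where $t_\alpha\mapsto e_\alpha$ embeds $U$ into $H^2(N,\C)$.

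\emph{Step $2$: holonomy Lie algebra, and assertion \eqref{z1}.} Write $V=H_1(N,\C)$. Running Sullivan's $1$-minimal model construction on $A$, and using crucially that the base $(H^{\hdot}(N,\C),d=0)$ is \emph{formal}, so that no Massey-type corrections enter from $N$, I expect to identify $\m(\pi)$ with the degree completion of the holonomy Lie algebra
\[
\mathfrak{h}(A)\;=\;\L\big(V\oplus U^{\vee}\big)\big/\mathfrak{I},
\]
where $\mathfrak{I}$ is generated by (i) the elements $e^{\vee}(\xi)+\cup^{\vee}(\xi)$ for $\xi\in H_2(N,\C)$, with $e^{\vee}\colon H_2(N,\C)\surj U^{\vee}$ dual to $e$ and $\cup^{\vee}\colon H_2(N,\C)\to\bwedge^{2}V$ dual to the cup product of $N$, and (ii) the brackets $[v,u^{\vee}]$ and $[u^{\vee}_1,u^{\vee}_2]$ for $v\in V$ and $u^{\vee},u^{\vee}_i\in U^{\vee}$, coming from the products $H^1(N,\C)\cdot U$ and $U\cdot U$ inside $A^2$. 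The decisive step is a \emph{regrading}: relations (i) express each generator $u^{\vee}$, modulo the homogeneous degree-$2$ relations $\cup^{\vee}(\ker e^{\vee})=0$, as the quadratic element $-\cup^{\vee}(\xi)$ with $e^{\vee}(\xi)=u^{\vee}$; eliminating $U^{\vee}$ and substituting turns the presentation into $\L(V)/\mathfrak{r}$, where $\mathfrak{r}$ is the \emph{homogeneous} ideal with degree-$2$ part $\cup^{\vee}\big((H^2(N,\C)/e(U))^{\vee}\big)$ and degree-$3$ part $[V,\cup^{\vee}(H_2(N,\C))]$ — the degree-$4$ words produced by the brackets $[u^{\vee}_1,u^{\vee}_2]$ already lying in the degree-$3$ ideal, by the Jacobi identity. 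This is precisely where $2$-formality of $N$ (rather than mere $1$-formality) is needed: it forces the $N$-contribution to the presentation to be genuinely quadratic, so that all higher relations are pushed into degree $3$ by the transgressing generators. Reinstating the free abelian summand from Step $1$ (which adds only degree-$2$ relations) gives $\m(\pi)\cong\widehat{\L/\mathfrak{r}}$ with $\L=\L(H_1(\pi,\C))$ and $\mathfrak{r}$ homogeneous, generated in degrees $2$ and $3$; this is \eqref{z1}.

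\emph{Step $3$: assertion \eqref{z2}.} By \cite{DP-ccm}, the analytic germ at the trivial representation of $\Hom_{\gr}(\pi,G)$ is determined by a $1$-finite $1$-model of $M$; concretely, it is isomorphic to the germ at the origin of the variety of $\mathfrak{g}$-valued flat connections $\{\omega : d\omega+\tfrac{1}{2}[\omega,\omega]=0\}$ on such a model. Feeding in the $1$-minimal model dual to $\widehat{\L/\mathfrak{r}}$ from \eqref{z1}, which is minimal and hence has no linear term in its differential, the Maurer--Cartan equation becomes homogeneous; and since the relations of $\mathfrak{r}$ are concentrated in degrees $2$ and $3$, it reduces to the quadratic equations $\rho(\omega)=0$ for $\rho\in\mathfrak{r}_2$ together with the cubic equations $\rho(\omega)=0$ for $\rho\in\mathfrak{r}_3$. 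Thus the germ is defined by quadrics and cubics only, which is \eqref{z2}.

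\emph{The main obstacle} will be Step $2$: proving rigorously that the $1$-minimal model of the Hirsch extension $A$ over the formal base $(H^{\hdot}(N,\C),d=0)$ is computed by $\mathfrak{h}(A)$ with no further corrections, and that the a priori filtered, non-homogeneous relations produced by the transgressions $e_\alpha$ reorganize — once the redundant generators $U^{\vee}$ are eliminated — into a genuinely graded ideal with generators confined to degrees $2$ and $3$. A secondary point requiring care is the transfer in Step $1$, i.e.\ checking that $A$ really is a $1$-model of $M$.
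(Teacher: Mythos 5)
Your route is essentially the paper's. The finite $1$-model $A=(H^{\hdot}(N,\C),d=0)\otimes_{\tau}\bwedge P^1$ is obtained exactly as you indicate (Theorem \ref{thm:rs} combined with Lemma \ref{lem=qhirsch}, which applies because the degree-$1$ generators transgress into degree $2$ and $N$ is $2$-formal), and your Step $2$ --- present the holonomy Lie algebra $\h(A)$, eliminate the duals of the transgressing generators, and use the Jacobi identity to push the resulting quartic brackets $[u^{\vee}_1,u^{\vee}_2]$ into the ideal generated in degrees $2$ and $3$ --- is precisely the proof of Theorem \ref{thm:filtf}; whether one splits off the closed degree-$1$ generators as a central factor, as you do, or keeps them in the presentation, as the paper does, is immaterial, and your description of the degree-$2$ and degree-$3$ generators of $\mathfrak{r}$ agrees with the paper's.

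Two points need repair. First, the step you flag as the main obstacle is not something to re-derive from the $1$-minimal model construction, and formality of the base plays no role in it: Theorem \ref{thm:malholo} (\cite{BMPP}) identifies $\m(\pi)$ with the lcs completion of $\h(A)$ for an \emph{arbitrary} $1$-finite $1$-model $A$, with no formality hypothesis. The $2$-formality of $N$ is used only to produce $A$ itself, i.e., to replace $\Omega(N)$ by $(H^{\hdot}(N),d=0)$ while keeping control of the degree-$2$ transgression classes (this is why $1$-formality of $N$ would not suffice); the regrading in Step $2$ then works simply because the base has zero differential. Second, your Step $3$ as written does not go through: Theorem \ref{thm:germs} requires a $1$-finite model, and the $1$-minimal model dual to $\widehat{\L/\mathfrak{r}}$ is not $1$-finite in general; moreover its Maurer--Cartan equation has a linear term $d\omega$ spread over infinitely many coordinates, so it is not literally a system of quadrics and cubics. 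The correct (and the paper's) argument applies Theorem \ref{thm:germs} to the finite model $A$ and invokes the isomorphism $\F(A,\g)\cong\Hom_{\rm Lie}(\h(A),\g)$ (\cite[Prop.~4.5]{MPPS}, i.e., Corollary \ref{cor:sophus}); the degree $2$--$3$ presentation of $\h(A)$ from Theorem \ref{thm:filtf} then exhibits this variety directly as a zero set of quadrics and cubics in $\Hom(H_1(\pi,\C),\g)$.
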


This result has the same flavor as the weight obstructions of Morgan \cite{Mo} 
and Kapovich--Millson \cite{KM}, which are specific to fundamental groups of 
quasi-projective manifolds and germs of their representation varieties, respectively. 
On the other hand, our theorem may be applied to arbitrary principal $K$-bundles 
over formal, compact manifolds. 

\subsection{Cohomology jump loci}
\label{subsec:intro4}
 
The {\em characteristic varieties}\/  of a space $X$ 
with respect to a rational representation 
$\iota \colon G\to \GL(V)$  are the sets $\VV^i_s(X,\iota)$ 
consisting of those representations $\rho\colon \pi\to G$ 
for which the twisted cohomology group $H^i(X , V_{\iota\circ \rho})$ 
has $\C$-dimension at least $s$.  In degree $i=1$, these 
varieties depend only on the group $\pi=\pi_1(X)$, and so 
we may denote them as $\VV^1_s(\pi,\iota)$.  
In the rank $1$ case, i.e., when $G=\C^*$ and $\iota=\id_{\C^*}$, 
we simply write the characteristic varieties of $X$ as $\VV^i_s(X)$.

When $M$ is a closed, orientable manifold of dimension $n$, 
it is known that Poincar\'{e} duality imposes subtle restrictions 
on these global jump loci.  Now suppose that $X$ is a finite 
CW-complex admitting a finite model $A$, 
which is an $n$-dimensional Poincar\'{e} 
duality $\dga$. Then, as we show in Theorem \ref{thm:invjump}, 
there is an analytic involution of the germ at the origin, 
$\Hom (\pi_1(X), G)_{(1)}$, which identifies 
$\VV^{i}_s(X, \iota)_{(1)}$ with $\VV^{n-i}_s(X, \iota)_{(1)}$, 
for all $i,s$, where $\iota$ is the identity representation of $\GL (V)$.
Furthermore, in the rank $1$ case, this involution is induced by 
the involution $\rho \mapsto \rho^{-1}$ of the character group 
$\Hom(\pi_1(X), \C^*)$. 

Let again $M$ be a smooth, closed manifold supporting 
an almost free $K$-action. 
The projection $p\colon M\to M/K$ induces a natural epimorphism 
$p_{\sharp}\colon \pi_1 (M) \surj \pi_1^{\orb} (M/K)$ between orbifold 
fundamental groups.  Our next result establishes a tight connection 
between the germs at the origin of the rank $1$ characteristic varieties 
and the $\SL_2(\C)$ representation varieties of  $\pi_1 (M)$ and 
$\pi_1^{\orb} (M/K)$. 

\begin{theorem}
\label{thm:intro4}
Suppose that the transgression 
$P^{\hdot}_K \to H^{\hdot +1}(M_K) \cong H^{\hdot +1}(M/K)$ is injective in degree $1$.
Then the following hold.
\begin{enumerate}
\item \label{im1}
If the orbit space $N=M/K$ has a $2$-finite $2$-model,
then the epimorphism $p_{\sharp}$ induces a local analytic isomorphism between 
$\VV^1_s (\pi_1 (M))_{(1)}$ and $\VV^1_s (\pi_1^{\orb} (N))_{(1)}$, 
for all $s$.
\item \label{im2}
If $N$ is $2$-formal, then $p_{\sharp}$ induces an analytic isomorphism between the 
germs at $1$ of $\Hom (\pi_1 (M), G)$ and $\Hom (\pi_1^{\orb} (N), G)$, for 
$G=\SL_2 (\C)$ or a Borel subgroup.
\end{enumerate}
\end{theorem}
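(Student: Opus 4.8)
The plan is to pass to finite algebraic models of $M$ and of $N=M/K$ and use the functorial correspondences of \cite{DP-ccm}: for a space with a $1$-finite $1$-model, the germ at $1$ of the rank-$1$ characteristic variety $\VV^1_s$ is isomorphic to the germ at $0$ of the degree-one resonance scheme $\RR^1_s$ of the model, and (for a finite model and $G=\SL_2(\C)$ or its Borel subgroup) the germ at $1$ of $\Hom(\pi_1,G)$ is isomorphic to the germ at $0$ of the scheme $\F(\cdot,\g)$ of $\g$-valued flat connections; both correspondences are natural in the model. Recall that for an almost free $K$-action one has $\bar\pi:=\pi_1^{\orb}(N)=\pi_1(M_K)$, that the map $M\to M_K$ (coming from $M\simeq EK\times M\to EK\times_K M$) realizes $p_{\sharp}$ on $\pi_1$, and that $\proj^*\colon H^{\hdot}(N)\isom H^{\hdot}(M_K)$. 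I would take a $2$-finite $2$-model $B$ of $N$ in case \eqref{im1}, and the finite $2$-model $B=(H^{\hdot}(N),0)$ in case \eqref{im2} (available since $N$ is $2$-formal and $H^{\hdot}(N)$ is finite-dimensional). Since $B$ is a $2$-model, $H^2(B)\cong H^2(N)$, so the degree-$1$ characteristic classes $w_\alpha:=e_\alpha$ (those with $m_\alpha=1$) are represented by cocycles in $B^2$, and I form the Hirsch extension $A:=B\otimes_\tau\bwedge(t_\alpha\colon m_\alpha=1)$ with $dt_\alpha=w_\alpha$. Using the known description of a model of $M$ as a Hirsch extension of $\Omega^{\hdot}(N)$ by $\bwedge P_K$, and the fact that the generators $t_\alpha$ of degree $\ge 3$ affect cohomology only in degrees $\ge 3$, one checks that $A$ is a $2$-model of $M$ (inheriting the finiteness of $B$) and that $\iota\colon B\inj A$ is a model for the orbit map $p$; in particular $\iota$ realizes $p_{\sharp}$. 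The theorem is thus reduced to: $\iota$ induces an isomorphism of germs $\RR^1_s(B)_{(0)}\isom\RR^1_s(A)_{(0)}$ for all $s$ (case \eqref{im1}), and $\F(B,\g)_{(0)}\isom\F(A,\g)_{(0)}$ for $\g=\sl_2(\C)$ and for its Borel subalgebra $\mathfrak b$ (case \eqref{im2}).

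This is where the hypothesis enters. Writing $P$ for the span of the degree-$1$ generators $t_\alpha$, the injectivity of the transgression in degree $1$ means exactly that $\{w_\alpha\}$ is linearly independent in $H^2(N)$, i.e.\ $d|_P\colon P\to B^2$, $t_\alpha\mapsto w_\alpha$, is injective, and hence so is $P\otimes V\to B^2\otimes V$ for every vector space $V$. A first consequence is $H^1(A)=H^1(B)$: a degree-$1$ cocycle $b+\sum c_\alpha t_\alpha$ of $A$ forces $\sum c_\alpha w_\alpha$ to be a coboundary, hence $c_\alpha=0$ for all $\alpha$. Thus $\iota^*\colon H^1(N)\isom H^1(M)$ and, under $p_{\sharp}$, the germs $\Hom(\bar\pi,\C^*)_{(1)}$ and $\Hom(\pi_1(M),\C^*)_{(1)}$ are identified. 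For \eqref{im1} I would consider the short exact sequence of complexes $0\to B\to A\to A/B\to 0$ twisted by the universal degree-$1$ element $\omega$ over the germ $(H^1(B),0)$: here $B$ is a subcomplex for $d+\omega\wedge$, while $(A/B)^0=0$ and $(A/B)^1=P$ with twisted differential $t_\alpha\mapsto\omega\,t_\alpha$, which is injective because multiplication by $\omega$ is injective. Hence the twisted $H^0$ and $H^1$ of $A/B$ vanish, so the twisted $H^1$ of $A$ and of $B$ agree as coherent sheaves on the germ; passing to Fitting loci gives $\RR^1_s(B)_{(0)}\isom\RR^1_s(A)_{(0)}$, and the correspondence of \cite{DP-ccm} then yields \eqref{im1}.

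For \eqref{im2} I would compare flat connections directly. Write a $\g$-valued flat connection of $A$ as $\eta=\eta_B+\eta_P$ with $\eta_B\in B^1\otimes\g$ and $\eta_P\in P\otimes\g$, and decompose the Maurer--Cartan equation $d\eta+\tfrac12[\eta,\eta]=0$ according to $A^2=B^2\oplus\big(\bigoplus_\alpha B^1\!\wedge t_\alpha\big)\oplus\bwedge^2 P$. The $B^2\otimes\g$-component reads $d\eta_P=-\big(d\eta_B+\tfrac12[\eta_B,\eta_B]\big)$; since $d|_{P\otimes\g}\colon P\otimes\g\to B^2\otimes\g$ is injective, $\eta_P$ is determined by $\eta_B$, and it vanishes precisely when $\eta_B$ is a flat connection of $B$. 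Substituting this $\eta_P$ into the remaining ``$B^1\!\wedge t_\alpha$'' and ``$\bwedge^2 P$'' components, and using the explicit structure constants of $\sl_2(\C)$ (resp.\ $\mathfrak b$), one shows by a short case analysis — organized by which components of $\eta_B$ vanish — that $\eta_P=0$; then $\eta\in B^1\otimes\g$ is a flat connection of $B$, the converse being immediate. This proves $\F(B,\g)_{(0)}\isom\F(A,\g)_{(0)}$, and the correspondence of \cite{DP-ccm} for $\Hom(-,G)_{(1)}$ then gives \eqref{im2}.

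I expect the main obstacle to be the last computation in case \eqref{im2}: unlike the rank-$1$ (abelian) and Borel (solvable) situations, for $\g=\sl_2(\C)$ the quantity $d\eta_B+\tfrac12[\eta_B,\eta_B]$ is a priori nonzero, so one genuinely needs the small ``length'' of $\sl_2(\C)$ — that its representation scheme near the trivial representation is controlled by cohomology in degrees $\le 2$ — to rule out flat connections with nonzero $t$-part; this is exactly the point at which the restriction to $G=\SL_2(\C)$ and its Borel subgroup is forced. A secondary technical point is to ensure that the reductions above are scheme-theoretic and not merely set-theoretic, which uses the invariance of the resonance and flat-connection schemes under quasi-isomorphism of finite models.
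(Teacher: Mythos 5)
Your skeleton runs parallel to the paper's (Allday--Halperin Hirsch extension, replacement of $\Omega^{\hdot}(N)$ by a finite model via Lemma \ref{lem=qhirsch}, the germ correspondence of Theorem \ref{thm:germs}, and the smallness of $\sl_2$ behind case \eqref{im2}), but two steps have genuine gaps. First, the assertion that $\iota\colon B\inj A$ ``is a model for the orbit map $p$; in particular $\iota$ realizes $p_{\sharp}$'' is exactly the point you cannot simply check: Theorem \ref{thm:rs} gives a weak equivalence $\Omega(M)\simeq \Omega(N)\otimes_{\sigma}\bwedge P$ but no map-level compatibility with $M\to M_K$, and Lemma \ref{lem=qhirsch} produces a zig-zag, not a commuting square over the projection. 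The paper deliberately sidesteps this: it proves only an \emph{abstract} isomorphism of germs, shows separately (Hochschild--Serre) that $p_{\sharp}^{!}$ is a closed embedding $\VV^1_s(\pi_1^{\orb}(N))\inj \VV^1_s(\pi_1(M))$ (and likewise for $\Hom(-,G)$ since $p_\sharp$ is onto), and then concludes by a Hopfian argument: the embedding induces a surjection of analytic local rings which are abstractly isomorphic and Noetherian, hence an isomorphism. Without either an honest proof of your naturality claim or this Hopfian trick, your argument yields isomorphic germs but not that the isomorphism is induced by $p_{\sharp}$, which is the actual content of the theorem.

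Second, in case \eqref{im1} your vanishing claim for the twisted $H^1$ of $A/B$ fails at the one point that matters. Pointwise, the twisted differential $(A/B)^1=P\to (A/B)^2$ is $t_\alpha\mapsto \omega\, t_\alpha$, which is zero at $\omega=0$, so the twisted $H^1(A/B)$ at the origin is all of $P$ --- and the germ is taken precisely there. If instead you interpret $\omega$ as the universal element over the local ring, the injectivity holds, but then ``passing to Fitting loci'' conflates the fiberwise jump loci $\RR^1_s$ with Fitting loci of the universal cohomology sheaf; these differ in general because cohomology does not commute with base change (Tor terms coming from degree $2$ enter the fiberwise $H^1$). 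The repair is the pointwise long exact sequence: for $\omega\ne 0$ argue as you do, while at $\omega=0$ the connecting map $P\to H^2(B)$ sends $t_\alpha\mapsto[e_\alpha]$ and is injective exactly by the degree-one injectivity of the transgression --- a hypothesis your jump-loci computation never visibly uses at that point. This is Proposition \ref{prop:circleres}\eqref{r2}, iterated over elementary extensions $A_{\alpha}$ (one also needs $[e_{\alpha+1}]\ne 0$ in $H^2(A_\alpha)$, which follows from the linear independence of the $[e_\alpha]$). Finally, in case \eqref{im2} your reduction is the right one (it is Proposition \ref{prop:flataeq}), but the ``short case analysis with structure constants'' is where the proof lives: the paper's argument is that the centralizer of any nonzero element of $\sl_2(\C)$ is its own span, so a nonzero $P$-component forces all coefficient matrices into a single line and hence $\omega\in\F^1(A,\g)$, which under your hypotheses already lies in $\varphi^{!}(\F(H,\g))$.
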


For instance, let $K$ be a compact, connected Lie group, and 
let $N$ be a compact, formal manifold with $b_2(N)\ge \dim P_K^1$. 
Applying Theorem \ref{thm:krealiz-intro}, we may use these data to 
construct interesting principal $K$-bundles $M\to N$ which fit into the purview of 
Theorem \ref{thm:intro4}; see Example \ref{ex=tauinj} for more details.

In the case when $K=S^1$, suppose $M$ is a closed, orientable Seifert fibered 
$3$-manifold with non-zero Euler class, and let $p\colon M \to M/S^1 =\Sigma_g$
be the projection map onto a Riemann surface of genus $g$.
Then the above theorem insures that 
$\VV^1_s (M)_{(1)}\cong\VV^1_s (\Sigma_g)_{(1)}$, 
for all $s$, and $\Hom (\pi_1 (M), \SL_2 (\C))_{(1)}\cong 
\Hom (\pi_1 (\Sigma_g), \SL_2 (\C))_{(1)}$, as analytic germs 
at the trivial representation $1$; see Corollary \ref{cor=seifert}.

\subsection{Sasakian manifolds}
\label{subsec:intro3}
In many ways, Sasakian geometry is an odd-dimensional analogue of K\"{a}hler 
geometry.  More explicitly, every compact Sasakian manifold $M$ 
admits an almost-free circle action with orbit space $N=M/S^1$ 
a K\"{a}hler orbifold.  Furthermore, the Euler class of the action 
coincides with the K\"{a}hler class of the base, $h\in H^2(N,\Q)$, 
and this class satisfies the Hard Lefschetz property.

As shown by Tievsky in \cite{Ti}, every Sasakian manifold $M$ as above 
has a rationally defined, finite model of the form $(H^{\hdot}(N)\otimes  \bwedge(t), d)$, 
where the differential $d$ vanishes on $H^{\hdot}(N)$ and sends $t$ to $h$.  
Using this model and Theorem \ref{thm:intro1}, we prove in \S\ref{sect:sasaki} 
the following theorem, which is the Sasakian analog of the main result from 
\cite{DGMS}, namely, the formality of compact K\"{a}hler manifolds.

\begin{theorem}
\label{thm:intro5}
Let $M$ be a compact Sasakian manifold of dimension $2n+1$.  
Then $M$ is $(n-1)$-formal. 
\end{theorem}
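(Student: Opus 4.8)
The plan is to combine Tievsky's finite model for $M$ with the partial-regularity criterion of Theorem~\ref{thm:intro1}. By \cite{Ti}, the compact Sasakian manifold $M^{2n+1}$ has a rational model of the form $(A,d)=(H^{\hdot}(N)\otimes \bwedge(t),d)$, where $N=M/S^1$ is a compact K\"{a}hler orbifold, $d$ vanishes on $H^{\hdot}(N)$, and $dt=h$ is the K\"{a}hler class. This is precisely a one-element Hirsch extension $A=H\otimes_{\tau}\bwedge(t)$ with $H=H^{\hdot}(N,\C)$, $\deg t=1$, and $\tau(t)=e_1:=h\in H^2$. So by Theorem~\ref{thm:intro1} applied to the single even-degree element $e_1=h$, it suffices to show that the sequence $\{h\}$ is $q$-regular in $H^{\hdot}(N,\C)$ for $q=n-1$; then $A$, hence $M$, is $(n-1)$-formal.

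So the crux is: verify that $\{h\}$ is $(n-1)$-regular, i.e.\ (unwinding the definition with $\alpha=1$, $n_1=\deg e_1=2$, and $\overline{H}_1=H$ since there are no $\beta<1$) that the class $h\in H^2(N,\C)$ has trivial annihilator in $H^{\hdot}(N,\C)$ up to degree $q-n_1+2 = (n-1)-2+2 = n-1$. That is, I must show: for $0\le j\le n-1$, if $x\in H^j(N,\C)$ satisfies $x\cdot h=0$, then $x=0$. This is exactly where the Hard Lefschetz property enters. The orbit space $N$ is a K\"{a}hler orbifold of complex dimension $n$ (real dimension $2n$), and the K\"{a}hler class $h$ satisfies Hard Lefschetz, either by the orbifold version of the Hard Lefschetz theorem or, as recorded in the excerpt's discussion of Sasakian geometry, as a standing hypothesis coming from the structure theory. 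Concretely, multiplication by $h^{n-j}$ gives an isomorphism $H^{j}(N,\C)\isom H^{2n-j}(N,\C)$ for all $j\le n$; in particular multiplication by $h$ is \emph{injective} on $H^j(N,\C)$ for $j\le n-1$ (if $hx=0$ then $h^{n-j}x=h^{n-j-1}(hx)=0$, forcing $x=0$ when $j\le n-1$). This gives exactly the triviality of the annihilator of $h$ up to degree $n-1$, which is the $(n-1)$-regularity condition.

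The remaining bookkeeping is to check the hypotheses of Theorem~\ref{thm:intro1} are met with the right indices: $e_1=h$ is a single element of even degree $2$; $q=n-1$; and the conclusion is that $A$ has the same $(n-1)$-type as $(H/hH,\,d=0)$, so in particular $A$ — and therefore $M$, which is weakly equivalent to $A$ through Tievsky's zig-zag — is $(n-1)$-formal. One should also remark that $H^{\hdot}(N,\C)$ is finite-dimensional and connected, so $H/hH$ is a finite $(n-1)$-model, matching the ``finite, partially formal model'' framework; but for the stated theorem only $(n-1)$-formality is asserted.

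The main obstacle is not difficult but is the conceptual heart of the argument: identifying that the Hard Lefschetz property of the K\"{a}hler (orbifold) class $h$ on $N$ translates precisely into the $(n-1)$-regularity of the one-term sequence $\{h\}$ in the sense defined before Theorem~\ref{thm:intro1}, and getting the degree range $q=n-1$ exactly right from the shift $q-n_\alpha+2$. A minor technical point to be careful about is that $N$ is a K\"{a}hler \emph{orbifold} rather than a smooth K\"{a}hler manifold, so one should cite the orbifold Hard Lefschetz theorem (valid since rational cohomology of the orbifold agrees with that of its coarse space and carries a pure Hodge structure with the Lefschetz operator acting); this is already implicit in the cited work of Tievsky and in the structure theory of Sasakian manifolds recalled above, so no new input is needed. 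Sharpness of the bound $n-1$ (that $M$ need not be $n$-formal in general) is addressed separately in Example~\ref{ex:hopf-nonformal} and is not part of proving this theorem.
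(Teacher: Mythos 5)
Your proposal is correct and follows essentially the same route as the paper's own proof (Theorem \ref{thm:highf}): Tievsky's Hirsch-extension model, the Hard Lefschetz property translated into $(n-1)$-regularity of the one-element sequence $\{h\}$ with exactly the same degree bookkeeping $q-n_\alpha+2=n-1$, and an application of Theorem \ref{thm:lefred}. The only caveat is that Tievsky's model is known to be a model for $M$ over $\R$ but not over $\Q$ as you assert (cf.\ Remark \ref{rem:non-descent}); this does not affect $(n-1)$-formality over $\R$ or $\C$, though the paper adds a descent step via \cite{SW} to obtain the conclusion over an arbitrary field of characteristic $0$.
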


We make this statement more precise in Theorem \ref{thm:highf-bis},  
by providing an explicit, finite $(n-1)$-model with zero differential 
for $M$ over an {\em arbitrary}\/ field of characteristic $0$. 
This result is optimal; indeed,  for each $n\ge 1$, the $(2n+1)$-dimensional 
Heisenberg compact nilmanifold $\Heis_n$ is a Sasakian manifold, 
yet it is not $n$-formal. 
Theorem \ref{thm:intro5} strengthens a statement of Kasuya \cite{Ka14}, 
who claimed that, for $n\ge 2$, a Sasakian manifold as 
above is $1$-formal. The proof of that claim, though, has 
a gap, which we avoid by giving a proof based on a very 
different approach. The formality of compact Sasakian manifolds 
is also analyzed by Biswas et al.~in \cite{BFMT}, using Massey products. 
Furthermore, Mu\~{n}oz and Tralle show in \cite{MT} that a 
compact, simply-connected, $7$-dimensional Sasakian manifold 
is formal if and only if all triple Massey products vanish.

A group $\pi$ is said to be a {\em Sasakian group}\/ if it 
can be realized as the fundamental group of a compact 
Sasakian manifold.  A major open problem in the field 
(cf.~\cite{BG, Chen}) is to  characterize this class of groups.  
Applying the previous theorems, we obtain new, rather 
stringent restrictions on a finitely presented group $\pi$ being Sasakian.  

\begin{theorem}
\label{thm:intro6}
Let $\pi=\pi_1(M^{2n+1})$ be the fundamental group of a compact 
Sasakian manifold of dimension $2n+1$.  Then:
\begin{enumerate}
\item \label{sas1}
The group $\pi$ is filtered-formal, and in fact $1$-formal if $n>1$.  
\item \label{sas2}
All irreducible components of the characteristic varieties $\VV^1_s(\pi)$ 
passing through the identity are algebraic subtori of the 
character group $\Hom(\pi,\C^*)$.
\item \label{sas3}
If $G$ is a complex linear algebraic group, then 
the germ at the origin of $\Hom(\pi, G)$ is defined by 
quadrics and cubics only, and in fact by quadrics only if $n>1$.
\end{enumerate}
\end{theorem}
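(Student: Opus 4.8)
The plan is to derive all three parts from Theorems \ref{thm:intro2}, \ref{thm:intro3}, \ref{thm:intro4} and \ref{thm:intro5}, applied to the circle action on $M$. By \S\ref{subsec:intro3}, $M$ carries an almost free $S^1$-action whose orbit space $N=M/S^1$ is a compact K\"{a}hler orbifold, with Euler class equal to the K\"{a}hler class $h\in H^2(N,\C)$, which satisfies Hard Lefschetz. I will use two features of $N$: it is formal --- hence $k$-formal for every $k$ --- because the $\partial\bar{\partial}$-lemma holds on compact K\"{a}hler orbifolds and the argument of \cite{DGMS} then applies; and $H^{\hdot}(N,\C)$ is finite, so $N$ admits a $k$-finite $k$-model for every $k$. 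The Tievsky model $(H^{\hdot}(N)\otimes\bwedge(t),d)$ with $dt=h$ of \cite{Ti} is exactly the Hirsch extension underlying Theorems \ref{thm:intro2} and \ref{thm:intro5}.

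\emph{Parts \eqref{sas1} and \eqref{sas3}.} Since $N$ is $2$-formal, Theorem \ref{thm:intro3} applied to the $S^1$-action shows at once that $\pi$ is filtered-formal, with $\m(\pi)$ isomorphic to the degree completion of $\L/\mathfrak{r}$ for a homogeneous ideal $\mathfrak{r}$ generated in degrees $2$ and $3$, and that the germ at $1$ of $\Hom(\pi,G)$ is defined by quadrics and cubics for every complex linear algebraic group $G$; this is the general content of \eqref{sas1} and \eqref{sas3}. If $n>1$, then by Theorem \ref{thm:intro5} the manifold $M$ is $(n-1)$-formal, hence $1$-formal (as $n-1\ge 1$); since $1$-formality of a space with finite $1$-skeleton depends only on its fundamental group, $\pi$ is $1$-formal. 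Therefore $\m(\pi)$ is the degree completion of the quadratic holonomy Lie algebra of $\pi$, and by the deformation theory of representations (cf.~\cite{KM}) the germ at $1$ of $\Hom(\pi,G)$ is then cut out by the quadratic Maurer--Cartan equation, hence by quadrics only.

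\emph{Part \eqref{sas2}.} As $h\ne 0$ in $H^2(N,\C)$, the transgression $P^{\hdot}_{S^1}\to H^{\hdot+1}(M/S^1)$ is injective in degree $1$, and $N$ has a $2$-finite $2$-model; hence Theorem \ref{thm:intro4}\eqref{im1} provides, for each $s$, a local analytic isomorphism $\VV^1_s(\pi)_{(1)}\isom\VV^1_s(\pi^{\orb})_{(1)}$ induced by $p_{\sharp}^{*}$, where $\pi^{\orb}=\pi_1^{\orb}(N)$. Since $N$ is a compact K\"{a}hler orbifold, the Arapura-type structure theorem for characteristic varieties of such orbifolds applies to $\pi^{\orb}$: the irreducible components of $\VV^1_s(\pi^{\orb})$ passing through the origin are subtori $T_1,\dots,T_m$ of $\Hom(\pi^{\orb},\C^{*})$. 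Now $p_{\sharp}^{*}\colon\Hom(\pi^{\orb},\C^{*})\inj\Hom(\pi,\C^{*})$ is an injective homomorphism of algebraic groups, so each $p_{\sharp}^{*}(T_i)$ is a subtorus of $\Hom(\pi,\C^{*})$; inflation of $1$-cocycles gives $p_{\sharp}^{*}(\VV^1_s(\pi^{\orb}))\subseteq\VV^1_s(\pi)$; and together with the germ isomorphism this identifies $\VV^1_s(\pi)$, near $1$, with $\bigcup_i p_{\sharp}^{*}(T_i)$. A short argument then upgrades this to the global statement: each branch at $1$ of an irreducible component $W$ of $\VV^1_s(\pi)$ through $1$ lies in some $p_{\sharp}^{*}(T_i)$, so $W$ --- being the Zariski closure of any of its branches at $1$ --- is contained in $p_{\sharp}^{*}(T_i)$; since $p_{\sharp}^{*}(T_i)\subseteq\VV^1_s(\pi)$ lies in turn inside an irreducible component, $W=p_{\sharp}^{*}(T_i)$, a subtorus.

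The step I expect to be the main obstacle is part \eqref{sas2}: Theorem \ref{thm:intro4}\eqref{im1} only compares the germs at the origin of the two characteristic varieties, whereas the desired conclusion concerns the \emph{global} irreducible components of $\VV^1_s(\pi)$. Bridging this gap requires both the a priori global structure theorem for characteristic varieties of compact K\"{a}hler orbifolds and the careful bookkeeping needed to propagate the subtorus structure --- via the germ-level identification and the inclusion $p_{\sharp}^{*}(\VV^1_s(\pi^{\orb}))\subseteq\VV^1_s(\pi)$ --- from the base $\pi^{\orb}$ to the global components of $\VV^1_s(\pi)$ through $1$.
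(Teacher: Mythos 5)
There is a genuine gap, and it sits exactly where the paper is most careful. Your entire strategy rests on the assertion that the orbit space $N=M/S^1$ is ($k$-)formal ``because the $\partial\bar{\partial}$-lemma holds on compact K\"{a}hler orbifolds and the argument of \cite{DGMS} applies,'' and that $N$ has $k$-finite $k$-models because its cohomology is finite. Neither claim is justified: the paper explicitly warns (in \S\ref{subsec:sas}, right after Theorem \ref{thm:sasmodel}) that while \cite{BB+} prove formality of the \emph{orbifold de Rham algebra} of a compact K\"{a}hler orbifold, that algebra is not known to be weakly equivalent to the Sullivan algebra $\Omega^{\hdot}(N)$, so formality of $N$ as a space does not follow; and finite-dimensional cohomology does not by itself produce a $q$-finite $q$-model (the $1$-minimal model of a free group already fails to be $1$-finite). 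Consequently your applications of Theorems \ref{thm:intro2}, \ref{thm:intro3} and \ref{thm:intro4} to the Sasakian $S^1$-action are unsupported for $n>1$ (for $n=1$ the orbit space happens to be a smooth projective curve, hence genuinely formal, but that is not the justification you give, and it is precisely the case where Theorem \ref{thm:intro5} gives you nothing). The paper avoids this trap by never invoking formality of $N$: it works with the Tievsky model of $M$ itself (Theorem \ref{thm:sasmodel}), a finite model $(H^{\hdot}(N)\otimes_h\bwedge(t),d)$ established by Tievsky by direct geometric means. Parts \eqref{sas1} and \eqref{sas3} then follow from Theorem \ref{thm:highf}/\ref{thm:highf-bis} plus Theorem \ref{thm:malholo} when $n>1$, and from the explicit holonomy Lie algebra computation for the Hirsch extension over $H^{\hdot}(\Sigma_g)$ when $n=1$ (Corollary \ref{cor:sasobstr}). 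Your observation that $(n-1)$-formality of $M$ gives $1$-formality of $\pi$ for $n>1$ is correct and is indeed how the paper gets the ``quadrics only'' refinement.

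For part \eqref{sas2} there is a second, independent gap: you invoke ``the Arapura-type structure theorem for characteristic varieties of compact K\"{a}hler orbifolds'' applied to $\pi_1^{\orb}(N)$. No such theorem is available in the paper (Arapura's theorem is stated for quasi-projective manifolds, not orbifold fundamental groups), and without it the germ-to-global propagation you describe has nothing to propagate. The paper's proof of \eqref{sas2} is much more direct and bypasses the orbit space entirely: Lemma \ref{lem:qtievsky} shows the Tievsky model is a finite model for $M$ with positive weights, defined over $\Q$ and with the $\Q$-structure on $H^1$ preserved, so Theorem \ref{thm:mpps-bis} applies to $M$ itself and yields the subtorus conclusion for all $\VV^i_s(M)$ at once (Corollaries \ref{cor:tmodel} and \ref{cor:sasgpobs}). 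You should replace your argument for \eqref{sas2} by this positive-weights argument, and replace the formality claim for $N$ by direct use of the Tievsky model throughout.
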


Furthermore, if $G=\SL_2(\C)$ or a Borel subgroup thereof, then the germs 
at $1$ of $\Hom(\pi_1(M), G)$ and $\VV^1_s(\pi_1(M))$, for all $s$, depend 
(in an explicit way) only on the graded ring $H^{\hdot}(M/S^1, \C)$, while the 
Tievsky model also depends in an essential manner on the K\"{a}hler class 
$h\in H^2(M/S^1,\Q)$; see Corollary \ref{cor=nokclass}.

\subsection{Quasi-projective manifolds}
\label{subsec:intro5}

The infinitesimal analogue of the $G$-represen\-tation variety 
$\Hom(\pi,G)$ around the origin $1$ 
is the set of $\g$-valued flat connections on a $\dga$ $A$, where 
$\g$ is the Lie algebra of the Lie group $G$.  By definition,
this is the set $\F(A,\g)$ of elements in $A^1\otimes \g$ which 
satisfy the Maurer--Cartan equation, 
$d\omega +\frac{1}{2}[\omega,\omega]=0$, having as 
basepoint the trivial connection $0$.  If 
$\dim A^1< \infty$, then $\F(A,\g)$ is a Zariski-closed
subset of the affine space $A^1\otimes \g$, which contains 
the closed subvariety $\F^1(A,\g)$  consisting of all tensors of the form 
$\eta \otimes g$ with $d \eta=0$. 

To define the infinitesimal counterpart of characteristic varieties, 
let $\theta\colon \g\to \gl(V)$ be a finite-dimensional representation.  
To each $\omega \in \F(A,\g)$ there is an associated 
covariant derivative, $d_{\omega}\colon A^{\hdot}\otimes V\to 
A^{\hdot+1}\otimes V$, given by $d_{\omega}=d\otimes \id_V +\ad_{\omega}$.  
By flatness, $d_{\omega}^2=0$. 
The {\em resonance varieties}\/ of $A$ with respect to $\theta$ are the sets 
$\RR^i_s(A,\theta)$ consisting of those $\omega \in \F(A,\g)$ 
for which $\dim H^i(A\otimes V, d_{\omega})\ge s$.  
If $A$ is $q$-finite, then these subsets are Zariski-closed in $\F(A,\g)$,
for all $i\le q$. Furthermore, if $H^i(A)\ne 0$, then $\RR^i_1(A,\theta)$ 
contains the closed subvariety $\Pi(A,\theta)$  
consisting of all $\eta \otimes g\in \F^1(A,\g)$ with $\det \theta(g)=0$. 

By work of Arapura \cite{Ar}, the key to understanding the degree-$1$ 
characteristic varieties of rank $1$ for a quasi-projective manifold $M$ 
is the (finite) set $\mathcal{E}(M)$ of regular, surjective maps 
$f\colon M\to S$ for which the generic fiber is connected and the target 
is a smooth curve $S$ with $\chi(S)<0$, up to reparametrization at the 
target.  All such maps extend to regular maps $\bar{f}\colon \overline{M} \to 
\overline{S}$, for some `convenient' compactification   
$\overline{M}=M\cup D$.  Letting $A(M)=A(\overline{M},D)$ 
be the finite $\dga$ model of $M$ constructed by 
Morgan \cite{Mo} and more generally by Dupont \cite{Du},  
and similarly for $A(S)$, we obtain the following structural result 
for the embedded first resonance varieties of a large class of 
quasi-projective surfaces, in the difficult rank $2$ case. 
 
\begin{theorem}
\label{thm:intro7}
Let $M$ be a punctured, quasi-homogeneous surface with 
isolated singularity.  Let $\theta\colon \g\to \gl(V)$ 
be a finite-dimensional representation, where $\g$ is the 
Lie algebra $\sl_2(\C)$ or a Borel subalgebra.
If $b_1(M)>0$, then there is a convenient compactification 
$\overline{M} = M \cup D$  such that 
\begin{align*}
\F(A(M),\g) = \F^1(A(M),\g)\cup 
\bigcup_{f\in \mathcal{E}(M)}  f^{!} (\F(A(S),\g)),
\\
\RR^1_1(A(M),\theta) = \Pi(A(M),\theta)\cup 
\bigcup_{f\in \mathcal{E}(M)}  f^{!} (\F(A(S),\g)).
\end{align*}
\end{theorem}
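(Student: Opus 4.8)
The plan is to exploit the fact that, for a weighted-homogeneous isolated surface singularity, $M=X\setminus\{0\}$ carries a good $\C^{*}$-action whose Seifert structure reduces all the relevant jump loci to those of the orbit curve, where the Arapura-type description of pencils is classical.

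\emph{Geometric setup and small model.} By the structure theory of weighted-homogeneous normal surface singularities, $M$ deformation retracts onto its link $L$, which carries an almost-free $S^{1}$-action with orbit space a compact orbifold Riemann surface $N$ of genus $g$ with finitely many cyclic-quotient points; writing $C$ for the underlying smooth projective curve, one has $H^{\hdot}(N,\C)=H^{\hdot}(C,\C)$. The orbifold Euler number of the action is nonzero, so the transgressed Euler class $h=e_{1}\in H^{2}(N,\C)$ is nonzero, and the Gysin sequence gives $b_{1}(M)=b_{1}(N)=2g$; hence $b_{1}(M)>0$ forces $g\ge 1$. Choose the convenient compactification $\overline{M}=M\cup D$ so that the Seifert projection extends to a morphism $\bar p\colon\overline{M}\to C$, and set $A(M)=A(\overline{M},D)$. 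Since $N$ is formal, the Hirsch-extension model of \S\ref{subsec:intro2} (Tievsky's model \cite{Ti} when $L$ is viewed as a Sasakian manifold) is the finite $1$-model $A_{0}=\bigl(H^{\hdot}(N,\C)\otimes\bwedge(t),d\bigr)$ with $d|_{H^{\hdot}(N)}=0$ and $dt=h$, and $A_{0}$ is $1$-equivalent to $A(M)$. As $\F(-,\g)$ and $\RR^{1}_{1}(-,\theta)$ depend only on the $1$-equivalence class, I will carry out all computations on $A_{0}$ and transport the conclusions along the quasi-isomorphism.

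\emph{Key lemma: the transgression generator is invisible.} Write a general element of $A_{0}^{1}\otimes\g$ as $\omega=\alpha+t\otimes x$ with $\alpha\in H^{1}(N)\otimes\g$, $x\in\g$, and split the Maurer--Cartan equation along $A_{0}^{2}\otimes\g=\bigl(H^{2}(N)\otimes\g\bigr)\oplus\bigl(t\cdot H^{1}(N)\otimes\g\bigr)$. Using $d\omega=h\otimes x$, $[\alpha,\alpha]\in H^{2}(N)\otimes\g=\C h\otimes\g$, and $[\alpha,t\otimes x]\in t\cdot H^{1}(N)\otimes\g$, the equation decouples into $(i)$ an identity in $\C h\otimes\g$ forcing $x=-\frac{1}{2}$ of the $[\alpha,\alpha]$-coefficient, and $(ii)$ the conditions $[x_{i},x]=0$ for all $\g$-components $x_{i}$ of $\alpha$, i.e.\ $x$ centralizes the subspace $\langle\alpha\rangle\subseteq\g$ they span. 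Here the hypothesis on $\g$ enters: in $\sl_{2}(\C)$ and in a Borel subalgebra thereof, any two commuting elements, one of which is nonzero, are proportional. So if $x\ne 0$, then $(ii)$ gives $\alpha=v\otimes x$ for some $v\in H^{1}(N)$, whence $[\alpha,\alpha]=(v\cup v)\otimes[x,x]=0$ and $(i)$ forces $x=0$, a contradiction. Thus every flat connection on $A_{0}$ lies in $H^{1}(N)\otimes\g$, and $p^{*}$ identifies $\F\bigl((H^{\hdot}(N),0),\g\bigr)$ with $\F(A(M),\g)$. The resonance statement is handled the same way: for a $t$-free $\omega$ the covariant-derivative complex $(A_{0}\otimes V,d_{\omega})$ is the mapping cone of $\cup h$ on $(H^{\hdot}(N)\otimes V,d_{\omega})$, so $H^{1}(A_{0}\otimes V,d_{\omega})\ne 0$ iff $H^{1}(H^{\hdot}(N)\otimes V,d_{\omega})\ne 0$ or $H^{0}(H^{\hdot}(N)\otimes V,d_{\omega})$ survives $\cup h$; using again that nonzero elements of $\sl_{2}$ or a Borel have one-dimensional centralizer, the latter locus equals $\Pi(A(M),\theta)$, whence $\RR^{1}_{1}(A(M),\theta)=p^{*}\RR^{1}_{1}\bigl((H^{\hdot}(N),0),\theta\bigr)\cup\Pi(A(M),\theta)$.

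\emph{Identifying $\mathcal{E}(M)$ and assembling.} The almost-free action gives a central extension $1\to\Z\to\pi_{1}(M)\xrightarrow{p_{\sharp}}\pi_{1}^{\orb}(N)\to 1$ in which, since $h\ne 0$, the central $\Z$ is rationally trivial, so $p_{\sharp}$ is an isomorphism on $H_{1}(-,\Z)$. Given $f\in\mathcal{E}(M)$, its target $S$ has $\chi(S)<0$, hence torsion-free, centerless $\pi_{1}(S)$; so $f_{\sharp}$ kills the central $\Z$ and all torsion of $\pi_{1}^{\orb}(N)$ and factors through $\pi_{1}(C)$, and by Arapura's correspondence between epimorphisms onto hyperbolic-curve groups and pencils \cite{Ar}, $f$ factors (up to reparametrization at the target) through $\bar p\colon M\to C$. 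As a surjective pencil from a compact curve is an isomorphism, this yields $\mathcal{E}(M)=\varnothing$ if $g=1$ (then $\chi(C)=0$ and $\pi_{1}^{\orb}(N)$ has only abelian quotients once torsion is killed) and $\mathcal{E}(M)=\{\bar p\colon M\to C\}$ if $g\ge2$. To assemble, recall that $\bar p$ is modeled by a dga morphism $H^{\hdot}(C,\C)\to A_{0}$ identifying $H^{1}(C)$ with $Z^{1}(A_{0})=H^{1}(N)$ compatibly with cup products, so $\bar p^{!}$ carries $\F(A(C),\g)$ and $\F^{1}(A(C),\g)$ onto $\F(A(M),\g)$ and $\F^{1}(A(M),\g)$. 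For $g=1$: the Key Lemma, together with the elementary fact that in $\sl_{2}$ or a Borel commuting pairs have rank $\le1$ (so the Maurer--Cartan set and first resonance of the cohomology model of a genus-one curve are $\F^{1}$ and $\Pi$ respectively), gives $\F(A(M),\g)=\F^{1}(A(M),\g)$ and $\RR^{1}_{1}(A(M),\theta)=\Pi(A(M),\theta)$, matching the displayed formulas with empty union. For $g\ge2$: $A(S)=A(C)=H^{\hdot}(C)$, and since $\chi(C)<0$ one has $\RR^{1}_{1}(A(C),\theta)=\F(A(C),\g)$; combining with the Key Lemma, $\F(A(M),\g)=\bar p^{!}\F(A(C),\g)$ and $\RR^{1}_{1}(A(M),\theta)=\Pi(A(M),\theta)\cup\bar p^{!}\F(A(C),\g)$, with $\F^{1}(A(M),\g)=\bar p^{!}\F^{1}(A(C),\g)\subseteq\bar p^{!}\F(A(C),\g)$; these are exactly the two displayed identities.

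\emph{Main obstacle.} The delicate point is the geometric factorization of an arbitrary $f\in\mathcal{E}(M)$ through $\bar p$: the $\pi_{1}$-level factorization is immediate, but promoting it to a morphism and ruling out ``extra'' pencils of the quasi-projective surface $M$ not seen by the Seifert structure requires Arapura's realization theorem \cite{Ar} together with the remark that the torsion of $\pi_{1}^{\orb}(N)$ must die in any map to a hyperbolic-curve group. A secondary, more bookkeeping-type difficulty is verifying in the Key Lemma that the ``second locus'' of the twisted Gysin sequence contributes nothing beyond $\Pi(A(M),\theta)$. Both that step and the contradiction $x=0$ are exactly where $\g=\sl_{2}(\C)$ or a Borel subalgebra is essential: for larger $\g$, elements with higher-dimensional centralizers produce genuine flat connections on $A(M)$ involving $t$, and the clean decomposition over $\mathcal{E}(M)$ fails.
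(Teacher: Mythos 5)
Your computation on the small Hirsch--extension model $A_{0}=\bigl(H^{\hdot}(\Sigma_g)\otimes_e\bwedge(t),d\bigr)$ is essentially the paper's Proposition \ref{prop:flataeq} and Corollary \ref{cor:flatag}, and that part is sound. The genuine gap is the sentence ``As $\F(-,\g)$ and $\RR^{1}_{1}(-,\theta)$ depend only on the $1$-equivalence class, I will carry out all computations on $A_{0}$ and transport the conclusions along the quasi-isomorphism.'' This is false as a statement about varieties: quasi-isomorphic $\cdga$'s have isomorphic \emph{reduced analytic germs at the origin} of $\F(-,\g)$ and $\RR^i_s(-,\theta)$ (Theorem \ref{thm:germs}), but not isomorphic global varieties --- these live in different ambient spaces $A^1\otimes\g$, and $A(\overline{M},D)^1$ is much larger than $A_0^1$; a priori $\F(A(\overline{M},D),\g)$ could have irreducible components missing the origin, invisible to any germ computation. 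Since Theorem \ref{thm:intro7} asserts equalities of actual varieties for the Morgan--Dupont model $A(M)=A(\overline{M},D)$, this transfer is precisely the crux, and your proposal has no substitute for it. The paper's proof of Theorem \ref{thm:mainsurf} fills it in two steps: first, equality of germs at $0$ is obtained from Theorem \ref{thm:germs} together with a Hopfian argument for the induced surjection of Noetherian analytic local rings; second, local is upgraded to global by showing that every irreducible component of the varieties involved passes through $0$ --- for $\F(A(\overline{M},D),\g)$ because the Gysin model has positive weights and the associated $\C^{*}$-action preserves the variety of flat connections (\cite{DP-ccm}), and for $\F^{1}$ and $f^{!}(\F(A(S),\g))$ because they are cones cut out by quadratic equations. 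Without this, your argument only proves the identities for $A_0$, or for germs at $0$, not the stated theorem.

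Two secondary points. Your determination of $\mathcal{E}(M)$ by factoring an admissible map through the Seifert projection at the level of fundamental groups and then invoking Arapura's realization is a different (and, as you acknowledge, more delicate) route than the paper's, which reads off $\mathcal{E}(M)$ from the positive-dimensional components of $\RR^1_1(A_0)$ via Theorem \ref{thm:dpccm} and Corollary \ref{cor:rk1res}; your route can likely be completed, but the promotion of a $\pi_1$-level factorization to a factorization of regular maps is left unproved. More seriously for the second displayed identity, your treatment of $\RR^1_1$ for an \emph{arbitrary} finite-dimensional $\theta$ is assertion rather than proof: the claims that the ``second locus'' of your twisted Gysin sequence equals $\Pi(A(M),\theta)$ and that $\RR^1_1(A(C),\theta)=\F(A(C),\g)$ whenever $\chi(C)<0$ are exactly the content of the results the paper quotes (Proposition 4.1 of \cite{BMPP} and Corollary 3.8 / Corollary 7.2 of \cite{MPPS}); in the paper, once \eqref{eq:flatbara} is established, \eqref{eq:resbara} is deduced directly from \cite{BMPP} rather than recomputed.
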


The restriction on the first Betti number is not essential, as explained 
in Remark \ref{rem:betti0}. To prove Theorem \ref{thm:intro7}, we replace 
$M$ (up to homotopy) by the singularity link, and $A(M)$ by a finite model 
$A$ of this almost free $S^1$-manifold, for which explicit computations are 
done in \S\S\ref{subsec:rk2flat}-\ref{subsec:rk2res}.   We single out in 
Examples \ref{ex:proj} and \ref{ex:w1h10} several more classes 
of quasi-projective manifolds for which the conclusions of the theorem 
hold.  Whether those conclusions hold in complete generality remains 
an open question, which is investigated further in \cite{PS-natjump, PS-noshi}. 

\section{Algebraic models and formality properties}
\label{sect:cdga}

In this section, we collect some relevant facts from 
rational homotopy theory, following \cite{Su77, Mo, FHT, HS, Le}.   
All spaces will be assumed to be path-connected, and the default 
coefficient ring will be a field $\k$ of characteristic $0$. 

\subsection{Differential graded algebras and Hirsch extensions}
\label{subsec:cdga}

The basic algebraic structure we will consider in this work is that 
of a commutative, differential graded algebra (for short, 
a \dga) over the field $\k$.  This is an object of the form 
$A=(A^{\hdot},d)$ where $A$ is a non-negatively graded 
$\k$-vector space, endowed with a multiplication map 
$\cdot\colon A^i \otimes A^j \to A^{i+j}$ satisfying 
$u\cdot v = (-1)^{ij} v \cdot u$,  
and a differential $d\colon A^i\to A^{i+1}$ 
satisfying $d(u\cdot v) =d{u}\cdot v 
+(-1)^{i} u \cdot d{v}$, for all $u\in A^i$ and $v\in A^j$. 

We say that $A$ is {\em connected}\/ if $\dim A^0=1$.  
We also say  that $A$ is {\em $q$-finite}, for some 
$0\le q\le \infty$, if $A$ is connected and 
$\dim \bigoplus_{i\le q}A^{i}<\infty$. When $q=\infty$, 
we will omit it from terminology and notation.  
A morphism $\varphi\colon A\to B$ between two 
$\dga$'s is a {\em quasi-isomorphism}\/ if the 
induced homomorphism in cohomology, $\varphi^*\colon 
H^{\bullet}(A)\to H^{\bullet}(B)$, is an isomorphism.

Denote by $Z^{\hdot} (A)$ the graded vector space of $d$-cocycles of a 
\dga \/ $A$.  Let $P^{\hdot}$ be an oddly-graded, finite-dimensional 
vector space, with homogeneous basis $\{ t_i \in P^{m_i} \}$. Given a 
degree $1$ linear map, $\tau\colon P^{\hdot} \to Z^{\hdot +1} (A)$, 
we define the corresponding {\em Hirsch extension}\/ as the \cdga 
\begin{equation}
\label{eq:hirsch}
\left(A\otimes_{\tau} \bwedge P , d\right), 
\end{equation}
where the differential $d$ extends the 
differential on $A$, while $dt_i=\tau (t_i)$. The following standard 
fact is proved in \cite[Lemmas II.2 and II.3]{Le}, in the case when 
all the degrees $m_i$ are equal.  The same proof gives the result 
in the general case.

\begin{lemma}
\label{lem:hirsch}
The isomorphism type of the \cdga  $(A\otimes_{\tau} \bwedge P, d)$ 
depends only on $A$ and the homomorphism 
induced in cohomology, $[\tau ]\colon P^{\hdot} \to H^{\hdot +1} (A)$. 
Moreover, $[\tau ]$ and $[\tau ]\circ g$ give isomorphic extensions, for any
automorphism $g$ of the graded vector space $P^{\hdot}$.
\end{lemma}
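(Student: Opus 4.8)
\textbf{Proof proposal for Lemma \ref{lem:hirsch}.}

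The plan is to prove the two assertions by constructing explicit \cdga\ isomorphisms. For the first assertion, suppose $\tau, \tau'\colon P^{\hdot} \to Z^{\hdot+1}(A)$ are two degree-$1$ linear maps inducing the same map $[\tau] = [\tau']\colon P^{\hdot} \to H^{\hdot+1}(A)$ in cohomology. Then for each basis element $t_i \in P^{m_i}$, the cocycle $\tau(t_i) - \tau'(t_i) \in Z^{m_i+1}(A)$ is a coboundary, so we may choose $a_i \in A^{m_i}$ with $d a_i = \tau(t_i) - \tau'(t_i)$. First I would define a map $\Phi\colon A \otimes_{\tau} \bwedge P \to A \otimes_{\tau'} \bwedge P$ as the identity on $A$ and by $\Phi(t_i) = t_i + a_i$, extended multiplicatively; since $\bwedge P$ is free (exterior) on the $t_i$ and $A$ is the same underlying algebra on both sides, this extends uniquely to a graded algebra homomorphism, and it is an isomorphism because the assignment $t_i \mapsto t_i - a_i$ on the target side supplies a two-sided inverse. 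The only thing to check is compatibility with the differentials: on $A$ both differentials agree, and $d_{\tau'}\Phi(t_i) = d_{\tau'}(t_i + a_i) = \tau'(t_i) + d a_i = \tau(t_i) = \Phi(d_{\tau} t_i)$, using that $d_{\tau'}$ restricted to $A$ is $d$. The graded Leibniz rule then propagates this to all of $A \otimes_\tau \bwedge P$, so $\Phi$ is a \cdga\ isomorphism.

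For the second assertion, let $g$ be an automorphism of the graded vector space $P^{\hdot}$; write $g(t_i) = \sum_j c_{ij} t_j$ with $c_{ij} \in \k$ (nonzero only when $m_j = m_i$), and $(c_{ij})$ invertible. I would define $\Psi\colon A \otimes_{[\tau]\circ g} \bwedge P \to A \otimes_{[\tau]} \bwedge P$ to be the identity on $A$ and $\Psi(t_i) = g(t_i) = \sum_j c_{ij} t_j$, extended as a graded algebra homomorphism; invertibility of the matrix $(c_{ij})$ makes $\Psi$ an isomorphism of graded algebras. Compatibility with differentials is immediate: on the source side $d t_i = ([\tau]\circ g)(t_i)$ interpreted via a lift, while $d\Psi(t_i) = \sum_j c_{ij}\, d t_j = \sum_j c_{ij}\,\tau(t_j) = \tau(g(t_i))$, which is exactly $\Psi$ applied to the differential of $t_i$ in the source. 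Here I am tacitly using the first assertion, which lets me work with the cohomology-level data $[\tau]$ rather than a specific lift $\tau$, so that the computation is literally an equality of cocycles once lifts are fixed consistently.

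The main obstacle — really the only subtlety — is purely bookkeeping: one must be careful that a graded-vector-space automorphism $g$ of $P^{\hdot}$ need not preserve the chosen basis, only the grading, so $g$ mixes the $t_i$ of equal degree, and the verification that $\Psi$ respects the algebra structure on the exterior factor uses that a linear substitution of odd-degree generators induces a well-defined algebra map on $\bwedge P$ (no sign issues arise because we substitute odd generators by linear combinations of odd generators of the same degree). Beyond that, everything is a direct check of the Leibniz rule on generators, which then extends to all of the Hirsch extension by multiplicativity. Since the underlying graded algebra of a Hirsch extension is always $A \otimes \bwedge P$ regardless of $\tau$, no comparison of underlying modules is needed, and the argument is essentially formal. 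This is the content of the cited \cite[Lemmas II.2 and II.3]{Le} in the equigraded case, and the same proof goes through verbatim when the degrees $m_i$ are allowed to differ.
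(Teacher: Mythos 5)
Your proof is correct and is exactly the standard argument that the paper itself does not reproduce but cites from Lehmann: changing the lift of $[\tau]$ by coboundaries $da_i=\tau(t_i)-\tau'(t_i)$ yields the \cdga\ isomorphism $t_i\mapsto t_i+a_i$ (well defined since the $t_i$ and $a_i$ are odd, so squares and the needed commutation relations vanish), and a graded automorphism $g$ of $P^{\hdot}$ yields the linear substitution $t_i\mapsto g(t_i)$. Both verifications of compatibility with the differentials are as you state, so there is nothing to add.
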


When $\dim P=1$, we have the following natural exact sequence, for all $i$ 
(see e.g. \cite[pp. 178--179]{Hal}):
\begin{equation}
\label{eq=helem}
\xymatrix{
H^{i-m-1}(A) \ar^(.57){[\tau (t)] \cdot }[r] &H^{i}(A)\ar[r] &
H^{i}(A\otimes_{\tau} \bwedge t) \ar[r] & H^{i-m}(A) \ar^{[\tau (t)] \cdot } [r]& H^{i+1}(A) 
}.
\end{equation}

\subsection{Minimal models}
\label{subsec:minmod}

Let $\bwedge (x)$ be the exterior (respectively, polynomial) 
algebra on a single generator $x$ of odd (respectively, even) degree.
A \cdga $\M$ is said to be {\em minimal}\/ 
if the following three conditions are satisfied.

\begin{description}[font=\textit, leftmargin=3.5cm,style=nextline]
\item[Freeness] The underlying commutative graded algebra of $\M$ 
is of the form $\bwedge (\{x_{\alpha}\}) = \bigotimes_{\alpha} \bwedge (x_{\alpha})$, 
with positive-degree variables $\XX=\{x_{\alpha}\}$ indexed by a well-ordered set.
\item[Nilpotence]  For any $\alpha$, we have that 
$d x_{\alpha}\in \bwedge (\{x_{\beta} \mid \beta<\alpha\})$. 
\item[Decomposability]  For any $\alpha$, we have that 
$d x_{\alpha}\in \bwedge^{\! +}(\XX) \cdot \bwedge^{\! +}(\XX)$, where 
$\bwedge^{\! +}(\XX)$ is the ideal generated by $\XX$.
\end{description}

A {\em minimal model}\/ for a \cdga $A$ is a minimal \cdga $\M$ which 
comes equipped with a quasi-isomorphism $\varphi\colon \M\to A$.  
Any \cdga $A$ with $H^0(A)=\k$ has a unique minimal model, 
denoted by $\M(A)$. 
The notion of minimality admits the following refinement.  A minimal 
\cdga $\M$ is said to be {\em $q$-minimal}\/ (for some $q\ge 0$) if 
$\deg(x_{\alpha})\le q$, for all $\alpha$. Such an object is a 
{\em $q$-minimal model}\/ for a \cdga $A$ if there is a 
{\em $q$-equivalence} $\varphi\colon \M\to A$,  i.e., 
the \cdga map $\varphi$ induces a homology isomorphism 
up to degree $q$ and a homology monomorphism in degree $q+1$.  
Again, any \cdga $A$ with connected homology  has a unique $q$-minimal model, 
denoted by $\M_q(A)$. 

In the definition of minimality, let us denote by $V=V^{\hdot}$ the 
graded vector space generated by the set $\XX=\{x_{\alpha}\}$. 
With this notation, $\M(A)=(\bwedge V,d)$ and 
$\M_q(A)=(\bwedge V^{\! \le q},d)$.

\subsection{Formality}
\label{subsec:formality}

Two $\dga$s $A$ and $B$ are said to be {\em weakly equivalent}\/ 
(denoted $A\simeq B$) if there is a zig-zag of quasi-isomorphisms 
connecting $A$ to $B$.  
Likewise, $A$ and $B$ are said to have the same {\em $q$-type}\/ 
(denoted $A\simeq_q B$) if there is a zig-zag of $q$-equivalences 
connecting $A$ to $B$.   If $H^0(A)=\k$, then $A\simeq B$ 
(respectively, $A\simeq_q B$) if and only if $A$ and $B$ share the same 
minimal (respectively, $q$-minimal) model. Clearly, $A\simeq_q B$ implies 
$A\simeq_s B$, for all $s\le q$.

A \cdga $A$ is said to be {\em formal}\/ if it is weakly equivalent 
to the cohomology algebra $H^{\hdot}(A)$, endowed with the 
zero differential. Likewise, $A$ is said to be {\em $q$-formal}\/ 
if $A\simeq_q (H^{\hdot}(A),d=0)$. Plainly, $q$-formality implies 
$s$-formality, for all $s\le q$. 

As shown by Halperin and Stasheff in \cite{HS}, the formality of a \cdga 
with connected, finite-type homology is independent of the 
ground field, with respect to field extensions $\k\subset \K$. 
The same descent property holds for $q$-formality,
for all $q<\infty$, provided $H^{\hdot}(A)$ is $(q+1)$-finite, see \cite{SW}. 
In general, it is easy to check that $A\simeq_q B$
over $\k$ implies  that $A\otimes_\k \K\simeq_q B\otimes_\k \K$, but 
the converse does not necessarily hold.

Now let $X$ be a topological space.  
An important bridge between topology and algebra is provided by the work 
of D.~Sullivan \cite{Su77}, who constructed a \cdga 
$\Omega^{\hdot}(X)= \Omega^{\hdot}(X,\k)$ 
of piecewise polynomial $\k$-forms on $X$, inspired by the de Rham 
algebra of smooth forms on a manifold. This \cdga has the property that 
$H^{\hdot}(\Omega(X,\k))\cong H^{\hdot}(X,\k)$, as graded rings.  

A \cdga $A$ is said to be a {\em model}\/ for $X$ if $A\simeq \Omega(X)$.  
Likewise, $A$ is a {\em $q$-model}\/ for $X$, for some $q\ge 1$, 
if $A\simeq_q \Omega(X)$.  Finally, the space $X$ is said to be formal 
(respectively, $q$-formal) if $\Omega^{\hdot}(X)$ has that property.

\subsection{Formality properties of groups}
\label{subsec:filtform}

Let $X$ be a $1$-finite space, i.e., a 
space homotopy equivalent to a connected CW-complex with only finitely 
many $1$-cells, and let $\pi=\pi_1(X)$.  By considering a 
classifying map $X\to K(\pi,1)$, it is readily seen that the 
$1$-formality of $X$ depends only on $\pi$.  

Let $\m(\pi)$ be the {\em Malcev Lie algebra}\/ of $\pi$ (over $\k$). This is a complete, 
filtered Lie algebra, constructed by Quillen in \cite{Qu}.  As is well-known 
(see e.g.~\cite{PS09, SW}), a finitely-generated group $\pi$ is $1$-formal 
if and only if its Malcev Lie algebra $\m(\pi)$ is isomorphic, as a filtered 
Lie algebra, to the lower central series completion of a quadratic Lie algebra, 
that is, $\m(\pi)\cong \widehat{L}$, where the Lie algebra $L$ is generated 
in degree $1$, and has relations only in degree $2$.

The group $\pi$ is said to be {\em filtered-formal}\/ 
if $\m(\pi)\cong \widehat{L}$, where $L$ is generated 
in degree $1$, and has homogeneous defining relations. 
This property, analyzed in detail in \cite{SW}, is strictly 
weaker than $1$-formality.  For instance, if $\Heis_1$ 
is the $3$-dimensional Heisenberg manifold from Example \ref{ex:heis}, 
then the group $\pi_1(\Heis_1)$ is filtered-formal, yet not $1$-formal. 

\subsection{Hirsch extensions and algebraic $q$-type}
\label{ssqg}

As before, let $(A,d)$ be a \dga, and let $P$ be a graded vector space generated 
by finitely many odd-degree elements $t_i\in P^{m_{i}}$.  Set $m=\max \{ m_i\}$, 
and let $\tau\colon P \to Z^{\le m+1}(A)$ be a degree one homomorphism. 
We then have a Hirsch extension,  $(A\otimes_{\tau} \bwedge P,d)$, 
where the differential $d$ agrees with that of $A$, while taking  
$t_i$ to $\tau(t_i)\in A^{m_{i}+1}$. The next result will be very useful in the sequel.
For $q=\infty$, it follows from more general results on relative Sullivan models, see e.g.
\cite[Lemma 14.2]{FHT}. In our simpler case, we provide an elementary proof, for $q \le \infty$.  

\begin{lemma}
\label{lem=qhirsch}
Suppose $A\simeq_q B$ and $q\ge m+1$.  
There is then a degree one homomorphism $\sigma\colon P\to Z^{\le m+1}(B)$ 
and an identification $H^{\le m+1}(A) \cong H^{\le m+1}(B)$ under 
which $[\sigma]$ corresponds to $[\tau]$, and 
\[
A\otimes_{\tau} \bwedge P \simeq_q B\otimes_{\sigma} \bwedge P.
\]
\end{lemma}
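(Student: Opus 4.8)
The plan is to work with minimal models and leverage the hypothesis $A \simeq_q B$, which, since $H^0(A) = \k$, is equivalent to saying that $A$ and $B$ share the same $q$-minimal model $\M = \M_q(A) = \M_q(B)$. Fix $q$-equivalences $\varphi_A \colon \M \to A$ and $\varphi_B \colon \M \to B$ (or, more precisely, a zig-zag of $q$-equivalences in each case). First I would observe that, since $q \ge m+1$ and each $t_i$ has degree $m_i \le m$, the cocycle $\tau(t_i) \in Z^{m_i+1}(A)$ has cohomology class $[\tau(t_i)] \in H^{m_i+1}(A)$ with $m_i + 1 \le q$; hence $\varphi_A^*$ identifies $H^{\le m+1}(A)$ with $H^{\le m+1}(\M)$, and likewise $\varphi_B^*$ identifies $H^{\le m+1}(B)$ with $H^{\le m+1}(\M)$. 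Composing these gives the desired identification $H^{\le m+1}(A) \cong H^{\le m+1}(B)$; the class $[\tau] \in \Hom(P^{\hdot}, H^{\hdot+1}(A))$ transports to a class $[\tau]' \in \Hom(P^{\hdot}, H^{\hdot+1}(\M))$ and then to $[\sigma] \in \Hom(P^{\hdot}, H^{\hdot+1}(B))$, and one simply picks any cocycle representative $\sigma \colon P \to Z^{\le m+1}(B)$ of $[\sigma]$ (possible since the degrees involved are $\le m+1$, so the cocycles computing these classes live in the stated range).

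Next I would build the Hirsch extension of $\M$ itself. Since $\varphi_A \circ$ (the zig-zag) realizes $[\tau]'$, lift $[\tau]'$ to an honest cocycle map $\tilde\tau \colon P \to Z^{\le m+1}(\M)$ and form $\M \otimes_{\tilde\tau} \bwedge P$. By Lemma \ref{lem:hirsch}, the isomorphism type of this \cdga depends only on $\M$ and the induced cohomology map $[\tilde\tau] = [\tau]'$, so it is well-defined up to isomorphism. The core of the argument is then to produce a $q$-equivalence
\[
\M \otimes_{\tilde\tau} \bwedge P \;\longrightarrow\; A \otimes_{\tau} \bwedge P
\]
by extending $\varphi_A$: on the $\M$-part use $\varphi_A$, and send $t_i \mapsto t_i$. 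For this to be a \cdga map one needs $\varphi_A(\tilde\tau(t_i))$ and $\tau(t_i)$ to represent the same class in $H^{m_i+1}(A)$ — which holds by construction — but they may differ by a coboundary $d a_i$ with $a_i \in A^{m_i}$; absorbing this by redefining the image of $t_i$ to be $t_i + a_i$ (legitimate since $m_i \le q$, so $a_i$ lives in the controlled range) yields a genuine \cdga morphism. The same construction with $\varphi_B$ and $\sigma$ gives a $q$-equivalence $\M \otimes_{\tilde\tau}\bwedge P \to B \otimes_{\sigma} \bwedge P$. Chaining these (and the zig-zags of $q$-equivalences between $\M$ and $A$, resp. $B$) delivers $A \otimes_{\tau} \bwedge P \simeq_q B \otimes_{\sigma} \bwedge P$.

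The main obstacle is verifying that the extended map is actually a \emph{$q$-equivalence} rather than merely a \cdga map — i.e., that it induces an isomorphism on $H^{\le q}$ and a monomorphism on $H^{q+1}$. Here I would argue by a filtration/five-lemma comparison: the Hirsch extension $(A \otimes_{\tau} \bwedge P, d)$ carries the increasing filtration by wedge-powers of $P$, and the same for $\M \otimes_{\tilde\tau} \bwedge P$; the extended map respects these filtrations. One then compares the two spectral sequences (or, inductively on $\dim P$, iterates the long exact sequence \eqref{eq=helem} for one-dimensional Hirsch extensions). At each stage the relevant groups are $H^{i-|t|-1}$, $H^i$, $H^{i-|t|}$ of the previous layer, with the connecting maps being cup product with $[\tau(t)]$ resp. $[\sigma(t)]$; since $|t| \ge 1$ the degree shifts mean that controlling $H^{\le q}$ iso and $H^{q+1}$ mono for the layers suffices — and $\varphi_A$ provides exactly that on $A$ and $\M$ (this is where $q \ge m+1$ is used, ensuring the transgression targets and the cup-product maps all lie within the range where $\varphi_A^*$ is an isomorphism). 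A careful but routine diagram chase with the five lemma then closes the induction. The subtlety to keep honest is the base case and the bookkeeping that a \emph{mono} in degree $q+1$ is preserved under passing to the Hirsch extension, which requires knowing the cup-product map $H^{q-|t|} \to H^{q+1}$ of the base is compatible on both sides — again guaranteed by the identification $[\sigma] \leftrightarrow [\tau]$.
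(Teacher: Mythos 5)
Your proposal is correct and uses essentially the same argument as the paper: the heart in both cases is induction on $\dim P$ via the elementary Hirsch-extension long exact sequence \eqref{eq=helem} and the five lemma, with the hypothesis $q\ge m+1$ guaranteeing that all the relevant cohomology groups lie in the range where the comparison map is an isomorphism (or a monomorphism in degree $q+1$). The only organizational difference is that you collapse the zig-zag by routing through the common $q$-minimal model and correct cocycle representatives by coboundaries ($t_i\mapsto t_i+a_i$), whereas the paper treats each arrow of the zig-zag directly, setting $\sigma=\varphi\circ\tau$ in the forward direction and invoking Lemma \ref{lem:hirsch} in the backward one; both devices are valid and interchangeable here.
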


\begin{proof}
Assume first that there is a $q$-equivalence $\varphi \colon A \to B$.   
Since $q\ge m+1$, the induced homomorphism, 
$\varphi^* \colon H^{\le m+1}(A)\to  H^{\le m+1}(B)$,  
is an isomorphism.   Set $\sigma=\varphi\circ \tau$. Then 
$\varphi^*\circ [\tau]=[\sigma]$.  It remains to show that 
$\varphi \otimes \id \colon A\otimes_{\tau} \bwedge P  \to B\otimes_{\sigma} \bwedge P$ 
is a $q$-equivalence.  Since every Hirsch extension may be viewed as a finite 
sequence of elementary extensions with $\dim P=1$, we can argue by induction 
on $\dim P$. The induction step follows from exact sequence  \eqref{eq=helem} 
and the $5$-Lemma.

Now assume that there is a $q$-equivalence $\psi \colon B \to A$.  Arguing 
as above, the map $\psi^* \colon H^{\le m+1}(B)\to  H^{\le m+1}(A)$ 
is an isomorphism.  We may then pick a degree one homomorphism 
$\sigma\colon P\to Z^{\le m+1}(B)$ 
such that $[\sigma]=(\psi^*)^{-1} \circ [\tau]$. In view of Lemma \ref{lem:hirsch}, 
the isomorphism type of the Hirsch extension $B\otimes_{\sigma} \bwedge P$ 
depends only on $B$ and $[\sigma]$, and thus not on the choice of $\sigma$.  
The rest of the argument is exactly as in the first case.

The general case, where $A$ and $B$ are connected by a zig-zag of 
$q$-equivalences, follows from the two particular cases handled above.
\end{proof}

\section{Cohomology jump loci}
\label{sect:cjl}

In this section we review and develop some material 
from \cite{DP-ccm, MPPS, BMPP}.  Unless otherwise 
mentioned, we continue to work over
a fixed field $\k$ of characteristic $0$. 

\subsection{Characteristic varieties}
\label{subsec:cvs}
Let $X$ be a space.  We say that $X$ is {\em $q$-finite}\/ 
if it has the homotopy type of a connected CW-complex with finite $q$-skeleton. 
For $q=\infty$, this means that $X$ is a finite, connected CW-space;  
in this case, we will simply say that $X$ is a finite space.

We will assume henceforth that $X$ is $q$-finite, for some 
$q\ge 1$ (possibly $q=\infty$). In this case, the fundamental 
group $\pi=\pi_1(X)$ is finitely generated. 
Fix a linear algebraic group $G$.  The {\em $G$-representation variety}\/ 
of the discrete group $\pi$ is the set $\Hom_{{\rm gr}} (\pi,G)$, 
viewed in a natural way as an affine algebraic subvariety 
of $G^{\times m}$, where $m$ is the number of generators of $\pi$. 
This variety comes with a distinguished basepoint $1$, the 
trivial representation. 

Now fix also a rational representation 
$\iota \colon G\to \GL(V)$, where $V$ is a finite-dimensional 
$\k$-vector space.  Given a representation $\rho \colon \pi \to G$, 
we may view $V$ as a $\k[\pi]$-module via the action defined 
by $\iota\circ \rho$, and thus consider the cohomology 
groups of $X$ with coefficients in this module. 
The {\em characteristic varieties}\/  of $X$ 
with respect to $\iota$  are the jump loci for these 
twisted cohomology groups.  More precisely, 
in each degree $i\ge 0$ and depth $s\ge 0$, 
define 
\begin{equation}
\label{eq:defv}
\VV^i_s(X,\iota) =\{\rho\in \Hom (\pi,G) \mid \dim_{\k} 
H^i(X , V_{\iota\circ \rho}) \ge s\}.
\end{equation}

These sets are Zariski closed subsets 
of the representation variety, provided $i\le q$. 
Note that $\VV^1_s(\pi,\iota) :=\VV^1_s(X,\iota)$ 
depends only on the fundamental group $\pi$, for all $s$.
In the rank $1$ case, i.e., when $G=\k^*$, $V=\k$, 
and $\iota\colon \k^*\to \GL_1(\k)$ is the standard 
identification, we will simply write these jump loci as 
$\VV^i_s(X)$, and view them as subvarieties of the 
character group $\Hom(\pi_1(X),\k^*)=H^1(X,\k^*)$. 

\subsection{Resonance varieties}
\label{subsec:res}

We now turn to the infinitesimal counterpart of the above 
setup. Let $A=(A^{\hdot}, d)$ be a \dga.  We will assume 
that $A$ is  $q$-finite, for some $q\ge 1$. 
Furthermore, let $\g$ be a finite-dimensional Lie algebra. 
Inside the affine space $A^1\otimes \g$, we shall consider the 
variety $\F(A,\g)$ of {\em $\g$-valued flat connections}\/ on 
$A$, which consists of all solutions of the Maurer--Cartan equation, 
$d\omega +\frac{1}{2}[\omega,\omega]=0$.  This variety is natural in 
both $A$ and $\g$, and has as natural basepoint the trivial flat 
connection, $0$. If $\omega =\sum_i \eta_i \otimes g_i$, with
$\eta_i\in A^1$ and $g_i\in \g$, the flatness condition amounts to
\begin{equation}
\label{eq:flat coords}
\sum_{i} d\eta_i \otimes g_i +
\sum_{i<j} \eta_i \eta_j \otimes [g_i, g_j] =0.
\end{equation}

Now let $\theta\colon \g\to \gl(V)$ be a finite-dimensional representation.  
To each flat connection $\omega \in A^1\otimes \g$ there is an associated 
covariant derivative, $d_{\omega}\colon A^{\hdot}\otimes V\to 
A^{\hdot+1}\otimes V$, given by $d_{\omega}=d\otimes \id_V +\ad_{\omega}$, 
where the adjoint operator $\ad_{\omega}$ also depends on $\theta$.  
Explicitly, if $\omega=\sum_i \eta_i \otimes g_i$, then
\begin{equation}
\label{eq:adv}
d_{\omega}(\alpha\otimes v) = d\alpha \otimes v +
\sum_{i} \eta_i  \alpha \otimes \theta(g_i)(v),
\end{equation}
for all $\alpha\in A^i$ and $v\in V$.
It is readily verified that $d_{\omega}^2=0$, by the flatness condition. 

The {\em resonance varieties}\/ of $A$ with respect to $\theta$  
(in degree $i\ge 0$ and depth $s\ge 0$), are the sets 
\begin{equation}
\label{eq:defr}
\RR^i_s(A,\theta) =\{\omega\in \F (A,\g) \mid \dim_{\k} 
H^i(A\otimes V, d_{\omega}) \ge s\}.
\end{equation}
These sets are Zariski-closed subsets of the variety of flat connections, provided 
that $i\le q$. Let us define the $(q+1)$-truncation of a $\cdga$ $A$ to be the 
quotient $\cdga$ $A^{\le q+1}:= A^{\hdot}/ \bigoplus_{j>q+1}A^j$.
By construction, both $\F (A,\g)$ and $\RR^i_s(A,\theta)$ depend only on 
this truncation, for $i\le q$ and $s\ge 0$.

In the rank $1$ case, i.e., when $\g=\k$  
and $\theta\colon \k\to \gl_1(\k)$ is the standard 
identification, we will omit $\g$ and $\theta$ from the notation. 
In this situation, the variety $\F(A) =\{\omega \in A^1 \mid d\omega =0\}$ 
may be identified with $H^1(A)$, by connectedness of $A$.  
Moreover, the covariant derivative is now given by 
$d_{\omega} \eta =d\eta +\omega \cdot \eta$, for $\eta\in A^{\hdot}$.  
In particular, $\omega$ belongs to $\RR^1_1(A)$ if and only if 
there is an $\eta\in A^1\setminus \k\cdot \omega$ such that 
$d\eta +\omega \cdot \eta=0$.

The rank $1$ resonance varieties obey the following product formula. 

\begin{prop}
\label{prop:prod}
Let $(A,d)$ and $(A',d')$ be two connected $\dga$'s. Then
\[
\RR^q_1(A\otimes A') = 
\bigcup_{i+j=q} \RR^i_1(A) \times \RR^j_1(A').
\]
\end{prop}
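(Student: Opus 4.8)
The plan is to analyze the Koszul-type complex computing $H^1(A\otimes A', d_\omega)$ for a rank-$1$ flat connection $\omega$ and reduce to the factors. First I would note that since $A$ and $A'$ are connected, a flat connection on $A\otimes A'$ in the rank-$1$ case is just an element $\omega\in Z^1(A\otimes A')=(Z^1(A)\otimes \k)\oplus(\k\otimes Z^1(A'))$, so we may write $\omega=\omega_1\otimes 1+1\otimes\omega_2$ with $\omega_i$ a cocycle in the respective factor, and under the identification $H^1(A\otimes A')=H^1(A)\oplus H^1(A')$ the class of $\omega$ decomposes accordingly. The covariant differential on $A\otimes A'$ is then $d_\omega=d_{\omega_1}\otimes\id+\id\otimes d_{\omega_2}$ (with the usual Koszul sign), i.e.\ the total differential of the tensor product of the two cochain complexes $(A^\bullet,d_{\omega_1})$ and $(A'^\bullet,d_{\omega_2})$.

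Next I would apply the Künneth formula to this tensor product of complexes of $\k$-vector spaces (all flat, so no Tor terms):
\[
H^q(A\otimes A',d_\omega)\cong\bigoplus_{i+j=q}H^i(A,d_{\omega_1})\otimes H^j(A',d_{\omega_2}).
\]
Hence $\dim H^q(A\otimes A',d_\omega)\ge 1$ if and only if $H^i(A,d_{\omega_1})\ne 0$ and $H^j(A',d_{\omega_2})\ne 0$ for some pair $i+j=q$, which says exactly that $\omega_1\in\RR^i_1(A)$ and $\omega_2\in\RR^j_1(A')$. Translating back through the identification $\F(A\otimes A')=\F(A)\times\F(A')$, this is precisely the asserted equality
\[
\RR^q_1(A\otimes A')=\bigcup_{i+j=q}\RR^i_1(A)\times\RR^j_1(A').
\]
One subtlety to handle: in the definition of $\RR^q_1$ one also needs the ambient degree-$q$ piece to satisfy the $q$-finiteness bookkeeping, but since the statement is purely about the defining sets (membership), no finiteness hypothesis is actually needed here; the Künneth identification above is an equality of finite-dimensional spaces whenever the relevant graded pieces are finite, and otherwise the dimension-$\ge 1$ condition still transfers verbatim.

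The main obstacle, such as it is, is bookkeeping rather than conceptual: one must check carefully that the twisted differential on the tensor product really is the total differential of the tensor product of the twisted complexes. This follows by writing $\omega=\omega_1\otimes 1+1\otimes\omega_2$ and using formula \eqref{eq:adv} together with the Leibniz rule for the product on $A\otimes A'$, but the Koszul signs in $d(u\otimes v)=du\otimes v+(-1)^{|u|}u\otimes dv$ and in multiplication by a degree-$1$ element must be tracked consistently; once that identification is in place, Künneth finishes the argument immediately.
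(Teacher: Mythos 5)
Your proof is correct and is essentially the argument the paper intends: the paper simply defers to \cite[Prop.~13.1]{PS-plms} (stated there for zero differentials) and notes that the same proof works in general, and that proof is exactly your decomposition $\omega=\omega_1\otimes 1+1\otimes\omega_2$, the identification of $(A\otimes A',d_\omega)$ with the tensor product of the twisted complexes $(A,d_{\omega_1})$ and $(A',d_{\omega_2})$, and the K\"unneth formula over a field. Your sign bookkeeping and the observation that no finiteness is needed for the set-theoretic equality are both accurate.
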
 

A proof of this statement is given in \cite[Prop.~13.1]{PS-plms} 
under the assumption that both $d$ and $d'$ vanish (see also 
\cite[Prop.~2]{PS-springer}).  The same proof works in this 
wider generality.

\subsection{Germs of complex jump loci}
\label{subsec:germs}
We now take the coefficient field to be $\k=\C$.  
Let $X$ be a $q$-finite space with fundamental group $\pi$, and assume 
$\Omega^{\hdot}(X)$ has the same $q$-type as a $q$-finite \cdga $A$.
Let $\iota\colon G\to \GL(V)$ be a rational representation of
a complex linear algebraic group $G$, and let $\theta=d_1(\iota)\colon \g \to \gl(V)$
be its tangential representation at the identity.  For an affine variety $\XX$ and 
a point $x\in \XX$, we denote by $\XX_{(x)}$ the reduced analytic germ 
of $\XX$ at $x$. 

\begin{theorem}[\cite{DP-ccm}]
\label{thm:germs}
Under the above assumptions, there is an analytic isomorphism 
$\Hom(\pi,G)_{(1)} \isom \F(A,\g)_{(0)}$ which restricts to analytic 
isomorphisms $\VV^i_s(X,\iota)_{(1)} \isom \RR^i_s(A,\theta)_{(0)}$,
for all $i\le q$ and $s\ge 0$.
\end{theorem}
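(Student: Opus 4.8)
The plan is to reduce the statement to two separate assertions and then combine them. The first assertion is that, for a $q$-finite \cdga $A$, the germ at the origin of the variety of flat connections $\F(A,\g)$, together with all its resonance subvarieties $\RR^i_s(A,\theta)$ with $i\le q$, is an invariant of the $q$-homotopy type of $A$. The second is that for the Sullivan model $A=\Omega^{\hdot}(X,\C)$ these germs are identified with the germ at the trivial representation of $\Hom(\pi,G)$ and with the germs there of the characteristic varieties $\VV^i_s(X,\iota)$. Granting both, the hypothesis $A\simeq_q\Omega^{\hdot}(X)$ says that $A$ and $\Omega^{\hdot}(X)$ share a $q$-minimal model $\M_q$, with $q$-equivalences $\M_q\to A$ and $\M_q\to\Omega^{\hdot}(X)$; composing the corresponding germ isomorphisms yields $\F(A,\g)_{(0)}\cong\F(\M_q,\g)_{(0)}\cong\F(\Omega(X),\g)_{(0)}\cong\Hom(\pi,G)_{(1)}$, and the analogue for the resonance and characteristic loci is exactly the assertion of the theorem.

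For the first assertion I would proceed as follows. By the remark in \S\ref{subsec:res}, $\F(A,\g)$ and the $\RR^i_s(A,\theta)$ with $i\le q$ depend only on the $(q+1)$-truncation of $A$, so I may assume $A$ finite-dimensional. I would then identify $\F(A,\g)$ with the Maurer--Cartan variety of the differential graded Lie algebra $A\otimes\g$, whose bracket combines the multiplication on $A$ with the bracket on $\g$. Any $q$-equivalence $\varphi\colon A\to B$ induces a morphism $\varphi\otimes\id_\g\colon A\otimes\g\to B\otimes\g$ of such dglas which, since $\g$ is flat over $\C$, is a homology isomorphism in degrees $\le q$ and a monomorphism in degree $q+1$. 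By the quasi-isomorphism invariance of Maurer--Cartan deformation functors in the style of Goldman--Millson, such a morphism induces an isomorphism on the germs at $0$ of the associated Maurer--Cartan schemes; the borderline range --- notably $q=1$, where only $H^{\le 2}$ is under control --- should reduce to a direct computation using that $\F(A,\g)$ is cut out in $A^1\otimes\g$ by the single $A^2\otimes\g$-valued Maurer--Cartan equation. Finally, for each $\omega$ the map $\varphi\otimes\id_V$ is a morphism of twisted complexes $(A\otimes V,d_\omega)\to(B\otimes V,d_{\varphi(\omega)})$, and a standard filtration argument shows it is a cohomology isomorphism through degree $q$; hence the germ isomorphism above carries $\RR^i_s(A,\theta)_{(0)}$ onto $\RR^i_s(B,\theta)_{(0)}$ for all $i\le q$ and $s\ge0$.

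For the second assertion I would take $A=\Omega^{\hdot}(X,\C)$ and use parallel transport. A $\g$-valued flat connection $\omega$ on $A$ has a well-defined holonomy, which gives an analytic map $\F(\Omega(X),\g)\to\Hom(\pi,G)$, $\omega\mapsto\rho_\omega$, sending $0$ to the trivial representation. Using that $G$ is a complex linear algebraic group, together with the identification of $\Hom(\pi,G)_{(1)}$ with the Maurer--Cartan germ attached to the Malcev Lie algebra $\m(\pi)$ (equivalently, to the $1$-minimal model of $X$), one checks that this map is a local analytic isomorphism at $(0,1)$. Under this identification the twisted complex $(\Omega(X)\otimes V,d_\omega)$ computes $H^{\hdot}(X,V_{\iota\circ\rho_\omega})$, so $\rho_\omega\in\VV^i_s(X,\iota)$ precisely when $\omega\in\RR^i_s(\Omega(X),\theta)$, for $i\le q$, which gives the required restricted isomorphisms.

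The step I expect to be the main obstacle is the first assertion: showing that it is not merely the germ of the Maurer--Cartan scheme, but its whole resonance stratification, that is invariant under a \cdga map which is only a $q$-equivalence rather than a genuine quasi-isomorphism, and carrying this out uniformly in $q\ge1$ --- in particular handling $q=1$, where only $H^{\le2}$ is available and the deformation-theoretic machinery must be applied essentially by hand. A secondary technical point is to verify that the twisted Sullivan--de~Rham complex $(\Omega(X)\otimes V,d_\omega)$ genuinely computes twisted cohomology of $X$ for the near-trivial (hence pronilpotent-type) local systems in play; this is where finiteness of the $q$-skeleton of $X$ enters. All of this is carried out in detail in \cite{DP-ccm}.
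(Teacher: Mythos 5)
The first thing to note is that the paper does not prove this statement: Theorem \ref{thm:germs} is quoted from \cite{DP-ccm}, so there is no internal proof to compare your argument against. Your outline is, in broad strokes, a faithful reconstruction of the strategy of that reference: reduce to the $(q+1)$-truncation, make the germs $\F(A,\g)_{(0)}$ and $\RR^i_s(A,\theta)_{(0)}$ invariants of the $q$-type of $A$, and identify the topological side with the infinitesimal side via the $1$-minimal model and the Malcev completion, with the twisted complex $(A\otimes V,d_\omega)$ computing twisted cohomology near the trivial local system.

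If this were offered as a proof rather than a reading guide, however, the step you yourself flag is where it breaks down, and for a more specific reason than you give. Goldman--Millson invariance, even in the form requiring only an isomorphism on $H^{\le 1}$ and a monomorphism on $H^{2}$, is a statement about the deformation \emph{functor}, i.e., the Deligne groupoid of Maurer--Cartan elements modulo gauge equivalence; it yields a bijection of gauge-equivalence classes over Artin rings, not an analytic isomorphism of germs of the Maurer--Cartan varieties themselves. Theorem \ref{thm:germs} concerns the representation variety $\Hom(\pi,G)$, not the character variety, and correspondingly the un-quotiented variety $\F(A,\g)$; upgrading the functor-level equivalence to an isomorphism of reduced analytic germs of these schemes, compatibly with the resonance stratification, is precisely the hard technical content of \cite{DP-ccm} and does not follow formally from the machinery you invoke. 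A secondary issue: your parallel-transport map is literally defined on smooth de Rham forms, whereas $\Omega^{\hdot}(X)$ here is Sullivan's algebra of piecewise polynomial forms, with $\Omega^{1}(X)$ infinite-dimensional, so $\F(\Omega(X),\g)$ is not an affine variety and ``its germ at $0$'' has no direct meaning; the reference instead routes the topological identification through the $q$-minimal model and the exponential correspondence with $\m(\pi)$, which is the rigorous substitute for holonomy. Neither objection is fatal to the plan --- both are resolved in \cite{DP-ccm} --- but they are the places where your outline is a citation rather than a proof.
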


Following \cite{BMSS, Mo}, we say that a \cdga $(A,d)$ over a field 
$\k$ of characteristic zero has {\em positive weights}\/ 
if, for each $i\ge 0$, there is a vector space decomposition, 
$A^i=\bigoplus_{\alpha\in \Z} A^i_\alpha$, such that, if we set $\wt(a)=\alpha$ 
for $a\in A^i_\alpha$, the following conditions hold: 
\begin{enumerate}
\item \label{pw1}
$\wt(a)>0$, for all $a\in A^i_\alpha$ with $i>0$; 
\item  \label{pw2}
$\wt(da)=\wt(a)$, for all $a\in A^i_\alpha$; 
\item  \label{pw3}
$\wt(ab)=\wt(a)+\wt(b)$, for all $a\in A^i_\alpha$ and $b\in A^j_\beta$. 
\end{enumerate}

For instance, if $d=0$, we may set $\wt(a)=i$ for 
all $a\in A^i$.  Moreover, if $(A,d)$ has positive weights and a $d$-cocycle
$e\in A^{m+1}$ is homogeneous with respect to those weights, then the 
Hirsch extension $(A\otimes_e \bwedge (t) , d)$ also has positive 
weights.  Indeed, we simply declare $\wt(t)=\wt(e)$, and extend the weights 
on the Hirsch extension  accordingly. When $d=0$, it follows that any
Hirsch extension $A\otimes_{\tau} \bwedge P$ has positive weights.

Now let $X$ be a space, with Sullivan model $\Omega(X)=\Omega(X,\k)$.  
We say that a $\k$-\cdga $(A,d)$ is a {\em $q$-model with positive 
weights}\/ for $X$ if the following conditions are satisfied:
\begin{enumerate}
\item $A$ is defined over $\Q$;
\item $A$ has positive weights over $\Q$;
\item $A\simeq_q \Omega(X)$, with the isomorphism induced on 
first homology by the zig-zag of $q$-equivalences between 
$A$ and $\Omega(X)$ required to preserve $\Q$-structures.  
\end{enumerate}
As the next result shows, the existence of such a
model for a space $X$ imposes stringent conditions 
on the rank $1$ cohomology jump loci of $X$. 

\begin{theorem}[\cite{DP-ccm}]
\label{thm:mpps-bis}
Let $X$ be a $q$-finite space, and suppose $X$ admits a 
$q$-finite, $q$-model $A$ with positive weights. 
Then all irreducible components 
of $\VV^i_s(X)$ passing through $1$ are algebraic subtori  
of the character group $H^1(X,\C^*)$, for all $i\le q$ 
and $s\ge 0$.
\end{theorem}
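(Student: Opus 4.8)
The plan is to transport the statement from the character group of $\pi=\pi_1(X)$ to the variety of flat connections on $A$, use the positive weights to recognize the relevant resonance varieties as rational cones, and then globalize via the exponential map. First I would specialize Theorem~\ref{thm:germs} to rank $1$, taking $G=\C^*$, $V=\C$, $\iota=\id$ (so $\g=\C$ and $\theta=\id$). For each $i\le q$ and $s\ge 0$ this yields an analytic isomorphism $\Hom(\pi,\C^*)_{(1)}\isom \F(A)_{(0)}$ carrying $\VV^i_s(X)_{(1)}$ to $\RR^i_s(A)_{(0)}$, where $\F(A)=H^1(A)$ is a finite-dimensional $\C$-space since $A$ is $q$-finite. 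From the construction behind Theorem~\ref{thm:germs}, together with condition~(3) in the definition of a $q$-model with positive weights (the $q$-equivalences $A\simeq_q\Omega(X)$ respect the $\Q$-structures on $H^1$), one checks that this germ isomorphism is the germ at the origin of the exponential map $\exp\colon H^1(X,\C)\to \Hom(\pi,\C^*)$ composed with the rational isomorphism $H^1(X,\C)\cong H^1(A)$. Thus $\exp$ identifies $\RR^i_s(A)_{(0)}$ with $\VV^i_s(X)_{(1)}$.

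Next I would use the positive weights to see that $\RR^i_s(A)$ is a cone defined over $\Q$. For $\lambda\in\C^*$, the weight decomposition gives a \cdga automorphism $\phi_\lambda$ of $A\otimes_\Q\C$ acting by $\lambda^{\wt(a)}$ on a homogeneous element $a$; by the positive-weights axioms it commutes with $d$ and with the product. Hence, for $\omega\in\F(A)=Z^1(A)$, one has $\phi_\lambda\circ d_\omega=d_{\phi_\lambda(\omega)}\circ\phi_\lambda$, so $\phi_\lambda$ is an isomorphism of twisted complexes $(A^{\hdot},d_\omega)\isom(A^{\hdot},d_{\phi_\lambda(\omega)})$; therefore $\dim H^i(A,d_\omega)=\dim H^i(A,d_{\phi_\lambda(\omega)})$, and $\RR^i_s(A)$ is invariant under every $\phi_\lambda$. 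As the $\phi_\lambda$ form a connected group acting with positive weights on $H^1(A)$, each irreducible component of $\RR^i_s(A)$ is $\phi$-invariant and passes through the origin, and the irreducible components of the germ $\RR^i_s(A)_{(0)}$ are precisely the germs at $0$ of the irreducible components of the variety $\RR^i_s(A)$. Moreover $\RR^i_s(A)$ is defined over $\Q$, being cut out inside the $\Q$-defined space $\F(A)$ by the $\Q$-defined resonance conditions on the truncation $A^{\le q+1}$.

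The crucial step, which I expect to be the main obstacle, is to show that each $\phi$-invariant, $\Q$-defined irreducible component $Z$ of $\RR^i_s(A)$ is in fact a rationally defined \emph{linear} subspace of $H^1(A)$. Here I would pass to the reduced tangent cone $\mathrm{TC}_0(Z)$ --- again a $\Q$-defined cone of the same dimension as $Z$ --- and show both that $Z=\mathrm{TC}_0(Z)$ and that $\mathrm{TC}_0(Z)$ is a finite union of rational linear subspaces. The mechanism is that a positive-weight \cdga over $\Q$ is controlled, degree by degree, by its weight-graded pieces, which reduces the question to the case of zero differential, where the required linearity follows from the classical tangent-cone description of cohomology jump loci. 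Ruling out would-be non-linear contributions under the mere hypothesis of positive weights (rather than full formality) is the heart of the matter; the reductions above are formal.

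Granting linearity, the globalization is routine. Let $W$ be an irreducible component of $\VV^i_s(X)$ passing through $1$. By the previous steps $W_{(1)}=\exp(Z'_{(0)})$ for a rational linear subspace $Z'\subseteq H^1(X,\C)$, so $T:=\exp(Z')$ is an algebraic subtorus of $\Hom(\pi,\C^*)$ with $T_{(1)}=W_{(1)}$. Since $W$ and $T$ are irreducible closed subvarieties of $\Hom(\pi,\C^*)$ sharing a Euclidean neighborhood of $1$, and a nonempty Euclidean-open subset of an irreducible complex variety is Zariski dense, we conclude $W=T$; hence $W$ is a subtorus, as asserted.
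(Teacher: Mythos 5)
First, note that the paper does not prove this statement: it is quoted verbatim from \cite{DP-ccm} (their Theorem B), so there is no internal proof to compare against. Judged on its own merits, your first two steps are correct and are exactly how \cite{DP-ccm} begins: the germ isomorphism $\VV^i_s(X)_{(1)}\cong\RR^i_s(A)_{(0)}$ of Theorem \ref{thm:germs} is induced by the exponential map once the model has positive weights and a compatible $\Q$-structure on $H^1$ (the paper itself invokes this in the proof of Theorem \ref{thm:invjump}), and the weight action does make $\RR^i_s(A)$ a $\Q$-defined, $\C^*$-invariant subvariety all of whose components pass through $0$. The final globalization step (an irreducible variety and a subtorus sharing a Euclidean neighborhood of $1$ must coincide) is also fine.

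The genuine gap is your step establishing linearity, which you yourself flag as ``the heart of the matter'' without closing it. Two things go wrong. First, the reduction ``to the case of zero differential, where the required linearity follows from the classical tangent-cone description of cohomology jump loci'' proves nothing: when $d=0$ the resonance varieties are homogeneous determinantal cones, and such cones are \emph{not} in general finite unions of linear subspaces. Linearity of the resonance components through $0$ is precisely the nontrivial \emph{output} of this theorem (the Tangent Cone Theorem of Dimca--Papadima--Suciu), not something available as an input on the \cdga\ side. Second, the identity $Z=\mathrm{TC}_0(Z)$ fails for varieties that are merely invariant under a positive-weight $\C^*$-action with unequal weights: the curve $\{y=x^2\}$ with $\wt(x)=1$, $\wt(y)=2$ is irreducible, weighted-homogeneous, and passes through $0$, yet its tangent cone is the line $\{y=0\}$. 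The missing ingredient, which is the actual engine of the proof in \cite{DP-ccm}, lives on the \emph{torus} side: for any Zariski-closed $W\subseteq\Hom(\pi,\C^*)$, the exponential tangent cone $\tau_1(W)=\{z\in H^1(X,\C)\mid \exp(\lambda z)\in W\ \text{for all}\ \lambda\in\C\}$ is a finite union of rationally defined linear subspaces; this is proved by writing out the Laurent-polynomial equations of $W$ along $\lambda\mapsto\exp(\lambda z)$ and using linear independence of characters in $\lambda$. One then shows that the cone structure of the resonance component $R$ forces $R\subseteq\tau_1(W)\subseteq \mathrm{TC}_1(W)$ and that these all coincide, whence $R$ is a rational linear subspace and $W=\exp(R)$ is a subtorus. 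Without this lemma---or some substitute exploiting the algebraic (Laurent) structure of $W$ inside the character group---the positive-weight hypothesis alone will not yield linearity, and your argument does not go through.
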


\begin{remark}
\label{rem:BW}
In a very recent preprint, Budur and Wang \cite{BW} 
remove the positive weights hypothesis from the above theorem. 
\end{remark}

\begin{remark}
\label{rem:betti0}
When the Betti numbers vanish, the embedded jump loci 
are not interesting, at least from the  
point of view of germs around the origin.  Indeed, 
if $b_1(X)=0$, then $\Hom(\pi_1(X), G)_{(1)}=\{1\}$, by 
\cite[Theorem A]{DP-ccm}. Furthermore, if $b_i(A)=0$ for some 
$i\le q$, then $\RR^i_1(A,\theta)_{(0)}=\emptyset$, by 
\cite[(15)]{MPPS}.
\end{remark}

\subsection{Malcev completions and holonomy Lie algebras}
\label{subsec:malholo}

As before, let $\pi$ be a discrete group, and let $\k$ be a field of 
characteristic $0$. 
The {\em Malcev Lie algebra}\/ of $\pi$ (over $\k$), denoted 
$\m(\pi)$, is a filtered, complete $\k$-Lie algebra whose filtration 
satisfies certain axioms, spelled out by Quillen in \cite[Appendix A]{Qu}. 
In particular, the associated graded Lie algebra of $\m(\pi)$ with 
respect to the aforementioned filtration  is isomorphic 
to the associated graded $\k$-Lie algebra of $\pi$ with respect to the
lower central series (lcs) filtration.  For a detailed treatment of the various 
Lie algebras associated to finitely generated groups we refer to \cite{SW}, 
and the further citations therein. 

Now let $A=(A^{\hdot},d)$ be a $1$-finite \dga \/ over $\k$.   
Following \cite{MPPS}, set $A_i=(A^i)^*$, where $(\cdot)^*$ 
stands for vector space duals, and let $\L^{\hdot}(A_1)$ 
be the free Lie algebra on $A_1$, graded by bracket length.  
Let  $d^*\colon A_2\to A_1=\L^1(A_1)$
and $\cup^*\colon A_2\to A_1\wedge A_1=\L^2(A_1)$ 
be the dual maps to the differential and the product map, 
respectively.  
The  {\em holonomy Lie algebra}\/ of $A$
is the finitely presented Lie algebra
\begin{equation}
\label{eq:hololie}
\h(A) = \L(A_1) / \ideal(\im (d^*+\cup^*)).  
\end{equation} 
Clearly, this construction is functorial, and depends only 
on the $2$-truncation $A^{\le 2}$.

The following theorem strengthens a result proved by 
Bezrukavnikov \cite{Bez} in the case when $A$ is a
quadratic algebra. 

\begin{theorem}[\cite{BMPP}, Theorem 3.1]
\label{thm:malholo}
Let $X$ be a $1$-finite space, and 
suppose $A$ is a $1$-finite $1$-model for $X$, defined over $\k$. 
Then $\m(\pi_1(X))$ is isomorphic (as a filtered Lie algebra) 
to the lcs completion of $\h(A)$, over $\k$.
\end{theorem}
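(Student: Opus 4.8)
The plan is to prove Theorem~\ref{thm:malholo} by combining the minimal-model machinery from \S\ref{subsec:minmod} with a careful analysis of the $1$-minimal model's structure. First I would invoke the fundamental principle that the Malcev Lie algebra $\m(\pi_1(X))$ is dual, as a filtered Lie algebra, to the $1$-minimal model $\M_1(\Omega(X))$: this is the classical Sullivan--Quillen correspondence, which translates the $1$-minimal model $(\bwedge V^{\le 1}, d)$ into a tower of nilpotent Lie algebras whose inverse limit is $\m(\pi_1(X))$, with the filtration coming from the stages of the tower. Since $A$ is a $1$-model for $X$, we have $A\simeq_1 \Omega(X)$, and hence by the uniqueness of $1$-minimal models (stated in \S\ref{subsec:minmod}) we get $\M_1(A)\cong \M_1(\Omega(X))$. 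Thus it suffices to show that the filtered Lie algebra dual to $\M_1(A)$ is isomorphic to the lcs completion $\widehat{\h(A)}$.

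Next I would construct the comparison map explicitly. The $1$-minimal model is built inductively: $\M_1(A) = \bigcup_k \M^{(k)}$, where $\M^{(1)} = (\bwedge W_1, 0)$ with $W_1$ dual to $H^1(A)$, and each $\M^{(k+1)}$ is a Hirsch extension of $\M^{(k)}$ adjoining generators $W_{k+1}$ in degree $1$ whose differentials kill the "new" degree-$2$ cohomology obstructions. Dually, this produces the lower central series tower of $\h(A)$, because the defining relations of $\h(A)$ in \eqref{eq:hololie} are precisely built from $d^* + \cup^*\colon A_2 \to \L^1(A_1)\oplus \L^2(A_1)$, i.e.\ from the degree-$1$ differential together with the cup product $A^1\wedge A^1 \to A^2$ of $A$ (equivalently, of $H^{\le 2}(A)$ once one accounts for $d^*$). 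The key point is that the first-stage Hirsch extension $\M^{(2)}$ encodes exactly the quadratic part of the relations, and each subsequent stage adds nothing new to the associated graded beyond what is forced by the Jacobi identity — so the associated graded Lie algebra of the tower is $\gr(\h(A))$, and the completion recovers $\widehat{\h(A)}$.

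The main technical step — and the main obstacle — is verifying that the construction depends only on the $2$-truncation $A^{\le 2}$ and that the $1$-minimal model of $A$ agrees with that of its $2$-truncation, together with checking that the map from $\Omega(X)$ to $A$ being merely a $1$-equivalence (isomorphism on $H^{\le 1}$, injection on $H^2$) is enough to force equality of $1$-minimal models. Here I would lean on the fact, recorded in \S\ref{subsec:formality}, that $A\simeq_q B$ if and only if $A$ and $B$ share the same $q$-minimal model, applied with $q=1$; the construction of $\M_1$ only ever sees $H^1$, $H^2$, and the cup-product map $H^1\wedge H^1\to H^2$, all of which are controlled by a $1$-equivalence (the obstruction classes in $H^2$ that the new generators must cancel live in the image of the injection on $H^2$). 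One then checks that for a $1$-finite \dga, the dual of $\M_1(A)$, with its tower filtration, satisfies Quillen's axioms and has associated graded Lie algebra equal to $\gr\widehat{\h(A)} = \gr\h(A)$, which identifies it with $\widehat{\h(A)}$ as a filtered Lie algebra. The Bezrukavnikov case (quadratic $A$, where $d^*=0$ and $\h(A)$ is the quadratic-dual Lie algebra) is the special case where the tower degenerates after the second stage at the level of associated graded; the extra work in the general case is bookkeeping the $d^*$ contribution and the non-quadratic relations in degree $\ge 3$ that can appear in $\h(A)$, but since $\h(A)$ is still generated in degree $1$, the inductive structure of the $1$-minimal model carries through unchanged.
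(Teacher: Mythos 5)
First, a remark on the comparison you were asked to be measured against: the paper does not prove this statement at all --- it is imported verbatim from \cite{BMPP}, Theorem 3.1, so there is no internal argument to set your proof beside. Your outline does follow the strategy one would expect (and which that reference essentially implements): identify $\m(\pi_1(X))$ with the filtered Lie algebra dual to the $1$-minimal model tower, use $\M_1(A)\cong\M_1(\Omega(X))$ via the $1$-equivalence, and then identify the dual of $\M_1(A)$ with the lcs completion $\widehat{\h(A)}$.

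The problem is that this last identification --- which is the entire content of the theorem --- is asserted rather than proved, and the claim you use to justify it is false. You write that ``the construction of $\M_1$ only ever sees $H^1$, $H^2$, and the cup-product map $H^1\wedge H^1\to H^2$.'' If that were true, the $1$-minimal model of any connected \cdga would coincide with that of $(H^{\le 2}(A),d=0)$, so every $1$-finite space would be $1$-formal; the Heisenberg nilmanifold $\Heis_1$ discussed in the paper is a counterexample. In reality $\M_1(A)$ depends on the full $1$-quasi-isomorphism type of $A$, including all higher Massey products, whereas $\h(A)$ is built from the \emph{cochain-level} maps $d^*\colon A_2\to A_1$ and $\cup^*\colon A_2\to\L^2(A_1)$ and depends on the $2$-truncation $A^{\le 2}$ as a \cdga, not merely on its cohomology ring (your parenthetical ``equivalently, of $H^{\le 2}(A)$'' is a symptom of the same confusion). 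The substance of the theorem is precisely that this cochain-level degree-$\le 2$ data already determines the whole tower --- the obstruction cocycles at every stage of the inductive construction can be computed inside $A^{\le 2}$ --- and that the resulting pronilpotent Lie algebra is $\widehat{\h(A)}$ with matching filtrations. Your sketch never constructs the comparison morphism between $\widehat{\h(A)}$ and the inverse limit of the dual tower, nor verifies that it induces an isomorphism on associated graded objects; the sentence ``each subsequent stage adds nothing new to the associated graded beyond what is forced by the Jacobi identity'' is exactly the statement that has to be proved, not a reason for it.
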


The next result relates the holonomy Lie algebra to the germs 
of the representation varieties considered in \S\ref{subsec:germs}. 

\begin{corollary}
\label{cor:sophus}
Let $\pi=\pi_1(X)$ be the fundamental group of a $1$-finite 
space $X$, and let $G$ be a complex linear algebraic group, with 
Lie algebra $\g$. 
If $A$ is a $1$-finite $1$-model for $X$ over $\C$, then the 
analytic germs $\Hom_{\rm gr}(\pi,G)_{(1)}$ and 
$\Hom_{\rm Lie}(\h(A),\g)_{(0)}$ are isomorphic. 
\end{corollary}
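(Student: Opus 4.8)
The plan is to deduce Corollary~\ref{cor:sophus} by combining Theorem~\ref{thm:malholo} with a standard correspondence between filtered complete Lie algebras and the germs at the identity of representation varieties into complex algebraic groups. First I would invoke Theorem~\ref{thm:malholo}: since $A$ is a $1$-finite $1$-model for $X$, the Malcev Lie algebra $\m(\pi)$ is filtered-isomorphic to the lower central series completion $\widehat{\h(A)}$ of the holonomy Lie algebra. So it suffices to produce an analytic isomorphism of germs
\[
\Hom_{\rm gr}(\pi,G)_{(1)} \;\cong\; \Hom_{\rm Lie}(\h(A),\g)_{(0)}
\]
in terms of these Lie-algebra data.

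The key step is the classical fact (going back to Quillen, and used in this exact form in \cite{DP-ccm, BMPP, SW}) that for a finitely generated group $\pi$ and a complex linear algebraic group $G$ with Lie algebra $\g$, the germ $\Hom_{\rm gr}(\pi,G)_{(1)}$ depends only on the Malcev Lie algebra $\m(\pi)$, and is naturally isomorphic to the germ at $0$ of $\Hom^{\rm filt}_{\rm Lie}(\m(\pi),\g)$, the space of filtered (equivalently, continuous) Lie algebra homomorphisms from the pro-nilpotent Lie algebra $\m(\pi)$ to $\g$ — here $\g$ carries the filtration by powers of a nilpotent ideal, or one passes to formal/analytic neighborhoods so that the exponential makes sense. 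Next I would observe that, because $A$ is $1$-finite, $\h(A)$ is a finitely presented graded Lie algebra with $\h(A)_1 = H^1(A)^* = H_1(\pi,\C)$ finite-dimensional, so its lcs completion $\widehat{\h(A)}$ is pro-(finite-dimensional nilpotent). A filtered Lie homomorphism $\widehat{\h(A)} \to \g$ landing in a nilpotent neighborhood of $0$ is the same thing as an ordinary Lie homomorphism $\h(A)\to\g$ whose image consists of nilpotent-type elements, because $\h(A)$ is generated in degree $1$ and a homomorphism out of it is determined by where the finitely many degree-$1$ generators go, subject to the finitely many defining relations — and these generators must map into the relevant ideal. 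Thus $\Hom^{\rm filt}_{\rm Lie}(\widehat{\h(A)},\g)_{(0)} \cong \Hom_{\rm Lie}(\h(A),\g)_{(0)}$ as analytic germs at the origin; both are cut out inside the affine space $\Hom_{\C}(H_1(\pi,\C),\g)$ by the same finite system of polynomial equations coming from the relators of $\h(A)$.

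Assembling the chain of isomorphisms gives
\[
\Hom_{\rm gr}(\pi,G)_{(1)} \;\cong\; \Hom^{\rm filt}_{\rm Lie}(\m(\pi),\g)_{(0)} \;\cong\; \Hom^{\rm filt}_{\rm Lie}(\widehat{\h(A)},\g)_{(0)} \;\cong\; \Hom_{\rm Lie}(\h(A),\g)_{(0)},
\]
where the middle isomorphism is Theorem~\ref{thm:malholo} (which asserts a filtered isomorphism $\m(\pi)\cong\widehat{\h(A)}$, hence induces a bijection on filtered homomorphism spaces compatible with their germ/analytic structures), and the outer two are the correspondences discussed above. One should check functoriality/naturality enough to see that all these maps are morphisms of analytic germs, not just set-theoretic bijections, but this is routine given that each is either induced by a filtered Lie isomorphism or is an identification of two presentations of the same germ.

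I expect the main obstacle to be purely expository: pinning down the precise form of the "Malcev completion $\leftrightarrow$ representation variety germ" dictionary over $\C$ in enough generality to cover an arbitrary complex linear algebraic group $G$ (not just unipotent or reductive ones), and making sure the passage between the pro-nilpotent completion $\widehat{\h(A)}$ and the finitely presented Lie algebra $\h(A)$ genuinely induces an isomorphism of \emph{analytic germs} and not merely an equality of underlying sets. Both points are handled in \cite{DP-ccm} and \cite{BMPP}, so the cleanest route is to cite those sources for the dictionary and simply feed in $\m(\pi)\cong\widehat{\h(A)}$ from Theorem~\ref{thm:malholo}; the finite presentation of $\h(A)$ afforded by $1$-finiteness of $A$ is exactly what makes the final identification with $\Hom_{\rm Lie}(\h(A),\g)_{(0)}$ legitimate.
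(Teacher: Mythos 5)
Your route is genuinely different from the paper's. The paper's proof is two lines: Proposition 4.5 of \cite{MPPS} gives a natural isomorphism of \emph{affine varieties} $\F(A,\g)\cong \Hom_{\rm Lie}(\h(A),\g)$ (a $\g$-valued flat connection on $A$ is literally the same data as a Lie homomorphism out of the presentation \eqref{eq:hololie}), and Theorem \ref{thm:germs} then identifies $\Hom_{\gr}(\pi,G)_{(1)}$ with $\F(A,\g)_{(0)}$. You instead route everything through Theorem \ref{thm:malholo} and a Malcev-completion description of the formal neighborhood of $1$ in the representation variety, in the spirit of Kapovich--Millson \cite{KM}. That is a legitimate alternative skeleton, but it bypasses the mechanism the paper actually uses (flat connections and Theorem \ref{thm:germs}), and Theorem \ref{thm:malholo} is an input of comparable depth.

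The pivotal middle identification, however, is not correct as you state it. For the groups the corollary is mainly aimed at (e.g.\ $G=\SL_2(\C)$), $\g$ has no nonzero nilpotent ideal, so the proposed filtration on $\g$ is trivial, and a ``filtered/continuous'' Lie homomorphism from the pro-nilpotent algebra $\widehat{\h(A)}$ to the discrete finite-dimensional $\g$ must kill some term of the lcs filtration, i.e.\ factor through a finite-dimensional nilpotent quotient; its image is then a nilpotent subalgebra of $\g$. By contrast, $\Hom_{\rm Lie}(\h(A),\g)$ contains, arbitrarily close to $0$ (by homogeneity of the defining equations), homomorphisms with image all of $\g$ --- already for a surface group and $\g=\sl_2$. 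So $\Hom^{\rm filt}_{\rm Lie}(\widehat{\h(A)},\g)$ is strictly smaller than $\Hom_{\rm Lie}(\h(A),\g)$ near the origin, and your claim that the two are cut out by the same equations fails. The correct version of your dictionary must be phrased with Artinian coefficients: for a local Artinian $\C$-algebra $(R,\m_R)$, a deformation of the trivial representation is a homomorphism $\pi\to\ker(G(R)\to G(\C))=\exp(\g\otimes\m_R)$, and since $\g\otimes\m_R$ is nilpotent this equals $\Hom^{\rm cont}_{\rm Lie}(\m(\pi),\g\otimes\m_R)$, which by Theorem \ref{thm:malholo} and degree-one generation of $\h(A)$ equals $\Hom_{\rm Lie}(\h(A),\g\otimes\m_R)$. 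This only identifies the \emph{formal} germs, so you must still invoke Artin approximation (or argue as in \cite{DP-ccm}) to upgrade to an analytic isomorphism of the two algebraic germs --- a step your sketch elides. With those two repairs the argument goes through; the paper's two-citation proof avoids both issues.
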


\begin{proof}
Proposition 4.5 from \cite{MPPS} provides a natural isomorphism 
of affine varieties between $\F(A,\g)$ and  $\Hom_{\rm Lie}(\h(A), \g)$. 
The claim then follows from Theorem \ref{thm:germs}.
\end{proof}

Corollary \ref{cor:sophus} extends Theorem 17.1 from \cite{KM}, 
from the $1$-formal case to the much broader class of
groups having a $1$-finite $1$-model.

\section{Models for compact Lie group actions: Part I}
\label{sect:Kmod1}

In this section, we first construct from manifolds admitting finite models new examples 
of manifolds supporting an almost free action by a compact Lie group, which also have
finite models. Secondly, we begin an algebraic study of this construction, related to 
the behavior of additional properties (existence of positive weights and formality).
This leads to new results on the partial formality properties of such manifolds, 
and related spaces. 

\subsection{Almost free actions and Hirsch extensions}
\label{subsec:s1act}

From now on, $K$ will denote a compact, connected, real Lie group. 
Consider the universal principal $K$-bundle, 
\begin{equation}
\label{eq:bundle}
\xymatrix{K\ar[r] & EK \ar[r] &BK},
\end{equation}
with contractible total space $EK$ and with base 
space the classifying space for $K$-bundles, $BK=EK/K$.  
By a classical result of Hopf, the cohomology ring of $K$ (with 
coefficients in a characteristic zero field $\k$) is isomorphic to the 
cohomology ring of a finite product of odd-dimensional spheres.  That is,
\begin{equation}
\label{eq:hopf}
H^{\hdot}(K) \cong \bwedge P^{\hdot}
\end{equation}
where $P^{\hdot}$ is an oddly-graded, finite-dimensional vector space, 
with homogeneous basis $\{ t_{\alpha} \in P^{m_{\alpha}} \}$, 
for some odd integers $m_1,\dots ,m_r$, where $r=\rank(K)$.  

Now let $M$ be a compact, connected, differentiable manifold 
on which the compact, connected Lie group $K$ acts smoothly. 
Both $M$ and the orbit space $N=M/K$ are finite spaces. For $N$,
this follows from triangulability of stratified spaces \cite{Gor, V}.
We consider the diagonal action 
of $K$ on the product $EK\times M$, and form the Borel construction, 
\begin{equation}
\label{eq:borel}
M_K=(EK\times M)/K.
\end{equation} 
Let $\proj \colon M_K \to N$ be the map induced 
by the projection $\proj_2 \colon EK \times M \to M$. 

The $K$-action on $M$ is said to be {\em almost free}\/ if all its 
isotropy groups are finite. When this assumption is met, work of 
Allday and Halperin \cite{AH} provides a very useful 
Hirsch extension model for the manifold $M$.  

\begin{theorem}[\cite{AH}]
\label{thm:rs}
Suppose $M$ admits an almost free $K$-action, with orbit space $N=M/K$.  
There is then a map $\sigma\colon P^{\hdot} \to Z^{\hdot +1}(\Omega (N) )$ 
such that $\proj^*\circ [\sigma]$ is the transgression in the principal bundle 
$K\to EK\times M \to M_K$, and 
\[
\Omega (M) \simeq \Omega (N) \otimes_{\sigma} \bwedge P\, .
\]
\end{theorem}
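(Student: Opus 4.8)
The plan is to build a Hirsch-extension model in stages over the Borel construction $M_K$, and then transport it to $N$ along the equivalence furnished by the almost-free hypothesis. Two preliminary reductions cost little. First, since $EK$ is contractible, the map $\proj_2\colon EK\times M\to M$ is a homotopy equivalence, so $\Omega(\proj_2)$ is a quasi-isomorphism and $\Omega(M)\simeq\Omega(EK\times M)$. Second -- and this is the only place the almost-free assumption enters -- the fibre of $\proj\colon M_K\to N$ over an orbit with (finite) isotropy group $H$ is $EK/H\simeq BH$, and $\widetilde H^{\hdot}(BH;\k)=0$ because $\ch\k=0$; combining this fibrewise vanishing with a triangulation of the stratified space $N$ (see \cite{Gor,V}) via a Mayer--Vietoris induction over strata shows that $\proj^{*}\colon H^{\hdot}(N;\k)\to H^{\hdot}(M_K;\k)$ is an isomorphism (the map recalled in \S\ref{subsec:intro2}); in particular $\Omega(\proj)\colon\Omega(N)\to\Omega(M_K)$ is a quasi-isomorphism.

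The substantive step is to produce a Hirsch-extension model over $\Omega(M_K)$ for the total space of the principal $K$-bundle $EK\times M\to M_K$. By Borel's theorem $H^{\hdot}(BK;\k)=\k[y_{1},\dots,y_{r}]$ with $\deg y_{\alpha}=m_{\alpha}+1$; being free graded-commutative on even-degree generators, this is already the minimal Sullivan model of $BK$, so $BK$ is formal, and the acyclic Koszul complex $\bigl(\k[y_{\alpha}]\otimes\bwedge P,\,D\bigr)$ with $Dy_{\alpha}=0$, $Dt_{\alpha}=y_{\alpha}$ is the relative Sullivan model of the universal bundle $K\to EK\to BK$. Now $EK\times M\to M_K$ is the pullback of the universal bundle along the classifying map $c\colon M_K\to BK$, and since $K$ is connected the monodromy action of $\pi_1(M_K)$ on $H^{\hdot}(K;\k)$ is trivial; hence base change of relative Sullivan models along $\Omega(c)$ applies (\cite[\S15]{FHT}). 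Concretely, fix a quasi-isomorphism $(\k[y_{\alpha}],0)\xrightarrow{\sim}\Omega(BK)$, compose with $\Omega(c)$, and let $\sigma_{0}(t_{\alpha})\in Z^{m_{\alpha}+1}(\Omega(M_K))$ be the images of the $y_{\alpha}$; then $[\sigma_{0}(t_{\alpha})]=c^{*}(y_{\alpha})$, which by naturality of transgression equals $\tau(t_{\alpha})$, the transgression in the Serre spectral sequence of $EK\times M\to M_K$, and base change yields a quasi-isomorphism $\Omega(M_K)\otimes_{\sigma_{0}}\bwedge P\xrightarrow{\sim}\Omega(EK\times M)$. I expect this to be the main obstacle: as $M_K$ and $N$ are not manifolds, Chern--Weil theory is unavailable, and one must instead invoke the abstract base-change machinery, verifying that the fibration $K\to EK\times M\to M_K$ meets the hypotheses needed for a relative Sullivan model ($K$ connected with finite-dimensional $\k$-cohomology, trivial monodromy on $H^{\hdot}(K;\k)$).

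Finally, transport the Hirsch data across $\Omega(\proj)$. Since $\Omega(\proj)\colon\Omega(N)\to\Omega(M_K)$ is a quasi-isomorphism and $\sigma_{0}$ takes values in $Z^{\le m+1}(\Omega(M_K))$ with $m=\max\{m_{\alpha}\}$, Lemma~\ref{lem=qhirsch} (with $q=\infty$) supplies a degree-one map $\sigma\colon P^{\hdot}\to Z^{\hdot+1}(\Omega(N))$ and an identification $H^{\hdot}(N;\k)\cong H^{\hdot}(M_K;\k)$, namely $\proj^{*}$, under which $[\sigma]$ corresponds to $[\sigma_{0}]$; that is, $\proj^{*}\circ[\sigma]$ is the transgression, as claimed. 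The same lemma gives $\Omega(N)\otimes_{\sigma}\bwedge P\simeq\Omega(M_K)\otimes_{\sigma_{0}}\bwedge P$, so concatenating the weak equivalences above,
\[
\Omega(M)\simeq\Omega(EK\times M)\simeq\Omega(M_K)\otimes_{\sigma_{0}}\bwedge P\simeq\Omega(N)\otimes_{\sigma}\bwedge P,
\]
which is the assertion; by Lemma~\ref{lem:hirsch} the isomorphism type of the last extension depends only on $\Omega(N)$ and the class $[\sigma]$.
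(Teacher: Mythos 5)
The paper does not actually prove this statement: it is quoted verbatim from Allday--Halperin \cite{AH}, so there is no in-paper argument to compare against. Your reconstruction is correct, and it follows what is essentially the standard route to the cited result: (i) $\Omega(M)\simeq\Omega(EK\times M)$ since $EK$ is contractible; (ii) the finite-isotropy computation of the fibers of $\proj\colon M_K\to N$ as classifying spaces of finite groups, hence $\k$-acyclic in characteristic zero, giving that $\Omega(\proj)$ is a quasi-isomorphism; (iii) base change of the relative Sullivan (Koszul) model of $K\to EK\to BK$ along the classifying map of the principal bundle $EK\times M\to M_K$, with the identification $[\sigma_0(t_\alpha)]=c^*(y_\alpha)=\tau(t_\alpha)$ by naturality of the transgression; and finally Lemma \ref{lem=qhirsch} (with $q=\infty$) to transport the Hirsch data from $\Omega(M_K)$ to $\Omega(N)$, which is exactly how the paper itself moves such data between weakly equivalent \cdga s. Two steps are only sketched but are standard and you supply the needed hypotheses: the isomorphism $\proj^*$ is more cleanly obtained from the Leray spectral sequence (or Vietoris--Begle) for $\proj$, whose stalks vanish by your $BH$ computation, rather than a Mayer--Vietoris induction over a triangulation as in \cite{Gor,V}; and the base-change step requires the fibration hypotheses of \cite[\S 15]{FHT} (connected fiber with finite-dimensional cohomology and trivial $\pi_1(M_K)$-action on $H^{\hdot}(K)$, automatic for a principal bundle with connected structure group), which you verify. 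With those references in place, your proof is complete and consistent with the argument in \cite{AH}.
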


This theorem may be applied for instance to the total space $M$ 
of a principal $K$-bundle over a compact manifold $N=M/K$.

The next result adds a new interesting class of finite spaces 
having finite $\cdga$ models to the known examples from 
\cite{DP-ccm} and \cite{K-jdg}. 

\begin{lemma}
\label{lem:newfinite}
Let $M$ be an almost free $K$-manifold. Denote by $m$ the maximum 
degree of $P^{\hdot}$, where $\bwedge P^{\hdot} = H^{\hdot}(K)$.
\begin{enumerate}
\item \label{nf1}
Suppose $B$ is a $q$-finite $q$-model of the orbit space $N=M/K$, 
with $q\ge m+1$.  Then a suitable Hirsch extension
$A=B\otimes_{\tau} \bwedge P$ is  a $q$-finite $q$-model for $M$. 
\item \label{nf2}
Suppose $N=M/K$ is $q$-formal.  Then we may take 
$B^{\hdot}=(H^{\hdot}(N), d=0)$, and $A=B\otimes_{\tau} \bwedge P$  
is  a $q$-finite $q$-model of $M$ with positive weights.
\item \label{nf3}
Under the same formality assumption, all irreducible components of 
$\VV^i_s(M)$ containing the origin are algebraic subtori of 
the character group $H^1(M, \C^*)$, for all $i\le q$ and $s\ge 0$. 
\end{enumerate}
\end{lemma}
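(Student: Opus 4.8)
The plan is to treat the three parts in sequence, using Theorem \ref{thm:rs} as the common starting point and feeding its output into the machinery of Section \ref{sect:cdga} and Section \ref{sect:cjl}.

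\medskip

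\textbf{Part \eqref{nf1}.} By Theorem \ref{thm:rs}, there is a degree-one map $\sigma\colon P^{\hdot}\to Z^{\hdot+1}(\Omega(N))$ with $\Omega(M)\simeq \Omega(N)\otimes_{\sigma}\bwedge P$. Since $B$ is a $q$-model for $N$ and $q\ge m+1$, Lemma \ref{lem=qhirsch} applies: there is a degree-one map $\tau\colon P^{\hdot}\to Z^{\le m+1}(B)$, matching $[\sigma]$ under the identification $H^{\le m+1}(\Omega(N))\cong H^{\le m+1}(B)$, such that
\[
\Omega(N)\otimes_{\sigma}\bwedge P \;\simeq_q\; B\otimes_{\tau}\bwedge P.
\]
Composing the weak equivalence $\Omega(M)\simeq \Omega(N)\otimes_{\sigma}\bwedge P$ with this $q$-equivalence gives $\Omega(M)\simeq_q B\otimes_{\tau}\bwedge P =: A$, so $A$ is a $q$-model for $M$. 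Finiteness is immediate: $B$ is $q$-finite and $P$ is finite-dimensional, so $A^i=\bigoplus_{S\subseteq\{1,\dots,r\}}B^{i-\deg t_S}$ is finite-dimensional for $i\le q$, and $A^0=B^0=\k$.

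\medskip

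\textbf{Part \eqref{nf2}.} When $N$ is $q$-formal, $\Omega(N)\simeq_q (H^{\hdot}(N),d=0)$, so we may take $B=(H^{\hdot}(N),d=0)$ in part \eqref{nf1}. This $B$ is defined over $\Q$ (cohomology of a manifold) and has positive weights over $\Q$ — take $\wt(a)=i$ for $a\in H^i(N)$, as noted in the excerpt for the case $d=0$. The transgression classes $e_\alpha=[\tau(t_\alpha)]\in H^{m_\alpha+1}(N)$ are homogeneous of weight $m_\alpha+1$, so declaring $\wt(t_\alpha)=m_\alpha+1$ extends the positive weights to the Hirsch extension $A=B\otimes_{\tau}\bwedge P$, exactly as in the discussion preceding Theorem \ref{thm:mpps-bis}. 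One should also note that the $q$-equivalences produced by Lemma \ref{lem=qhirsch} respect the $\Q$-structures, since they are built functorially from the $\Q$-defined zig-zag between $\Omega(N,\Q)$ and $H^{\hdot}(N,\Q)$ (descent of $q$-formality over field extensions is cited in \S\ref{subsec:formality}); hence $A$ qualifies as a $q$-model with positive weights for $M$ in the sense of \S\ref{subsec:germs}.

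\medskip

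\textbf{Part \eqref{nf3}.} This is now a direct application: $M$ is a finite space (stated in \S\ref{subsec:s1act}), in particular $q$-finite, and by part \eqref{nf2} it admits a $q$-finite $q$-model with positive weights. Theorem \ref{thm:mpps-bis} then says every irreducible component of $\VV^i_s(M)$ through the trivial character is an algebraic subtorus of $H^1(M,\C^*)$, for all $i\le q$, $s\ge 0$.

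\medskip

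The main obstacle is the bookkeeping in part \eqref{nf2}: one must check that the weak equivalence from Theorem \ref{thm:rs} and the $q$-equivalence from Lemma \ref{lem=qhirsch} can be threaded together so that the composite zig-zag between $\Omega(M)$ and $A$ induces a $\Q$-compatible isomorphism on $H^1$, which is what the definition of ``$q$-model with positive weights'' demands. Theorem \ref{thm:rs} is a priori a statement over $\Q$ (it is ``rationally defined''), and Lemma \ref{lem=qhirsch} is functorial, so this should go through, but it requires stating the rational refinement of Theorem \ref{thm:rs} carefully. The rest is formal.
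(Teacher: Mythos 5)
Your proposal is correct and follows essentially the same route as the paper: parts \eqref{nf1} and the model statement in \eqref{nf2} are obtained from Theorem \ref{thm:rs} combined with Lemma \ref{lem=qhirsch}, while the positive-weights claim and part \eqref{nf3} come from the discussion preceding Theorem \ref{thm:mpps-bis} and that theorem itself. The extra care you take with $\Q$-structures is a sound elaboration of what the paper leaves implicit (the Sullivan algebra and Theorem \ref{thm:rs} are available over $\Q$), not a divergence in method.
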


\begin{proof}
Claim \eqref{nf1} and the first part of claim \eqref{nf2} follow 
from Theorem \ref{thm:rs} and Lemma \ref{lem=qhirsch}. 
The other claims follow from Theorem \ref{thm:mpps-bis} 
and the discussion preceding it.
\end{proof}

In the case of principal $K$-bundles, we can say more. 
Write $H^{\hdot}(K, \Q) \cong \bwedge P_K^{\hdot}= \bwedge (t_1,\dots,t_r)$, 
as before.

\begin{theorem}
\label{thm:krealiz}
Let $N$ be a finite space and $K$ be a compact connected real Lie group.
If $N$ has a finite model $B$ over $\Q$, then any Hirsch extension 
$A=B\otimes_{\tau} \bwedge P_K$ can be realized as a finite model 
over $\Q$ of some principal $K$-bundle $M$ over $N$. When $B$ 
has positive weights and the image of $[\tau]$ is generated by 
weighted-homogeneous elements, $A$ also has positive weights. 
\end{theorem}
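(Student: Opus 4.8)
The plan is to prove Theorem~\ref{thm:krealiz} in two steps: first realize the Hirsch extension $A=B\otimes_{\tau}\bwedge P_K$ as a model of \emph{some} principal $K$-bundle, then track the positive-weights property through that construction. For the first step, I would use the standard correspondence between principal $K$-bundles over $N$ and homotopy classes of maps $N\to BK$. Recall that $H^{\hdot}(BK,\Q)\cong \k[y_1,\dots,y_r]$ is a polynomial algebra, with $\deg y_\alpha=m_\alpha+1$, and that $BK$ is formal, so $(H^{\hdot}(BK,\Q),0)$ is a finite model for $BK$. The transgression $\tau$ determines, via $[\tau]\colon P_K^{\hdot}\to H^{\hdot+1}(B)$, a \cdga morphism $H^{\hdot}(BK)\to B$ sending $y_\alpha\mapsto [\tau](t_\alpha)$; by the realization theorem of Sullivan (finite-type rational homotopy theory), this \cdga morphism is realized, after passing to rationalizations, by a map $f\colon N\to BK$. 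Pulling back the universal bundle $EK\to BK$ along $f$ gives a principal $K$-bundle $M\to N$, and the Borel/Allday--Halperin description (or directly the fact that $\Omega(M)$ is a relative Sullivan model of $\Omega(N)$ over $\Omega(BK)$ determined by $f^*$) shows $\Omega(M)\simeq B\otimes_{\tau}\bwedge P_K$. Here one uses Lemma~\ref{lem:hirsch} to see the isomorphism type of the extension depends only on $B$ and $[\tau]$, so the model really is the prescribed $A$. Finiteness of $A$ is immediate since $B$ is finite and $P_K$ is finite-dimensional.

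A cleaner route for the first step, avoiding delicate realization arguments over $\Q$, is to take the pullback at the level of \emph{spaces} more carefully: choose first a finite CW-model for $N$, use that $[N,BK]$ is in bijection with isomorphism classes of principal $K$-bundles, and then invoke Theorem~\ref{thm:rs} (Allday--Halperin) applied to $M$, which gives $\Omega(M)\simeq\Omega(N)\otimes_{\sigma}\bwedge P$ for \emph{some} $\sigma$ with $\proj^*[\sigma]$ the transgression. The content is that, as $f$ ranges over all maps $N\to BK$, the resulting cohomology classes $[\sigma]$ range over all of $\Hom(P_K^{\hdot},H^{\hdot+1}(N,\Q))$ — this is exactly because $H^{\hdot}(BK)$ is free (polynomial) so any assignment $y_\alpha\mapsto$ arbitrary class is realized by a \cdga map, hence (again by Sullivan's realization, $BK$ being a rational $H$-space of finite type) by an actual map. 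Composing with a weak equivalence $\Omega(N)\simeq B$ transports $[\sigma]$ to the prescribed $[\tau]$, and Lemma~\ref{lem:hirsch} finishes. I expect the main obstacle to be precisely this realization claim: one must be careful that the target of the classifying map is $BK$ itself (not its rationalization), and that a bundle over the genuine space $N$ is obtained; the resolution is that $BK$ for $K$ compact connected is already ``rationally rigid'' enough — $H^{\hdot}(BK,\Q)$ is polynomial and $BK$ is simply connected of finite type — so the obstruction-theoretic lifting of a rational \cdga map to an honest map goes through, or alternatively one cites that principal $K$-bundles are classified by $[N,BK]$ and that the rational characteristic classes can be prescribed arbitrarily because $H^{\hdot}(BK,\Q)$ is a free graded-commutative algebra on classes in the relevant degrees.

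For the second step, suppose $B$ has positive weights over $\Q$ and the image of $[\tau]\colon P_K^{\hdot}\to H^{\hdot+1}(B)$ is spanned by weighted-homogeneous elements. By Lemma~\ref{lem:hirsch}, I may replace $\tau$ by any homomorphism inducing the same map in cohomology; in particular I may choose $\sigma\colon P_K^{\hdot}\to Z^{\hdot+1}(B)$ with each $\sigma(t_\alpha)$ a weighted-homogeneous \emph{cocycle} representing the given weighted-homogeneous cohomology class — this is possible since the subspace of $Z^{\hdot+1}(B)$ of weight-$w$ elements surjects onto the weight-$w$ part of $H^{\hdot+1}(B)$, by condition (2) in the definition of positive weights (the differential preserves weight). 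Then, exactly as in the discussion in \S\ref{subsec:germs} following the definition of positive weights, one extends the weight grading to $A=B\otimes_{\sigma}\bwedge P_K$ by declaring $\wt(t_\alpha)=\wt(\sigma(t_\alpha))=m_\alpha+1>0$; conditions (1)--(3) are checked degree by degree and monomial by monomial, the only new verification being $\wt(dt_\alpha)=\wt(\sigma(t_\alpha))=\wt(t_\alpha)$, which holds by construction, together with multiplicativity, which is automatic for a tensor product of weighted algebras. Since $\sigma$ is defined over $\Q$ and the model equivalences can be taken over $\Q$, $A$ inherits a $\Q$-structure with positive weights, completing the proof. This step is routine; the weight bookkeeping is the content, and there is no real obstacle beyond invoking Lemma~\ref{lem:hirsch} to replace $\tau$ by a weight-homogeneous lift $\sigma$.
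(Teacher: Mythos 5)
Your overall architecture (classify bundles by maps to $BK$, obtain the model of the total space from Theorem~\ref{thm:rs}/the Hirsch Lemma, then do the weight bookkeeping) matches the paper's, but the key realization step is wrong as you state it. Sullivan's realization theorem only yields a map into the rationalization $(BK)_{\Q}\simeq \prod_{\alpha}K(\Q,m_{\alpha}+1)$, and your claim that ``the rational characteristic classes can be prescribed arbitrarily because $H^{\hdot}(BK,\Q)$ is free'' is false: the tuples of classes induced by honest maps $N\to BK$ form only a lattice-like subset of $\Hom(P_K^{\hdot},H^{\hdot+1}(N,\Q))$. Already for $K=S^1$ and $N=S^2$ there is no principal circle bundle whose rational Euler class is $\frac{1}{2}\,x$ (with $x$ an integral generator), since the Euler class must lie in the image of $H^2(N,\Z)$. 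So the obstruction-theoretic lifting of a map $N\to (BK)_{\Q}$ to $N\to BK$ that you invoke does not ``go through''; it genuinely fails for non-integral prescriptions, and this is precisely where the content of the theorem lies.

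The paper's proof handles this by citing \cite[Corollary 3.4]{Pa}: for the prescribed classes $e_{\alpha}=[\tau(t_{\alpha})]$ there is a principal $K$-bundle over $N$ whose characteristic classes are $\lambda_{\alpha}\cdot e_{\alpha}$ for some $\lambda_{\alpha}\in\Q^{\times}$ --- realization only \emph{up to nonzero rational rescaling}. The prescribed Hirsch extension is nevertheless obtained, by the second clause of Lemma~\ref{lem:hirsch}: $[\tau]$ and $[\tau]\circ g$ give isomorphic extensions for any graded automorphism $g$ of $P_K$, here the diagonal rescaling $t_{\alpha}\mapsto \lambda_{\alpha}t_{\alpha}$. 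Your proposal never uses this rescaling mechanism because it rests on exact realizability, and the up-to-multiples realization is itself a nontrivial statement (the content of the cited result, not a formal consequence of $H^{\hdot}(BK,\Q)$ being polynomial); this is the genuine gap. The positive-weights part of your argument is essentially the paper's intended one (pass to weight-homogeneous cocycle representatives, which exist since $d$ preserves weights, and extend the weights to the new generators), up to the small slip that $\wt(t_{\alpha})$ should be declared equal to the weight of the chosen homogeneous class, which need not equal $m_{\alpha}+1$; only its positivity is needed.
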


\begin{proof}
Set $e_{\alpha}=[\tau (t_{\alpha})] \in H^{\hdot}(B) \equiv H^{\hdot}(N, \Q)$. 
By \cite[Corollary 3.4]{Pa}, there is a principal $K$-bundle $M$ over $N$ 
having characteristic classes $\lambda_{\alpha}\cdot e_{\alpha}$, $\alpha =1,\dots,r$,
for some $\lambda_{\alpha} \in \Q^{\times}$. The fact that $A$ is a finite model of 
$M$ over $\Q$ follows from the Hirsch Lemma.  The discussion on positive 
weights from \S \ref{subsec:germs} completes the proof.
\end{proof}

This result provides a systematic way of constructing new examples of finite 
spaces with finite models, with interesting non-nilpotent topology. It also gives 
a simple criterion which ensures that the existence of positive weights on old models
passes to the new models. It is known that the stronger formality property may 
not be inherited by Hirsch extensions of a formal $\dga$.  In the next subsection, 
we provide a criterion which solves this issue, for partial formality.

\subsection{Graded regularity and partial formality}
\label{subsec:partform-circle}

Fix an integer $q\ge 0$.  Our next theorem uses graded versions 
of two classical notions from commutative algebra.    Although the 
analogous result for $q=\infty$ is well-known (cf.~\cite{H87}), we shall 
need the precise form stated here, as a crucial ingredient in the proofs 
of Theorems \ref{thm:highf} and \ref{thm:highf-bis}  below.

Let $H^{\hdot}$ be a connected commutative graded algebra over a characteristic 
zero field $\k$. We say that a homogeneous element $e\in H^k$ is a non-zero 
divisor up to degree $q$ if the multiplication map $e \cdot \colon H^i \to H^{i+k}$ 
is injective, for all $i\le q$.  (For $q=0$, this simply means that $e\ne 0$.) 

Likewise, we say that a sequence $e_1,\dots ,e_r$
of  homogeneous elements in $H^+$ is {\em $q$-regular}\/ 
if the class of each $e_{\alpha}$ is a non-zero divisor up to 
degree $q-\deg(e_{\alpha})+2$ in the quotient ring $H/\sum_{\beta <\alpha} e_{\beta} H$.
(This implies in particular that the elements  $e_1,\dots ,e_r$ are 
linearly independent over $\k$, when  $q\ge \deg(e_{\alpha})-2$ for all $\alpha$.)

\begin{theorem}
\label{thm:lefred}
Suppose $e_1,\dots ,e_r$ is an even-degree, $q$-regular sequence in $H^{\hdot}$.  
Then the Hirsch extension $A=H \otimes_{\tau} \bwedge (t_1,\dots ,t_r)$ with 
$d=0$ on $H$  and $dt_{\alpha}=\tau (t_{\alpha})=e_{\alpha}$ has 
the same $q$-type as $(H/\sum_{\alpha} e_{\alpha} H, d=0)$.
In particular, $A$ is $q$-formal.
\end{theorem}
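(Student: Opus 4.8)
The plan is to build the $q$-equivalence in two steps, handling first the passage from the full Hirsch extension $A = H \otimes_{\tau} \bwedge(t_1,\dots,t_r)$ down to its cohomology, and then identifying that cohomology (up to degree $q$) with the quotient ring $\overline{H} := H/\sum_\alpha e_\alpha H$. The natural map to use is the projection $\pi\colon A \to \overline{H}$ that kills every $t_\alpha$ and reduces $H$ modulo the ideal $(e_1,\dots,e_r)$; since $dt_\alpha = e_\alpha \in (e_1,\dots,e_r)$, this is a well-defined \cdga morphism (with $\overline{H}$ carrying the zero differential). It therefore suffices to show that $\pi$ is a $q$-equivalence, i.e.\ induces an isomorphism on cohomology in degrees $\le q$ and a monomorphism in degree $q+1$.

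First I would reduce to the one-variable case by induction on $r$, writing $A = A_{r-1} \otimes_{\tau} \bwedge(t_r)$ where $A_{r-1} = H \otimes_\tau \bwedge(t_1,\dots,t_{r-1})$ and using the standard Hirsch exact sequence \eqref{eq=helem}. The inductive hypothesis is that $A_{r-1}$ has the same $q$-type as $(\overline{H}_{r}, d=0)$, where $\overline{H}_r = H/\sum_{\beta<r} e_\beta H$; concretely, the projection $A_{r-1} \to \overline{H}_r$ is a $q$-equivalence. By Lemma \ref{lem=qhirsch} (note $m = \max m_\alpha$ is odd and the hypothesis $q$-regularity is set up exactly so the relevant degrees $q - \deg(e_\alpha) + 2$ stay in range), the Hirsch extension $A = A_{r-1}\otimes_\tau \bwedge(t_r)$ has the same $q$-type as $\overline{H}_r \otimes_{e_r} \bwedge(t_r)$, the Hirsch extension of $\overline{H}_r$ (zero differential) with $dt_r = e_r$. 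So the whole problem collapses to: \emph{if $e$ is a homogeneous element of even degree $k$ in a connected graded algebra $H'$ which is a non-zero divisor up to degree $q-k+2$, then $H' \otimes_e \bwedge(t)$ has the same $q$-type as $(H'/eH', d=0)$.}

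For this base case I would compute $H^i(H' \otimes_e \bwedge(t))$ directly from \eqref{eq=helem}, which here reads
\[
H^{i-k}(H') \xrightarrow{\,e\cdot\,} H^{i}(H') \to H^{i}(H' \otimes_e \bwedge(t)) \to H^{i-k+1}(H') \xrightarrow{\,e\cdot\,} H^{i+1}(H').
\]
Since $H'$ has zero differential, $H^j(H') = H'^{\,j}$ and the maps $e\cdot$ are honest multiplications. The non-zero-divisor hypothesis says $e\cdot\colon H'^{\,j}\to H'^{\,j+k}$ is injective for $j \le q-k+2$. Hence for $i \le q+1$ we have $i - k + 1 \le q - k + 2$, so the right-hand map $e\cdot$ is injective, which forces the connecting map $H^i(H'\otimes_e\bwedge(t)) \to H^{i-k+1}(H')$ to be zero; thus $H^i(H'\otimes_e\bwedge(t)) = \coker(e\cdot\colon H'^{\,i-k}\to H'^{\,i}) = (H'/eH')^i$ for all $i \le q+1$, and one checks this identification is exactly what the projection $\pi$ induces, giving isomorphism for $i\le q$ and the needed monomorphism at $i = q+1$. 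Finally, $q$-formality is immediate: $A \simeq_q (H^{\hdot}(A), d=0)$ follows because $(\overline{H}, d=0)$ is its own cohomology, so it is formal, and having the same $q$-type as a formal \cdga makes $A$ itself $q$-formal.

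The main obstacle is bookkeeping of the degree bounds through the induction: one must make sure that the hypothesis "$e_\alpha$ is a non-zero divisor up to degree $q - \deg(e_\alpha) + 2$ in $\overline{H}_\alpha$" is precisely strong enough both (i) to apply Lemma \ref{lem=qhirsch}, which needs $q \ge m+1 \ge \deg(e_\alpha)$-type inequalities to peel off one generator at a time, and (ii) to run the cokernel computation above at the top degree $i = q+1$. This is why the theorem is stated with the $+2$ shift rather than something cruder; verifying that the shift propagates correctly when one replaces $H$ by successive quotients $\overline{H}_\alpha$ — and that $q$-regularity of the original sequence is inherited in the form needed at each stage — is the one step that genuinely requires care rather than routine manipulation.
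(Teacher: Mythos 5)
Your base case is fine, and in fact your Gysin-sequence computation for a single variable is exactly the degree count the paper runs. The genuine gap is in the inductive step: you pass from $A_{r-1}\simeq_q \overline{H}_r$ to $A_{r-1}\otimes_{e_r}\bwedge(t_r)\simeq_q \overline{H}_r\otimes_{e_r}\bwedge(t_r)$ by invoking Lemma \ref{lem=qhirsch}, but that lemma requires $q\ge m+1$, where here $m+1=\deg(e_r)$, and Theorem \ref{thm:lefred} carries no such hypothesis. Your parenthetical remark that ``$q$-regularity keeps the relevant degrees in range'' conflates two different bounds: $q$-regularity controls the range $\le q-\deg(e_\alpha)+2$ in which multiplication is injective, whereas Lemma \ref{lem=qhirsch} needs the $q$-equivalence to identify cohomology through degree $\deg(e_r)$ in order to transport the class $[e_r]$ across the zig-zag; when $q<\deg(e_r)$ the $q$-equivalence gives no control in that degree, so the replacement of the base of the Hirsch extension is unjustified. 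This regime is not a corner case: it is precisely where the theorem gets used, e.g.\ Example \ref{ex:hopf-nonformal} has $q=3$ and $\deg e=4$, and the Sasakian application (Theorem \ref{thm:highf}) with $n=2$ has $q=1$ and $\deg h=2$. (The definition of $q$-regularity even addresses $q=0$.) So as written your induction only proves the statement under the extra assumption $q\ge \max_\alpha\deg(e_\alpha)$.

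The repair is to avoid replacing $A_{r-1}$ by $\overline{H}_r$ wholesale and instead carry the comparison map along the induction, which is what the paper does: it keeps the inclusion $\varphi_\alpha\colon (H,0)\to A_\alpha$ and the projection $\psi_\alpha$, proves by induction that $\varphi_\alpha^*$ is surjective (hence, using $\psi_\alpha^*\varphi_\alpha^*=\id$, an isomorphism) in degrees $\le q+1$, and then deduces injectivity of $[e_{\alpha+1}]\cdot$ on $H^{i-m_{\alpha+1}}(A_\alpha)$ for $i\le q+1$ from the commuting square \eqref{eq:cd}, where only degrees $\le q-\deg(e_{\alpha+1})+2$ of the quotient ring are involved -- exactly what $q$-regularity gives, with no lower bound on $q$. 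Concretely, you can keep your one-variable Gysin argument, but run it on $A_\alpha$ directly (whose cohomology you identify with $H/\sum_{\beta\le\alpha}e_\beta H$ only in the degrees where the inductive surjectivity statement applies), rather than on an abstract $q$-equivalent replacement of it.
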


\begin{proof} 
The canonical projection $H\surj H/\sum_{\alpha} e_{\alpha} H$ 
extends to a morphism of graded algebras, 
$\psi \colon H \otimes \bwedge (t_1,\dots ,t_r) \to H/\sum_{\alpha} e_{\alpha} H$, 
by setting $\psi (t_{\alpha})=0$. In turn, this morphism defines a $\dga$ map, 
$\psi \colon H \otimes_{\tau} \bwedge (t_1,\dots ,t_r) \to 
(H/\sum_{\alpha} e_{\alpha} H, d=0)$.
We will show that $\psi$ is a $q$-equivalence. 

Let $\varphi \colon (H, d=0) \inj A$ be the canonical $\dga$ inclusion,
inducing $\varphi^*\colon  H^{\hdot}/\sum_{\alpha} e_{\alpha} H 
\to H^{\hdot} (A)$. Clearly, $\psi^*$ is surjective and $\psi^* \circ \varphi^*=\id$. 
It follows that $\psi$ is a $q$-equivalence if and only if the map 
$\varphi^*\colon H^{i} \to H^{i} (A)$ is surjective for all $i\le q+1$.

We prove this by induction on the length of our $q$-regular sequence. 
Denote by $m_{\alpha}$ the degree of $t_{\alpha}$. 
Set $A_{\alpha}=H \otimes_{\tau} \bwedge 
(t_1,\dots ,t_{\alpha})$ and note that $A_{\alpha +1}=A_{\alpha} 
\otimes_{e_{\alpha +1}} \bwedge (t_{\alpha +1})$. Suppose that
the map $\varphi_{\alpha}^*$ is surjective up to degree $q+1$. 
To obtain the same property for $\varphi_{\alpha +1}$
at the induction step, it suffices to show that the 
map $H^{i} (A_{\alpha}) \to H^{i} (A_{\alpha +1})$ 
is surjective, for all $i\le q+1$. In view of \eqref{eq=helem}, 
this is equivalent to showing that the map 
$[e_{\alpha +1}] \cdot \colon H^{i-m_{\alpha +1}} (A_{\alpha}) \to H^{i+1} (A_{\alpha})$ 
is injective, for all $i\le q+1$.

Consider the following commuting diagram 
\begin{equation}
\label{eq:cd}
\begin{gathered}
\xymatrix{
H^{i-m_{\alpha +1}}/ \sum_{\beta \le \alpha} e_{\beta}H 
\ar^(.59){\varphi_{\alpha}^*}[r] \ar@{^{(}->}^{e_{\alpha +1}\cdot}[d]
&H^{i-m_{\alpha +1}} (A_{\alpha}) \ar^{[e_{\alpha +1]}\cdot}[d]\\
H^{i+1}/ \sum_{\beta \le \alpha} e_{\beta}H & H^{i+1} (A_{\alpha}) \, .
\ar_(.4){\psi_\alpha^*}[l]
}
\end{gathered}
\end{equation}

Since we are assuming that $i\le q+1$, the $q$-regularity of our sequence 
implies that the map $e_{\alpha +1}$ on the left side of \eqref{eq:cd} is injective. 
By our induction hypothesis, the map $\varphi_{\alpha}^*$ is
an isomorphism. By commutativity of \eqref{eq:cd}, the map 
$[e_{\alpha +1}] \cdot$ must be injective, 
and this completes the proof.
\end{proof}

Classical theorems of Borel \cite{Borel} and Chevalley \cite{Chev} 
provide a machine for constructing graded algebras which satisfy the hypothesis 
of Theorem \ref{thm:lefred}, in the case when $q=\infty$.

\begin{construction}
\label{con:regseq}
Let $H^{\hdot}(BK)$ be the cohomology algebra of the classifying 
space of a compact, connected Lie group $K$.  Let $T$ be a maximal 
torus in $K$, and let $W=NT/T$ be the Weyl group. The classifying 
space $BT$ is the product of $r$ copies of $\CP^{\infty}$, where $r$ 
is the rank of $K$. Its cohomology algebra is $H^{\hdot}(BT)=\k [x_1,\dots, x_r]$,
with degree $2$ free algebra generators, on which $W$ acts by graded 
algebra automorphisms. 

The natural map $\kappa \colon BT \to BK$ identifies the cohomology 
algebra $H^{\hdot}(BK)$ with the invariant subalgebra
of the $W$-action. More precisely, $H^{\hdot}(BK)$ is isomorphic 
to a polynomial ring of the form $\k [f_1,\dots, f_r]$, 
where each $f_{\alpha}$ is a $W$-invariant polynomial of 
even degree $m_{\alpha}+1$, with $m_{\alpha}$ 
as in \S\ref{subsec:s1act}. Moreover, $f_1,\dots , f_r$  
forms a regular sequence in $\k [x_1,\dots, x_r]$. 
\end{construction}

Let $U\subseteq K$ be a closed, connected subgroup of a compact, 
connected Lie group. As shown in \cite{Th}, the Sullivan minimal model 
of the homogeneous space $K/U$ is a Hirsch extension of the form 
$A=H \otimes_{\tau} \bwedge (t_1,\dots ,t_s)$, where $H^{\hdot}$ 
is a free graded algebra on finitely many even-degree generators, 
with zero differential, as in Theorem \ref{thm:lefred}.  
As is well-known, not all homogeneous spaces $K/U$ are formal. 
Nevertheless, the criterion from the aforementioned theorem 
may be used to obtain valuable information on their partial 
formality properties.

\begin{example}
\label{ex=homnonf}
For the homogeneous space $\Sp(5)/\SU(5)$, 
the aforementioned algebra $H^{\hdot}$ has two free generators,
$x_6$ and $x_{10}$, where subscripts denote degrees, and the sequence 
from Theorem \ref{thm:lefred} is $\{x^2_6, x^2_{10}, x_6 x_{10}\}$, 
see \cite[Exercise 3.5, p.~\!143]{FOT}. It is straightforward to deduce 
from our theorem that $\Sp(5)/\SU(5)$ is $19$-formal. 
On the other hand, an easy computation with Massey triple 
products shows that this estimate is sharp, that is, 
$\Sp(5)/\SU(5)$ is not $20$-formal. 
\end{example}

\subsection{Partial formality of $K$-manifolds}
\label{subsec:pfk}

Now let $M$ be an almost free $K$-manifold. Using  the setup from 
\S\ref{subsec:s1act}, write $H^{\hdot}(K)= \bwedge (t_1,\dots ,t_{r})$, 
and denote the transgression of $t_{\alpha}$ by 
$e_{\alpha} \in H^{m_{\alpha} +1} (M/K)$.  As before, set 
$m= \max \{ m_{\alpha} \}$.

\begin{theorem}
\label{thm:nformal}
Suppose the $K$-action on $M$ is almost free, the orbit space $N=M/K$ is $k$-formal, 
for some $k\ge m+1$, and $e_1,\dots ,e_r$ is a $q$-regular sequence in $H^{\hdot}(N)$,
for some $q\le k$.  
Then
\[
\Big(H^{\hdot}(N) \big/\sum_{\alpha=1}^r e_{\alpha} H^{\hdot}(N)\big., d=0\Big)
\] 
is a finite $q$-model for $M$.  In particular, $M$ is $q$-formal.
\end{theorem}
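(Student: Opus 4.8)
The plan is to combine the Allday--Halperin Hirsch-extension model of Theorem~\ref{thm:rs} with the $k$-formality of the orbit space via Lemma~\ref{lem=qhirsch}, and then feed the resulting Hirsch extension over $H^{\hdot}(N)$ into Theorem~\ref{thm:lefred}. First, by Theorem~\ref{thm:rs}, there is a map $\sigma\colon P^{\hdot}\to Z^{\hdot+1}(\Omega(N))$ with $\Omega(M)\simeq \Omega(N)\otimes_\sigma \bwedge P$, and $\proj^*\circ[\sigma]$ is the transgression, so that under the identification $H^{\hdot}(M_K)\cong H^{\hdot}(N)$ the classes $[\sigma(t_\alpha)]$ correspond to $e_\alpha\in H^{m_\alpha+1}(N)$. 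Since $N$ is $k$-formal with $k\ge m+1$, we have $\Omega(N)\simeq_k (H^{\hdot}(N),d=0)$. Applying Lemma~\ref{lem=qhirsch} with $A=\Omega(N)$, $B=(H^{\hdot}(N),d=0)$, $q$ replaced by $k$ (noting $k\ge m+1$), we obtain a degree-one map $\tau\colon P\to Z^{\le m+1}(H^{\hdot}(N))$ — necessarily landing in $H^{m_\alpha+1}(N)$ since the differential is zero — with $[\tau]$ corresponding to $[\sigma]$ under $H^{\le m+1}(\Omega(N))\cong H^{\le m+1}(H^{\hdot}(N))$, and such that
\[
\Omega(N)\otimes_\sigma\bwedge P \;\simeq_k\; (H^{\hdot}(N),d=0)\otimes_\tau\bwedge P.
\]
By Lemma~\ref{lem:hirsch}, the isomorphism type of the right-hand side depends only on $[\tau]$, so we may take $\tau(t_\alpha)=e_\alpha$ exactly.

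\textbf{Second step.} Combining the two weak equivalences, $\Omega(M)\simeq_k H^{\hdot}(N)\otimes_\tau\bwedge(t_1,\dots,t_r)$ with $dt_\alpha=e_\alpha$, hence in particular $\Omega(M)\simeq_q$ this same Hirsch extension since $q\le k$. Now apply Theorem~\ref{thm:lefred}: the sequence $e_1,\dots,e_r$ is even-degree (the $m_\alpha$ are odd, so $m_\alpha+1$ is even) and $q$-regular by hypothesis, so
\[
H^{\hdot}(N)\otimes_\tau\bwedge(t_1,\dots,t_r)\;\simeq_q\;\Big(H^{\hdot}(N)\big/\textstyle\sum_\alpha e_\alpha H^{\hdot}(N),\,d=0\Big).
\]
Chaining these, $\Omega(M)\simeq_q \big(H^{\hdot}(N)/\sum_\alpha e_\alpha H^{\hdot}(N),d=0\big)$, which is the asserted finite $q$-model; it is $q$-finite because $N$ is a finite space, so $H^{\hdot}(N)$ is finite-dimensional in each degree, and the quotient is connected. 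Since this model has zero differential, it equals its own cohomology, so $M$ is $q$-formal.

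\textbf{Main obstacle.} The technical care is almost entirely in the bookkeeping of the first step: one must check that Lemma~\ref{lem=qhirsch} genuinely applies, i.e.\ that the hypothesis $q\ge m+1$ there is met (here with $k$ in the role of $q$, which holds since $k\ge m+1$), and that the identification of $[\tau]$ with the transgressed classes $e_\alpha$ is the correct one under $\proj^*$. One subtlety worth noting is that Theorem~\ref{thm:rs} gives a weak equivalence (a zig-zag), not a single map, but Lemma~\ref{lem=qhirsch} is stated precisely for $A\simeq_q B$ connected by a zig-zag of $q$-equivalences, so this causes no difficulty. A second point: Lemma~\ref{lem=qhirsch} produces some $\tau$ with the right cohomology class, and we then invoke Lemma~\ref{lem:hirsch} to replace it by the cocycle-level choice $\tau(t_\alpha)=e_\alpha$; since $H^{\hdot}(N)$ carries the zero differential, every such cocycle is automatically a genuine cohomology class, so this replacement is transparent. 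Beyond these matchings, the argument is purely a concatenation of the three cited results.
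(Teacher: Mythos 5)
Your overall strategy is exactly the paper's: combine the Allday--Halperin model (Theorem~\ref{thm:rs}) with Lemma~\ref{lem=qhirsch} to replace $\Omega(N)$ by $(H^{\hdot}(N),d=0)$, and then apply Theorem~\ref{thm:lefred}. However, there is one genuine gap, at precisely the point you dismiss as ``transparent bookkeeping.'' Lemma~\ref{lem=qhirsch} produces a map $\tau$ whose class $[\tau]$ corresponds to $[\sigma]$ \emph{under the isomorphism $H^{\le m+1}(\Omega(N))\cong H^{\le m+1}(N)$ induced by the chosen zig-zag of $k$-equivalences}. That isomorphism is a priori \emph{not} the canonical identification of $H^{\hdot}(\Omega(N))$ with $H^{\hdot}(N)$ under which the characteristic classes $e_\alpha$ were defined; it may differ from it by an unknown graded-ring automorphism $\psi$ of the truncation $H^{\le k}(N)$. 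So what you actually obtain is a Hirsch extension with $[\tau(t_\alpha)]=\psi(e_\alpha)$, not $e_\alpha$. Lemma~\ref{lem:hirsch} cannot repair this: it lets you change $\tau$ within a fixed cohomology class, not change the class itself. And replacing $\{e_\alpha\}$ by $\{\psi(e_\alpha)\}$ is not harmless, because $\psi$ is only defined (and only an isomorphism) up to degree $k$, while $q$-regularity involves multiplication maps landing in degrees up to $q+2$, which can exceed $k$ when $q$ is close to $k$; likewise the identification of the quotient ring $H/\sum\psi(e_\alpha)H$ with $H/\sum e_\alpha H$ in the range needed for the $q$-type is not automatic.

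The paper closes this gap with an extra input you do not have: by \cite[Proposition 3.1]{Ma}, the zig-zag of $k$-equivalences realizing the $k$-formality of $N$ may be \emph{chosen} so that it induces the identity on cohomology in degrees up to $k$. With that choice, the isomorphism in Lemma~\ref{lem=qhirsch} is the canonical one, hence $\sigma(t_\alpha)=e_\alpha$ exactly, and the rest of your argument (parity of the degrees $m_\alpha+1$, $q$-regularity, Theorem~\ref{thm:lefred}, finiteness of the quotient since $N$ is a finite space) goes through as you wrote it. So the fix is a one-line citation, but as written your step ``we may take $\tau(t_\alpha)=e_\alpha$ exactly'' is unjustified, and it is exactly the step where the content of the formality hypothesis has to be normalized.
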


\begin{proof}
Since the action of $K$ on $M$ is almost free, 
Theorem \ref{thm:rs} insures that the Hirsch extension  
$\Omega (N) \otimes_{\tau} \bwedge P$ is a model for $M$. 
The $k$-formality assumption on $N$ means that 
there is a zig-zag of $k$-equivalences between $\Omega (N)$ and $H(N)$.
It follows from \cite[Proposition 3.1]{Ma} that this zig-zag
may be chosen to induce the identity in homology, in degrees up to $k$.   

Now, since  $k\ge m+1$ and $k\ge q$,  Lemma \ref{lem=qhirsch} 
insures that $\Omega (N) \otimes_{\tau} \bwedge P$ has the same $q$-type as 
$H (N)\otimes_{\sigma} \bwedge (t_1,\dots ,t_r)$, 
where $[\sigma]$ corresponds to $[\tau]$ under the 
induced isomorphism $H^{\le m+1}(N) \cong H^{\le m+1}(\Omega(N))$.
By the above, this isomorphism is the identity, and thus 
$\sigma(t_{\alpha})= e_{\alpha}$, for all $\alpha$.  

Using now the $q$-regularity assumption on the sequence $\{e_\alpha\}$, 
Theorem \ref{thm:lefred} applies, and we obtain the desired conclusion.
\end{proof}

As illustrated in the next two examples, the $q$-regularity 
assumption from Theorem \ref{thm:nformal} is
optimal with respect to the $q$-formality conclusion for 
the manifold $M$, when $K=S^1$ or $S^3$. 

\begin{example}
\label{ex:heis-bis}
Let $M=\Heis_1$ be the $3$-dimensional Heisenberg nilmanifold 
from Example \ref{ex:heis}.  This manifold is the total space of the 
principal $S^1$-bundle over the formal manifold $N=S^1 \times S^1$, 
with Euler class $e\in H^2 (N)$ equal to the orientation class. 
In this case, the sequence $\{ e \}$ is $0$-regular, but not $1$-regular
in $H^{\hdot} (N)$. In fact, as mentioned previously, $M$ is not $1$-formal. 
As explained in Example \ref{ex:heis}, this is the first manifold in a series, 
$\Heis_n$, where $(n-1)$-regularity implies $(n-1)$-formality in an optimal way. 
\end{example}

\begin{example}
\label{ex:hopf-nonformal}
Let  $M$ to be the total 
space of the principal $S^3$-bundle over $N=S^2 \times S^2$ 
obtained by pulling back the Hopf bundle $S^7\to S^4$ along 
a degree-one map $N\to S^4$. As above, $N$ is 
formal, and the Euler class $e\in H^4 (N)$ 
is the orientation class.  In this case, $\{ e \}$ is 
$3$-regular, but not $4$-regular 
in $H^{\hdot} (N)$, and Theorem \ref{thm:nformal} 
says that $M$ is $3$-formal. Direct computation with the 
minimal model of $M$ shows that, in fact, $M$ is 
not $4$-formal.
\end{example}

In general, though, our lower bound for the $q$-formality of manifolds 
with almost free $K$-actions is not sharp. We use Construction \ref{con:regseq} 
to illustrate this point.  

\begin{example}
\label{ex=notsharp}
Let $K$ be a compact, connected Lie group of rank $r$, and identify 
$H^{\hdot}(BK)$ with $\k [f_1,\dots, f_r]$. For any $n\ge 1$, let $M_n$ be the 
principal $K$-bundle over the formal manifold $N=(\CP^n)^{\times r}$
classified by the restriction of the natural map $\kappa \colon BT \to BK$ 
to $N$. It is immediate to check that the inclusion $j\colon N \inj BT$
induces an isomorphism in cohomology, up to degree $2n+1$. 
Furthermore, it follows from Construction \ref{con:regseq} that  
the sequence $e_{\alpha}=j^*(f_{\alpha})$, $\alpha=1,\dots, r$  
is $(2n-1)$-regular in $H^{\hdot}(N)$. Therefore, 
by Theorem \ref{thm:nformal}, the manifold $M_n$ is $(2n-1)$-formal. 

On the other hand, the Hirsch extension $H^{\hdot}(BT) \otimes_{f} \bwedge P_K$
is fully formal, by the classical case $q=\infty$ of Theorem \ref{thm:lefred}. 
The argument from Lemma \ref{lem=qhirsch} shows that the $\dga$ map 
$j^*\otimes \id \colon H^{\hdot}(BT) \otimes_{f} \bwedge P_K 
\to H^{\hdot}(N) \otimes_{e} \bwedge P_K$ is a $2n$-equivalence. 
Hence, $M_n$ is actually $2n$-formal.
\end{example}

\section{Models for compact Lie group actions: Part II}
\label{sect:Kmod2}

We continue with the setup from the previous section, and investigate 
several algebraic and geometric objects associated to a closed 
manifold $M$ endowed with an almost free action by a compact 
Lie group $K$, to wit, the Malcev Lie algebra and the representation 
varieties of $\pi_1(M)$, as well as the rank $1$ cohomology 
jump loci of $M$.

\subsection{Malcev completion and representation varieties}
\label{subsec:formalbase}

Let $H$ be a $2$-finite \cdga with zero differential, and 
let $A=H\otimes_{\tau} \bwedge P$ be a Hirsch extension, 
where $P$ is an oddly-graded, finite-dimensional vector space.

\begin{theorem}
\label{thm:filtf}
The holonomy Lie algebra $\h(A)$ admits a finite presentation
with generators in degree $1$ and relations in degrees $2$ and $3$. 
\end{theorem}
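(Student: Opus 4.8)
The plan is to extract a presentation of $\h(A)$ from the $2$-truncation $A^{\le 2}$, on which $\h(A)$ depends, and then to homogenize that presentation by eliminating redundant generators.

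First I would record the structure of $A^{\le 2}$. Since $H$ has zero differential, $\tau$ takes values in $H$; and since $P$ is oddly graded, only its degree-$1$ piece $P^1$ affects $A$ below degree $3$. Thus $A^1=H^1\oplus P^1$ and $A^2=H^2\oplus(H^1\otimes P^1)\oplus\bwedge^2 P^1$; the differential $d\colon A^1\to A^2$ vanishes on $H^1$ and equals $\tau_1:=\tau|_{P^1}\colon P^1\to H^2$ on $P^1$; and the multiplication $\mu\colon\bwedge^2 A^1\to A^2$ is the cup product $\bwedge^2 H^1\to H^2$ on the first summand and an isomorphism onto $(H^1\otimes P^1)\oplus\bwedge^2 P^1$ on the rest. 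Put $V_H=(H^1)^*$ and $V_P=(P^1)^*$. Because $\im d\subseteq H^2$, the dual map $d^*$ kills $(H^1\otimes P^1)^*$ and $(\bwedge^2 P^1)^*$, so unwinding \eqref{eq:hololie} gives
\[
\h(A)=\L(V_H\oplus V_P)\big/\ideal\big(\, V_H\wedge V_P \,\cup\, \bwedge^2 V_P \,\cup\, \{\cup^*\xi+\tau_1^*\xi:\xi\in(H^2)^*\} \,\big),
\]
where $\cup^*\colon(H^2)^*\to\bwedge^2 V_H$ is the usual quadratic holonomy relator map of $H$ and $\tau_1^*\colon(H^2)^*\to V_P$ is dual to $\tau_1$. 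The key point is that the first two families of relators carry no linear term.

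Next I would use this presentation. The relators $V_H\wedge V_P$ and $\bwedge^2 V_P$ force $[V_H,V_P]=0$ and $[V_P,V_P]=0$, so $V_P$ is central in $\h(A)$. Choose a splitting $(H^2)^*=\ker\tau_1^*\oplus C$ on which $\tau_1^*$ maps $C$ isomorphically onto $V_P':=\im\tau_1^*\subseteq V_P$, and a complement $V_P''$ of $V_P'$ in $V_P$. For $\xi\in C$ the relator $\cup^*\xi+\tau_1^*\xi$ exhibits the generator $\tau_1^*\xi\in V_P'$ as a fixed element $-\cup^*\xi\in\bwedge^2 V_H=\L^2(V_H)$, so a Tietze transformation eliminates all generators in $V_P'$. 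Substituting $V_P'\mapsto-\cup^*(C)$ into the surviving relators, one is left with a presentation of $\h(A)$ on the finite degree-$1$ generating set $V_H\oplus V_P''$ whose relators are: in degree $2$, the brackets $[V_H,V_P'']$ and $[V_P'',V_P'']$ together with the quadratic relators $\cup^*(\ker\tau_1^*)$; and in degree $3$, the brackets $[V_H,\cup^*(C)]$ and $[V_P'',\cup^*(C)]$. These are finite in number since $H$ is $2$-finite and $P$ is finite-dimensional.

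The single step requiring care — and the only use of the Jacobi identity — is that substitution into $\bwedge^2 V_P$ also produces relators $[\cup^*\xi,\cup^*\eta]$ with $\xi,\eta\in C$, of bracket length $4$; I would discard these by checking that they already lie in the ideal generated by the degree-$3$ relators $[V_H,\cup^*(C)]=0$. Indeed, writing $\cup^*\xi=\sum_k[a_k,b_k]$ with $a_k,b_k\in V_H$ and expanding each $[[a_k,b_k],\cup^*\eta]$ via Jacobi rewrites it in terms of brackets $[V_H,\cup^*(C)]$, hence in terms of that ideal. With those degree-$4$ relators removed, $\h(A)$ is presented by finitely many degree-$1$ generators and finitely many homogeneous relators in degrees $2$ and $3$, which is the assertion. (Combined with Theorem \ref{thm:malholo}, this yields the filtered-formality part of Theorem \ref{thm:intro3}.) The main obstacle is purely one of bookkeeping — tracking the dualities and the Tietze moves — since conceptually everything follows from the centrality of $V_P$.
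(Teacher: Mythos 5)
Your proposal is correct and follows essentially the same route as the paper: dualize the $2$-truncation of the Hirsch extension, use the relators $\cup^*\xi+\tau_1^*\xi$ (the paper's $s_\beta^*+u_\beta$) to eliminate the generators dual to $\im\tau_1$, and then invoke the Jacobi identity to absorb the resulting length-$4$ relators $[\cup^*\xi,\cup^*\eta]$ into the ideal generated by the cubic relators $[V_H,\cup^*(C)]$, which is exactly how the paper disposes of its relators $[u_\beta,u_{\beta'}]$. The only (immaterial) difference is that the paper also eliminates the cubic relators $[V_P'',\cup^*(C)]$ using the quadratic ones $[V_H,V_P'']$, whereas you retain them; this does not affect the conclusion.
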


\begin{proof}
Clearly,  the $\dga$s $H\otimes_{\tau} \bwedge P$ and $H\otimes_{\tau} \bwedge P^1$
 have the same $2$-truncation; hence, we may assume that $P=P^1$. Pick bases  
$\{ t_{\alpha} \} \cup \{ s_{\beta} \}$ for $P^1$ and $\{ e_{\beta} \} \cup \{ f_{\gamma}\}$ 
for $H^2$ such that $d_A( t_{\alpha})=0$ and $d_A (s_{\beta})= e_{\beta}$. Let $\{ h_i \}$ 
be a basis for $H^1$, and denote by $\{ (\cdot)^* \}$ the dual bases. Plainly, 
$A^1= H^1 \oplus P$ and $A^2=H^2 \oplus \bwedge^2 P \oplus (H^1 \otimes P)$. 

By construction, the map $d_A^*$ is zero on $A_2$, except for 
$d_A^* (e_{\beta}^*)= s_{\beta}^*$.  Moreover, we have the decomposition 
$\bwedge^2 A^1 = \bwedge^2 H^1 \oplus \bwedge^2 P \oplus (H^1 \otimes P)$. 
Again by construction, $\cup_A=\cup_H$ on the first summand, and $\cup_A=\id$ 
for the others. Set $u_{\beta}= \cup_H^*  e_{\beta}^* \in \L^2 (h_i^*)$ and 
$v_{\gamma}= \cup_H^*  f_{\gamma}^* \in \L^2 (h_i^*)$. 

By \eqref{eq:hololie}, the Lie algebra $\h(A)$ is generated 
by $\{ t_{\alpha}^* \} \cup \{ s_{\beta}^* \} \cup \{ h_i^* \}$,
with the following defining relations: $s_{\beta}^* + u_{\beta}= 0\,  (\forall \beta)$; 
$v_{\gamma}= 0\, (\forall \gamma)$;
$[t_{\alpha}^*, t_{\alpha'}^*]=0\, (\forall \alpha, \alpha')$; 
$[s_{\beta}^*, s_{\beta'}^*]=0\, (\forall \beta, \beta')$; 
$[t_{\alpha}^*, s_{\beta}^*]=0\, (\forall \alpha, \beta)$; 
$[h_i^*, t_{\alpha}^*]=0\, (\forall i, \alpha)$;  and 
$[h_i^*, s_{\beta}^*]=0\, (\forall i, \beta)$. 

Using the first batch of relations to eliminate the generators $\{ s_{\beta}^* \}$, we find 
that $\h(A)$ is generated in degree $1$ by $\{ t_{\alpha}^* \}  \cup \{ h_i^* \}$, and has 
the following relators:
\addtocounter{equation}{-11}
\renewcommand{\theequation}{\Roman{equation}}
\begin{align}
\label{eq=hol21}
&v_{\gamma} && (\forall \gamma),
\\
\label{eq=hol22}
&[t_{\alpha}^*, t_{\alpha'}^*] && (\forall \alpha, \alpha'),
\\
\label{eq=hol23}
&[h_i^*, t_{\alpha}^*] && (\forall i, \alpha),
\\
\label{eq=hol32}
&[h_i^*, u_{\beta}] &&(\forall i, \beta),
\\
\label{eq=hol31}
&[t_{\alpha}^*, u_{\beta}] &&(\forall \alpha, \beta),
\\
\label{eq=hol4}
&[u_{\beta}, u_{\beta'}] && (\forall \beta, \beta').
\end{align}
\addtocounter{equation}{5}

We claim that relations \eqref{eq=hol4} and \eqref{eq=hol31} follow from the others.
Indeed, $u_{\beta'} \in \L^2 (H_1)$  and, for any $i,j$, the relation 
$[[h_i^*, h_j^*], u_{\beta}]= [h_i^*, [h_j^*, u_{\beta}]]- [h_j^*, [h_i^*, u_{\beta}]]$
is a consequence of \eqref{eq=hol32}. Similarly, relations  \eqref{eq=hol31} 
may be eliminated using \eqref{eq=hol23}.

Of the remaining relations, \eqref{eq=hol21}--\eqref{eq=hol23} 
are quadratic, while \eqref{eq=hol32} are cubic. 
This completes the proof.
\end{proof}

\begin{corollary}
\label{cor:ffcircle}
Suppose $M$ supports an almost free $K$-action with $2$-formal orbit space. 
Then:
\begin{enumerate}
\item \label{ff1}
The group $\pi=\pi_1(M)$ is filtered-formal. 
More precisely, the Malcev Lie algebra $\m(\pi)$ is isomorphic 
to the lcs completion of the quotient $\L(H_1(\pi, \k))/\mathfrak{r}$, where 
$\mathfrak{r}$ is a homogeneous ideal generated in degrees $2$ and $3$. 

\item \label{ff2}
For every complex linear algebraic group $G$,  
the germ at the origin of the representation variety 
$\Hom_{\gr}(\pi,G)$ is defined by quadrics and cubics only.
\end{enumerate}
\end{corollary}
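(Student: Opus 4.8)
The plan is to manufacture, out of Theorem \ref{thm:rs} and the $2$-formality of $N$, a finite cdga model for $M$ of exactly the shape $H\otimes_{\tau}\bwedge P$ treated in Theorem \ref{thm:filtf}, and then to combine that theorem with Theorem \ref{thm:malholo} and Corollary \ref{cor:sophus}. First I would build the model. Write $H^{\hdot}(K)=\bwedge P^{\hdot}$ as in \S\ref{subsec:s1act}, let $P^1$ be its degree-one part, and let $\sigma\colon P^{\hdot}\to Z^{\hdot+1}(\Omega(N))$ be the Hirsch datum of Theorem \ref{thm:rs}, so $\Omega(M)\simeq\Omega(N)\otimes_{\sigma}\bwedge P$. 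The generators of $P$ in degree $\ge 3$ contribute to this Hirsch extension only in degrees $\ge 3$, so the cdga inclusion $\Omega(N)\otimes_{\sigma'}\bwedge P^1\hookrightarrow\Omega(N)\otimes_{\sigma}\bwedge P$, with $\sigma'=\sigma|_{P^1}$, is an isomorphism in degrees $\le 2$ and a short computation shows it is a $2$-equivalence; thus $\Omega(N)\otimes_{\sigma'}\bwedge P^1$ is a $2$-model for $M$. Since $P^1$ is concentrated in degree $1$, its maximal degree is $1$, and $2$-formality of $N$ gives $\Omega(N)\simeq_2(H^{\hdot}(N),0)$ with $2\ge 1+1$; hence Lemma \ref{lem=qhirsch} transports the extension and yields $\Omega(M)\simeq_2 A$, where $A=(H^{\hdot}(N)\otimes_{\tau}\bwedge P^1,\,d)$ for a suitable degree-one map $\tau\colon P^1\to Z^2(H^{\hdot}(N))=H^2(N)$. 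As $N$ is a finite space, $H^{\hdot}(N)$ is finite-dimensional, so $A$ is a finite cdga, hence a $1$-finite $1$-model for $M$.

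Next, $A$ has precisely the form $H\otimes_{\tau}\bwedge P$ of Theorem \ref{thm:filtf}, with $H=(H^{\hdot}(N),0)$ a $2$-finite cdga and $P=P^1$ placed in degree $1$; so that theorem gives a finite presentation of $\h(A)$ with generators in degree $1$ and defining relators in degrees $2$ and $3$ only, while $\h(A)_1\cong H^1(A)^{*}\cong H_1(M,\k)=H_1(\pi,\k)$. Thus $\h(A)\cong\L(H_1(\pi,\k))/\mathfrak{r}$ for a homogeneous ideal $\mathfrak{r}$ generated in degrees $2$ and $3$. Part (\ref{ff1}) follows at once: since $A$ is a $1$-finite $1$-model for the $1$-finite space $M$, Theorem \ref{thm:malholo} identifies $\m(\pi)$ with the lcs completion of $\h(A)$, that is, with the lcs completion of $\L(H_1(\pi,\k))/\mathfrak{r}$, which is the asserted statement.

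For part (\ref{ff2}) I take $\k=\C$ and invoke Corollary \ref{cor:sophus}, which provides an analytic isomorphism $\Hom_{\gr}(\pi,G)_{(1)}\cong\Hom_{\Lie}(\h(A),\g)_{(0)}$. After fixing bases of $H_1(\pi,\k)$ and of $\g$, a linear map $H_1(\pi,\k)\to\g$ extends to a Lie algebra homomorphism $\h(A)\to\g$ exactly when the image of each degree-$2$ (respectively degree-$3$) defining relator of $\h(A)$ vanishes, which is a finite system of quadratic (respectively cubic) equations in the matrix entries. Hence $\Hom_{\Lie}(\h(A),\g)$ is cut out inside $\Hom(H_1(\pi,\k),\g)$ by quadrics and cubics, and so is its germ at $0$; transporting through the isomorphism above gives the assertion about the germ of $\Hom_{\gr}(\pi,G)$ at $1$.

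I expect the only delicate point to be the construction of $A$. The hypotheses include no bound of the form $k\ge\max_{\alpha}m_{\alpha}+1$ used in Lemma \ref{lem:newfinite} and Theorem \ref{thm:nformal}, so the full Hirsch extension $\Omega(N)\otimes_{\sigma}\bwedge P$ cannot be transported along the $2$-equivalences coming from $2$-formality of $N$. The way around this is to exploit that $\h(A)$ depends only on the $2$-truncation, and the germ of the representation variety only on a $1$-model, which permits discarding the degree-$\ge 3$ generators of $H^{\hdot}(K)$ and reduces matters to the Hirsch datum on $P^1$; for that datum the maximal degree is $1$, so $2$-formality of $N$ is exactly the strength needed to feed into Lemma \ref{lem=qhirsch}. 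Once $A$ is in hand, the remaining steps are direct applications of Theorems \ref{thm:filtf} and \ref{thm:malholo} and Corollary \ref{cor:sophus}.
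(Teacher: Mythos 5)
Your proposal is correct and follows essentially the same route as the paper: restrict the Allday--Halperin Hirsch datum to $P^1$, use $2$-formality of $N$ together with Lemma \ref{lem=qhirsch} to obtain the $1$-finite $1$-model $A=H^{\hdot}(N)\otimes_{\tau}\bwedge P^1$, and then conclude via Theorems \ref{thm:filtf} and \ref{thm:malholo} and Corollary \ref{cor:sophus}. The only difference is that you spell out details the paper leaves implicit (the $2$-equivalence justifying the truncation to $P^1$, and the translation of degree-$2$ and degree-$3$ relators into quadrics and cubics on $\Hom_{\Lie}(\h(A),\g)$), and these are carried out correctly.
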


\begin{proof}
By Theorem \ref{thm:rs}, the Hirsch extension 
$\Omega (N) \otimes_{\tau} \bwedge P^1$ is a $1$-model for $M$.
In view of Lemma \ref{lem=qhirsch}, the Hirsch extension  
$A^{\hdot}=H^{\hdot} (N) \otimes_{\tau} \bwedge P^1$ is 
a $1$-finite $1$-model of $M$. Claim \eqref{ff1} now follows 
from Theorems \ref{thm:malholo} and
\ref{thm:filtf}.  Finally, claim \eqref{ff2} is 
a consequence of Corollary \ref{cor:sophus} and Theorem \ref{thm:filtf}.
\end{proof}

The second statement in the above corollary is analogous to the 
quadraticity obstruction for fundamental groups of compact K\"ahler 
manifolds obtained by Goldman--Millson in \cite[Theorem 1]{GM}. Clearly, 
the corollary applies to principal $K$-bundles over formal manifolds.

\subsection{Flat connections and representation varieties}
\label{subsec:g-flat}

We recall from \cite{MPPS} 
that, for any $1$-finite \cdga $A$ and any finite-dimensional Lie algebra 
$\g$, the set 
\begin{equation}
\label{eq:f1}
\F^1(A,\g)=\{ \omega = \eta\otimes g \in A^1\otimes \g \mid d \eta=0\}
\end{equation}
is a Zariski closed, homogeneous subvariety of the 
variety of $\g$-valued flat connections, 
$\F(A,\g)$. The subvariety $\F^1(A,\g)$ may be called 
the `trivial part' of $\F(A,\g)$, since it is
completely determined by the vector spaces $H^1(A)$ and $\g$. 

Now suppose $A=B\otimes_{\tau} \bwedge P$ is a Hirsch 
extension of a $1$-finite \cdga  $(B, d)$, and 
let $\varphi\colon B\inj A$ be 
the canonical $\cdga$ inclusion. 
By naturality, we have an inclusion, 
\begin{equation}
\label{eq:flataincl}
\F(A,\g)\supseteq \F^1(A,\g) \cup \varphi^{!} (\F(B,\g)).
\end{equation}

\begin{prop}
\label{prop:flataeq}
If $\g$ is a Lie subalgebra of $\sl_2$, then \eqref{eq:flataincl} 
becomes an equality.
\end{prop}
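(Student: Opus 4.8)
The plan is to prove the reverse inclusion in \eqref{eq:flataincl}, i.e.\ that every $\g$-valued flat connection on $A=B\otimes_\tau\bwedge P$ lies in $\F^1(A,\g)\cup\varphi^{!}(\F(B,\g))$ when $\g\subseteq\sl_2$. First I would reduce to the case $\dim P=1$: a general Hirsch extension is a finite tower of elementary ones, and by induction it suffices to handle one step $A=B\otimes_\tau\bwedge(t)$, with $\deg t=m$ odd and $dt=\tau(t)=e\in Z^{m+1}(B)$. (One should also note that only $P^1$ matters for $A^1$, so in fact $m=1$ and $A^1=B^1\oplus\k t$; I would make that reduction explicit.) Then write a general element of $A^1\otimes\g$ as $\omega=\beta+t\otimes x$ with $\beta\in B^1\otimes\g$ and $x\in\g$; the goal is to show that the Maurer--Cartan equation for $\omega$ forces either $x=0$ (so $\omega=\beta\in\F(B,\g)$, landing in $\varphi^{!}(\F(B,\g))$ up to the trivial $t$-term being absent) or that $\omega$ collapses to a $\F^1$-type element.

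The key computation is to expand $d\omega+\tfrac12[\omega,\omega]=0$ in the bigrading of $A=B\oplus tB$. Writing $\beta=\sum_i\eta_i\otimes g_i$, the $B$-part of the equation is the usual flatness equation for $\beta$ over $B$ plus a correction term $e\otimes x$ coming from $dt=e$, while the $tB$-part reads $t\otimes\big(d(\cdot)\text{-term}\big)$ collecting the brackets $[\beta,t\otimes x]$; concretely the $tB$-component gives an equation of the form $t\cdot\big(\sum_i \eta_i\otimes[g_i,x] - d(\text{something})\big)=0$ forcing, in $B^2\otimes\g$, that $\sum_i\eta_i\otimes[g_i,x]$ equals a coboundary-type expression, while $[t\otimes x,t\otimes x]=t^2\otimes[x,x]=0$ automatically since $t^2=0$. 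At this point I would invoke the structure of $\sl_2$ (and its subalgebras: the Borel $\mathfrak b$ and the Cartan): the crucial representation-theoretic input is that for $\g\subseteq\sl_2$, an identity of the shape $\sum_i\eta_i\otimes[g_i,x]=0$ in $B^1\otimes\g\bmod(\text{image of }d)$ together with the $B$-part forces the $g_i$ to be proportional to $x$ or $x=0$ — this is exactly the kind of rank-one phenomenon that makes $\sl_2$ special. The cleanest way to organize this is to use that $\g\subseteq\sl_2$ implies any two-dimensional subspace of $\g$ is already a Borel, so $\mathrm{ad}_x$ has at most a one-dimensional obstruction, and to split into the cases $x=0$, $x$ regular semisimple, and $x$ nilpotent/in a Borel.

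The step I expect to be the main obstacle is precisely this case analysis extracting, from the mixed ($tB$-graded) flatness equation, that $\beta$ is forced to be "colinear" with $x$ in the Lie direction. I would handle it by first treating $x=0$ (immediate: $\omega\in\varphi^{!}(\F(B,\g))$), then assuming $x\ne0$ and choosing coordinates on $\sl_2$ so that $x$ lies in the Cartan or in the nilradical; in each case $\mathrm{ad}_x$ acts semisimply or nilpotently with known kernel, and the equation $\sum_i\eta_i\otimes[g_i,x]=0$ (after projecting away the $\mathrm{ad}_x$-image, using that $e\otimes x$ sits in the appropriate weight space and $t$ is closed modulo $e$) pins down the components of $\beta$ not proportional to $x$. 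Writing $\beta=\eta\otimes x+\beta'$ with $\beta'$ in a complement, I would show $\beta'$ must be $d$-closed and bracket-trivial against $x$, and then that the remaining Maurer--Cartan equation degenerates so that $\omega$ either equals $(\eta + t)\otimes x\bmod$ exact — which after the standard change of variable absorbing the $t$-term (legitimate since $e$ is a coboundary in $A$) becomes an element of $\F^1(A,\g)$ — or already lies in $\varphi^{!}(\F(B,\g))$. Finally I would feed the one-variable conclusion back through the induction on $\dim P$, noting that $\F^1$ and the images $\varphi^{!}(\F(B,\g))$ behave compatibly with composing elementary extensions, to conclude equality in \eqref{eq:flataincl} for the full Hirsch extension.
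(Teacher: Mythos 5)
Your outline is essentially the paper's argument: decompose $A^2$ as $B^2\oplus\bwedge^2P^1\oplus(B^1\otimes P^1)$, read off the components of the Maurer--Cartan equation, and use the special commutation structure of $\sl_2$ to force a flat connection with nonzero $P^1$-part into $\F^1(A,\g)$. So the approach is right, but three details in your write-up are off and worth correcting. First, the $B^1\otimes P^1$ component of \eqref{eq:flat coords} is exactly $\sum_i\eta_i\otimes[g_i,x]=0$ with \emph{no} coboundary correction: the term $d(t\otimes x)=e\otimes x$ lands in $B^2\otimes\g$, not in $B^1t\otimes\g$, so there is nothing to "project away" and no weight-space argument needed; choosing the $\eta_i$ linearly independent immediately gives $[g_i,x]=0$ for every $i$. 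Second, the Lie-theoretic input you want is not that "any two-dimensional subspace of $\g$ is a Borel" (false: the span of $e$ and $f$ in $\sl_2$ is not a subalgebra), but simply that two elements of $\sl_2$ commute if and only if they are linearly dependent; this single fact replaces your entire case analysis on $x$ semisimple versus nilpotent, since in either case the centralizer of $x\ne0$ is $\k x$. Third, no "change of variable absorbing the $t$-term" is needed at the end: once all Lie coefficients are multiples of $x$, the connection is a pure tensor $\xi\otimes x$ with $\xi\in A^1$, and flatness of a pure tensor with $x\ne0$ forces $d\xi=0$ outright, so $\omega\in\F^1(A,\g)$ by definition (the definition allows $\xi$ to have a $t$-component). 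Finally, the reduction to $\dim P=1$ is harmless but unnecessary -- the paper treats all of $P^1$ at once, with the same three-component decomposition; if you do induct, you must (as you note) check that $\varphi^{!}$ of the $\F^1$-part at each stage lands in the $\F^1$-part of the next, which holds because the inclusion of an elementary extension sends closed pure tensors to closed pure tensors.
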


\begin{proof}
Key to our proof is the easily checked fact that two matrices 
$g,g' \in \sl_2$ commute if and only if $\rank \{ g,g' \} \le 1$. 
Pick any $\omega \in \F(A,\g)\setminus \F^1(A,\g)$. We have to show that
$\omega \in \varphi^{!} (\F(B,\g))$. Write $\omega =\eta +\eta'$, with
$\eta \in B^1\otimes \g$ and $\eta' \in P^1\otimes \g$. Note that 
\[
A^2= B^2 \oplus \bwedge^2 P^1 \oplus (B^1 \otimes P^1).
\]
Under this decomposition, the three
components of the Maurer--Cartan equation \eqref{eq:flat coords} are:
\[
d \eta'+ d \eta + \tfrac{1}{2} [\eta, \eta]=0; \quad [\eta', \eta']=0; \quad [\eta, \eta']=0.
\]

Write $\eta =\sum_i h_i \otimes g_i$ and 
$\eta' =\sum_{\alpha} t_{\alpha} \otimes g'_{\alpha}$,
with respect to some fixed bases for $B^1$ and $P^1$. 
The last two components of the flatness 
condition for $\omega$ then become
\[
[g'_{\alpha}, g'_{\beta}]=0, \, \forall \, \alpha, \beta; \quad
[g'_{\alpha}, g_{i}]=0, \, \forall \, \alpha, i.
\] 
If $\eta' \ne 0$, these conditions force $\omega \in \F^1(A,\g)$,
a contradiction. Hence, $\eta' = 0$. The first component of the flatness 
condition for $\omega$ then becomes $d \eta + \tfrac{1}{2}[\eta, \eta]=0$. Therefore, 
$\omega = \varphi^{!} (\eta)$, with $\eta \in \F(B,\g)$, and this 
completes the proof.
\end{proof}

As we shall see later on, the property from the above proposition has the 
same flavor as similar results which hold in the context of smooth complex 
algebraic varieties. Proposition \ref{prop:flataeq} also has the following 
topological interpretation. 

\begin{remark}
\label{rem=kpencil}
Assume that $K$ acts almost freely on $M$, 
with formal orbit space $N$.  Working over $\C$, we have that $(H^{\hdot} (N), d=0)$
 is a finite model for $N$, while $H^{\hdot} (N)\otimes_{\tau} \bwedge P$
is a finite model for $M$, by Lemma \ref{lem:newfinite}. Moreover, the 
morphism $\varphi\colon H\inj A$ models the canonical projection,
$p\colon M\to N$. Let $G$ be either $\SL_2 (\C)$ or a Borel subgroup, 
with Lie algebra $\g$.   
Theorem \ref{thm:germs} and Proposition \ref{prop:flataeq} then say that  
`the non-trivial part of $\Hom_{\gr}(\pi_1 (M),G)_{(1)}$ pulls back from $N$, 
via the map $p$.'  We refer to \cite{PS-natjump} for the precise description of 
this representation variety near $1$, in the case when $M$ is a principal 
$K$-bundle over $N$.
\end{remark}

\subsection{Rank $1$ cohomology jump loci}
\label{sect:res1sas}

We now turn our attention to the jump loci associated to algebraic models.  
We start with rank $1$ resonance. 

\begin{prop}
\label{prop:circleres}
Let $B$ be a connected \dga. Fix an element $e\in B^2$ with $de=0$, and 
let $A=(B\otimes_{e} \bwedge (t), d)$ be the corresponding Hirsch extension. 
\begin{enumerate}
\item \label{e0}
If $[e]=0$, then $\RR^i_1(A) =  \RR^{i-1}_1(B) \cup \RR^i_1(B)$, for all $i$. 
\item  \label{e1}
If $[e]\ne 0$, then
\begin{enumerate}
\item \label{r1}
$\RR^i_s (A)\subseteq \RR^{i-1}_1 (B) \cup \RR^i_s (B)$, for all $i$ and $s$.
\item \label{r2}
$\RR^1_s (A)=\RR^1_s (B)$, for all $s$.
\end{enumerate}
\end{enumerate}
\end{prop}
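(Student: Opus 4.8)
The key tool is the long exact sequence \eqref{eq=helem} together with its twisted analogue. For each $\omega\in\F(A)=H^1(A)$, write $A\otimes V_\omega$ for the complex $(A,d_\omega)$ with $\g=\k$, $\theta=\id$; concretely $d_\omega(\alpha)=d\alpha+\omega\alpha$. Since $A=B\otimes_e\bwedge(t)$ with $\dim\bwedge(t)=2$, we have a short exact sequence of complexes $0\to B\otimes V_{\omega}\to A\otimes V_{\omega}\to (B\otimes V_{\omega})[-m]\to 0$ (where $m=\deg t$, here $m=1$ since $e\in B^2$ forces $\deg t=1$), compatible with the twisted differentials because $d_\omega t=dt=e$ lies in $B$ and $\omega\in B^1$. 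The connecting homomorphism $H^{i-1}(B\otimes V_\omega)\to H^{i+1}(B\otimes V_\omega)$ is cup product with $[e]$; this gives, for every $\omega\in\F(A)$ (note $\F(A)=\F(B)$ here since $A^1=B^1\oplus\k t$ and $dt=e\in B^2$, so $d_\omega t=e+\omega t\ne 0$ unless we are in $B^1$ — more precisely $\F(A)=\{\omega\in A^1:d\omega=0\}$; as $d$ vanishes on $\k t$ only if $e=0$, one checks $\F(A)=H^1(A)$ and the restriction map $H^1(A)\to H^1(B)$ is an isomorphism when $[e]\neq 0$ and has $1$-dimensional kernel when $[e]=0$), the exact sequence
\begin{equation*}
\cdots\to H^{i-2}(B\otimes V_{\bar\omega})\xrightarrow{[e]\cdot} H^{i}(B\otimes V_{\bar\omega})\to H^{i}(A\otimes V_\omega)\to H^{i-1}(B\otimes V_{\bar\omega})\xrightarrow{[e]\cdot} H^{i+1}(B\otimes V_{\bar\omega})\to\cdots
\end{equation*}
where $\bar\omega$ is the image of $\omega$ in $H^1(B)$.

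\textbf{Part (1).} When $[e]=0$, Lemma \ref{lem:hirsch} lets us replace $\tau$ by $0$, so $A\simeq B\otimes\bwedge(t)$ as a $\dga$; then $A\otimes V_\omega\cong (B\otimes V_{\bar\omega})\otimes\bwedge(t)$ as complexes, hence $H^i(A\otimes V_\omega)\cong H^i(B\otimes V_{\bar\omega})\oplus H^{i-1}(B\otimes V_{\bar\omega})$. Since $[e]=0$, the map $H^1(A)\to H^1(B)$ is surjective with $1$-dimensional kernel spanned by the class of $t$, but twisting by $\omega$ versus $\omega+ct$ gives isomorphic complexes (conjugate by multiplication by a suitable unit, or directly: $d_{\omega+ct}=d_\omega$ on $B$-part since $t\cdot B$-elements are killed degree-wise appropriately — this needs a short check). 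Thus $\dim H^i(A\otimes V_\omega)\ge s$ iff $\dim H^i(B\otimes V_{\bar\omega})+\dim H^{i-1}(B\otimes V_{\bar\omega})\ge s$; depth-$1$ jumping ($s=1$) then says some summand is nonzero, giving exactly $\RR^i_1(A)=\RR^{i-1}_1(B)\cup\RR^i_1(B)$ after projecting/identifying $\F(A)\twoheadrightarrow\F(B)$.

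\textbf{Part (2).} Now $[e]\ne 0$, so $H^1(A)\xrightarrow{\sim}H^1(B)$ (from \eqref{eq=helem} in degree $1$: the map $[e]\cdot\colon H^0(B)\to H^2(B)$ is injective since $[e]\ne0$, and $[e]\cdot\colon H^{-1}=0$, giving $H^1(A)\cong H^1(B)$), hence $\F(A)=\F(B)$ as varieties and I may write $\omega$ for both. For \emph{(a)}, read off from the exact sequence that $H^i(A\otimes V_\omega)$ is squeezed between a subquotient of $H^i(B\otimes V_\omega)$ and a subspace of $H^{i-1}(B\otimes V_\omega)$, namely there is an exact sequence $0\to \coker([e]\cdot)_i\to H^i(A\otimes V_\omega)\to \ker([e]\cdot)_{i-1}\to 0$; so $\dim H^i(A\otimes V_\omega)\le \dim H^i(B\otimes V_\omega)+\dim H^{i-1}(B\otimes V_\omega)$, and if $\dim H^i(A\otimes V_\omega)\ge s$ then either $\dim H^i(B\otimes V_\omega)\ge s$ (if $\coker$ contributes $\ge s$... more carefully: $\dim\coker_i\le\dim H^i(B\otimes V_\omega)$, so if the $\coker$ part is $\ge s$ we are in $\RR^i_s(B)$; if the $\ker_{i-1}$ part is nonzero we are in $\RR^{i-1}_1(B)$) — this yields $\RR^i_s(A)\subseteq\RR^{i-1}_1(B)\cup\RR^i_s(B)$. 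For \emph{(b)}, take $i=1$: in the exact sequence the term $H^{-1}(B\otimes V_\omega)=0$ and $H^0(B\otimes V_\omega)$ is $\k$ if $\omega=0$ and $0$ otherwise, while $[e]\cdot\colon H^0\to H^2$ is injective (for $\omega=0$ by $[e]\ne0$; for $\omega\ne0$ trivially). Hence $H^1(A\otimes V_\omega)\cong H^1(B\otimes V_\omega)$ for all $\omega$, giving $\RR^1_s(A)=\RR^1_s(B)$ for all $s$; one should separately note the basepoint $\omega=0$ is handled the same way since $\coker([e]\cdot\colon H^0(B)\to H^2(B))$ doesn't enter degree $1$.

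\textbf{Main obstacle.} The routine part is the long exact sequence; the genuinely fiddly point is bookkeeping the identification $\F(A)\to\F(B)$ and checking that twisting $A\otimes V_\omega$ only depends on $\bar\omega\in H^1(B)$ (so that the loci are well-defined subsets of $\F(A)$ and the stated equalities make sense set-theoretically). I expect the hardest step is the $[e]=0$ case of part (1): one must verify carefully that the Künneth-type splitting is natural enough in $\omega$ that jumping of $H^i(A\otimes V_\omega)$ translates precisely into jumping of $H^i(B\otimes V_{\bar\omega})$ or $H^{i-1}(B\otimes V_{\bar\omega})$, including the behavior along the kernel direction $\k t\subset H^1(A)$; this is where Proposition \ref{prop:prod} or a direct argument à la \cite[Prop.~13.1]{PS-plms} is the right model to follow.
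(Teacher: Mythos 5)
Your part~(2) is essentially the argument the paper gives: when $[e]\neq 0$ one identifies $\F(A)$ with $\F(B)$ via $H^1(B)\xrightarrow{\,\simeq\,}H^1(A)$, views $(B^{\hdot},d_\omega)$ as a subcomplex of $(A^{\hdot},d_\omega)$ with quotient $(B^{\hdot-1},\pm d_\omega)$, and reads off (a) from the resulting long exact sequence and (b) from its degree-one portion, using $H^0(B,d_\omega)=0$ for $\omega\neq 0$ and injectivity of $[e]\cdot\colon H^0(B)\to H^2(B)$ for $\omega=0$. That part is correct (the sign on the quotient differential is harmless, and you never actually need the connecting map to be multiplication by $[e]$ except at $\omega=0$, where it is the untwisted map of \eqref{eq=helem}).

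The genuine gap is in part~(1). Your claim that twisting by $\omega$ and by $\omega+ct$ gives isomorphic complexes is false, and so is the parenthetical check: $d_{\omega+ct}(b)=d_{\omega}(b)\pm c\,b\,t$, and the extra term does not vanish. After the identification $A\cong B\otimes(\bwedge(t),d=0)$ of Lemma \ref{lem:hirsch}, write $\omega=\bar\omega+ct$; the K\"unneth decomposition gives $H^i(A,d_\omega)\cong\bigoplus_{j+k=i}H^j(B,d_{\bar\omega})\otimes H^k(\bwedge(t),d_{ct})$, and for $c\neq 0$ the factor $(\bwedge(t),d_{ct})$ is acyclic (its differential $\k\to\k t$, $1\mapsto ct$, is an isomorphism), so $H^{\hdot}(A,d_\omega)=0$. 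Thus a nonzero $t$-component kills resonance rather than leaving it unchanged: $\RR^i_1(A)$ lies inside the image of $H^1(B)\hookrightarrow H^1(A)$, and the equality in (1) is an equality of subsets of $\F(A)$ under that inclusion, not the preimage under the projection $\F(A)\twoheadrightarrow\F(B)$ as your last sentence asserts. Concretely, for $B=(H^{\hdot}(S^1),d=0)$ and $e=0$, $A$ is the cohomology ring of the $2$-torus with zero differential, so $\RR^1_1(A)=\{0\}$, whereas your reading would produce the whole line $\k\,t$. The correct route is the one you gesture at in your closing remark and the one the paper takes: apply the product formula of Proposition \ref{prop:prod} to $A\cong B\otimes(\bwedge(t),d=0)$, noting $\RR^0_1(\bwedge(t))=\RR^1_1(\bwedge(t))=\{0\}$, which yields part~(1) at once with the correct embedding of $\RR^{i-1}_1(B)\cup\RR^i_1(B)$ into $\F(A)$.
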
 

\begin{proof}
First suppose $[e]=0$.  Then $A$ is isomorphic to $B\otimes (\bwedge (t), d=0)$.  
Applying the product formula for resonance varieties from 
Proposition~\ref{prop:prod}, claim \eqref{e0} follows. 

Now suppose $[e]\ne 0$. 
Denoting by $\varphi \colon B\to A$ the canonical \cdga inclusion,  
an easy computation shows that $H^1(\varphi) \colon H^1(B) \to  H^1(A)$ 
is an isomorphism, since both $A$ and $B$ are connected. Thus, the 
varieties of rank $1$ flat connections on $A$ and $B$ may be identified. 

For $\omega \in \F (A)\equiv \F (B)$, the cochain complex $(B^{\hdot}, 
d_{\omega})$ is clearly a subcomplex of $(A^{\hdot}, d_{\omega})$. 
Using the description of $d_{\omega}$ from \S \ref{subsec:res}, 
it is straightforward to check that the quotient complex can be 
identified with $(B^{\hdot -1}, -d_{\omega})$. Claim \eqref{r1} 
then follows by examining the associated long exact
sequence in homology,
\begin{equation}
\label{eq:les1}
\xymatrixcolsep{20pt}
\xymatrix{
\cdots  \ar[r]&  H^{i-2}(B, d_{\omega}) \ar[r]& H^{i}(B, d_{\omega}) \ar[r]&
H^{i}(A, d_{\omega}) \ar[r]& H^{i-1}(B, d_{\omega}) \ar[r]&  \cdots 
}.
\end{equation}

For Claim \eqref{r2}, the relevant portion of the long exact sequence is
\begin{equation}
\label{eq:les2}
\xymatrix{ 0\ar[r]& H^{1}(B, d_{\omega}) \ar[r]&
H^{1}(A, d_{\omega}) \ar[r]& H^{0}(B, d_{\omega}) \ar[r]&  \cdots 
}.
\end{equation}
If $\omega \ne 0$, then $H^{0}(B, d_{\omega})=0$ by connectedness of $B$, 
and therefore $\varphi$ induces an isomorphism between $H^{1}(B, d_{\omega})$ 
and $H^{1}(A, d_{\omega})$.  The same thing happens when $\omega =0$, 
since $[e]\ne 0$ by assumption. The claim follows.
\end{proof}

We may take in the above proposition $B= (H(\Sigma_g), d=0)$, where $\Sigma_g$ is
a compact Riemann surface of genus $g>0$, and $e\in H^2(\Sigma_g)$ equal to the
orientation class. 

\begin{corollary}
\label{cor:rk1res}
Let $A=(H^{\hdot}(\Sigma_g)\otimes_e \bwedge (t), d)$ be the 
corresponding Hirsch extension. Then $\RR^1_1 (A)= \{ 0\}$ if $g=1$, 
and $\RR^1_1 (A)= H^{1}(\Sigma_g)$ if $g>1$.
\end{corollary}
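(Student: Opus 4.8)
The plan is to apply Proposition \ref{prop:circleres} directly, with $B=(H^{\hdot}(\Sigma_g),d=0)$ and $e\in H^2(\Sigma_g)$ the orientation (fundamental) class. First I would observe that since $B$ has zero differential and $\Sigma_g$ is a connected, $1$-dimensional complex manifold of genus $g$, its cohomology is concentrated in degrees $0,1,2$, with $H^0=\k$, $H^1=\k^{2g}$, $H^2=\k$, and the class $e$ is nonzero. So we are in case \eqref{e1} of the Proposition, and part \eqref{r2} gives $\RR^1_s(A)=\RR^1_s(B)$ for all $s$; in particular $\RR^1_1(A)=\RR^1_1(H^{\hdot}(\Sigma_g))$.

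The remaining task is just to identify $\RR^1_1(H^{\hdot}(\Sigma_g))$, which is the standard rank $1$ resonance variety of a genus-$g$ surface. Using the rank $1$ description from \S\ref{subsec:res}, an element $\omega\in H^1(\Sigma_g)$ lies in $\RR^1_1$ iff there exists $\eta\in H^1(\Sigma_g)$ not proportional to $\omega$ with $\omega\cdot\eta=0$ in $H^2(\Sigma_g)=\k$. When $g=1$, the cup-product pairing $H^1\times H^1\to H^2$ is a nondegenerate symplectic form on a $2$-dimensional space, so $\omega\cdot\eta=0$ with $\eta\notin\k\cdot\omega$ forces $\omega=0$; hence $\RR^1_1(A)=\{0\}$. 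When $g>1$, the space $H^1$ has dimension $2g\ge 4$, so for any $\omega$ the orthogonal complement of $\omega$ under the symplectic pairing has dimension $\ge 2g-1\ge 3$, hence contains some $\eta\notin\k\cdot\omega$ with $\omega\cdot\eta=0$; thus every $\omega$ lies in $\RR^1_1$, i.e. $\RR^1_1(A)=H^1(\Sigma_g)$.

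There is essentially no obstacle here: the only mild point is making sure the hypotheses of Proposition \ref{prop:circleres} are met, namely that $B$ is a connected \dga (it is, with $d=0$) and that $[e]\ne 0$ (it is, being the orientation class of a closed surface), so that part \eqref{e1}\eqref{r2} applies. I would phrase the proof in two sentences: invoke Proposition \ref{prop:circleres}\eqref{r2} to reduce to $\RR^1_1(H^{\hdot}(\Sigma_g))$, then quote (or give the one-line symplectic-form argument for) the well-known computation of the latter. Everything reduces to elementary linear algebra with the intersection form, so the write-up is short.
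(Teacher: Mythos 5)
Your proposal is correct and follows the same route as the paper: reduce to $\RR^1_1((H^{\hdot}(\Sigma_g),d=0))$ via Proposition \ref{prop:circleres}\eqref{r2} (applicable since $[e]\ne 0$), and then compute the rank $1$ resonance of the surface, a step the paper dismisses as ``immediate to check'' and which your symplectic-pairing argument carries out correctly (including the point that $0$ always lies in $\RR^1_1$ since $H^1(\Sigma_g)\ne 0$).
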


\begin{proof}
It is immediate to check that $\RR^1_1 (B)= \{ 0\}$ for $g=1$ 
and $\RR^1_1 (B)= H^{1}(\Sigma_g)$ for $g>1$.  
\end{proof}    

\subsection{Orbifold fundamental groups}
\label{subsec:orbifold}

Imposing weaker assumptions in Theorem \ref{thm:nformal}, we can still derive 
interesting topological consequences. Let $f\colon \pi_1 \surj \pi_2$ be an epimorphism 
between finitely generated groups, and let $\iota \colon G\to \GL(V)$ be a rational 
representation of a linear algebraic group. We then have a natural closed algebraic embedding,
$f^{!} \colon \Hom (\pi_2, G) \inj \Hom (\pi_1, G)$. An application of the Hochschild--Serre 
spectral sequence (see e.g.~\cite[\S VII.6]{Br}) shows that the morphism 
$f^{!}$ preserves characteristic varieties in degree $1$ and induces closed 
algebraic embeddings, $\VV^1_s (\pi_2, \iota) \inj \VV^1_s (\pi_1, \iota)$, 
for all values of $s$. 

When $M$ is an almost free $K$-manifold, we recall from \cite[Theorem 4.3.18]{BG} 
that the projection $p\colon M\to M/K$ induces a natural epimorphism 
$f\colon \pi_1 (M) \surj \pi_1^{\orb} (M/K)$ between orbifold fundamental groups.

\begin{theorem}
\label{thm:monotr}
Suppose that the $K$-action on $M$ is almost free and  the transgression 
$P^{\hdot} \to H^{\hdot +1}(M_K) \equiv H^{\hdot +1}(M/K)$ is injective in degree $1$.
Then the following hold.
\begin{enumerate}
\item \label{mn1}
If the orbit space $N=M/K$ has a $2$-finite $2$-model over $\k \subseteq \C$,
then the homomorphism $f\colon \pi_1 (M) \surj \pi_1^{\orb} (N)$ 
induces an analytic isomorphism 
between $\VV^1_s (\pi_1^{\orb} (N))_{(1)}$ and $\VV^1_s (\pi_1 (M))_{(1)}$, 
for all $s$.
\item \label{mn2}
If $N$ is $2$-formal, then $f$ induces an analytic isomorphism between the 
germs at $1$ of $\Hom (\pi_1^{\orb} (N), G)$ and $\Hom (\pi_1 (M), G)$, for 
$G=\SL_2 (\C)$ or a Borel subgroup.
\end{enumerate}
\end{theorem}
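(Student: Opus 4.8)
The plan is to reduce both statements to the Hirsch extension model $A=H^{\hdot}(N)\otimes_{\tau}\bwedge P^1$ and then invoke the structural results on flat connections from \S\ref{subsec:g-flat}, together with the germ identifications from \S\ref{subsec:germs}. First I would set up the models on both sides: by Theorem \ref{thm:rs}, $\Omega(N)\otimes_{\sigma}\bwedge P$ is a model for $M$, and restricting attention to the $2$-truncation (which is all that matters for $\VV^1_s$ and for $\Hom(\cdot,G)_{(1)}$) we may replace $P$ by $P^1$. Under the hypothesis that the transgression is injective in degree $1$, the classes $\sigma(t_\alpha)=e_\alpha\in H^2(N)$ with $t_\alpha\in P^1$ are linearly independent. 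Writing $H=H^{\hdot}(N,\k)$, a $2$-finite $2$-model for $N$, the Hirsch extension $A=H\otimes_{\tau}\bwedge P^1$ is then, by Lemma \ref{lem=qhirsch}, a $2$-finite $2$-model for $M$. Simultaneously, the inclusion $\varphi\colon H\inj A$ is a $2$-model for the projection $p\colon M\to N$, hence for the epimorphism $f\colon\pi_1(M)\surj\pi_1^{\orb}(N)$ after composing with the model for $N$.

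Next, for part \eqref{mn1} I would pass to the infinitesimal picture in the rank $1$ case. By Theorem \ref{thm:germs}, $\VV^1_s(\pi_1(M))_{(1)}\cong\RR^1_s(A)_{(0)}$ and $\VV^1_s(\pi_1^{\orb}(N))_{(1)}\cong\RR^1_s(H)_{(0)}$, compatibly with $f^{!}$ and $\varphi^{!}$. Now $A$ is obtained from $H$ by a finite sequence of elementary Hirsch extensions, each adding one degree-$1$ generator $t_\alpha$ with $dt_\alpha=e_\alpha\ne 0$ in the successive quotient (here the injectivity of the transgression is what guarantees $[e_\alpha]\ne 0$ at each stage, after using Lemma \ref{lem:hirsch} to replace $\tau$ by a convenient representative). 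Applying Proposition \ref{prop:circleres}\eqref{r2} repeatedly gives $\RR^1_s(A)=\RR^1_s(H)$ for all $s$, with $\varphi^{!}$ inducing the identification (the map $H^1(\varphi)$ being an isomorphism at each step). Transporting back through Theorem \ref{thm:germs} yields the desired analytic isomorphism $\VV^1_s(\pi_1^{\orb}(N))_{(1)}\isom\VV^1_s(\pi_1(M))_{(1)}$.

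For part \eqref{mn2}, under the stronger assumption that $N$ is $2$-formal we may take $H=(H^{\hdot}(N),d=0)$, so $A=H\otimes_{\tau}\bwedge P^1$ is a $1$-finite $1$-model for $M$ with $d=0$ on $H$; this is exactly the setting of Proposition \ref{prop:flataeq}. For $G=\SL_2(\C)$ or a Borel subgroup, with Lie algebra $\g\subseteq\sl_2$, Proposition \ref{prop:flataeq} gives $\F(A,\g)=\F^1(A,\g)\cup\varphi^{!}(\F(H,\g))$, and since $\g$ acts on $H^1$ through the connecting data that is unchanged under $\varphi$, the first summand $\F^1(A,\g)$ is itself contained in $\varphi^{!}(\F(H,\g))$ (any $\eta\otimes g$ with $\eta\in H^1$ closed is pulled back from $H$), so in fact $\F(A,\g)=\varphi^{!}(\F(H,\g))$; moreover $\varphi^{!}$ is injective because $H^1(\varphi)$ is an isomorphism. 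Hence $\varphi^{!}$ is an isomorphism of affine varieties, a fortiori of germs at $0$. Theorem \ref{thm:germs} then converts this into an analytic isomorphism $\Hom(\pi_1^{\orb}(N),G)_{(1)}\isom\Hom(\pi_1(M),G)_{(1)}$ compatible with $f^{!}$.

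The main obstacle I anticipate is not any single step but the bookkeeping needed to make the iterated Hirsch extension argument in part \eqref{mn1} legitimate: one must check at each elementary stage that the class of the next transgressed element remains nonzero in the running quotient cohomology ring, which is where the degree-$1$ injectivity of the full transgression $P^{\hdot}\to H^{\hdot+1}(M_K)$ enters (it ensures the $e_\alpha$ stay independent so that Proposition \ref{prop:circleres}\eqref{e1} rather than \eqref{e0} applies), and one must verify that the successive identifications $H^1(\varphi_\alpha)$ compose to the single map $H^1(\varphi)$ so that the final isomorphism is genuinely induced by $f$. A secondary subtlety in part \eqref{mn2} is confirming that $\F^1(A,\g)\subseteq\varphi^{!}(\F(H,\g))$; this is immediate from the definition \eqref{eq:f1} once one notes that $Z^1(A)=Z^1(H)$ because $H^1$ carries no differential and the new generators $t_\alpha$ are not closed.
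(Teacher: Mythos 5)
Your setup and the infinitesimal computation are essentially the paper's: you build $A=B\otimes_{\tau}\bwedge P^1$ via Theorem \ref{thm:rs} and Lemma \ref{lem=qhirsch}, use the injectivity of the transgression to keep each $[e_{\alpha}]$ nonzero and iterate Proposition \ref{prop:circleres}(2b) to get $\RR^1_s(A)=\RR^1_s(B)$, and in part (2) use Proposition \ref{prop:flataeq} together with $\F^1(A,\g)=\varphi^{!}(\F^1(H,\g))$ to get $\F(A,\g)=\varphi^{!}(\F(H,\g))$. The genuine gap is in the final step of both parts, where you transport these equalities back to the topological side by asserting that the isomorphisms of Theorem \ref{thm:germs} are ``compatible with $f^{!}$ and $\varphi^{!}$''. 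No such naturality is available: Theorem \ref{thm:germs} produces, for each space separately, an analytic isomorphism of germs with no functoriality statement attached (naturality of these identifications is precisely the subject of the follow-up \cite{PS-natjump}). So your argument only yields an abstract isomorphism between $\VV^1_s(\pi_1^{\orb}(N))_{(1)}$ and $\VV^1_s(\pi_1(M))_{(1)}$ (resp.\ between the $\Hom$ germs), not one induced by $f$, which is what the theorem claims. The paper avoids this by a different device: it first records (via the Hochschild--Serre spectral sequence, see \S\ref{subsec:orbifold}) that $f^{!}$ is a \emph{closed embedding} $\VV^1_s(\pi_1^{\orb}(N))\inj \VV^1_s(\pi_1(M))$, and similarly for the representation varieties; the resonance computation then shows the two germs have isomorphic Noetherian analytic local rings, so the surjection of local rings induced by the embedding $f^{!}$ must be an isomorphism by the Hopfian property of Noetherian rings, whence $f^{!}$ itself is an analytic isomorphism of germs. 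Without either this Hopfian argument or a proof of the naturality you invoke, the statement ``induced by $f$'' is not established.

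Two secondary points. First, you apply Theorem \ref{thm:germs} to $\pi_1^{\orb}(N)$ as if $B$ (resp.\ $(H^{\hdot}(N),d=0)$) were known to be a $1$-finite $1$-model for that group; this needs the fact, quoted in the paper from \cite{BG}, that $\pi_1^{\orb}(N)$ and $\pi_1(N)$ share the same $1$-minimal model. Second, in part (1) you write $H=H^{\hdot}(N,\k)$ for the $2$-finite $2$-model, which tacitly presumes $2$-formality of $N$; the argument must be run with an arbitrary $2$-finite $2$-model $B$, which causes no trouble since the iterated use of Proposition \ref{prop:circleres} only requires $[e_{\alpha+1}]\ne 0$ in $H^2(A_{\alpha})$, a consequence of the linear independence of $[e_1],\dots,[e_r]$ in $H^2(B)$.
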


\begin{proof}
Let $B$ be a $2$-finite $2$-model of $N$. By Theorem \ref{thm:rs}, 
$\Omega (M) \simeq_1 \Omega (N) \otimes_{\sigma} \bwedge P^1$.
By Lemma \ref{lem=qhirsch}, the Hirsch extension 
$A=B \otimes_{\tau} \bwedge P^1$
is a $1$-finite $1$-model for $\pi_1(M)$. Using \cite[pp.~117--118]{BG}, we deduce that
$\pi_1^{\orb} (N)$ and $\pi_1 (N)$ share the same $1$-minimal model. Therefore, 
$B$ is a $1$-finite $1$-model for both $\pi_1 (N)$ and $\pi_1^{\orb} (N)$. 
We infer from Theorem \ref{thm:germs} that 
\[
\VV^1_s (\pi_1(M))_{(1)}\cong \RR^1_s (A)_{(0)} \quad \text{and}\quad 
 \VV^1_s (\pi_1^{\orb} (N))_{(1)} \cong \RR^1_s (B)_{(0)}.
\]

Now choose a basis $\{ t_1,\dots,t_r \}$ of $P^1$, and set $e_{\alpha}= \tau( t_{\alpha})$. 
Our hypothesis on the transgression implies that the classes $[e_1],\dots, [e_r]$ are linearly 
independent in $H^2(B)$. Let $\varphi \colon B\inj A$ be the canonical $\cdga$ 
inclusion. Setting $A_{\alpha}=B \otimes_{\tau} \bwedge (t_1,\dots ,t_{\alpha})$, 
we infer from Proposition \ref{prop:circleres}\eqref{r2} that the map 
$\varphi^*\colon H^1(B) \to H^1(A)$ is an isomorphism, and that 
$\RR^1_s (A_{\alpha})= \RR^1_s (A_{\alpha +1})$. Hence, 
$\RR^1_s (B)=\RR^1_s (A)$, for all $s$. Consequently, the germs 
$\VV^1_s (\pi_1(M))_{(1)}$ and $\VV^1_s (\pi_1^{\orb} (N))_{(1)}$ 
have the same analytic local ring $R_M \cong R_N$ (up to isomorphism).

Moreover, the morphism $f^{!} \colon \VV^1_s (\pi_1^{\orb} (N))_{(1)} \inj \VV^1_s (\pi_1(M))_{(1)}$ 
induces a surjection, $f^{!} \colon R_M \surj R_N$ between the corresponding analytic 
local rings. By the well-known Hopfian property for Noetherian rings (see for instance 
\cite[p.~65]{T}), the map $f^{!}$ must be an isomorphism, which completes the proof 
of claim \eqref{mn1}.

To prove claim \eqref{mn2}, 
we use a similar argument, with the $\dga$ $H^{\hdot}:= (H^{\hdot}(N), d=0)$ 
replacing $B$. By Theorem \ref{thm:germs}, there exist isomorphisms 
$\Hom (\pi_1 (M), G)_{(1)} \cong \F (A, \g)_{(0)}$ and 
$\Hom (\pi_1^{\orb} (N), G)_{(1)} \cong \F (H, \g)_{(0)}$. 
By Proposition \ref{prop:flataeq}, $\F(A,\g)$ equals $\F^1(A,\g) \cup \varphi^{!} (\F(H,\g))$.
Since $\varphi^*$ is an isomorphism in degree one, $\F^1(A,\g) = \varphi^{!} (\F^1(H,\g))$.
Therefore, $\F(A,\g) = \varphi^{!} (\F(H,\g)) \cong \F(H,\g)$. We may then take germs at $0$ 
and conclude as before, by a Hopfian argument.
\end{proof}

\begin{example}
\label{ex=tauinj}
As usual, let $K$ be a compact, connected Lie group, and identify 
$H^{\hdot}(K,\Q)$ with $\bigwedge P^{\hdot}_K$. Let $N$ be a 
compact, formal manifold, and assume $b_2(N)\ge s$, 
where $s=\dim P^1_K$  (for instance, take $N$ to be the 
product of at least $s$ compact K\"{a}hler manifolds). 
There is then a degree-preserving 
linear map, $\tau \colon P^{\hdot}_K \to H^{\hdot +1}(N,\Q)$, which is  
injective in degree $1$.  By Theorem \ref{thm:krealiz}, such a map 
can be realized as the transgression in a principal $K$-bundle $M_{\tau}\to N$,  
and the manifold $M_{\tau}$ satisfies the assumptions from Theorem \ref{thm:monotr}. 

\end{example}

Theorem \ref{thm:monotr} may also be applied to a Seifert fibered $3$-manifold 
with non-zero Euler class, $p\colon M \to M/S^1 =\Sigma_g$. 
We obtain the following relations between rank $1$ jump loci and rank $2$
representation varieties of $M$ and the corresponding, intensively studied, 
objects for the compact Riemann surface $\Sigma_g$.

\begin{corollary}
\label{cor=seifert}
In the above setup, $\VV^1_s (M)_{(1)}$ is isomorphic to $\VV^1_s (\Sigma_g)_{(1)}$, 
for all $s$, while $\Hom (\pi_1 (M), G)_{(1)}\cong 
\Hom (\pi_1 (\Sigma_g), G)_{(1)}$, for $G=\SL_2 (\C)$ or
a Borel subgroup.
\end{corollary}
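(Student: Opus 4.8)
The plan is to show that the Seifert fibered $3$-manifold $M$ with non-zero Euler class fits the hypotheses of Theorem \ref{thm:monotr}, so that its conclusions directly yield the stated isomorphisms. First I would recall the standard fact that a closed, orientable Seifert fibered $3$-manifold with non-zero Euler number admits a smooth, almost free $S^1$-action whose orbit space is the base surface $\Sigma_g$, so that $K=S^1$, $M/K=\Sigma_g$, and $N=\Sigma_g$ is a closed orientable surface, hence formal (being a compact Riemann surface, or by direct inspection of its minimal model). In particular $N$ is $2$-formal, which is the stronger hypothesis needed for part \eqref{mn2} of Theorem \ref{thm:monotr}, and a fortiori $N$ has a $2$-finite $2$-model, which is what part \eqref{mn1} requires.

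Next I would verify the transgression hypothesis: here $P^{\hdot}=H^{\hdot}(S^1,\C)$ is one-dimensional, concentrated in degree $1$, spanned by a single generator $t$, and the transgression $\tau\colon P^1 \to H^2(\Sigma_g,\C)$ sends $t$ to the Euler class $e$ of the $S^1$-action. The non-vanishing of the Euler number means precisely that $e\ne 0$ in $H^2(\Sigma_g,\C)\cong\C$, so the transgression is injective in degree $1$. Thus all hypotheses of Theorem \ref{thm:monotr} are met.

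Applying Theorem \ref{thm:monotr}\eqref{mn1} to the epimorphism $f\colon \pi_1(M)\surj \pi_1^{\orb}(\Sigma_g)$ gives an analytic isomorphism $\VV^1_s(\pi_1^{\orb}(\Sigma_g))_{(1)}\cong \VV^1_s(\pi_1(M))_{(1)}$ for all $s$; and applying part \eqref{mn2} gives an analytic isomorphism of the germs at $1$ of $\Hom(\pi_1^{\orb}(\Sigma_g),G)$ and $\Hom(\pi_1(M),G)$ for $G=\SL_2(\C)$ or a Borel subgroup. Finally I would identify the orbifold invariants with the honest surface invariants: since $\Sigma_g$ is an honest manifold the orbifold fundamental group is just $\pi_1(\Sigma_g)$, and $\VV^1_s(\pi_1(M))_{(1)}\cong\VV^1_s(M)_{(1)}$ since the degree-$1$ characteristic varieties depend only on the fundamental group; this yields the stated conclusions $\VV^1_s(M)_{(1)}\cong\VV^1_s(\Sigma_g)_{(1)}$ and $\Hom(\pi_1(M),G)_{(1)}\cong\Hom(\pi_1(\Sigma_g),G)_{(1)}$.

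The only genuine point requiring care — the "hard part," such as it is — is the passage from the orbifold fundamental group of the base to its ordinary fundamental group, and more precisely the bookkeeping about orbifold structure: when the Seifert fibration has exceptional fibers the orbit space is naturally an orbifold $\Sigma_g$ with cone points, and $\pi_1^{\orb}$ then genuinely involves the cone-point relations. One must observe that Theorem \ref{thm:monotr} is stated for $\pi_1^{\orb}(N)$ of the orbit \emph{space}, and that the argument in its proof (invoking \cite{BG}, pp.~117--118, that $\pi_1^{\orb}(N)$ and $\pi_1(N)$ share the same $1$-minimal model) already absorbs this subtlety — the relevant germs and jump loci are insensitive to the difference, so replacing $\pi_1^{\orb}(\Sigma_g)$ by $\pi_1(\Sigma_g)$ costs nothing at the level of germs at $1$. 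Everything else is a direct specialization of Theorem \ref{thm:monotr}.
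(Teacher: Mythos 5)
Your proposal is correct and follows exactly the route the paper intends: the corollary is stated as a direct specialization of Theorem \ref{thm:monotr} to the almost free $S^1$-action on a Seifert fibered $3$-manifold, with the non-zero Euler class supplying the injectivity of the degree-$1$ transgression and the formality of $\Sigma_g$ supplying the $2$-formal (hence $2$-finite $2$-model) hypothesis. Your explicit handling of the passage from $\pi_1^{\orb}(\Sigma_g)$ to $\pi_1(\Sigma_g)$ via the shared $1$-minimal model is precisely the point the paper leaves implicit (it is absorbed in the proof of Theorem \ref{thm:monotr} through the citation of \cite{BG}), so there is no gap.
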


\section{Sasakian manifolds}
\label{sect:sasaki}

In this section, we apply the results from the preceding chapter to obtain 
significant consequences for the topology of compact Sasakian manifolds, 
related to formality properties, representation varieties and cohomology jump loci.

\subsection{Sasakian geometry}
\label{subsec:sas}

A general reference for Sasakian geometry is the monograph 
of Boyer and Galicki \cite{BG}.  

Let $M^{2n+1}$ be a compact Sasakian manifold of dimension $2n+1$.  
By work of Ornea and Verbitsky \cite{OV}, we may assume that the Sasakian 
structure is quasi-regular.   A basic structural result in Sasakian geometry 
(Theorem 7.1.3 from \cite{BG}) guarantees that, in this case, $M$ 
supports an almost free circle action.  Furthermore, 
the quotient space, $N=M/S^1$, is a compact K\"{a}hler orbifold, with 
K\"{a}hler class $h\in H^2(N,\Q)$ satisfying the Hard Lefschetz property, 
that is, multiplication by $h^k$ defines an isomorphism 
\begin{equation}
\label{eq:hardlef}
\xymatrix{H^{n-k}(N) \ar^{\cong}[r] & H^{n+k}(N)}
\end{equation}
for each $1\le k\le n$; see 
\cite[Proposition 7.2.2 and Theorem 7.2.9]{BG}.

The thesis of Tievsky \cite[\S 4.3]{Ti} provides a very useful model 
for a Sasakian manifold.

\begin{theorem}[\cite{Ti}]
\label{thm:sasmodel}
Every compact Sasakian manifold $M$ admits as a finite model over $\R$ the Hirsch 
extension 
\[
A^{\hdot}(M)=(H^{\hdot}(N)\otimes _h \bwedge(t), d),
\] 
where 
$d$ is zero on $H^{\hdot}(N)$ and $dt=h$, 
the K\"{a}hler class of $N$.  
\end{theorem}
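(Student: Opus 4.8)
The plan is to reduce the statement to the Allday--Halperin model (Theorem \ref{thm:rs}) combined with the formality of the K\"ahler orbifold base $N$, exactly as in the proof of Theorem \ref{thm:nformal}. First I would invoke the quasi-regularity of the Sasakian structure (via Ornea--Verbitsky) so that $M$ carries an almost free $S^1$-action with orbit space $N=M/S^1$ a compact K\"ahler orbifold; here $\rank S^1 = 1$, so $H^{\hdot}(S^1) = \bwedge(t)$ with $t$ in degree $1$, hence $m=1$. Theorem \ref{thm:rs} then gives a transgression map $\sigma\colon \bwedge(t)\to Z^{\hdot+1}(\Omega(N))$ and a weak equivalence $\Omega(M)\simeq \Omega(N)\otimes_\sigma \bwedge(t)$, where $[\sigma(t)]$ is the Euler class of the action. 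By the basic structure theory of Sasakian manifolds quoted from \cite{BG}, this Euler class is precisely the K\"ahler class $h\in H^2(N,\Q)$.

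Next I would use the formality of compact K\"ahler orbifolds: since $N$ is a compact K\"ahler orbifold, it is formal (this is the orbifold version of \cite{DGMS}, and in particular $N$ is $\infty$-formal, so $k$-formal for every $k$, which covers the hypothesis $k\ge m+1 = 2$). Thus there is a zig-zag of quasi-isomorphisms between $\Omega(N)$ and $(H^{\hdot}(N),d=0)$; as in the proof of Theorem \ref{thm:nformal}, \cite[Proposition 3.1]{Ma} lets us arrange this zig-zag to induce the identity on cohomology in all degrees. Applying Lemma \ref{lem=qhirsch} (with $q=\infty$, or any finite $q\ge 2$) transports the Hirsch extension across the zig-zag: $\Omega(N)\otimes_\sigma \bwedge(t)$ is weakly equivalent to $H^{\hdot}(N)\otimes_{\sigma'}\bwedge(t)$ where, because the zig-zag induces the identity in cohomology, $\sigma'(t) = h$. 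Hence $A^{\hdot}(M) = (H^{\hdot}(N)\otimes_h \bwedge(t), d)$ with $d$ zero on $H^{\hdot}(N)$ and $dt = h$ is a model for $M$. Since $H^{\hdot}(N)$ is a finite-dimensional $\Q$-algebra (compact orbifold) and we adjoin a single exterior generator, $A^{\hdot}(M)$ is a finite model, and everything can be run over $\R$ (or $\Q$) since the K\"ahler class is rational.

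The main obstacle—and the reason this is attributed to Tievsky rather than being completely immediate—is justifying that the transgressed class is exactly $h$ (on the nose, not just up to scalar or up to the identification of cohomology) and that the orbifold de Rham formalism of \cite{DGMS}-type formality genuinely applies to the K\"ahler orbifold $N$ rather than merely to smooth K\"ahler manifolds. One must be a little careful that the Allday--Halperin theorem, the orbifold formality, and the identification of the Euler class with the K\"ahler class are all compatible with the same rational (or real) structures; the cleanest route is to cite Tievsky's thesis \cite[\S 4.3]{Ti} directly for the construction and simply record here that it fits the template of Theorem \ref{thm:rs} plus orbifold K\"ahler formality plus Lemma \ref{lem=qhirsch}. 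I would therefore present the proof as: (i) reduce to the quasi-regular case; (ii) apply Theorem \ref{thm:rs}; (iii) identify the transgression with $h$ using the Sasakian structure theory; (iv) use orbifold K\"ahler formality and Lemma \ref{lem=qhirsch} to replace $\Omega(N)$ by $(H^{\hdot}(N),d=0)$ without disturbing the transgression; (v) observe finiteness.
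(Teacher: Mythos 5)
There is a genuine gap here, and it is exactly the one the paper itself flags. Your route derives the model from Theorem \ref{thm:rs} together with formality of the orbit space $N=M/S^1$, and for the latter you invoke ``the orbifold version of \cite{DGMS}.'' But Theorem \ref{thm:rs} produces a Hirsch extension of the Sullivan algebra $\Omega^{\hdot}(N)$ of the quotient \emph{space}, so the formality you need in step (iv) is formality of $\Omega^{\hdot}(N)$. The formality result available for compact K\"{a}hler orbifolds (\cite{BB+}) concerns the orbifold de Rham algebra, which is \emph{not} known to be weakly equivalent to $\Omega^{\hdot}(N)$; the paper says this explicitly in the paragraph following Theorem \ref{thm:sasmodel}, precisely as the reason why Theorem \ref{thm:nformal} cannot simply be applied to Sasakian manifolds. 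So the template ``Theorem \ref{thm:rs} + orbifold K\"{a}hler formality + Lemma \ref{lem=qhirsch}'' does not close, and your fallback of citing \cite[\S 4.3]{Ti} is not a proof that fits this template: Tievsky's argument is of a different nature, constructing the model directly from the transverse (basic) Hodge theory of the quasi-regular Sasakian structure on $M$ itself, with the degree-one generator corresponding to the contact form --- which is also why the statement is only asserted over $\R$.

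Note also that the paper does not prove Theorem \ref{thm:sasmodel} at all; it quotes it from Tievsky's thesis and then works to extract rational and partial-formality information from it (Lemma \ref{lem:qtievsky}, Theorem \ref{thm:highf-bis}). Your parenthetical claim that ``everything can be run over $\R$ (or $\Q$) since the K\"{a}hler class is rational'' runs against the paper's explicit caveat (the discussion before Remark \ref{rem:non-descent}) that it does \emph{not} follow from \cite{Ti} that $A^{\hdot}(M)$ is a model for $M$ over $\Q$; descent of weak equivalence from $\R$ to $\Q$ is a genuine issue, not a formality of the K\"{a}hler class being rational. The remaining ingredients of your outline (quasi-regularity via Ornea--Verbitsky, identification of the Euler class of the action with $h$, the transport of Hirsch extensions via Lemma \ref{lem=qhirsch}) are fine, but without an actual proof that $\Omega^{\hdot}(N)$ is formal the argument has a hole at its central step.
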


Sasakian geometry is an odd-dimensional analogue of K\"{a}hler 
geometry.  From this point of view, the above theorem is a 
rough analogue of the main result on the algebraic topology 
of compact K\"ahler manifolds from \cite{DGMS}, guaranteeing 
that such manifolds are formal. Theorem \ref{thm:sasmodel} only says that 
$M$ behaves like an almost free compact $S^1$-manifold with formal orbit space.
A recent result from \cite{BB+} establishes the formality of the orbifold de Rham algebra 
of a compact K\"{a}hler orbifold. Unfortunately, this is not enough for applying Theorem 
\ref{thm:nformal}, since the authors of \cite{BB+} do {\em not} prove that the orbifold
de Rham algebra is weakly equivalent to the Sullivan de Rham algebra. 

By construction, the Tievsky model $A^{\hdot}(M)$ is a real $\dga$ defined 
over $\Q$. Nevertheless, it does {\em not}\/ follow from \cite{Ti} that $A^{\hdot}(M)$ 
is a model for $M$ over $\Q$.  To understand why that is the case, let us 
make a  parenthetical remark.

\begin{remark}
\label{rem:non-descent}
It follows from Sullivan's work \cite{Su77} that there exist smooth 
manifolds that have non-quasi-isomorphic rational models which 
become quasi-isomorphic when tensored with $\R$ (for concrete examples 
of this sort, see \cite{Op}). Such failure of descent from real homotopy 
type to rational homotopy type may even occur with models endowed 
with $0$-differentials.  
\end{remark}

However, we can say something very useful 
regarding rational models for Sasakian manifolds.  We start 
with a lemma, and will come back to this point in 
Theorem \ref{thm:highf-bis}.

\begin{lemma}
\label{lem:qtievsky}
The Tievsky model $A^{\hdot}_{\R}(M)=(H^{\hdot}(N, \R)\otimes _h \bwedge(t), d)$ 
is a finite model with positive weights for $M$, in the sense from \S\ref{subsec:germs}. 
\end{lemma}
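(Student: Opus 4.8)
The plan is to verify that $A^{\hdot}_{\R}(M)=(H^{\hdot}(N,\R)\otimes_h \bwedge(t),d)$ satisfies the three conditions from the definition of a $q$-model with positive weights in \S\ref{subsec:germs}, with $q=\infty$. The first condition, that $A$ is defined over $\Q$, is immediate: the cohomology ring $H^{\hdot}(N,\Q)$ is a rational form of $H^{\hdot}(N,\R)$, the K\"{a}hler class $h$ lives in $H^2(N,\Q)$, and the Hirsch extension construction respects the rational structure, so $A^{\hdot}_{\Q}(M)=(H^{\hdot}(N,\Q)\otimes_h \bwedge(t),d)$ is a $\Q$-form of $A^{\hdot}_{\R}(M)$.

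For the second condition, I would exhibit positive weights on $A^{\hdot}_{\Q}(M)$. Since the differential is zero on $H^{\hdot}(N)$, we may assign to each homogeneous class in $H^i(N)$ the weight $i$; this gives $(H^{\hdot}(N),d=0)$ positive weights. The element $h\in H^2(N)$ is then weighted-homogeneous of weight $2$, so by the discussion in \S\ref{subsec:germs} (the remark that a Hirsch extension of a positively weighted \cdga along a weighted-homogeneous cocycle again has positive weights) we may extend the weights to $A^{\hdot}_{\Q}(M)$ by declaring $\wt(t)=\wt(h)=2=\deg t +1$, and the conditions $\wt(da)=\wt(a)$, $\wt(ab)=\wt(a)+\wt(b)$, and positivity in positive degrees all hold.

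The third condition is the one requiring the most care: we need $A^{\hdot}_{\R}(M)\simeq \Omega(M,\R)$ via a zig-zag of quasi-isomorphisms whose induced map on $H^1$ preserves the $\Q$-structures coming from $H^1(A_{\Q}(M))=H^1(N,\Q)$ and $H^1(M,\Q)$. Here I would invoke Tievsky's theorem (Theorem \ref{thm:sasmodel}), which already gives $A^{\hdot}_{\R}(M)\simeq \Omega(M,\R)$ as real \cdga s; what remains is to check that the zig-zag can be arranged to respect rational structures in degree $1$. This follows because in degree $1$ the Hirsch extension does not involve $t$ (which has odd degree $1$ but $dt=h\in H^2$, so $t$ does contribute to $A^1$)—more precisely, $H^1(A_{\R}(M))=H^1(N,\R)$ since $h\neq 0$ forces $t$ not to be a cocycle, so $H^1(A_{\R}(M))\cong H^1(M,\R)\cong H^1(N,\R)$ compatibly with the transgression exact sequence, and the isomorphism $\proj^*\colon H^{\hdot}(N,\Q)\to H^{\hdot}(M_K,\Q)$ together with the rationally defined Borel-construction model of \cite{AH} identifies these $\Q$-structures. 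I expect the main obstacle to be precisely this last bookkeeping: confirming that the abstract weak equivalence of Tievsky carries the rational structure on $H^1$ correctly, rather than merely on $H^1\otimes\R$. Once that is pinned down, all three bullets of the definition in \S\ref{subsec:germs} are met, and the lemma follows.
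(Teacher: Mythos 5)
Your handling of conditions (1) and (2) of the definition in \S\ref{subsec:germs} is fine and agrees with the paper: the Hirsch extension $(H^{\hdot}(N,\Q)\otimes_h\bwedge(t),d)$ is a rational form, and positive weights are obtained by weighting $H^{\hdot}(N)$ by degree and setting $\wt(t)=\wt(h)=2$. The gap is in condition (3). You correctly isolate the crucial point --- that the identification $H^1(A_{\R})\equiv H^1(M,\R)$ coming from Tievsky's weak equivalence must preserve $\Q$-structures --- but you then leave it unproved (``I expect the main obstacle to be precisely this last bookkeeping \dots Once that is pinned down''). Observing that $H^1(A_{\R})\cong H^1(N,\R)$ (because $h\neq 0$ makes $t$ a non-cocycle) and that $H^1(M,\R)\cong H^1(N,\R)$ does not by itself show that the isomorphism induced by the zig-zag matches these rational forms; an abstract isomorphism of real vector spaces need not respect given $\Q$-forms. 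The paper closes this by using a specific feature of Tievsky's construction: under his identification $H^{\hdot}(A_{\R})\equiv H^{\hdot}(M,\R)$, the map induced by the canonical inclusion $\varphi\colon (H^{\hdot}(N,\R),d=0)\hookrightarrow A_{\R}$ coincides with the topological pullback $p^*\colon H^{\hdot}(N,\R)\to H^{\hdot}(M,\R)$ along $p\colon M\to N=M/S^1$. Since both $\varphi^*$ (algebraically, via the rational form $A_{\Q}$) and $p^*$ (topologically) are defined over $\Q$, and $\varphi^*$ is an isomorphism in degree $1$ because $h\neq 0$, the degree-$1$ identification preserves $\Q$-structures. That one sentence about $\varphi^*=p^*$ is exactly the missing ingredient in your write-up.

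Moreover, the alternative route you sketch --- identifying the $\Q$-structures via ``the rationally defined Borel-construction model of \cite{AH}'' and the transgression --- is not obviously available here. The orbit space $N$ is only a K\"{a}hler orbifold, and the paper stresses (in the discussion following Theorem \ref{thm:sasmodel}) that the formality results for the orbifold de Rham algebra are not known to transfer to the Sullivan algebra $\Omega(N)$, and that it does \emph{not} follow from Tievsky's work that $A^{\hdot}(M)$ is a model for $M$ over $\Q$. If one could run the Allday--Halperin model plus formality of $N$ rationally, one would simply invoke Theorem \ref{thm:nformal} and there would be nothing left to prove; the whole point of this lemma is that only the degree-$1$ rational compatibility is claimed, and it has to be extracted from the internal structure of Tievsky's model rather than from a rational version of Theorem \ref{thm:rs}.
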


\begin{proof}
By Theorem \ref{thm:sasmodel}, we have that $\Omega (M, \R) \simeq A_{\R}$, 
where $A_{\k}=H_{\k} \otimes_h \bwedge (t)$ and
$H_{\k}$ denotes the $\dga$ $(H^{\hdot}(N, \k), d=0)$. 
Note that $\Omega (M, \R)$, $A_{\R}$, and $H_{\R}$ all 
have rational forms, namely, $\Omega (M, \Q)$, $A_{\Q}$, and 
$H_{\Q}$. The fact that $A_{\R}$ has positive weights follows from the discussion
in \S \ref{subsec:germs}. We only need to show that Tievsky's identification,
$H^{1}(A_{\R}) \equiv H^{1}(M, \R)$, preserves $\Q$-structures. 

Let $\varphi \colon H_{\R} \to  A_{\R}$ be the canonical $\dga$ inclusion.
It follows from the construction of the Tievsky model that the homomorphism 
$\varphi^*\colon H^{\hdot} (N, \R) \to  H^{\hdot}(A_{\R}) \equiv H^{\hdot}(M, \R)$ 
coincides with the induced homomorphism 
$p^* \colon  H^{\hdot} (N, \R) \to H^{\hdot}(M, \R)$,
where $p\colon M \to N=M/S^1$ is the canonical projection. 
In particular, the respective $\Q$-structures are preserved. 
Moreover, since $h\ne 0$, the map $\varphi^*\colon H^1(N,\R)\to H^1(A_{\R})$ 
is an isomorphism, and we are done.
\end{proof}

Applying now Theorem \ref{thm:mpps-bis}, 
we obtain the following corollary.  

\begin{corollary}
\label{cor:tmodel}
Let $M$ be a compact Sasakian manifold. 
For each $i, s\ge 0$, all irreducible components 
of the characteristic variety $\VV^i_s(M)$ passing through $1$ 
are algebraic subtori of the character group $H^1(M,\C^*)$.
\end{corollary}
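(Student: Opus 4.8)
The plan is to deduce the statement directly from Theorem \ref{thm:mpps-bis}, applied with $q=\infty$. That theorem asserts that if a $q$-finite space $X$ admits a $q$-finite $q$-model with positive weights, then every irreducible component of $\VV^i_s(X)$ passing through the origin is an algebraic subtorus of $H^1(X,\C^*)$, for all $i\le q$ and $s\ge 0$. To obtain the conclusion for \emph{all} $i$ and $s$, I need to run this with $q=\infty$; that is, I need an honest finite model of $M$ with positive weights, in the precise sense of \S\ref{subsec:germs}.

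First I would note that $M$, being a compact smooth manifold without boundary, has the homotopy type of a finite CW-complex, hence is a finite space ($q$-finite for every $q$), so its characteristic varieties $\VV^i_s(M)$ are defined in all degrees. The essential input is then Lemma \ref{lem:qtievsky}: the Tievsky model $A^{\hdot}_{\R}(M)=(H^{\hdot}(N,\R)\otimes_h\bwedge(t),d)$ is a finite model with positive weights for $M$, exactly in the sense required. Unpacking the three conditions of \S\ref{subsec:germs}: the model is defined over $\Q$, via the rational form $H^{\hdot}(N,\Q)\otimes_h\bwedge(t)$; it has positive weights over $\Q$, by declaring $\wt$ on $H^{\hdot}(N)$ to be the cohomological degree and setting $\wt(t)=\wt(h)=2$, which is consistent since $d$ vanishes on $H^{\hdot}(N)$ and $dt=h$; and it is weakly equivalent to $\Omega(M,\R)$ by a zig-zag that induces, on first cohomology, an isomorphism preserving $\Q$-structures --- this last, more delicate, point being precisely what the proof of Lemma \ref{lem:qtievsky} establishes, using that $\varphi^{*}$ agrees with $p^{*}$ on $H^1$ and that $h\ne 0$.

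With $M$ finite and the Tievsky model in hand as a finite model with positive weights, Theorem \ref{thm:mpps-bis} applies verbatim with $q=\infty$ and yields the corollary for all $i,s\ge 0$. There is essentially no obstacle remaining at this stage: the only subtle point --- that a real model with positive weights need not in general descend to a rational one carrying the same structure (cf.\ Remark \ref{rem:non-descent}) --- has already been dispatched in Lemma \ref{lem:qtievsky}, where the compatible rational structure on Tievsky's model is exhibited. Alternatively, one could invoke the Budur--Wang theorem recorded in Remark \ref{rem:BW}, which removes the positive-weights hypothesis altogether, but Lemma \ref{lem:qtievsky} already supplies everything that Theorem \ref{thm:mpps-bis} demands.
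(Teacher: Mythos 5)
Your proposal is correct and matches the paper's own argument: the paper likewise proves Corollary \ref{cor:tmodel} by combining Lemma \ref{lem:qtievsky} (the Tievsky model is a finite model with positive weights, with compatible $\Q$-structure) with Theorem \ref{thm:mpps-bis}. Your extra unpacking of the positive-weights and $\Q$-descent conditions is exactly the content the paper delegates to that lemma, so there is no substantive difference.
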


A well-known, direct relationship between K\"{a}hler and Sasakian 
geometry is as follows.  Let $N$ be a compact K\"{a}hler manifold 
such that the K\"ahler class is integral, i.e., $h\in H^2(N,\Z)$, 
and let $M$ be the total space of the principal $S^1$-bundle 
classified by $h$.  Then $M$ is a regular Sasakian 
manifold.  A concrete class of examples is provided by the 
Heisenberg manifolds $\Heis_n$ from Example \ref{ex:heis} below.

\subsection{Partial formality of Sasakian manifolds}
\label{subsec:sas1f}

Let $M^{2n+1}$ be a compact Sasakian manifold, 
with fundamental group $\pi=\pi_1(M)$.  A basic question one 
can ask is:  Is the group $\pi$ (or, equivalently, the manifold $M$) 
$1$-formal?   
When $n=1$, clearly the answer is negative, a simple 
example being provided by the Heisenberg manifold 
$\Heis_1$.  In \cite[Theorem 1.1]{Ka14}, H.~Kasuya claims  that the 
case $n=1$ is exceptional, in the following sense.

\begin{claim}
\label{thm:sas1f}
Every compact Sasakian manifold of dimension $2n+1$ is 
$1$-formal over $\R$, provided $n>1$.
\end{claim}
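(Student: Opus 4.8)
The plan is to deduce the statement directly from Tievsky's finite model for $M$, combined with the partial-regularity criterion of Theorem~\ref{thm:lefred} applied to a one-element sequence in degree $q=1$. By Theorem~\ref{thm:sasmodel}, the \cdga $A=(H^{\hdot}(N,\R)\otimes_h\bwedge(t),d)$, with $d=0$ on $H^{\hdot}(N,\R)$ and $dt=h$, is a finite model for $M$ over $\R$; here $N=M/S^1$ is the compact K\"{a}hler orbifold base and $h\in H^2(N,\R)$ its K\"{a}hler class. Since $A$ is weakly equivalent to $\Omega^{\hdot}(M,\R)$, it is enough to prove that $A$ is $1$-formal over $\R$.

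The key step is to verify the hypothesis of Theorem~\ref{thm:lefred} with $H^{\hdot}=H^{\hdot}(N,\R)$, $r=1$ and $e_1=h$: since $h$ has even degree, what is needed is that the one-element sequence $\{h\}$ be $1$-regular, i.e.\ that multiplication by $h$ be injective as a map $H^i(N,\R)\to H^{i+2}(N,\R)$ for $i=0$ and $i=1$. The case $i=0$ is just $h\ne 0$, which holds because $h$ is a K\"{a}hler class. The case $i=1$ is where the assumption $n>1$ enters: by the Hard Lefschetz property~\eqref{eq:hardlef}, the exponent $k=n-1$ lies in the admissible range $1\le k\le n$, so $h^{n-1}\cdot\colon H^1(N,\R)\to H^{2n-1}(N,\R)$ is an isomorphism; hence if $h\omega=0$ for some $\omega\in H^1(N,\R)$, then $h^{n-1}\omega=h^{n-2}(h\omega)=0$, forcing $\omega=0$. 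Thus $\{h\}$ is $1$-regular, and Theorem~\ref{thm:lefred} gives that $A$ has the same $1$-type as $(H^{\hdot}(N,\R)/hH^{\hdot}(N,\R),d=0)$; in particular $A$, and therefore $M$, is $1$-formal over $\R$.

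The step requiring the most attention is the $1$-regularity check — really just the injectivity of $h\,\cdot$ on $H^1$ — together with keeping track of the coefficient field, rather than any deep difficulty, since the substance of the argument is absorbed into Theorem~\ref{thm:lefred}. The Hard Lefschetz injectivity on $H^1$ is precisely what breaks for $n=1$: for $\Heis_1$ the map $h\,\cdot\colon H^1\to H^3$ is unconstrained and indeed fails (see Example~\ref{ex:heis}), which is consistent with $\Heis_1$ not being $1$-formal. The remaining point of care is the standing restriction to $\R$: the Tievsky model is only known to compute the real homotopy type of $M$, and descent to $\Q$ may genuinely fail (Remark~\ref{rem:non-descent}), so $1$-formality over $\Q$ does not follow from this model alone. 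Finally, I would note that this statement is in fact subsumed by the sharper Theorems~\ref{thm:highf}--\ref{thm:highf-bis}: running the same factorization with $k$ up to $n-1$ shows that a Hard Lefschetz class $h$ yields an $(n-1)$-regular sequence $\{h\}$, so $A$ is $(n-1)$-formal, which for $n>1$ is strictly stronger than $1$-formality.
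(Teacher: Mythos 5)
Your proposal is correct and is essentially the paper's own argument: the paper establishes Claim~\ref{thm:sas1f} by proving the stronger Theorem~\ref{thm:highf}, whose proof is exactly your chain (Tievsky model, Hard Lefschetz $\Rightarrow$ partial regularity of $\{h\}$, then Theorem~\ref{thm:lefred}), run with $q=n-1$ rather than $q=1$. Your regularity bookkeeping ($q-\deg(h)+2=1$, so injectivity of $h\cdot$ on $H^{\le 1}$ suffices, which Hard Lefschetz gives precisely when $n>1$) and your caveat about descent to $\Q$ both match the paper.
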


It turns out that the proof from \cite{Ka14} has a gap, which we now 
proceed to explain. Given a \cdga $A$, the (degree $2$)  decomposable part
is the subspace $DH^2(A)\subseteq H^2(A)$ defined as the image of 
the product map in homology, $H^1(A)\wedge H^1(A)\to H^2(A)$. 
What Kasuya actually shows is that 
\begin{equation}
\label{eq:kdec}
DH^2(\M_1(M)) = H^2(\M_1(M)),
\end{equation}
for a compact Sasakian manifold $M^{2n+1}$ with $n>1$, 
where $\M_1(M)$ is the Sullivan $1$-minimal model of $M$, over $\R$.

Equality \eqref{eq:kdec} is an easy consequence of $1$-formality.  
Kasuya deduces the $1$-formality of $M$ from  \eqref{eq:kdec}, 
by invoking in \cite[Proposition 4.1]{Ka14} as a crucial tool 
Lemma 3.17 from \cite{ABC}.   
Unfortunately, though, this lemma is false, as shown by M\u{a}cinic in 
Example 4.5 and Remark 4.6 from \cite{Ma}.  Nevertheless, the next 
theorem proves Claim \ref{thm:sas1f} in a stronger form, while also 
recovering equality \eqref{eq:kdec}.

\begin{theorem}
\label{thm:highf}
Every compact Sasakian manifold $M$ of dimension $2n+1$ is 
$(n-1)$-formal, over an arbitrary field $\k$ of characteristic $0$. 
\end{theorem}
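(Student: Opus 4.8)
The plan is to run a single-transgression Hirsch extension through Theorem \ref{thm:lefred}, using Tievsky's finite real model and the Hard Lefschetz property of the base, and then to descend the resulting partial formality from $\R$ to an arbitrary field of characteristic zero.

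First I would invoke Theorem \ref{thm:sasmodel}: after replacing the Sasakian structure by a quasi-regular one, $M$ admits the finite model $A_\R = (H^{\hdot}(N,\R) \otimes_h \bwedge(t), d)$ over $\R$, with $d$ vanishing on $H^{\hdot}(N,\R)$ and $dt = h$, the K\"ahler class of the orbit space $N = M/S^1$. This is exactly a one-element Hirsch extension of the formal $\cdga$ $(H^{\hdot}(N,\R), 0)$ with transgression $\tau(t) = h \in H^2$, so it fits the hypotheses of Theorem \ref{thm:lefred} provided the length-one sequence $\{h\}$ is $q$-regular in the appropriate range.

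Second, I would check that $\{h\}$ is $(n-1)$-regular in $H^{\hdot}(N,\R)$. By the definition in \S\ref{subsec:partform-circle} (with $r=1$ and $\deg h = 2$), this means that multiplication by $h$, namely $h\cdot\colon H^i(N,\R) \to H^{i+2}(N,\R)$, is injective for all $i \le n-1$. This is immediate from the Hard Lefschetz isomorphism \eqref{eq:hardlef}: for $i \le n-1$ the map $h^{n-i}\cdot\colon H^i(N) \to H^{2n-i}(N)$ is an isomorphism, hence injective, and factoring it as multiplication by $h$ followed by multiplication by $h^{n-i-1}$ shows that multiplication by $h$ is injective on $H^i$. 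The constraint $n-i \ge 1$ is exactly what restricts the range to $i \le n-1$, which is why the bound is $n-1$ and not $n$. Now Theorem \ref{thm:lefred}, applied with $q = n-1$, gives $A_\R \simeq_{n-1} (H^{\hdot}(N,\R)/h\,H^{\hdot}(N,\R),\, d=0)$; in particular $A_\R$ is $(n-1)$-formal, and since $A_\R$ is a model for $M$ over $\R$, the manifold $M$ is $(n-1)$-formal over $\R$.

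The remaining step, which I expect to be the main subtlety, is descent to an arbitrary field $\k$ of characteristic zero. One cannot argue directly with $A_\R$, since the Tievsky model is only known to model $M$ over $\R$ and need not descend to $\Q$ as a model (cf.\ Remark \ref{rem:non-descent}). Instead, since $M$ is a compact manifold, $H^{\hdot}(M,\Q) = H^{\hdot}(\Omega(M,\Q))$ is of finite type and in particular $n$-finite, so the descent property for partial formality from \cite{SW} recorded in \S\ref{subsec:formality} applies with $q = n-1$: $(n-1)$-formality of $\Omega(M,\R) = \Omega(M,\Q)\otimes_\Q \R$ forces $(n-1)$-formality of $\Omega(M,\Q)$. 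Finally, the elementary ascent direction yields $\Omega(M,\k) = \Omega(M,\Q)\otimes_\Q \k \simeq_{n-1} (H^{\hdot}(M,\k),\, 0)$ for any field $\k \supseteq \Q$, which completes the argument. Everything before this last step is a direct application of Theorem \ref{thm:lefred} once Hard Lefschetz is in hand; the care is needed precisely in not concluding rational formality prematurely from the merely real Tievsky model.
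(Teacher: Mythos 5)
Your proposal is correct and follows essentially the same route as the paper's proof: Tievsky's model, the Hard Lefschetz property yielding $(n-1)$-regularity of the one-element sequence $\{h\}$, Theorem \ref{thm:lefred} to get partial formality over $\R$ (the paper works over $\C$, an immaterial difference), and then descent of $(n-1)$-formality to $\Q$ via \cite{SW} followed by extension of scalars to $\k$. Your explicit verification of the regularity bound and of the finiteness hypothesis needed for descent matches the paper's (more condensed) argument.
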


\begin{proof}
Let $N=M/S^1$.  By Theorem \ref{thm:sasmodel}, the manifold 
$M$ admits the Tievsky model 
$A_{\C}=(H^{\hdot}(N)\otimes _h \bwedge(t), d)$, 
with $d=0$ on $H^{\hdot}(N)$ and $dt=h$. 
Recall now that the K\"{a}hler orbifold $N$ satisfies the 
Hard Lefschetz Theorem, as explained in \eqref{eq:hardlef}.  
It follows that  the sequence $\{ h\}$ is $(n-1)$-regular in 
$H^{\hdot}(N)$, in the sense from \S \ref{subsec:partform-circle}.
Hence, by Theorem \ref{thm:lefred}, the manifold $M$ is 
$(n-1)$-formal over $\C$.  By descent of partial formality 
(cf.~\cite{SW}), $M$ is $(n-1)$-formal over $\Q$, and hence over $\k$.  
\end{proof}

The next result makes Theorem \ref{thm:highf} more precise, 
by constructing an explicit finite, $(n-1)$-model with zero differential 
for $M$ over an {\em arbitrary}\/ field of characteristic $0$. 

\begin{theorem}
\label{thm:highf-bis}
Let $M$ be a compact Sasakian manifold $M$ of dimension $2n+1$. 
The Sullivan model of $M$ over a field $\k$ of 
characteristic $0$ has the same $(n-1)$-type over $\k$ as the \cdga~
$(H^{\hdot}(N)/h\cdot H^{\hdot}(N), d=0)$, 
where $N=M/S^1$ and $h\in H^2(N, \k)$ is the K\"ahler class.
\end{theorem}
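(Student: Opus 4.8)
The plan is to combine Theorem \ref{thm:sasmodel}, which furnishes the real Tievsky model $A_{\R}(M)=(H^{\hdot}(N,\R)\otimes_h\bwedge(t),d)$, with Theorem \ref{thm:lefred} applied to the one-element sequence $\{h\}$. First I would observe that the Hard Lefschetz property \eqref{eq:hardlef} says exactly that multiplication by $h$ is injective on $H^{i}(N)$ for all $i\le n-1$; unwinding the definition of $q$-regularity from \S\ref{subsec:partform-circle}, this means the sequence $\{h\}$ (a single even-degree element, $\deg h=2$) is $(n-1)$-regular in $H^{\hdot}(N)$, since $q-\deg(e)+2 = (n-1)-2+2 = n-1$. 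Hence Theorem \ref{thm:lefred} applies with $q=n-1$, and yields that $A_{\R}(M)$ has the same $(n-1)$-type as the \cdga $(H^{\hdot}(N,\R)/h\cdot H^{\hdot}(N,\R),d=0)$.

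Next I would handle the passage to an arbitrary field $\k$ of characteristic zero, which is where the subtlety flagged in Remark \ref{rem:non-descent} enters: the Tievsky model is a priori only a real model, and $q$-type (unlike $q$-formality) is not known to descend along field extensions without extra input. The key point is that Theorem \ref{thm:lefred} is a statement about the purely algebraic \cdga $(H^{\hdot}(N,\k)\otimes_h\bwedge(t),d)$ for any ground field, so the conclusion that this \cdga has the same $(n-1)$-type as $(H^{\hdot}(N,\k)/h\cdot H^{\hdot}(N,\k),d=0)$ holds over $\k$ as soon as we know $\{h\}$ is $(n-1)$-regular in $H^{\hdot}(N,\k)$. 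Since $H^{\hdot}(N,\k)=H^{\hdot}(N,\Q)\otimes_{\Q}\k$ and injectivity of a linear map is preserved under flat base change, the Hard Lefschetz property over $\Q$ (equivalently over $\R$) gives it over $\k$. So the algebraic half of the statement is field-independent.

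The remaining step, and the one I expect to be the main obstacle, is to argue that this algebraically-defined $\k$-\cdga is genuinely an $(n-1)$-model \emph{for the space $M$} over $\k$, i.e.\ that it is $(n-1)$-equivalent to the Sullivan \cdga $\Omega^{\hdot}(M,\k)$. Over $\R$ this is immediate from Tievsky's theorem, but over $\Q$ one cannot simply quote \cite{Ti}. I would instead route through Theorem \ref{thm:highf}: that theorem already establishes, via descent of \emph{partial formality} (the result of \cite{SW}, valid because $H^{\hdot}(M,\k)$ is finite), that $M$ is $(n-1)$-formal over $\k$, so $\Omega^{\hdot}(M,\k)\simeq_{n-1}(H^{\hdot}(M,\k),d=0)$. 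It then remains to identify $H^{\hdot}(M,\k)$, in degrees $\le n$, with the truncated Koszul-type algebra $H^{\hdot}(N,\k)/h\cdot H^{\hdot}(N,\k)$; this follows from the Gysin sequence of the circle bundle (equivalently, the exact sequence \eqref{eq=helem} for the Hirsch extension with $dt=h$), using Hard Lefschetz to kill the relevant connecting maps in the range $\hdot\le n-1$. Assembling: $\Omega^{\hdot}(M,\k)\simeq_{n-1}(H^{\hdot}(M,\k),d=0)\cong(H^{\hdot}(N,\k)/h\cdot H^{\hdot}(N,\k),d=0)$ as $(n-1)$-types, which is the assertion.
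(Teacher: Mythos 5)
Your proposal is correct, and its overall skeleton --- $(n-1)$-formality of $M$ over $\k$ via Theorem \ref{thm:highf}, plus an identification of $H^{\le n}(M,\k)$ with $\big(H^{\hdot}(N,\k)/h\,H^{\hdot}(N,\k)\big)^{\le n}$ --- matches the paper's, but you obtain the key identification by a genuinely different route. The paper never invokes the topological Gysin sequence: it extracts the degree-$\le n$ ring isomorphism from the real Tievsky model, via the maps $\varphi,\psi$ in the proof of Theorem \ref{thm:lefred}, and then descends it to $\Q$ by the observation (Lemma \ref{lem:qtievsky}) that $\varphi^*$ coincides with $p^*$ and hence respects $\Q$-structures; it then converts the truncated ring isomorphism \eqref{eq=qdescent} into an $(n-1)$-equivalence of \cdga s using the truncation devices $A[q+1]$ and $A^{\le q+1}$, and finally extends scalars. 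You instead prove the identification directly over $\Q$ (hence over $\k$) from the Gysin sequence of the Borel fibration $M\simeq ES^1\times M\to M_{S^1}$, with $H^{\hdot}(M_{S^1},\Q)\cong H^{\hdot}(N,\Q)$, whose connecting maps are multiplication by the rational Euler class; this needs the standard Sasakian fact, quoted in \S\ref{subsec:intro3} from \cite{BG}, that this Euler class is (a nonzero multiple of) the K\"ahler class $h$, so that Hard Lefschetz kills the connecting maps in degrees $\le n$. Your route is topologically more direct and bypasses the $\Q$-structure bookkeeping of Lemma \ref{lem:qtievsky}; the paper's route stays entirely inside the Tievsky-model formalism and needs no separate identification of the Euler class. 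Three small points you should make explicit: (i) $M\to N$ is only an orbifold bundle (the structure is merely quasi-regular), so the Gysin sequence must be taken for the Borel construction as above, not for a literal principal bundle; (ii) your parenthetical ``equivalently, the exact sequence \eqref{eq=helem} for the Hirsch extension'' is not available over $\Q$ at that stage, precisely because the Tievsky model is not yet known to be a rational model (Remark \ref{rem:non-descent}), so only the topological Gysin sequence may be used there; and (iii) the final step ``$\cong$ as $(n-1)$-types'' needs an actual \cdga morphism: since $p^*$ kills the Euler class, it induces $\bar p^{\,*}\colon (H^{\hdot}(N,\k)/hH^{\hdot}(N,\k),d=0)\to (H^{\hdot}(M,\k),d=0)$, which by Gysin plus Hard Lefschetz is an isomorphism in degrees $\le n$ and whose source vanishes in degrees $>n$ (again by Hard Lefschetz), hence is an $(n-1)$-equivalence --- alternatively, use the paper's truncation constructions. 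With these details supplied, your argument is a complete and valid alternative proof.
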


\begin{proof}
As before, let $\Omega (M, \k)$ be the Sullivan model of $M$ over $\k$, 
let $A_{\R}$ be Tievsky model of $M$ over $\R$, 
and let $H_{\k}=(H^{\hdot}(N, \k), d=0)$.
Recall from the proof of Theorem \ref{thm:lefred} that there is a $\dga$ 
map $\psi \colon A_{\R} \to (H_{\R}/h H_{\R}$, $ d=0)$ 
which induces a graded ring isomorphism between the truncations 
$H^{\le n}(A_{\R})$ and $(H_{\R}/h H_{\R})^{\le n}$,
with inverse induced by $\varphi^*$. Identify the graded rings 
$H^{\le n}(M, \R)$ and $H^{\le n}(A_{\R})$,
using the zig-zag of quasi-isomorphisms provided by the weak 
equivalence $\Omega (M, \R) \simeq A_{\R}$. The proof of 
Lemma \ref{lem:qtievsky} shows that the composed graded ring isomorphism,  
$H^{\le n}(M, \R) \cong (H_{\R}/h H_{\R})^{\le n}$, respects $\Q$-structures. 
Therefore, we obtain an isomorphism of graded rings,
\begin{equation}
\label{eq=qdescent}
H^{\le n}(M, \Q) \cong (H_{\Q}/h H_{\Q})^{\le n} .
\end{equation} 

The following version of the truncation construction from \S \ref{subsec:res} 
will be helpful in the sequel.  Let $A$ be a connected $\dga$ over a field of 
characteristic zero, and let $q\ge 0$. Write $A^{q+1}= Z^{q+1}(A) \oplus U^{q+1}$.
Plainly, $U^{q+1}\oplus \bigoplus_{j>q+1}A^j$ is a differential ideal. 
Denote by $A[q+1]$ the quotient $\dga$,
and let $\kappa \colon A \to A[q+1]$ be the canonical $\dga$ projection. 
It is immediate to check that the map $\kappa^*\colon H^{\hdot}(A)\to H^{\hdot}(A[q+1])$
is an isomorphism up to degree $q+1$, and that $A^{\hdot}[q+1]=0$ in 
degrees $>q+1$. In particular, $A\simeq_q A[q+1]$
and the truncated cohomology ring $H^{\le q+1}(A)$ is isomorphic 
to $H^{\hdot}(A[q+1])$. 

Let $\M_{\Q}$ be the minimal model of $\Omega (M, \Q)$. As we explained 
previously, $\Omega (M, \Q) \simeq_{n-1} \M_{\Q}[n]$. This clearly implies that 
$(H^{\hdot} (M, \Q), d=0) \simeq_{n-1} (H^{\hdot} (\M_{\Q}[n]), d=0)$. On the other hand,
$\Omega (M, \Q) \simeq_{n-1} (H^{\hdot} (M, \Q), d=0)$, by partial formality over $\Q$. Hence, 
\begin{align}
\label{eq:isos}
\notag
\Omega (M, \Q) &\simeq_{n-1} (H^{\hdot} (\M_{\Q}[n]), d=0) \\[-8pt]
&\cong (H^{\le n}(M, \Q), d=0)  \\[-8pt] \notag
& \cong ((H_{\Q}/h H_{\Q})^{\le n} , d=0),
\end{align} 
where the last two isomorphisms are given by 
modified truncation and \eqref{eq=qdescent}. Plainly, 
$(H_{\Q}/h H_{\Q} , d=0) \simeq_{n-1} ((H_{\Q}/h H_{\Q})^{\le n} , d=0)$, 
by standard truncation. Putting things together, we conclude that 
$\Omega (M, \Q) \simeq_{n-1} (H_{\Q}/h H_{\Q}, d=0)$.
By extension of scalars, the same conclusion holds 
over $\k$. This completes the proof of the theorem.
\end{proof}

As illustrated by the next example, the conclusion of 
Theorem  \ref{thm:highf} is optimal. 

\begin{example}
\label{ex:heis}
Let $E=S^1\times S^1$ be an elliptic complex curve, and let $N=E^{\times n}$ 
be the product of $n$ such curves, with K\"{a}hler form $\omega=\sum_{i=1}^n 
dx_i \wedge dy_i$.   The corresponding Sasakian manifold is the 
$(2n+1)$-dimensional Heisenberg nilmanifold $\Heis_n$. 
Theorem \ref{thm:highf} guarantees that $\Heis_n$ is $(n-1)$-formal. 
As noted in \cite[Remark 5.4]{Ma}, though, the manifold 
$\Heis_n$ is {\em not}\/ $n$-formal. We refer to \cite{Ma} for a 
detailed study of the partial formality properties of this family of manifolds. 
\end{example}

\begin{remark}
\label{rem:nil}
The compact Sasakian manifold $\Heis_n$ from the above example may 
be alternatively described as the quotient of a certain nilpotent Lie group 
$H(1,n)$ by a suitable discrete, cocompact subgroup.
The Tievsky model was used in \cite{CDMY} to show that a compact, 
$(2n+1)$-dimensional nilmanifold admits a Sasakian structure 
if and only if it is the quotient of $H(1,n)$ by a discrete, cocompact 
subgroup $\Gamma$.  
\end{remark}

\subsection{Sasakian groups}
\label{subsec:sasgp}

A group $\pi$ is said to be a {\em Sasakian group}\/ if it 
can be realized as the fundamental group of a compact, 
Sasakian manifold.  A major open problem in the field 
(see e.g. \cite[Chapter 7]{BG} or \cite{Chen}) is:
``Which finitely presented groups are Sasakian?" 

A first, well-known obstruction is that the first Betti number $b_1(\pi)$ 
must be even, see for instance the references listed by Chen in \cite{Chen}.  
Much more subtle obstructions are provided by the following result.  
Fix a field $\k$ of characteristic $0$.

\begin{corollary}
\label{cor:sasobstr}
Let $\pi= \pi_1 (M^{2n+1})$ be a Sasakian group.  Then:
\begin{enumerate}
\item \label{s1}
The Malcev Lie algebra $\m(\pi, \k)$ is the lcs completion of the quotient 
of the free Lie algebra $\L (H_1(\pi , \k))$ by an ideal generated in degrees $2$ and $3$.  
Moreover, this Lie algebra presentation can be explicitly described in
terms of the graded ring $H^{\hdot}(M/S^1, \k)$ and the K\"ahler class 
$h\in H^{2}(M/S^1, \k)$.
\item \label{s2}
The group $\pi$ is filtered-formal. 
\item \label{s3}
For every complex linear algebraic group $G$,
the germ at the origin of the representation variety $\Hom(\pi, G)$ 
is defined by quadrics and cubics only.
\end{enumerate}
\end{corollary}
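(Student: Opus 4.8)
The argument parallels that of Corollary~\ref{cor:ffcircle}, except that we feed in the Tievsky model of $M$ rather than a Hirsch extension manufactured from a formal model of the orbit space. This detour is forced on us: we do not know that the K\"ahler orbifold $N=M/S^1$ is $2$-formal \emph{as a space} --- precisely the point left open in \cite{BB+} --- so Corollary~\ref{cor:ffcircle} cannot be invoked directly. What saves the day is that Theorem~\ref{thm:sasmodel} hands us outright a finite model $A_{\R}=(H^{\hdot}(N,\R)\otimes_h\bwedge(t),d)$ for $M$ over $\R$, and this is exactly of the form $H\otimes_\tau\bwedge P$ to which Theorem~\ref{thm:filtf} applies: $H=(H^{\hdot}(N,\R),d=0)$ is a $2$-finite \cdga with vanishing differential, and $P=P^1=\spn(t)$ sits entirely in degree $1$, since $h\in H^2$.

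Granting this, I would first apply Theorem~\ref{thm:filtf} to $A_{\R}$ to obtain a finite presentation of the holonomy Lie algebra $\h(A_{\R})$, with generators in degree $1$ and relators in degrees $2$ and $3$. Unwinding that theorem's proof in the special case $\tau(t)=h$, the generators are the duals of a basis of $H^1(N,\R)\cong H_1(\pi,\R)$; the quadratic relators record the kernel of the cup-product map $H^1(N)\wedge H^1(N)\to H^2(N)/\k h$; and the cubic relators are the brackets of the generators with the distinguished element $u=\cup_H^{*}(h^{*})\in\L^2(H_1(\pi,\R))$ attached to $h$. This is the ``explicit description in terms of $H^{\hdot}(M/S^1)$ and the K\"ahler class $h$'' asserted in~\eqref{s1}. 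Since $A_{\R}$ is a $1$-finite $1$-model for $M$ over $\R$, Theorem~\ref{thm:malholo} identifies $\m(\pi,\R)$ with the lcs completion of $\h(A_{\R})$; this gives~\eqref{s1}, and --- the defining ideal being homogeneous ---~\eqref{s2}, over $\R$. For~\eqref{s3} I would base-change to $\C$: $A_{\C}=A_{\R}\otimes_{\R}\C$ is a finite model for $M$ over $\C$ with $\h(A_{\C})=\h(A_{\R})\otimes_{\R}\C$ still quadratically-and-cubically presented, so Corollary~\ref{cor:sophus} realizes the germ $\Hom_{\gr}(\pi,G)_{(1)}$ as $\Hom_{\Lie}(\h(A_{\C}),\g)_{(0)}$, cut out inside the affine space $\Hom(H_1(\pi,\C),\g)$ by the equations $\rho(r)=0$, one quadratic or cubic equation for each relator $r$.

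It remains to descend~\eqref{s1} and~\eqref{s2} from $\R$ to an arbitrary field $\k$ of characteristic $0$. One cannot be cavalier here: by Remark~\ref{rem:non-descent} the Tievsky model is not known to be a model for $M$ over $\Q$. The way around is that the relators above have rational coefficients --- the cup products on $H^{\hdot}(N,\Q)$ and the class $h\in H^2(N,\Q)$ are defined over $\Q$ --- so $A_{\R}$ admits the rational form $A_{\Q}=(H^{\hdot}(N,\Q)\otimes_h\bwedge(t),d)$ with $\h(A_{\R})=\h(A_{\Q})\otimes_{\Q}\R$. Combining this with the base-change behaviour of the Malcev Lie algebra of a finitely generated group and with the descent of filtered-formality proved in \cite{SW} --- the very tool already used in the proof of Theorem~\ref{thm:highf} --- yields~\eqref{s1} and~\eqref{s2} over $\Q$, and hence over $\k$ by extension of scalars.

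I expect this descent step to be the one genuine obstacle: passing from the honestly-real Tievsky model down to $\Q$ while retaining the \emph{explicit} Lie-algebra presentation, and not merely the abstract filtered-formality property. For $n\ge 2$ it is painless, since Theorem~\ref{thm:highf} already gives $(n-1)$-formality over $\Q$, hence the stronger quadratic presentation; the delicate case is $n=1$, where only the homogeneous degree-$2$-and-$3$ presentation is available and one must lean on the base-change statements of \cite{SW}.
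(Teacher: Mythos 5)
Your overall strategy is sound and your application of Theorem \ref{thm:filtf} to the Tievsky model is correct (with $dt=h\neq 0$, the degree-one generator $t$ is of the ``$s_\beta$'' type and gets eliminated, leaving the quadratic relators dual to $H^1\wedge H^1\to H^2/\k h$ and the cubic relators $[h_i^*,u]$ with $u=\cup_H^*(h^*)$). Parts \eqref{s3} (over $\C$, where $A_\R\otimes\C$ is an honest model) and the abstract ``degrees $2$ and $3$'' statement are fine. But the descent step you flag as ``the one genuine obstacle'' is indeed a gap that your argument does not close. Knowing that $\m(\pi,\R)\cong\widehat{\h(A_\Q)\otimes_\Q\R}$ and that filtered-formality descends gives you $\m(\pi,\Q)\cong\widehat{\gr(\pi)\otimes\Q}$, but the \emph{explicit} claim in \eqref{s1} requires $\gr(\pi)\otimes\Q\cong\h(A_\Q)$, and all you can extract from the real statement is that these two $\Q$-forms become isomorphic over $\R$. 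Two $\Q$-forms of a graded Lie algebra need not be $\Q$-isomorphic, so the explicit presentation over $\Q$ (hence over $\k$) does not follow from the $\R$-statement plus abstract descent.

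The paper avoids this entirely by never descending from the Tievsky model. For $n>1$, Theorem \ref{thm:highf-bis} already produces the model $(H^{\hdot}(M/S^1,\k)/h\,H^{\hdot}(M/S^1,\k),\, d=0)$ \emph{over $\k$}, so Theorem \ref{thm:malholo} applies directly over $\k$ and yields the (in fact quadratic) presentation \eqref{eq:qusas}. For $n=1$ --- your delicate case --- the key observation you are missing is that the orbit space $M/S^1$ is then an honest smooth projective curve $\Sigma_g$ (not merely an orbifold), hence formal over any $\k$ by \cite{DGMS}; Lemma \ref{lem:newfinite}\eqref{nf2} then gives a Hirsch extension model $H^{\hdot}(\Sigma_g,\k)\otimes_e\bwedge(t)$ defined over $\k$ from the start, with $e\neq 0$ (else $b_1(M)=b_1(M/S^1)+1$) and normalizable to the orientation class, and Theorem \ref{thm:filtf} produces the cubic presentation \eqref{eq:cubsas} over $\k$ with no descent needed. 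I recommend restructuring your proof along this case split rather than trying to repair the $\R\to\Q$ descent of the explicit presentation.
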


\begin{proof}
If $n>1$, Theorems \ref{thm:highf} and \ref{thm:malholo} imply that  $\m(\pi, \k)$ 
is isomorphic to the lcs completion of the holonomy Lie algebra of $(H/h H, d=0)$, 
where $H^{\hdot}:=H^{\hdot}(M/S^1, \k)$.
Pick $\k$-bases for $H^1$ and $H^2$, say, $\{ h_i \}$ and $\{ h, f_{\gamma} \}$. 
Set $v_{\gamma}= \cup_H^* f_{\gamma}^* \in \L^2 (h_i^*)$. It follows from 
the definitions that the above holonomy Lie algebra has quadratic presentation
\begin{equation}
\label{eq:qusas}
\h ((H/h H, d=0)) = \L (h_i^*) / \, \text{ideal}\, (v_{\gamma}) .
\end{equation}

If $n=1$, it is well-known that the compact K\"{a}hler orbifold $M/S^1$ is a 
genus $g$ smooth projective curve $\Sigma_g$
(see e.g.~\cite[Proposition 4.4.4]{BG}). By \cite{DGMS}, the manifold $\Sigma_g$ is formal. 
We infer from Lemma \ref{lem:newfinite} that $\Omega (M, \k) \simeq A$, 
where $A:=H \otimes_e \bwedge (t)$,
for some $e\in H^2(\Sigma_g, \k)$. Note that $M$ and $M/S^1$ 
have the same first Betti number, since $h\ne 0$.
This implies that $e\ne 0$, since otherwise $b_1(M)=b_1(M/S^1) +1$. 
Normalizing if necessary, we may
thus assume that $e$ is the orientation class of $\Sigma_g$. 

By Theorem  \ref{thm:malholo}, the Malcev Lie algebra $\m(\pi, \k)$ is isomorphic 
to the lcs completion of $\h (A)$, which in turn can be computed 
as in Theorem \ref{thm:filtf}. 
Plainly, $\h (A)=0$ if $g=0$. If $g>0$, let $\{ a_1, b_1, \dots, a_g, b_g \}$ 
be a dual symplectic basis for $H_1$.
Set $u= \sum_i [a_i, b_i]$. We obtain the following cubic presentation:
\begin{equation}
\label{eq:cubsas}
\h (A) \cong \L (a_1, b_1, \dots, a_g, b_g )/ \, \text{ideal}\, ([a_i, u], [b_i, u]) .
\end{equation}

As shown in \cite{SW},  the same result as in the case $n=1$ 
holds for all orientable Seifert fibered $3$-manifolds.
The claim on representation varieties is a consequence 
of Corollary \ref{cor:sophus}.
\end{proof}

As an application of Corollary \ref{cor:tmodel}, we obtain 
another (independent) obstruction to Sasakianity. 

\begin{corollary}
\label{cor:sasgpobs}
Let $\pi$ be a Sasakian group. 
For each $s\ge 0$, all irreducible components 
of the characteristic variety $\VV^1_s(\pi)$ passing through $1$ 
are algebraic subtori of the character group $\Hom(\pi,\C^*)$.
\end{corollary}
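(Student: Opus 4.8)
The plan is to read this statement as nothing more than the degree-one case of Corollary~\ref{cor:tmodel}, translated through the standard dictionary between group-level and space-level jump loci. So the first step is to unwind the definition of a Sasakian group: I would fix a compact Sasakian manifold $M$ together with an isomorphism $\pi_1(M)\cong\pi$. Since $M$ is a compact manifold, it has the homotopy type of a finite CW-complex, hence is a finite space in the sense of \S\ref{subsec:cvs}, and Corollary~\ref{cor:tmodel} applies to it without modification.

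Next I would record the reduction from $\pi$ to $M$. As recalled in \S\ref{subsec:cvs}, the rank-one characteristic varieties in degree $1$ depend only on the fundamental group; concretely, the identification $H^1(X,\C^*)=\Hom(\pi_1(X),\C^*)$ identifies $\VV^1_s(M)$ with $\VV^1_s(\pi)$ as Zariski-closed subsets of $\Hom(\pi,\C^*)$, for every $s\ge 0$, carrying the trivial character to the basepoint $1$. (This is the familiar fact that twisted cohomology in degree $1$ sees only $\pi_1$, obtained by comparison with a classifying map $M\to K(\pi,1)$.) Then I would invoke Corollary~\ref{cor:tmodel} in degree $i=1$: every irreducible component of $\VV^1_s(M)$ passing through $1$ is an algebraic subtorus of $H^1(M,\C^*)$. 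Transporting this conclusion back along the identification $\VV^1_s(\pi)=\VV^1_s(M)$ yields the claim.

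I do not expect any genuine obstacle here. All the analytic content is already in place upstream: Lemma~\ref{lem:qtievsky} shows that the Tievsky model of $M$ is a finite model with positive weights, which feeds Theorem~\ref{thm:mpps-bis} to give Corollary~\ref{cor:tmodel}. The only point in the present argument that deserves a sentence of justification is the degree-one reduction $\VV^1_s(\pi)=\VV^1_s(M)$; everything else is a direct quotation of Corollary~\ref{cor:tmodel}.
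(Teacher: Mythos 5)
Your proposal is correct and matches the paper's (implicit) argument exactly: the paper derives Corollary~\ref{cor:sasgpobs} directly from Corollary~\ref{cor:tmodel}, using precisely the degree-one identification $\VV^1_s(\pi)=\VV^1_s(M)$ that you spell out. Nothing further is needed.
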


By Theorem \ref{thm:sasmodel}, the $\R$-homotopy type of a 
compact Sasakian manifold $M$ depends only on the cohomology ring 
$H^{\hdot}(M/S^1, \R)$ and the K\"{a}hler class $h\in H^2(M/S^1,\Q)$. 
Surprisingly enough, it turns out that the germs at $1$ of certain representation 
varieties and jump loci of $\pi_1(M)$ depend only on the graded cohomology 
ring of $M/S^1$. 

\begin{corollary}
\label{cor=nokclass}
Let $M$ be a compact Sasakian manifold, and let $G$ be either $\SL_2(\C)$, 
or a Borel subgroup.  
Then the germ at $1$ of $\Hom_{\gr}(\pi_1(M), G)$ depends only on the 
graded ring $H^{\hdot}(M/S^1, \C)$ and the Lie algebra of $G$, 
in an explicit way. Similarly, the germs $\VV^1_s(\pi_1(M))_{(1)}$ 
depend (explicitly) only on the graded ring $H^{\hdot}(M/S^1, \C)$, for all $s$. 
\end{corollary}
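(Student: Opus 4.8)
The plan is to push everything through the Tievsky model together with the two structural facts about Hirsch extensions already proved, namely Propositions \ref{prop:flataeq} and \ref{prop:circleres}. Write $N=M/S^1$, a compact K\"ahler orbifold, with K\"ahler class $h\in H^2(N,\C)$; recall that $h\ne 0$. Working over $\C$, Theorem \ref{thm:sasmodel} gives the finite model $A=(H^{\hdot}(N,\C)\otimes_h\bwedge(t),d)$ for $M$, where $\deg t=1$, $d=0$ on $H^{\hdot}(N,\C)$ and $dt=h$. Set $B=(H^{\hdot}(N,\C),d=0)$, so that $A=B\otimes_h\bwedge(t)$ is a Hirsch extension with $\dim P=1$ over the $1$-finite (indeed finite) \cdga $B$, and let $\varphi\colon B\inj A$ be the canonical inclusion.

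For the representation variety, let $G$ be $\SL_2(\C)$ or a Borel subgroup, with Lie algebra $\g\subseteq\sl_2$. First I would use Theorem \ref{thm:germs} to identify $\Hom_{\gr}(\pi_1(M),G)_{(1)}$ with $\F(A,\g)_{(0)}$, and then Proposition \ref{prop:flataeq} to write $\F(A,\g)=\F^1(A,\g)\cup\varphi^{!}(\F(B,\g))$. The crux is that neither piece involves $h$. Since $d=0$ on $B$, the Maurer--Cartan equation cutting out $\F(B,\g)\subseteq H^1(N,\C)\otimes\g$ only uses the cup product $H^1(N,\C)\wedge H^1(N,\C)\to H^2(N,\C)$ and the bracket of $\g$; and since $h\ne 0$ we have $Z^1(A)=H^1(N,\C)$, so $\F^1(A,\g)=\{\eta\otimes g:\eta\in H^1(N,\C),\ g\in\g\}$ is simply the variety of rank-$\le 1$ tensors in $H^1(N,\C)\otimes\g$. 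As $\varphi^{!}$ is the natural inclusion $H^1(N,\C)\otimes\g\inj A^1\otimes\g$, the whole of $\F(A,\g)$ lies in $H^1(N,\C)\otimes\g$ and is built functorially from the pair $(H^{\hdot}(N,\C),\g)$; taking germs at $0$ and transporting back through Theorem \ref{thm:germs} gives the first assertion, with the displayed description as the promised explicit recipe.

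For the characteristic varieties I would argue in rank $1$ and in parallel. Theorem \ref{thm:germs} identifies $\VV^1_s(\pi_1(M))_{(1)}$ with $\RR^1_s(A)_{(0)}$; since $[h]\ne 0$, Proposition \ref{prop:circleres}\eqref{r2} yields $\RR^1_s(A)=\RR^1_s(B)$ for all $s$, and $\RR^1_s(B)$ is the classical resonance locus of the graded ring $H^{\hdot}(N,\C)$, depending only on that ring. Passing to germs at $0$ and back proves the second assertion. The short steps are mostly bookkeeping; the substance lies in the two cited propositions --- the equality in Proposition \ref{prop:flataeq} is exactly what forces the ``nontrivial part'' of $\F(A,\g)$ to come from $B$, and it is special to $\g\subseteq\sl_2$ --- together with the two uses of $h\ne 0$ (to pin down $Z^1(A)$ and to invoke Proposition \ref{prop:circleres}\eqref{r2}), which is the only hypothesis one must keep track of.
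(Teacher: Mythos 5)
Your proposal is correct and follows essentially the same route as the paper: the Tievsky model, Theorem \ref{thm:germs}, Proposition \ref{prop:flataeq} for the representation variety, and Proposition \ref{prop:circleres}\eqref{r2} for the rank-one resonance, with $h\ne 0$ used exactly where the paper uses it. The only cosmetic difference is that the paper goes one step further and collapses the union to $\varphi^{!}(\F(H,\g))\cong\F(H,\g)$ (via $\F^1(A,\g)=\varphi^{!}(\F^1(H,\g))\subseteq\varphi^{!}(\F(H,\g))$), whereas you observe that each piece separately is determined by the ring and $\g$; both yield the stated conclusion.
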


\begin{proof}
Set $H^{\hdot}=H^{\hdot}(M/S^1, \C)$, and let $A= H \otimes_h \bwedge (t)$ 
be the complex Tievsky model of $M$. Denote by $\varphi \colon (H,d=0) \inj A$ 
the canonical $\dga$ inclusion. 
Let $\g \subseteq \sl_2 (\C)$ be the Lie algebra of $G$. 
Theorem \ref{thm:germs} and Proposition \ref{prop:flataeq} give the following 
isomorphism of analytic germs:
\begin{equation}
\label{eq=sasexpl1}
\Hom_{\gr}(\pi_1(M), G)_{(1)} \cong (\F^1(A,\g) \cup \varphi^{!} (\F(H,\g)))_{(0)} .
\end{equation}

Since $h\ne 0$, the map $\varphi^*\colon H^1\to H^1(A)$ is an isomorphism. 
Hence, $\F^1(A,\g) = \varphi^{!} (\F^1(H,\g))\subseteq \varphi^{!} (\F(H,\g))$. 
Therefore, $\Hom_{\gr}(\pi_1(M), G)_{(1)} \cong \F(H,\g)_{(0)}$. This proves the first claim.

Again by Theorem \ref{thm:germs}, $\VV^1_s(\pi_1(M))_{(1)} \cong \RR^1_s(A)_{(0)}$, 
for all $s$.  We infer from Proposition \ref{prop:circleres}\eqref{r2} that  
\begin{equation}
\label{eq=sasexpl2}
\VV^1_s(\pi_1(M))_{(1)} \cong \RR^1_s((H, d=0))_{(0)}, \, \text{for all} \,  s \,,
\end{equation}
and this proves the second claim.
\end{proof}

\section{Poincar\'{e} duality and cohomology jump loci}
\label{sect:pd-cjl}

In this section, we prove that Poincar\'{e} duality at the 
level of cochains implies twisted Poincar\'{e} duality. 
We illustrate this phenomenon with examples 
coming from almost free $K$-actions.

Let $A$ be a finite-dimensional, 
commutative graded algebra over a characteristic zero field $\k$. 
We say that $A$ is a {\em Poincar\'{e} duality algebra}\/ of dimension $n$ 
(for short, an $n$-\textsc{pda}) if $A^i=0$ for $i>n$ and $A^n=\k$, 
while the bilinear form
\begin{equation}
\label{eq:bilinear}
\xymatrixcolsep{16pt}
\xymatrix{A^{i}\otimes A^{n-i} \ar[r]& A^n=\k}
\end{equation}
given by the product is non-degenerate, for all $0\le i\le n$ 
(in particular, $A$ is connected).  If $M$ is a closed, 
connected, orientable, $n$-dimensional manifold, then, 
by Poincar\'{e} duality, the cohomology algebra $A=H^{\hdot}(M,\k)$ 
is an $n$-\textsc{pda}. 

Now let $A=(A^{\hdot}, d)$ be a \textsc{cdga}.  We say that $A$ is a 
{\em Poincar\'{e} duality differential graded algebra}\/ of dimension $n$ 
(for short, an $n$-\textsc{pd-cdga}) if the underlying algebra $A$ is an 
$n$-\textsc{pda}, and, moreover,  $H^n(A)=\k$, or, equivalently, 
$d A^{n-1}=0$. 

Clearly, if $A$ is an $n$-\textsc{pda}, then $(A, d=0)$ is an 
$n$-\textsc{pd-cdga}.  Hasegawa showed in \cite{Ha} that 
the minimal model for the classifying space of a finitely-generated 
nilpotent group $\pi$ is a \textsc{pd-cdga}. Let $A^{\hdot}$ be a \textsc{cdga}
with $H^1(A)=0$ for which $H^{\hdot}(A)$ is an $n$-\textsc{pda}.
Lambrechts and Stanley showed in \cite{LS} that $A$ is 
weakly equivalent to an $n$-\textsc{pd-cdga}.
The next result is probably known to the experts. 
For the reader's convenience, we include a proof.

\begin{lemma}
\label{lem:circlepd}
Let $A^{\hdot}= B^{\hdot} \otimes_{\tau} \bwedge (t_i)$ be a Hirsch extension with 
variables $t_i$ of degree $m_i$. If $(B^{\hdot}, d_B)$ is an $n$-\textsc{pd-cdga}, then
$(A^{\hdot}, d_A)$ is an $m$-\textsc{pd-cdga}, where $m=n+ \sum m_i$.
\end{lemma}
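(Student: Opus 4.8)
The statement is purely algebraic, so I would argue by induction on the number of Hirsch variables $t_i$, reducing to the case of a single variable $t$ of odd degree $m$ adjoined to an $n$-\textsc{pd-cdga} $B$. The claim then is that $A=B\otimes_\tau\bwedge(t)$ is an $(n+m)$-\textsc{pd-cdga}. There are two things to check: first, that the underlying graded algebra $A$ is an $(n+m)$-\textsc{pda}; second, that $H^{n+m}(A)=\k$, equivalently $dA^{n+m-1}=0$.

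\textbf{Step 1: $A$ is an $(n+m)$-\textsc{pda}.} As a graded vector space $A^{\hdot}=B^{\hdot}\oplus B^{\hdot-m}t$, so $A^i=0$ for $i>n+m$ (since $B$ vanishes above degree $n$), and $A^{n+m}=B^n t=\k\cdot(\omega_B t)$, where $\omega_B$ is a generator of $B^n$. For the pairing, write a degree-$i$ element as $b+b't$ with $b\in B^i$, $b'\in B^{i-m}$, and a degree-$(n+m-i)$ element as $c+c't$ with $c\in B^{n+m-i}$, $c'\in B^{n-i}$. The product lands in $A^{n+m}$, and its $B^n t$-component is $\big(b\cdot c' \pm b'\cdot c\big)t$ (the $bc$ and $b'c't\wedge t=0$ terms contribute nothing to the top degree). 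Now $b\mapsto(c'\mapsto bc')$ identifies $B^i$ with $(B^{n-i})^*$ by Poincaré duality of $B$, and $b'\mapsto(c\mapsto b'c)$ identifies $B^{i-m}$ with $(B^{n+m-i})^*=(B^{n-(i-m)})^*$, again by $\textsc{pd}$ of $B$. Hence the block form of the pairing $A^i\otimes A^{n+m-i}\to\k$ is, up to sign, anti-diagonal with these two perfect pairings on the blocks, so it is non-degenerate. Thus $A$ is an $(n+m)$-\textsc{pda}.

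\textbf{Step 2: $H^{n+m}(A)=\k$.} I want $dA^{n+m-1}=0$. An element of $A^{n+m-1}$ has the form $b+b't$ with $b\in B^{n+m-1}$ and $b'\in B^{n-1}$. Since $m\ge 1$ and $B$ vanishes above degree $n$, we have $b=0$ unless $m=1$; in the case $m=1$, $b\in B^{n}$ and $d_B b=0$ because $H^n(B)=\k$ means $d_B B^{n-1}=0$, but more simply $d_B B^n\subseteq B^{n+1}=0$. For the $b't$ term, $d(b't)=d_B b'\cdot t\pm b'\cdot\tau(t)$; here $d_B b'\in B^n$ so $d_B b'\cdot t\in B^n t$, and $b'\cdot\tau(t)\in B^{n+1}=0$. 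So $d(b+b't)=(d_Bb')t$, which need not vanish — but it lies in the top piece $B^n t\subseteq Z^{n+m}(A)$, so it is a coboundary modulo nothing; what I actually need is that this differential is \emph{zero in cohomology}, i.e. that $B^n t$ consists of coboundaries is irrelevant; rather $H^{n+m}(A)=A^{n+m}/dA^{n+m-1}$, and $dA^{n+m-1}=d_B(B^{n-1})\cdot t$. Since $(B,d_B)$ is a \textsc{pd-cdga}, $H^n(B)=\k$, i.e. $Z^n(B)/d_B B^{n-1}=\k$; but $Z^n(B)=B^n$ (as $d_B B^n\subseteq B^{n+1}=0$), so $d_B B^{n-1}$ is a codimension-one subspace of $B^n$. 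Hence $dA^{n+m-1}=(d_BB^{n-1})t$ has codimension one in $A^{n+m}=B^n t$, giving $H^{n+m}(A)=\k$, as required. Equivalently, one checks directly $dA^{n+m-1}\ne A^{n+m}$ and $A^{n+m}\subseteq Z^{n+m}(A)$ trivially.

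\textbf{Main obstacle.} The only subtle point is bookkeeping the signs and the case $m=1$ cleanly in Step 2, and making sure in Step 1 that the anti-diagonal block pairing is genuinely non-degenerate rather than merely of full rank on each block separately — this is automatic since a block-anti-triangular matrix with invertible anti-diagonal blocks is invertible. The induction step is then immediate: if $B$ is an $n$-\textsc{pd-cdga} and we adjoin $t_1,\dots,t_{k}$, apply the one-variable case to $B\otimes_\tau\bwedge(t_1,\dots,t_{k-1})$, which by induction is an $(n+\sum_{i<k}m_i)$-\textsc{pd-cdga}, and adjoin $t_k$; note that a Hirsch extension by several variables is, by definition, an iterated single-variable Hirsch extension, so no compatibility issue arises. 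I would also remark that when $d_B=0$ the hypothesis reads "$B$ is an $n$-\textsc{pda}," recovering the Poincaré duality assertion of Theorem~\ref{thm:krealiz-intro}.
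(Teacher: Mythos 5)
Your proof is correct and follows essentially the same route as the paper's: both verify that the underlying algebra is a \textsc{pda} and then check that $d$ kills the top-degree-minus-one part, with the terms $b'\cdot\tau(t)$ dying for degree reasons and $d_B b'$ dying by the \textsc{pd-cdga} hypothesis on $B$ (the paper treats all variables at once via monomials $b\otimes t_{i_1}\wedge\cdots\wedge t_{i_r}$ and the constraint $q\ge n-1$, while you induct one variable at a time and spell out the block-anti-diagonal pairing that the paper dismisses as clear). One small remark on Step 2: the parenthetical ``which need not vanish'' is misleading, since $d_B b'=0$ outright because $d_B B^{n-1}=0$ is exactly the $n$-\textsc{pd-cdga} hypothesis on $B$; your subsequent ``codimension one in $B^n$'' detour still lands on the right answer only because $\dim B^n=1$ forces that codimension-one subspace to be zero.
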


\begin{proof}
Clearly $A^{\hdot}$ is an $m$-\textsc{pda}. It remains to check that
$d_A(b\otimes t_{i_1} \wedge \cdots \wedge  t_{i_r})=0$, for all $b\in B^q$ such that
$q+\sum_j m_{i_j}=m-1$. Note that the condition on degrees forces $q\ge n-1$.
Indeed, 
\begin{equation}
d_A(b\otimes t_{i_1} \wedge \cdots \wedge  t_{i_r})= d_B( b) \otimes t_{i_1} \wedge \cdots \wedge  t_{i_r}
+ \sum_j \pm b\cdot \tau (t_{i_j} )\otimes  t_{i_1} \wedge \cdots \widehat{t_{i_j}} \cdots \wedge  t_{i_r}.
\end{equation}

In the above, all elements of the form $b\cdot \tau (t_{i_j})$ belong to $B^{>n}$. 
These elements must be equal to zero, since $B$ is an $n$-\textsc{pda}.
By the same argument, $d_B (b)=0$ if $q>n-1$. Finally, if $q=n-1$ then 
again $d_B (b)=0$, by our $n$-\textsc{pd-cdga} assumption on $B$. 
\end{proof}

\begin{corollary}
\label{cor=kpd}
Let $M$ be an almost free $K$-manifold. If $B$ is a finite model 
of the orbit space $N=M/K$ and an $n$-\textsc{pd-cdga}, then 
the finite model of $M$ from Lemma \ref{lem:newfinite}, 
$A=B \otimes_{\tau} \bwedge P$, is an  $(n+ \dim K)$-\textsc{pd-cdga}. 
If $N$ is a formal, closed, orientable, $n$-manifold, we
may take $B^{\hdot}=(H^{\hdot}(N), d=0)$.
\end{corollary}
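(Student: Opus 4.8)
The plan is to read off the conclusion from Lemmas~\ref{lem:newfinite} and \ref{lem:circlepd}, once the relevant degree bookkeeping is in place. Recall from \eqref{eq:hopf} that $\bwedge P = H^{\hdot}(K,\k)$, where $P$ is spanned by odd-degree generators $t_1,\dots,t_r$ with $\deg t_{\alpha}=m_{\alpha}$.

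First I would apply Lemma~\ref{lem:newfinite}(1) in the case $q=\infty$ (where the hypothesis $q\ge m+1$ is vacuous): since $B$ is a finite model of $N=M/K$, a suitable Hirsch extension $A=B\otimes_{\tau}\bwedge P$ is a finite model of $M$, and this is precisely the model named in the statement. By construction $\tau$ sends $t_{\alpha}\in P^{m_{\alpha}}$ into $Z^{m_{\alpha}+1}(B)$, so $A$ is a Hirsch extension of exactly the form handled by Lemma~\ref{lem:circlepd}, with variables of degrees $m_1,\dots,m_r$.

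Next I would invoke Lemma~\ref{lem:circlepd}: since $(B,d_B)$ is an $n$-\textsc{pd-cdga} by hypothesis, $(A,d_A)$ is an $m$-\textsc{pd-cdga} with $m=n+\sum_{\alpha}m_{\alpha}$. It then remains only to identify $\sum_{\alpha}m_{\alpha}$ with $\dim K$. As all the $m_{\alpha}$ are odd, the top nonzero graded piece of $\bwedge P\cong H^{\hdot}(K,\k)$ is $\k\cdot t_1\cdots t_r$, which lies in degree $\sum_{\alpha}m_{\alpha}$; on the other hand $K$, being a compact Lie group, is a closed orientable manifold of dimension $\dim K$, so its top cohomology sits in degree $\dim K$. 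Hence $\sum_{\alpha}m_{\alpha}=\dim K$, and $A$ is an $(n+\dim K)$-\textsc{pd-cdga}, as asserted.

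For the final assertion, suppose $N$ is a formal, closed, orientable $n$-manifold. Then Poincar\'e duality makes $H^{\hdot}(N,\k)$ an $n$-\textsc{pda}, so $B^{\hdot}:=(H^{\hdot}(N),d=0)$ is an $n$-\textsc{pd-cdga} (as noted just after the definition of \textsc{pd-cdga}), and formality of $N$ means that this $B$ is a finite model of $N$; the first part of the argument then applies verbatim. I do not expect any genuine obstacle here: the corollary is a direct combination of Lemmas~\ref{lem:newfinite} and \ref{lem:circlepd} with the classical identity $\dim K=\sum_{\alpha}m_{\alpha}$, the only point deserving a word of care being that the \emph{suitable} extension supplied by Lemma~\ref{lem:newfinite} really is a Hirsch extension in the technical sense required by Lemma~\ref{lem:circlepd} --- which it is, since $dt_{\alpha}=\tau(t_{\alpha})$ is automatically a $d_B$-cocycle.
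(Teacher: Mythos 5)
Your proposal is correct and follows exactly the route the paper intends (the corollary is stated without a separate proof precisely because it is the immediate combination of Lemma \ref{lem:newfinite} and Lemma \ref{lem:circlepd}, together with the classical identity $\dim K=\sum_{\alpha}m_{\alpha}$ coming from $H^{\hdot}(K)\cong\bwedge P$ and Poincar\'e duality for the compact manifold $K$). Your extra care about the degree bookkeeping and about $(H^{\hdot}(N),d=0)$ being a finite $n$-\textsc{pd-cdga} model in the formal case matches the paper's implicit argument.
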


In the case of principal $K$-bundles, we obtain a more precise result.

\begin{corollary}
\label{cor:newpd}
Let $N$ be a finite space having an $n$-\textsc{pd-cdga} finite model $B$, over $\Q$. 
Let $K$ be an arbitrary compact connected Lie group. Any Hirsch extension, 
$A= B \otimes_{\tau} \bwedge P_K$, may be realized as an  $(n+ \dim K)$-\textsc{pd-cdga}
finite model of a principal $K$-bundle $M_{\tau}$ over $N$. When $N$ is a formal,
closed, orientable, $n$-manifold, we may take $B^{\hdot}=(H^{\hdot}(N), d=0)$.
\end{corollary}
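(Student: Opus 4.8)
The plan is to combine two ingredients already at our disposal: Theorem~\ref{thm:krealiz}, which realizes an arbitrary Hirsch extension $A = B\otimes_{\tau}\bwedge P_K$ as a finite model over $\Q$ of a principal $K$-bundle $M_{\tau}\to N$, and Lemma~\ref{lem:circlepd}, which says that a Hirsch extension of an $n$-\textsc{pd-cdga} by oddly-graded variables $t_i$ of degrees $m_i$ is an $m$-\textsc{pd-cdga} with $m = n + \sum_i m_i$. First I would invoke Theorem~\ref{thm:krealiz} with the given $n$-\textsc{pd-cdga} model $B$ of $N$: the hypotheses of that theorem require only that $N$ be a finite space and that $B$ be a finite model over $\Q$, both of which hold, so we obtain a principal $K$-bundle $M_{\tau}$ over $N$ together with a weak equivalence $A \simeq \Omega(M_{\tau})$ over $\Q$, where $A = B\otimes_{\tau}\bwedge P_K$.

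Next I would apply Lemma~\ref{lem:circlepd} to the \cdga $A$. By \eqref{eq:hopf} the odd-degree generators $t_1,\dots,t_r$ of $\bwedge P_K \cong H^{\hdot}(K)$ have degrees $m_1,\dots,m_r$, and since $H^{\hdot}(K)$ is the cohomology of a product of odd spheres of those dimensions, $\dim K = \sum_{\alpha=1}^r m_{\alpha}$. Hence Lemma~\ref{lem:circlepd} gives that $A$ is an $m$-\textsc{pd-cdga} with $m = n + \sum_{\alpha} m_{\alpha} = n + \dim K$. Combining with the previous paragraph, $A$ is simultaneously a finite model of $M_{\tau}$ over $\Q$ and an $(n+\dim K)$-\textsc{pd-cdga}, which is the first assertion.

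For the final sentence, when $N$ is a formal, closed, orientable $n$-manifold, Poincar\'{e} duality makes $H^{\hdot}(N,\Q)$ an $n$-\textsc{pda}, so $(H^{\hdot}(N),d=0)$ is an $n$-\textsc{pd-cdga}, and formality of $N$ means it is a model of $N$; thus we may take $B^{\hdot} = (H^{\hdot}(N),d=0)$ and the preceding argument applies verbatim. I do not anticipate a genuine obstacle here: the statement is essentially a bookkeeping assembly of Theorem~\ref{thm:krealiz} and Corollary~\ref{cor=kpd}/Lemma~\ref{lem:circlepd}. The only point requiring a word of care is the identification $\dim K = \sum_{\alpha} m_{\alpha}$ of the total degree shift with the dimension of the group (so that the stated dimension $n+\dim K$ is correct), together with noting that the positive-weights refinement is not claimed here and so no regularity or weighted-homogeneity hypothesis on $[\tau]$ is needed.
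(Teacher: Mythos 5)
Your proof is correct and follows exactly the paper's own argument: Theorem \ref{thm:krealiz} supplies the principal $K$-bundle $M_{\tau}$ with prescribed finite model $A=B\otimes_{\tau}\bwedge P_K$ over $\Q$, and Lemma \ref{lem:circlepd} (equivalently Corollary \ref{cor=kpd}) gives the \textsc{pd-cdga} property in dimension $n+\sum_{\alpha}m_{\alpha}=n+\dim K$. The only addition is your explicit remark that $\dim K=\sum_{\alpha}m_{\alpha}$, which is a correct and harmless piece of bookkeeping left implicit in the paper.
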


\begin{proof}
The existence of the principal $K$-bundle $M_{\tau}$ with prescribed finite model 
$A= B \otimes_{\tau} \bwedge P_K$ follows from Theorem \ref{thm:krealiz}. In turn, 
Lemma \ref{lem:circlepd} yields the claimed \textsc{pd-cdga} property.
\end{proof}

Let $(A,d)$ be a finite \cdga. For a finite-dimensional vector space $V$,
define an isomorphism 
$\sigma\colon A^1\otimes \gl(V) \isom  A^1\otimes \gl(V^*)$ 
by $\sigma(\eta\otimes g)= - \eta\otimes g^{*}$ 
for $\eta\in A^1$ and $g\in \gl(V)$.  
Identifying $V$ with $\k^m$, this isomorphism 
coincides with the involution $-\id_{A^1}\otimes T$, 
where $T \colon \gl_m (\k) \to \gl_m(\k)$ is matrix transposition.
It is straightforward to verify 
that $\sigma$ induces an isomorphism between the 
corresponding varieties of flat connections, 
\begin{equation}
\label{eq:sigma}
\xymatrix{\sigma\colon \F(A, \gl(V)) \ar^(.52){\simeq}[r]& \F(A, \gl(V^*))}.
\end{equation}

In the next result, covariant derivatives are taken with respect to the identity
representations of $\gl(V)$ and $\gl(V^*)$.

\begin{lemma}
\label{lem:uptosign}
Let $(A^{\hdot} , d)$ be an $n$-\textsc{pd-cdga}, and 
let $\omega\in \F(A,\gl(V))$  be a $\gl(V)$-valued flat connection.  Then, for all $0\le i\le n$, 
the diagram
\[
\xymatrixcolsep{32pt}
\xymatrix{
(A^i)^* \otimes V^* & (A^{i+1})^* \otimes V^* \, \phantom{,}
\ar[l]_{d_{\omega}^*}\\
A^{n-i} \otimes V^* \ar^{\PD}_{\simeq}[u] & A^{n-i-1} \otimes V^* \, , 
\ar[l]_{d_{\sigma(\omega)}} \ar^{\PD}_{\simeq}[u]
}
\]
with vertical arrows induced by Poincar\'e duality isomorphisms, 
commutes up to a $(-1)^{n-i}$ sign.
\end{lemma}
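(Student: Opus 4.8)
The plan is to recognize the two vertical maps as the adjoints of the natural Poincar\'e duality pairings, and thereby reduce the claimed commutativity to a single ``integration by parts'' identity. Fix the orientation $\varepsilon\colon A^n\isom\k$, so that for each $j$ the Poincar\'e duality isomorphism is $\PD\colon A^{n-j}\isom(A^j)^{*}$, $\PD(c)=\bigl(b\mapsto\varepsilon(c\cdot b)\bigr)$, and the vertical arrows in the diagram are $\PD\otimes\id_{V^{*}}$. Equivalently, $\PD\otimes\id_{V^{*}}$ is the adjoint of the perfect pairing
\[
\langle a\otimes v,\,c\otimes w\rangle=\varepsilon(c\cdot a)\,w(v),\qquad a\in A^{j},\ c\in A^{n-j},\ v\in V,\ w\in V^{*}.
\]
Unwinding the definitions of $d_{\omega}^{*}$ and $\PD$, one sees that the square commutes up to the sign $(-1)^{n-i}$ exactly when, for all $x\in A^{i}\otimes V$ and $\alpha\in A^{n-i-1}\otimes V^{*}$,
\[
\langle x,\,d_{\sigma(\omega)}\alpha\rangle=(-1)^{n-i}\,\langle d_{\omega}x,\,\alpha\rangle .
\]
So the whole lemma becomes this one identity, which I would then check on elementary tensors.

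Write $\omega=\sum_{k}\eta_{k}\otimes g_{k}$ with $\eta_{k}\in A^{1}$, $g_{k}\in\gl(V)$, so that $\sigma(\omega)=-\sum_{k}\eta_{k}\otimes g_{k}^{*}$, and take $x=a\otimes v$, $\alpha=b\otimes w$ with $a\in A^{i}$, $b\in A^{n-i-1}$. Expanding
\[
d_{\omega}x=da\otimes v+\sum_{k}(\eta_{k}a)\otimes g_{k}(v),\qquad
d_{\sigma(\omega)}\alpha=db\otimes w-\sum_{k}(\eta_{k}b)\otimes g_{k}^{*}(w),
\]
both sides of the identity split into ``differential'' terms, proportional to $w(v)$, and ``connection'' terms, proportional to $w(g_{k}v)$ (using the transpose relation $(g_{k}^{*}w)(v)=w(g_{k}v)$ — this is precisely the point where the minus sign built into $\sigma$ is needed so that the connection contributions on the two sides can be matched). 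The differential terms are matched using the $n$-\textsc{pd-cdga} hypothesis: since $a\cdot b\in A^{n-1}$ and $dA^{n-1}=0$, the graded Leibniz rule gives $\varepsilon(da\cdot b)=(-1)^{i+1}\varepsilon(a\cdot db)$. The remaining discrepancies come from graded-commutativity (moving each $\eta_{k}$ past the degree-$i$ element $a$, which costs $(-1)^{i}$) and from the asymmetry of the left-multiplication normalization of $\PD$ on $A^{n-i}$ versus $A^{n-i-1}$ (factors $(-1)^{i(n-i)}$ versus $(-1)^{(i+1)(n-i-1)}$); when assembled these collapse to exactly $(-1)^{n-i}$ on both families of terms.

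The only real work is this sign bookkeeping, and that is where I expect the sole difficulty to lie: the structural part of the argument is essentially a two-line rewriting, and — worth noting — flatness of $\omega$ is never used in this particular identity. Once the elementary-tensor identity is established, bilinearity extends it to all $x$ and $\alpha$, and reading it back through the adjunction, via $(\PD\otimes\id)(d_{\sigma(\omega)}\alpha)(x)=\langle x,\,d_{\sigma(\omega)}\alpha\rangle$ and $\bigl(d_{\omega}^{*}\circ(\PD\otimes\id)\bigr)(\alpha)(x)=\langle d_{\omega}x,\,\alpha\rangle$, yields the commutativity of the square up to the factor $(-1)^{n-i}$, as claimed.
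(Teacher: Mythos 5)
Your proposal is correct and follows essentially the same route as the paper's proof: evaluate both composites against elementary tensors via the duality pairing, match the differential terms using the graded Leibniz rule together with $dA^{n-1}=0$, and match the connection terms using graded-commutativity and the minus sign built into $\sigma$ (and indeed, as you note, flatness of $\omega$ plays no role here). The only quibble is that your parenthetical about the normalization factors $(-1)^{i(n-i)}$ and $(-1)^{(i+1)(n-i-1)}$ is unnecessary under the pairing convention you fixed, but the final sign $(-1)^{n-i}$ and the argument are correct.
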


\begin{proof}
Write $\omega = \sum_{\alpha} \eta_{\alpha} \otimes g_{\alpha} \in A^1\otimes \gl(V)$.
Then $\sigma(\omega) = \sum_{\alpha} -\eta_{\alpha} \otimes g^*_{\alpha}$. 
Pick $a\otimes v^* \in A^{n-i-1} \otimes V^*$ and 
$b\otimes u \in A^{i} \otimes V$.  Denoting by 
$\langle\text{- , -}\rangle$ the evaluation maps 
and using formula \eqref{eq:adv} for the covariant derivative, 
we find that
\begin{equation}
\label{eq:c1}
\langle\PD\circ d _{\sigma(\omega)}(a\otimes v^*), b\otimes u \rangle =
da \cdot b\, \langle v^*, u\, \rangle  -\sum_{\alpha} \eta_{\alpha}\cdot a \cdot b 
\, \langle v^*, g_{\alpha} u \rangle
\end{equation}
and 
\begin{align}
\label{eq:c2}
\langle d^* _{\omega}\circ \PD (a\otimes v^*), b\otimes u \rangle &=
\langle \PD (a\otimes v^*) , d_{\omega} (b\otimes u) \rangle  \\
& = a \cdot db\, \langle v^*, u\, \rangle + \sum_{\alpha} a\cdot \eta_{\alpha} \cdot b 
\, \langle v^*, g_{\alpha} u \rangle. \notag
\end{align}

In view of our $n$-\textsc{pd-cdga} assumption on $A$, 
and the fact that $0=d(a\cdot b)=da\cdot b+(-1)^{n-i-1} a\cdot db$, 
the first terms from \eqref{eq:c1} and \eqref{eq:c2} differ by a 
factor of $(-1)^{n-i}$.   Moreover, by graded-commutativity of $A$, 
we also have that $-\eta_{\alpha} a b = (-1)^{n-i} a\eta_{\alpha}b$, 
for all $\alpha$, and this completes the proof.
\end{proof}

The previous lemma leads to the following twisted Poincar\'e duality result.

\begin{corollary}
\label{cor:twpd}
Let $(A^{\hdot} , d)$ be an $n$-\textsc{pd-cdga}, and 
let $\omega\in \F(A,\gl(V))$.  Then 
\[
H^i(A\otimes V,d_{\omega})^* \cong H^{n-i}(A\otimes V^*,d_{\sigma(\omega)}), \, \forall \, i .
\] 
\end{corollary}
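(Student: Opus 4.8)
The plan is to obtain the isomorphism directly from Lemma \ref{lem:uptosign}, once that lemma is reinterpreted as a comparison between one twisted cochain complex and the $\k$-dual of another. There is essentially no new computation to do: all the content is already in that lemma, which has been proved.

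First I would record the elementary point that, since the underlying algebra $A$ is finite-dimensional and $V$ is finite-dimensional, every term $A^i\otimes V$ of the twisted complex $(A^{\hdot}\otimes V, d_{\omega})$ is a finite-dimensional $\k$-vector space, and dualization over $\k$ is exact. Consequently the $\k$-dual complex $\big((A^{\hdot})^*\otimes V^*, d_{\omega}^*\big)$, with $d_{\omega}^*\colon (A^{i+1})^*\otimes V^*\to (A^i)^*\otimes V^*$ (this is a complex because $d_{\omega}^2=0$), satisfies $H_i\big((A^{\hdot})^*\otimes V^*, d_{\omega}^*\big)\cong H^i(A^{\hdot}\otimes V, d_{\omega})^*$ for all $i$, by the usual universal-coefficients argument for complexes of vector spaces over a field.

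Next I would invoke Lemma \ref{lem:uptosign}. Note first that $\sigma(\omega)\in\F(A,\gl(V^*))$ by \eqref{eq:sigma}, so $d_{\sigma(\omega)}^2=0$ and $\big(A^{\hdot}\otimes V^*, d_{\sigma(\omega)}\big)$ is a genuine cochain complex. Reindexing it "upside down" via $j\mapsto n-j$ turns it into a chain complex whose term in degree $i$ is $A^{n-i}\otimes V^*$ and whose differential (from degree $i+1$ to degree $i$) is exactly the map $d_{\sigma(\omega)}\colon A^{n-i-1}\otimes V^*\to A^{n-i}\otimes V^*$ appearing in the lemma; its homology in degree $i$ is therefore $H^{n-i}(A^{\hdot}\otimes V^*, d_{\sigma(\omega)})$. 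Likewise the dual complex from the previous paragraph is, in degree $i$, the term $(A^i)^*\otimes V^*$ with differential $d_{\omega}^*$. Lemma \ref{lem:uptosign} says precisely that the Poincar\'e duality isomorphisms $\PD\colon A^{n-i}\otimes V^*\isom (A^i)^*\otimes V^*$ assemble into a degreewise isomorphism between these two chain complexes which intertwines the differentials up to the sign $(-1)^{n-i}$ in degree $i$.

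Finally I would observe that a degreewise isomorphism commuting with differentials \emph{up to sign} still carries cycles to cycles and boundaries to boundaries, since multiplying a linear map by a unit changes neither its kernel nor its image; hence it induces an isomorphism on homology. (If one prefers an honest chain isomorphism, one can absorb the signs by rescaling $\PD$ on $A^{n-i}\otimes V^*$ by a unit $\epsilon_i$ chosen so that $\epsilon_{i+1}/\epsilon_i=(-1)^{n-i}$.) Combining the three displays gives
\[
H^i(A^{\hdot}\otimes V, d_{\omega})^*\;\cong\;H_i\big((A^{\hdot})^*\otimes V^*, d_{\omega}^*\big)\;\cong\;H^{n-i}(A^{\hdot}\otimes V^*, d_{\sigma(\omega)}),
\]
which is the assertion; for $i<0$ or $i>n$ both sides vanish, so those cases are automatic. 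The only thing to watch is the bookkeeping — matching the upside-down reindexing of $(A^{\hdot}\otimes V^*, d_{\sigma(\omega)})$ to the indexing of the square in Lemma \ref{lem:uptosign}, and checking that the degree-dependent sign is genuinely harmless — but this is routine and poses no real obstacle.
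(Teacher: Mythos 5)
Your proposal is correct and follows exactly the route the paper intends: the paper offers no separate proof of Corollary \ref{cor:twpd}, presenting it as an immediate consequence of Lemma \ref{lem:uptosign}, and your argument (dualize the finite-dimensional complex $(A\otimes V,d_\omega)$ using exactness of $\k$-duality, reindex $(A\otimes V^*,d_{\sigma(\omega)})$ upside down, and observe that the Poincar\'e duality maps give a degreewise isomorphism commuting with differentials up to harmless unit signs) is precisely the dualization-and-reindexing step the authors leave implicit. No gaps.
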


This in turn gives rise to Poincar\'e duality for embedded resonance varieties.

\begin{lemma}
\label{lem:mpd}
Let $A^{\hdot}$ be an $n$-\textsc{pd-cdga}. Let $\g$ be either $\gl_m (\k)$ 
or $\sl_m (\k)$.  Denote by $\theta$ either $\id \colon \gl_m(\k)\to \gl_m(\k)$ 
or the inclusion $ \sl_m(\k)\inj \gl_m(\k)$. Then the involution 
$\sigma \colon A^1\otimes \gl_m(\k) \isom  A^1\otimes \gl_m(\k)$ induces 
an algebraic isomorphism of embedded varieties,
\[
\xymatrix{ \sigma\colon (\F(A,\g), \RR^i_s(A,\theta))
\ar^(.5){\cong}[r]&
(\F(A,\g), \RR^{n-i}_s(A,\theta))}, \, \forall \, i,s \, .
\]
\end{lemma}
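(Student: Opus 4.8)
The plan is to note that $\sigma$ is a linear involution of the affine space $A^1\otimes\gl_m(\k)$, hence an algebraic automorphism which is its own inverse, and then to verify the two facts that make it an isomorphism of the embedded pairs in question: that $\sigma$ maps $\F(A,\g)$ onto itself, and that it interchanges $\RR^i_s(A,\theta)$ with $\RR^{n-i}_s(A,\theta)$ for all $i,s$. The first fact is a restriction of the isomorphism \eqref{eq:sigma}; the second is a consequence of the twisted Poincar\'e duality in Corollary \ref{cor:twpd}, once the identifications between $V$, $V^*$ and $\k^m$ are pinned down.

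First I would settle the statement about flat connections. Identifying $V=\k^m$ and $V^*\cong\k^m$ by the standard basis and its dual, the map $\sigma$ of \eqref{eq:sigma} becomes $-\id_{A^1}\otimes T$ with $T$ matrix transposition. Since $T$ is a Lie-algebra anti-automorphism of $\gl_m(\k)$ --- so that the sign built into $\sigma$ repairs the Maurer--Cartan equation, exactly as recorded in \eqref{eq:sigma} --- and since $T$ preserves the trace, $\sigma$ carries $A^1\otimes\sl_m(\k)$ into itself. Combined with \eqref{eq:sigma} and the elementary observation $\F(A,\sl_m(\k))=\F(A,\gl_m(\k))\cap(A^1\otimes\sl_m(\k))$ (the Maurer--Cartan expression of an $\sl_m$-valued $1$-cochain is again $\sl_m$-valued), this shows $\sigma$ restricts to an algebraic involution of $\F(A,\gl_m(\k))$ and of $\F(A,\sl_m(\k))$.

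Next I would handle the resonance varieties. Fix $\omega\in\F(A,\g)$; for $\g=\sl_m(\k)$ we regard $\omega$ inside $\F(A,\gl_m(\k))$, which is legitimate since the covariant derivative on $A\otimes\k^m$ attached to $\omega$ is the same operator for the identity representation of $\gl_m(\k)$ and for the inclusion $\sl_m(\k)\inj\gl_m(\k)$. Corollary \ref{cor:twpd}, applied with $V=\k^m$ and its identity representation, gives $H^i(A\otimes V,d_\omega)^*\cong H^{n-i}(A\otimes V^*,d_{\sigma(\omega)})$, where on the right $d_{\sigma(\omega)}$ uses the identity representation of $\gl(V^*)$. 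The key bookkeeping point is that, under the dual-basis identification $V^*\cong\k^m$, the dual map $g\mapsto g^*$ is transposition, so $\sigma(\omega)\in A^1\otimes\gl(V^*)$ becomes precisely the involution of the present lemma applied to $\omega$, and $A\otimes V^*$ with its identity $\gl(V^*)$-action becomes $A\otimes\k^m$ with the standard $\gl_m(\k)$-action. Thus the right-hand side is the cochain complex computing the resonance of $A$ at $\sigma(\omega)$, and taking dimensions yields $\dim H^i(A\otimes\k^m,d_\omega)=\dim H^{n-i}(A\otimes\k^m,d_{\sigma(\omega)})$. Hence $\omega\in\RR^i_s(A,\theta)$ if and only if $\sigma(\omega)\in\RR^{n-i}_s(A,\theta)$, for either choice of $\theta$; together with the previous paragraph this proves the lemma.

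I expect the only real obstacle to be this dictionary of identifications --- keeping straight $V\cong\k^m$, $V^*\cong\k^m$, and $\gl(V^*)\cong\gl_m(\k)$, and checking that the ``identity representation of $\gl(V^*)$'' of Corollary \ref{cor:twpd} goes over, under transposition, to the standard representation of $\gl_m(\k)$ on $\k^m$ that underlies $\RR^i_s(A,\theta)$. Everything else is formal: $\sigma$ is linear, $\sigma^2=\id$, and the two displayed invariance properties are then immediate from \eqref{eq:sigma} and Corollary \ref{cor:twpd}.
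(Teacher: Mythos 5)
Your proposal is correct and follows essentially the same route as the paper: the $\gl_m$ case is read off from Corollary \ref{cor:twpd} via the identifications $V\cong V^*\cong\k^m$ under which $\sigma=-\id\otimes T$, and the $\sl_m$ case is reduced to it by intersecting with the $\sigma$-invariant subspace $A^1\otimes\sl_m(\k)$. The only cosmetic difference is that you verify directly (via the agreement of covariant derivatives and the Maurer--Cartan equation) what the paper quotes from \cite{MPPS} about flat connections and resonance for the Lie subalgebra $\sl_m\subseteq\gl_m$.
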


\begin{proof}
The first case is a direct consequence of Corollary \ref{cor:twpd}.

In the second case, the equality $\F(A,\sl_m(\k))= \F(A,\gl_m(\k)) \cap A^1\otimes \sl_m(\k)$
is an instance of the well-known general formula describing the behavior of 
flat connections with respect to Lie subalgebras; see \eqref{eq:flat coords}. 
In view of Remark 2.5 from \cite{MPPS}, the resonance 
variety $\RR^i_s(A,\theta)$ is the intersection of 
$\RR^i_s(A,\id_{\gl_m(\k)})$ with $A^1\otimes \sl_m(\k)$. 
On the other hand, the involution $\sigma$ leaves invariant the subspace 
$A^1\otimes \sl_m(\k)\subseteq A^1\otimes \gl_m(\k)$.  The 
desired conclusion follows at once.
\end{proof}

We deduce the following topological consequence.  
Let $G$ be either $\GL_m(\C)$ or $\SL_m(\C)$. Denote by $\iota$ either
the identity $\GL_m(\C)\to \GL_m(\C)$ or the inclusion $\SL_m(\C)\inj \GL_m(\C)$. 

\begin{theorem}
\label{thm:invjump}
Let $X$ be a finite space  admitting a finite model $A$ which is an 
$n$-\textsc{pd-cdga} over $\C$. There is then 
an analytic involution of $\Hom (\pi_1(X), G)_{(1)}$ which 
identifies $\VV^{i}_s(X, \iota)_{(1)}$ with $\VV^{n-i}_s(X, \iota)_{(1)}$, for all $i,s$.
Furthermore, in the rank $1$ case, this involution is induced by 
the involution $\rho \mapsto \rho^{-1}$ of the character group $H^1(X, \C^*)$. 
\end{theorem}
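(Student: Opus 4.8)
The plan is to transport the linear involution $\sigma$ of Lemma \ref{lem:mpd} --- which lives on the flat--connection side --- across the analytic isomorphism supplied by Theorem \ref{thm:germs}. First I would set $\g=\Lie(G)$ and $\theta=d_1(\iota)$, so that $\theta$ is either $\id\colon\gl_m(\C)\to\gl_m(\C)$ or the inclusion $\sl_m(\C)\inj\gl_m(\C)$; in particular $(\g,\theta)$ has exactly the shape required by Lemma \ref{lem:mpd}. Since $A$ is a finite model for the finite space $X$, Theorem \ref{thm:germs} furnishes an analytic isomorphism $\Phi\colon\Hom(\pi_1(X),G)_{(1)}\isom\F(A,\g)_{(0)}$ that restricts, for all $i$ and $s$, to analytic isomorphisms $\VV^i_s(X,\iota)_{(1)}\isom\RR^i_s(A,\theta)_{(0)}$.

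Next, because $A$ is an $n$-\textsc{pd-cdga}, Lemma \ref{lem:mpd} applies verbatim: the involution $\sigma$ preserves $\F(A,\g)$ and carries $\RR^i_s(A,\theta)$ isomorphically onto $\RR^{n-i}_s(A,\theta)$, for all $i,s$. As $\sigma$ is linear it fixes the basepoint $0$, hence its restriction to the analytic germ $\F(A,\g)_{(0)}$ is an analytic involution $\sigma_{(0)}$ interchanging $\RR^i_s(A,\theta)_{(0)}$ with $\RR^{n-i}_s(A,\theta)_{(0)}$. I would then put $\widetilde\sigma=\Phi^{-1}\circ\sigma_{(0)}\circ\Phi$; this is an analytic self-map of $\Hom(\pi_1(X),G)_{(1)}$ fixing $1$, it is an involution since $\sigma^2=\id$, and composing the two families of restriction isomorphisms above with $\sigma_{(0)}$ shows that $\widetilde\sigma$ identifies $\VV^i_s(X,\iota)_{(1)}$ with $\VV^{n-i}_s(X,\iota)_{(1)}$ for all $i,s$. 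This settles the first assertion.

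For the rank $1$ statement I would specialize to $m=1$, so that $G=\C^*$, $\g=\C$, $\theta=\id$, and $V^*\cong V=\C$. By connectedness of $A$ one has $\F(A)=Z^1(A)=H^1(A)$, and the formula $\sigma(\eta\otimes g)=-\eta\otimes g^{*}$ degenerates to $\sigma=-\id_{A^1}$; thus $\sigma_{(0)}=-\id$ on $H^1(A)_{(0)}\cong H^1(X,\C)_{(0)}$. Writing $\nu$ for the inversion automorphism $\rho\mapsto\rho^{-1}$ of the character group $\Hom(\pi_1(X),\C^*)=H^1(X,\C^*)$, I would invoke the fact that in rank one the isomorphism $\Phi$ of Theorem \ref{thm:germs} is a homomorphism of abelian analytic groups --- carrying the multiplicative structure near $1$ to the additive structure of $H^1(A)$ near $0$ --- this being built into its construction in \cite{DP-ccm}, where in rank one $\Phi$ is precisely the exponential map $H^1(X,\C)\to H^1(X,\C^*)$. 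Granting this, $\Phi\circ\nu=(-\id)\circ\Phi=\sigma_{(0)}\circ\Phi$ on the relevant germs, whence $\widetilde\sigma=\Phi^{-1}\circ\sigma_{(0)}\circ\Phi=\nu$ near $1$, as claimed.

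The step I expect to be the main obstacle is exactly this last one: one must verify that the DP--ccm isomorphism is genuinely a group homomorphism in the rank $1$ case (equivalently, that it coincides with the exponential map), since an arbitrary analytic isomorphism with differential $-\id$ at the basepoint need not be intertwined with $\nu$. Establishing this requires unpacking the construction behind Theorem \ref{thm:germs} rather than treating it as a black box. Everything else --- linearity of $\sigma$, the applicability of Lemma \ref{lem:mpd}, and the bookkeeping of the degree shift $i\leftrightarrow n-i$ --- is routine.
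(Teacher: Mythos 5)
Your proposal is correct and follows essentially the same route as the paper: the general case is exactly the transport of the involution $\sigma$ of Lemma \ref{lem:mpd} through the analytic isomorphism of Theorem \ref{thm:germs}, and in the rank $1$ case the paper reduces to $\sigma=-\id$ just as you do. The one step you flag as the main obstacle --- that the identification of $\F(A)_{(0)}\equiv H^1(X,\C)_{(0)}$ with $H^1(X,\C^*)_{(1)}$ is the exponential map, hence intertwines $-\id$ with $\rho\mapsto\rho^{-1}$ --- is precisely what the paper settles by citing Theorem B(2) of \cite{DP-ccm}, so no further unpacking of that construction is needed.
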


\begin{proof}
The general case follows from Theorem \ref{thm:germs} and Lemma \ref{lem:mpd}.
In the rank one case, Theorem B(2) from \cite{DP-ccm} guarantees that the 
identification between $\F (A)_{(0)}\equiv H^1(X, \C)_{(0)}$ and $H^1(X, \C^*)_{(1)}$ 
is given by the exponential map.  On the other hand, in this case $\sigma =-\id$.
\end{proof}

\begin{remark}
\label{rem:mtduality}
Given a finitely generated group $\pi$, the correspondence $\rho \mapsto (\rho^*)^{-1}$
defines an algebraic involution of the representation variety,
$\alpha\colon \Hom_{\gr} (\pi, \GL_m(\C)) \isom \Hom_{\gr} (\pi, \GL_m(\C))$. 
If $M$ is an $n$-dimensional closed, orientable manifold with $\pi=\pi_1(M)$, 
well-known results about Poincar\'e duality with local coefficients (see for instance 
\cite[\S 2]{W}) imply that the global involution $\alpha$ identifies $\VV^{i}_s(M, \iota)$ 
with $\VV^{n-i}_s(M, \iota)$, for all $i,s$,  where $\iota$ is the identity map of 
$\GL_m(\C)$. Theorem \ref{thm:invjump}, then, can be viewed as a local 
analogue of this classical result.
\end{remark}

\section{Quasi-projective manifolds}
\label{sect:qprof}

Another class of examples where our techniques developed so far give strong 
topological consequences is provided by certain complex quasi-projective 
manifolds, closely related to classical $3$-manifold theory. In this section 
we establish the general setup.

\subsection{Admissible maps and rank $1$ jump loci}
\label{subsec:pencils-rk1}

Let $M$ be a {\em quasi-projective manifold}, i.e., an irreducible, 
smooth, complex quasi-projective variety,  and let $S$ 
be a smooth complex curve, i.e., a $1$-dimensional quasi-projective 
manifold.  A regular, surjective map $f\colon 
M\to S$ is said to be an {\em admissible map}\/ if the generic 
fiber of $f$ is connected. The curve $S$ is said to be of general 
type if $\chi(S)<0$.  The set  $\mathcal{E}(M)$ of admissible 
maps onto curves of general type (modulo reparametrization 
at the target) is finite.

It is readily seen that $\VV^1_1(S)=H^1(S,\C^*)$, 
for every curve $S$ of general type.  A celebrated theorem 
of Arapura describes the geometry of the characteristic 
variety $\VV^1_1(M)$, largely in terms of pull-backs 
along admissible maps of the character tori of the target 
curves of general type.  

\begin{theorem}[\cite{Ar}]
\label{thm:admrk1}
The correspondence $f\leadsto f^{!}(H^1(S,\C^*))$ gives a bijection between
the set $\mathcal{E}(M)$ and the set of positive-dimensional irreducible  
components of $\VV^1_1(M)$ passing through the identity of the character group 
$H^1(M,\C^*)$.
\end{theorem}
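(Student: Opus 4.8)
The statement to prove is Arapura's structure theorem for the degree-one, rank-one characteristic variety of a quasi-projective manifold, so the plan is to build the asserted bijection out of the classical ``pencil'' constructions, with the hard direction resting on generic-vanishing and Hodge-theoretic input.

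First I would handle the easy inclusion together with the injectivity of the correspondence. For $f\in\mathcal{E}(M)$ the target $S$ has $\chi(S)<0$, hence (as recalled before Theorem~\ref{thm:admrk1}) $\VV^1_1(S)=H^1(S,\C^*)$, a subtorus of positive dimension through $1$. Since the generic fibre of $f$ is connected, $f$ induces an epimorphism $\pi_1(M)\twoheadrightarrow\pi_1(S)$; by the discussion of $f^{!}$ in \S\ref{subsec:orbifold} (Hochschild--Serre), $f^{!}$ is then a closed embedding of character tori carrying $\VV^1_1(S)$ into $\VV^1_1(M)$. Thus $f^{!}(H^1(S,\C^*))$ is a positive-dimensional subtorus of $H^1(M,\C^*)$ through $1$ contained in $\VV^1_1(M)$, hence a union of components; irreducibility of a subtorus gives that it is a single component. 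For injectivity, note that $f$ is recovered from the subtorus $f^{!}(H^1(S,\C^*))$: dualizing it produces a quotient of the (semi-abelian) Albanese of $M$ whose Stein factorization over its image returns $f$ up to reparametrization at the target.

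The core is the reverse inclusion: every positive-dimensional irreducible component $W\subseteq\VV^1_1(M)$ through $1$ equals $f^{!}(H^1(S,\C^*))$ for some $f\in\mathcal{E}(M)$. I would argue in three steps. (i) $W$ is a subtorus: this is the structure theorem for cohomology support loci of smooth quasi-projective varieties --- all components of $\VV^1_s(M)$ are torsion-translates of subtori, in the unitary range by Simpson's non-abelian Hodge theory and in general by Arapura's analysis of the behaviour of the Hodge/weight filtration along the universal deformation --- so a component containing $1$ is a genuine subtorus. (ii) Pass to the generic local system $L$ on $W$: since $W\subseteq\VV^1_1(M)$ one has $H^1(M,L)\ne 0$, and this nonvanishing is stable as $L$ varies in the positive-dimensional family $W$. (iii) Convert this into a fibration: using the mixed Hodge structure on the cohomology of $M$ with coefficients in the varying local system together with a semicontinuity/degeneration argument (the open-variety analogue of the Green--Lazarsfeld and Beauville fibration theorems), one produces a nonzero closed logarithmic $1$-form on a good compactification whose associated foliation has a compact leaf space; its Stein factorization is an admissible map $f\colon M\to S$. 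The presence of a positive-dimensional family forces enough independent such forms that $\chi(S)<0$, so $f\in\mathcal{E}(M)$, and then $W$ and $f^{!}(H^1(S,\C^*))$ are subtori with equal tangent spaces at $1$, hence equal.

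The main obstacle is step (iii), and in particular the case $\dim W=1$. When $\dim W\ge 2$ the tangent space $T_1W\subseteq H^1(M,\C)$ is isotropic for the cup product into the relevant Hodge-graded piece of $H^2(M,\C)$, and one may invoke the generalized Castelnuovo--de Franchis theorem (Catanese; Bauer--Catanese and Arapura in the open/orbifold setting) directly to extract the pencil. For $\dim W=1$ isotropy is vacuous, and one genuinely needs the deformation argument: the jump of $H^1(M,L)$ along the one-parameter family must be organized by a single holomorphic or logarithmic $1$-form, and extracting that form --- and then verifying that the curve it produces really has negative Euler characteristic (rather than $\chi=0$, which contributes no component) and that the orbifold multiplicities in the fibre structure are consistent with the bare statement --- is the delicate, Hodge-theoretic heart of Arapura's proof.
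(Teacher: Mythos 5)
This statement is not proved in the paper: Theorem~\ref{thm:admrk1} is imported verbatim from Arapura's paper \cite{Ar} (cf.\ also the refinement quoted as Theorem~\ref{thm:dpccm}), so there is no internal argument to compare yours against. Judged on its own, your sketch correctly reproduces the standard architecture of Arapura's proof --- the easy inclusion via $f^{!}$ and the surjectivity of $\pi_1(M)\to\pi_1(S)$, injectivity via the (generalized) Albanese, and the hard direction via the subtorus structure theorem plus a Castelnuovo--de Franchis/isotropic-subspace argument --- but as written it is an outline rather than a proof. In particular, step (i), ``$W$ is a subtorus, by the structure theorem for cohomology support loci,'' invokes what is essentially the main content of \cite{Ar} itself, so the core of the hard direction is deferred to the very result being established.

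Two concrete gaps. First, in the easy direction you conclude that $f^{!}(H^1(S,\C^*))$ is ``a union of components'' and hence, being irreducible, ``a single component.'' An irreducible closed subset of $\VV^1_1(M)$ is merely \emph{contained in} a component; it is a component only if it is maximal, and irreducibility of the subtorus gives you nothing in that direction. Maximality has to be extracted from the surjectivity statement (the ambient component through $1$ is itself $g^{!}(H^1(S',\C^*))$ for some $g\in\mathcal{E}(M)$) combined with injectivity, forcing $g=f$ up to reparametrization; as written this step is circular-free only after the hard direction is complete. Second, for the correspondence to be a bijection onto \emph{all} positive-dimensional components through $1$, one must rule out one-dimensional components, since every curve $S$ with $\chi(S)<0$ has $b_1(S)\ge 2$ and hence contributes a torus of dimension at least $2$. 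You flag the case $\dim W=1$ as delicate but do not resolve it; the isotropic-subspace mechanism is vacuous there, and this is precisely where the deformation-theoretic/Hodge-theoretic input of \cite{Ar} cannot be waved at --- it must actually be carried out.
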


In particular, the non-trivial part of 
$(\Hom(\pi_1(M),\C^*),\VV^1_1(M))_{(1)}$ 
pulls back from curves of general type, via admissible maps.  
The infinitesimal counterpart of this result is a consequence 
of \cite[Theorem C]{DP-ccm}. 

\begin{theorem}[\cite{DP-ccm}]
\label{thm:dpccm}
For a quasi-projective manifold $M$ with finite model $A$ with positive 
weights, the set $\mathcal{E}(M)$ is in bijection with the set of 
positive-dimensional irreducible components of 
$\RR^1_1(A)\subseteq H^1(A) \equiv H^1(M)$ via the 
correspondence $f\leadsto f^{!}(H^1(S, \C))$. 
\end{theorem}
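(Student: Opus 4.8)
The plan is to transport Arapura's theorem (Theorem~\ref{thm:admrk1}) across the exponential map $\exp\colon H^1(M,\C)\to H^1(M,\C^*)$, using the germ-level dictionary between characteristic and resonance varieties of Theorem~\ref{thm:germs} and the subtorus structure forced by positive weights (Theorem~\ref{thm:mpps-bis}).

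First I would describe the global geometry on both sides. Since $M$ is a finite space with a finite positive-weights model $A$, Theorem~\ref{thm:mpps-bis} shows that the irreducible components of $\VV^1_1(M)$ through $1\in H^1(M,\C^*)$ are algebraic subtori $T_1,\dots,T_k$. By Theorem~\ref{thm:germs} there is an analytic isomorphism $\VV^1_1(M)_{(1)}\cong\RR^1_1(A)_{(0)}$, and by Theorem~B(2) of \cite{DP-ccm} (cf.\ the proof of Theorem~\ref{thm:invjump}) it is the restriction of $\exp$. Writing $L_j=\log(T_j)\subseteq H^1(M,\C)$ for the rationally defined linear subspace with $\exp(L_j)=T_j$, it follows that $\RR^1_1(A)$ and $\bigcup_j L_j$ have the same germ at $0$. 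Because $A$ has positive weights, $\RR^1_1(A)$ is a cone each of whose irreducible components contains $0$; comparing with the germ $\bigl(\bigcup_j L_j\bigr)_{(0)}$ then forces $\RR^1_1(A)=\bigcup_j L_j$ globally, with the $L_j$ precisely its irreducible components. This passage from the germ identification to the global statement is, I expect, the only genuinely non-formal step; it is exactly what the positive-weights hypothesis buys (and is the content of Theorem~C of \cite{DP-ccm}). In particular $T_j\mapsto L_j$ is a dimension-preserving bijection between the irreducible components of $\VV^1_1(M)$ through $1$ and the irreducible components of $\RR^1_1(A)$, hence restricts to a bijection of the positive-dimensional ones.

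It then remains to identify the two correspondences. Given $f\in\mathcal{E}(M)$ with $T_j=f^{!}(H^1(S,\C^*))$, naturality of $\exp$ with respect to $f$ (the commuting square $\exp_M\circ f^{!}=f^{!}\circ\exp_S$, with $\exp_S\colon H^1(S,\C)\to H^1(S,\C^*)$) together with surjectivity of $\exp_S$ gives $T_j=\exp_M\bigl(f^{!}(H^1(S,\C))\bigr)$, whence $L_j=\log(T_j)\supseteq f^{!}(H^1(S,\C))$; since an admissible map, having connected generic fibers, induces a monomorphism on $H^1$, both subspaces have the same dimension and therefore $L_j=f^{!}(H^1(S,\C))$. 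Composing the bijection of Theorem~\ref{thm:admrk1} with $T_j\mapsto L_j$ yields exactly the correspondence $f\leadsto f^{!}(H^1(S,\C))$ between $\mathcal{E}(M)$ and the positive-dimensional irreducible components of $\RR^1_1(A)$, which is the assertion; the rest is routine naturality bookkeeping for the exponential map.
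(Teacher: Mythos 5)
The paper does not actually prove this statement: it is imported verbatim as \cite[Theorem~C]{DP-ccm}, so there is no in-paper argument to compare against. Your reconstruction --- Arapura's Theorem~\ref{thm:admrk1} on the $\VV^1_1$ side, the exponential germ isomorphism of Theorem~\ref{thm:germs} (in its refined form, Theorem~B(2) of \cite{DP-ccm}) to pass to $\RR^1_1(A)_{(0)}$, and the positive-weights hypothesis to globalize --- is sound, and it is in fact essentially how the cited result is obtained in \cite{DP-ccm}; there is no circularity, since Theorems~\ref{thm:germs} and \ref{thm:mpps-bis} do not depend on the statement being proved. Two small points deserve sharper wording. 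First, positive weights do not make $\RR^1_1(A)$ a cone in the literal sense (the $\C^*$-action rescales $A^1$ by possibly unequal positive weights), but what your argument actually uses is only that every irreducible component of the Zariski-closed set $\RR^1_1(A)$ contains $0$; this does follow from invariance under a positive-weight $\C^*$-action, and together with the germ equality $\RR^1_1(A)_{(0)}=\bigl(\bigcup_j L_j\bigr)_{(0)}$ it forces each component $V_i$ to satisfy $\dim_0(V_i\cap L_j)=\dim V_i$ for some $j$, hence $V_i\subseteq L_j$ by irreducibility, while conversely each $L_j$ lies in $\RR^1_1(A)$ by Zariski density of a neighborhood of $0$ in the irreducible $L_j$; so the globalization goes through as you claim. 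Second, the identification $L_j=f^{!}(H^1(S,\C))$ rests on $f^{!}$ being injective on $H^1(S,\C)$ and on $H^1(S,\C^*)$, both of which follow from surjectivity of $f_{\sharp}\colon\pi_1(M)\to\pi_1(S)$ for an admissible map; you invoke the right fact (connectedness of the generic fiber), and the containment $f^{!}(H^1(S,\C))\subseteq L_j$ plus equal dimensions then closes the argument. With those clarifications your proposal is a correct and complete derivation of the cited theorem from the other results quoted in the paper.
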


\subsection{Embedded rank $2$ jump loci and admissible maps}
\label{subsec:pencils-rk2}
We now formulate a rank-$2$ analog of Theorem \ref{thm:admrk1}.

\begin{question}
\label{quest:admrk2}
Let $M$ be a quasi-projective manifold.
Given a rational representation $\iota\colon \SL_2(\C) \to \GL(V)$, 
does the non-trivial part of the germ 
\begin{equation}
\label{eq:germ1}
(\Hom_{{\rm gr}} (\pi_1(M),\SL_2(\C)),\VV^1_1(M,\iota))_{(1)}
\end{equation}
pull back from curves of general type, via admissible maps?
\end{question}

We start by giving a precise infinitesimal analog of this question.
To do this, we first need to review a couple of relevant facts 
about compactifications and Gysin models.

Following \cite{Du}, we say that a divisor $D$ in a 
projective manifold $\overline{M}$ is a {\em hypersurface arrangement}\/  
if all irreducible components of $D$ are smooth, and $D$ coincides 
locally with the union of an arrangement of linear hyperplanes. 
When all these hyperplanes are actually coordinate hyperplanes, 
$D$ is a {\em normal crossing}\/ divisor. 

Now let $M$ be a quasi-projective manifold.  There is then 
a {\em convenient compactification} $M\subset \overline{M}$.  
This means that the complement 
$D=\overline{M}\, \setminus\, M$ is a hypersurface arrangement,   
and every element of $\mathcal{E}(M)$ is represented by 
an admissible map $f\colon M\to S$ which is the 
restriction of a regular map $\bar{f}\colon \overline{M} \to 
\overline{S}$, where $\overline{S}=S\cup F$ is the canonical 
compactification of the curve $S$ (obtained by adding a 
finite set of points $F$), such that $\bar{f}^{-1}(F) \subseteq D$.
(If needed, one may also assume that $D$ is a normal crossing divisor.) 

In the case when $D$ is a normal-crossing divisor, 
Morgan constructed in \cite{Mo} a {\em Gysin model}, $A(\overline{M},D)$, 
for the quasi-projective manifold $M$.  In \cite{Du}, Dupont 
extends this construction to the case when $D$ is an 
arbitrary hypersurface arrangement.  As with Morgan's 
original Gysin model, Dupont's model $A(\overline{M},D)$ 
is a finite model, and is functorial in the appropriate sense. 

We may now rephrase Question \ref{quest:admrk2}.
Let $M$ be a quasi-projective manifold.  As explained in 
Remark \ref{rem:betti0}, we may assume $b_1(M)>0$, 
to avoid trivialities. Let $\g$ be the Lie algebra 
$\sl_2(\C)$ or one of its Borel subalgebras, and let 
$\theta\colon \g \to \gl(V)$ be a finite-dimensional 
representation. 

We will need one more definition, extracted from \cite{MPPS}. 
For a $1$-finite \cdga $A^{\hdot}$ and a finite-dimensional Lie
representation $\theta\colon \g \to \gl(V)$, the set 
\begin{equation}
\label{eq:bigpi}
\Pi(A,\theta)=\{\omega =\eta\otimes g \in \F^1(A,\g) \mid 
\det \theta(g)=0\}
\end{equation}
is a Zariski-closed, homogeneous subvariety of $\F^1(A,\g)$. 
Furthermore, if $H^i(A)\ne 0$, then $\Pi(A,\theta)$ is contained in 
$\RR^i_1(A,\theta)$, cf. \cite[Theorem 1.2]{MPPS}. 
Plainly, the variety $\Pi(A,\theta)$ depends only 
on the vector space $H^1(A)$ and the representation $\theta$.

\begin{question}
\label{quest:infrk2}
Is there a convenient compactification $\overline{M} = M \cup D$  
such that the following equalities hold?
\begin{align}
\label{eq:flatbara}
\F(A(\overline{M},D),\g) = \F^1(A(\overline{M},D),\g)\cup 
\bigcup_{f\in \mathcal{E}(M)}  f^{!} (\F(A(\overline{S},F),\g)).
\\
\RR^1_1(A(\overline{M},D),\theta) = \Pi(A(\overline{M},D),\theta)\cup 
\bigcup_{f\in \mathcal{E}(M)}  f^{!} (\F(A(\overline{S},F),\g)).
\label{eq:resbara}
\end{align}
\end{question}

In the above, we view $\F^1(A(\overline{M},D),\g)$ and 
$\Pi(A(\overline{M},D),\theta)$ as the `trivial parts' of the 
respective varieties, since they depend only on $b_1(M)$ 
and $\theta$. In the case when $\g=\C$ and $\theta=\id_\C$, 
we clearly have $\F^1(A(\overline{M},D),\g)=H^1(M)$ 
and $\Pi(A(\overline{M},D),\theta)=\{0\}$. 

In view of a recent result from \cite{PS-noshi}, a positive answer to the 
global Question \ref{quest:infrk2} would imply a positive 
answer to the local Question \ref{quest:admrk2}.
Equalities \eqref{eq:flatbara} and \eqref{eq:resbara} 
are known to hold for several interesting classes of 
quasi-projective manifolds $M$. Let $W_{\bullet}$ denote 
the Deligne weight filtration on $H^{\bullet}(M)$.

\begin{example}
\label{ex:proj}
If $M$ is a projective manifold (in which case $W_1(H^1(M))=H^1(M)$), 
we may take $D=\emptyset$ and $F=\emptyset$ in the above formulas.  
By \cite{Mo}, we have that 
$A(M,\emptyset)=(H^{\bullet}(M),d=0)$, and similarly for $S=\overline{S}$. 
Using now Corollary 7.2 from \cite{MPPS}, we conclude that 
Question \ref{quest:infrk2} has a positive answer in this case.
\end{example}

\begin{example}
\label{ex:w1h10}
As shown in Theorem 4.2 and Proposition 4.1 from \cite{BMPP},  
in the case when $W_1(H^1(M))=0$ there is a convenient compactification 
$\overline{M}=M\cup D$, where $D$ is a normal-crossings divisor, 
such that the equalities from Question \ref{quest:infrk2} hold.  
Furthermore, according to Theorem 1.3 from \cite{BMPP}, 
Question \ref{quest:infrk2} has a positive answer also for partial 
configuration spaces of smooth projective curves.
\end{example}

Question \ref{quest:infrk2} is analyzed in much detail in follow-up 
work, \cite{PS-natjump, PS-noshi}. In particular, in \cite[Theorem 1.2]{PS-noshi}, 
we reinterpret this question in terms of the local structure of the representation 
variety $\Hom(\pi_1(M),G)$ and of the embedded cohomology 
jump loci $\VV^1_1(\pi_1(M), \iota)$.

\section{Isolated surface singularities}
\label{sect:links}

In this section, we will describe another large class of 
quasi-projective manifolds for which Question \ref{quest:infrk2} 
has a positive answer.

\subsection{A Hirsch extension model}
\label{subsec:hemlink}

Let $X$ be a complex affine surface endowed with a good $\C^*$-action and 
having a normal, isolated singularity at $0$.  Let $M$ be its singularity link 
(a closed, oriented, smooth $3$-dimensional manifold).  The 
punctured surface $X^*=X\setminus \{0\}$ is a quasi-projective 
manifold which deform-retracts onto $M$.  
Moreover, the almost free good $\C^*$-action on $X^*$ restricts 
to an $S^1$-action on $M$ with finite isotropy subgroups. 
In particular, $M$ is an orientable Seifert fibered $3$-manifold. 
The orbit space, $M/S^1=X^*/\C^*$, is a smooth projective 
curve $\Sigma_g$, of genus $g=\frac{1}{2} b_1(M)$, and the regular 
canonical projection, $f\colon X^* \to X^*/\C^*$, induces an isomorphism 
on first homology.  See for instance \cite{DPS-mz} and references therein. 

\begin{prop}
\label{prop:lkmodel}
Let $e\in H^2(\Sigma_g)$ be the orientation class of $\Sigma_g$.  
The  quasi-projective manifold $X^*$ has   
a finite model with positive weights of the form 
\[
A^{\hdot}=(H^{\hdot}(\Sigma_g)\otimes_e \bwedge (t), d),
\]
where $d$ vanishes on $H^{\hdot}(\Sigma_g)$ and $dt=e$.  
Moreover, $(A^{\hdot}, d)$ is a $3$-\textsc{pd-cdga}.
\end{prop}

\begin{proof}
The smooth projective curve $\Sigma_g$ is a formal 
space, and thus admits the finite model 
$B^{\hdot} = (H^{\hdot}(\Sigma_g), d=0)$.  
Let $e'\in H^{2}(\Sigma_g)$ be the Euler 
class from Lemma \ref{lem:newfinite}.  
The $e'\ne 0$, since otherwise $b_1(M)=1+2g$.  
Hence, after normalization if necessary, we 
may assume $e'=e$. The assertion on Poincar\'e duality follows from Corollary \ref{cor=kpd}.
\end{proof}

From now on, we will assume that $g>0$ (the reason for this 
restriction on the genus is explained in Remark \ref{rem:betti0}). 

\begin{prop}
\label{prop:em}
For $X^*$ as above, $\mathcal{E}(X^*)=\emptyset$ if $g=1$ and 
$\mathcal{E}(X^*)=\{f\}$ if $g>1$.
\end{prop}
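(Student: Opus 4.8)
The plan is to deduce the statement from the infinitesimal form of Arapura's theorem, applied to the explicit finite model for $X^*$ furnished by Proposition~\ref{prop:lkmodel}. By that proposition, $X^*$ admits the finite model with positive weights $A^{\hdot}=(H^{\hdot}(\Sigma_g)\otimes_e\bwedge(t),d)$ (which is moreover a $3$-\textsc{pd-cdga}, though we will not need that here). Since $g>0$ by standing assumption, the Betti numbers of $X^*$ do not vanish, and Theorem~\ref{thm:dpccm} applies verbatim: the correspondence $h\leadsto h^{!}(H^1(S,\C))$ sets up a bijection between $\mathcal{E}(X^*)$ and the set of positive-dimensional irreducible components of $\RR^1_1(A)\subseteq H^1(A)\equiv H^1(X^*)$ passing through the origin. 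Thus the whole problem is reduced to describing those components.

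Next I would invoke Corollary~\ref{cor:rk1res}, which computes $\RR^1_1(A)$ exactly: it equals $\{0\}$ when $g=1$, and equals the full linear space $H^1(\Sigma_g)\equiv H^1(A)$ when $g>1$. In the case $g=1$ this variety is a single point, so it has no positive-dimensional irreducible component, and the bijection above forces $\mathcal{E}(X^*)=\emptyset$. In the case $g>1$, $\RR^1_1(A)=H^1(A)\cong\C^{2g}$ is irreducible of positive dimension $2g\ge 4$, hence is its own unique positive-dimensional irreducible component through the origin; therefore $\mathcal{E}(X^*)$ is a one-element set.

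It then remains to identify this unique admissible map with the canonical projection $f\colon X^*\to X^*/\C^*=\Sigma_g$. Here I would verify that $f$ genuinely lies in $\mathcal{E}(X^*)$: it is a regular surjection of quasi-projective manifolds (recalled in \S\ref{subsec:hemlink}), its generic fiber is a single $\C^*$-orbit and hence connected, and when $g>1$ the target satisfies $\chi(\Sigma_g)=2-2g<0$, i.e.\ $\Sigma_g$ is a smooth curve of general type. Thus $f\in\mathcal{E}(X^*)$, and since this set has exactly one element we conclude $\mathcal{E}(X^*)=\{f\}$. The argument is short because all the substance is already packaged in Proposition~\ref{prop:lkmodel}, Corollary~\ref{cor:rk1res}, and Theorem~\ref{thm:dpccm}; the only step requiring a modicum of care — and the one I regard as the main (minor) obstacle — is this last verification, namely checking that $f$ meets the precise technical definition of an admissible map onto a curve of general type, so that the abstract one-element set predicted by Theorem~\ref{thm:dpccm} is represented concretely by $f$ rather than by some unspecified pencil.
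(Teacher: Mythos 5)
Your proposal is correct and follows essentially the same route as the paper: both deduce the result from Theorem \ref{thm:dpccm} applied to the positive-weight finite model of Proposition \ref{prop:lkmodel}, use Corollary \ref{cor:rk1res} to see that $\RR^1_1(A)$ has no (resp.\ exactly one) positive-dimensional component when $g=1$ (resp.\ $g>1$), and identify the unique element with the orbit map $f$ by checking it is admissible onto a curve of general type.
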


\begin{proof}
The orbit map $f\colon X^*\to X^*/\C^*=\Sigma_g$ is an orbifold bundle 
map, with connected generic fiber $\C^*$; thus, $f$ is an admissible map.  
Our claims follow from Theorem \ref{thm:dpccm}, Proposition \ref{prop:lkmodel}
and Corollary \ref{cor:rk1res}. 
\end{proof}

\begin{remark}
\label{rem:rk1v}
By Proposition \ref{prop:lkmodel}, $A$ is a $3$-\textsc{pd-cdga}. In particular,
$\RR^i_s(A)=\emptyset$ for $i>3$ and $s>0$. For
$i=0$, it follows directly from definitions \eqref{eq:defr} and \eqref{eq:adv} that 
$\RR^0_1(A)= \{0\}$ and $\RR^0_s(A)= \emptyset$ for $s>1$. 
By Proposition \ref{prop:circleres}\eqref{r2},
$\RR^1_s(A)$ equals $\RR^1_s((H^{\hdot}(\Sigma_g), d=0))$, for all $s$. 
Again from the definitions, and using Poincar\'e duality in $H^{\hdot}(\Sigma_g)$,
it is straightforward to check that $\RR^1_s(A)$ equals $H^{1}(\Sigma_g)$ for $s\le 2g-2$,  
while it is $\{ 0\}$ for $2g-1 \le s\le 2g$ and it is empty for $s>2g$. 
By Lemma \ref{lem:mpd},
$\RR^i_s(A)$ is thus computed for all values of $i$ and $s$. By Theorem \ref{thm:germs}, 
this gives a description of $\VV^i_s(M)_{(1)}$ for all $i,s$, which complements the 
computations from \cite[\S 8]{DPS-mz}.
\end{remark}

\subsection{Rank $2$ flat connections}
\label{subsec:rk2flat}

Set $\pi=\pi_1(M)\cong \pi_1(X^*)$ and 
$H^{\hdot}=H^{\hdot}(\Sigma_g)$.  Furthermore, let $A^{\hdot} = 
(H^{\hdot}\otimes_e \bwedge (t), d)$ be the complex finite model for 
$M\cong X^*$ described in Proposition \ref{prop:lkmodel}. 
Let $\iota \colon \SL_2 \inj \GL_2$ be the defining 
representation, and let $\theta\colon \sl_2 \inj \gl_2$ 
be its tangential representation.  By Theorem \ref{thm:germs}, 
there is an analytic isomorphism of germ pairs, 
\begin{equation}
\label{eq:germpairs}
\xymatrix{(\Hom_{\gr}(\pi,\SL_2) , \VV^i_1(X^*,\iota))_{(1)}  
\ar^(.53){\simeq}[r]&
(\F(A,\sl_2), \RR^i_1(A,\theta))_{(0)}},
\end{equation}
for all $i\ge 0$.  Our next goal is to describe  completely these germs of 
embedded rank $2$ jump loci. 

We start with the varieties of flat connections.  We denote by 
$\varphi\colon (H^{\hdot}, d=0)\inj A^{\hdot}$ the \cdga 
inclusion. Note that $H^1(\varphi)\colon H^1 \to H^1(A)$ is an isomorphism,
since $e\ne 0$; see \S\ref{sect:res1sas}.

\begin{corollary}
\label{cor:flatag}
Suppose $\g$ is a Lie subalgebra of $\sl_2$.  If $g=1$, then $\F(A,\g)=\F^1(A,\g)$, 
whereas if $g>1$, then  $\F(A,\g)=\varphi^{!} (\F(H,\g))$. 
\end{corollary}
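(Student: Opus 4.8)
\textbf{Proof plan for Corollary \ref{cor:flatag}.}

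The strategy is to combine the structural result for flat connections on Hirsch extensions, Proposition \ref{prop:flataeq}, with the explicit description of $\F^1(A,\g)$ together with a dichotomy in the two genus cases. Recall that $A^{\hdot} = H^{\hdot}\otimes_e \bwedge(t)$ with $H^{\hdot}=H^{\hdot}(\Sigma_g)$, so that the hypotheses of Proposition \ref{prop:flataeq} are met for any Lie subalgebra $\g$ of $\sl_2$; hence we always have the equality
\[
\F(A,\g)=\F^1(A,\g)\cup \varphi^{!}(\F(H,\g)).
\]
The task is to simplify the right-hand side in each of the two cases. Since $e\ne 0$, the inclusion $\varphi$ induces an isomorphism $H^1(\varphi)\colon H^1\isom H^1(A)$ (as noted just before the statement, and following \S\ref{sect:res1sas}). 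Because $\F^1$ depends only on the first cohomology and on $\g$, this isomorphism gives $\F^1(A,\g)=\varphi^{!}(\F^1(H,\g))\subseteq \varphi^{!}(\F(H,\g))$, so in fact $\F(A,\g)=\varphi^{!}(\F(H,\g))$ for \emph{every} $g\ge 1$ and every such $\g$. This already disposes of the $g>1$ assertion.

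For $g=1$ the extra content is that $\F(H,\g)=\F^1(H,\g)$, i.e.\ every $\g$-valued flat connection on $(H^{\hdot}(\Sigma_1),d=0)$ is decomposable as $\eta\otimes g$ with $d\eta=0$. Here $H^{\hdot}(\Sigma_1)=\bwedge(x,y)$ with $xy$ the fundamental class, and $d=0$, so the Maurer--Cartan equation for $\omega=x\otimes g_1+y\otimes g_2\in H^1\otimes\g$ reduces to $xy\otimes[g_1,g_2]=0$, that is $[g_1,g_2]=0$. By the key fact recalled in the proof of Proposition \ref{prop:flataeq} --- two matrices in $\sl_2$ commute iff they span a subspace of dimension $\le 1$ --- the vectors $g_1,g_2$ are linearly dependent, so $\omega$ lies in a single line $\C\cdot g\subseteq\g$ times $H^1$; writing $\omega=\eta\otimes g$ with $\eta\in H^1$ (automatically $d$-closed) shows $\omega\in\F^1(H,\g)$. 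Thus $\F(H,\g)=\F^1(H,\g)=\F^1(A,\g)$ via $\varphi^{!}$, which is the $g=1$ claim.

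The only genuine step requiring care is the commuting-pairs argument in the $g=1$ case, but this is exactly the $\sl_2$ linear-algebra fact already used to prove Proposition \ref{prop:flataeq}, so there is no real obstacle; the rest is bookkeeping with the isomorphism $H^1(\varphi)$ and the definition of $\F^1$. I would write the proof by first invoking Proposition \ref{prop:flataeq}, then handling $g>1$ in one line via $\F^1\subseteq\varphi^{!}(\F(H,\g))$, and finally treating $g=1$ with the explicit Maurer--Cartan computation above.
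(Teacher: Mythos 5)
Your proposal is correct and follows essentially the same route as the paper: invoke Proposition \ref{prop:flataeq}, use that $H^1(\varphi)$ is an isomorphism to get $\F^1(A,\g)=\varphi^{!}(\F^1(H,\g))\subseteq\varphi^{!}(\F(H,\g))$, and reduce the genus-one case to showing $\F(H,\g)=\F^1(H,\g)$. The only (harmless) difference is that for $g=1$ you verify this last equality by a direct Maurer--Cartan computation with the commuting-pairs fact in $\sl_2$, whereas the paper cites \cite[Lemma 4.14]{MPPS}, viewing $(H,d=0)$ as the cochain algebra of a two-dimensional abelian Lie algebra.
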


\begin{proof}
Our assertions follow from Proposition \ref{prop:flataeq}.
In the second case, we use the fact that $\F^1(A,\g)= \varphi^{!} (\F^1(H,\g))$, 
since $H^1(\varphi)$ is an isomorphism.
In the first case, the $\dga$ $(H, d=0)$ is the cochain algebra of a two-dimensional 
abelian Lie algebra. Hence, $\F(H,\g)=\F^1(H,\g)$,
by \cite[Lemma 4.14]{MPPS}.  
\end{proof}

This corollary and Lemma 7.3 from \cite{MPPS}
provide an explicit description of the variety $\F(A,\sl_2)$. 

\subsection{Rank $2$ resonance}
\label{subsec:rk2res}

To complete the picture, we turn to the resonance varieties, 
$\RR^i_1(A,\theta)$.  
We know from Proposition \ref{prop:lkmodel} that $A^{\hdot}$ is 
a $3$-\textsc{pd-cdga}; in particular, it is concentrated in degrees 
$0\le i\le 3$.  Consequently, $\RR^i_1(A,\theta)=\emptyset$ 
for $i>3$.  In the remaining degrees, we use Poincar\'e duality, 
cf. Lemma \ref{lem:mpd}.

Lemma 3.4 from \cite{MPPS} takes care of the first case ($i=0$):  
\begin{equation}
\label{eq:deg0}
\RR^0_1(A,\theta)=\Pi(A,\theta).
\end{equation}

In the last case ($i=1$), we obtain the following explicit description.   

\begin{prop}
\label{prop:resaformula}
With notation as in \S\ref{subsec:rk2flat}, we have
\[
\RR^1_1(A,\theta) = 
\begin{cases}
\Pi(A,\theta) &\text{if $g=1$},\\ 
\varphi^{!} (\F(H, \sl_2)) &\text{if $g>1$}.
\end{cases}
\]
\end{prop}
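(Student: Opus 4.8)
The plan is to treat the two cases separately, in each one reducing the computation of $\RR^1_1(A,\theta)$ to facts already available for the model $A=(H^{\hdot}(\Sigma_g)\otimes_e \bwedge(t),d)$ and its rank $1$ avatar. Throughout, $V=\C^2$, $\theta\colon \sl_2\inj\gl_2$ is the inclusion, and I write $H=(H^{\hdot}(\Sigma_g),d=0)$. Corollary \ref{cor:flatag} identifies $\F(A,\sl_2)$ with $\F^1(A,\sl_2)$ when $g=1$ and with $\varphi^{!}(\F(H,\sl_2))$ when $g>1$; since $\RR^1_1(A,\theta)\subseteq\F(A,\sl_2)$ by definition, in each case only one inclusion has to be proved.

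For $g=1$: since $e\ne 0$, the map $\varphi^*\colon H^1\to H^1(A)$ is an isomorphism, so $H^1(A)\ne 0$ and hence $\Pi(A,\theta)\subseteq\RR^1_1(A,\theta)$ by \cite[Theorem 1.2]{MPPS}. For the reverse inclusion, take $\omega\in\F^1(A,\sl_2)\setminus\Pi(A,\theta)$ and write $\omega=\eta\otimes g$ with $d\eta=0$; then $\omega\ne 0$, so $\eta\ne 0$, and $\det\theta(g)\ne 0$, so the traceless matrix $g$ has two distinct eigenvalues $\pm\lambda$ with $\lambda\ne 0$ and is diagonalizable over $\C$. Decomposing $V$ into the corresponding eigenlines and using formula \eqref{eq:adv} for the covariant derivative gives an isomorphism of cochain complexes $(A\otimes V,d_\omega)\cong (A,d_{\lambda\eta})\oplus(A,d_{-\lambda\eta})$. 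Now $\pm\lambda\eta$ is a nonzero element of $\F(A)$, and $\RR^1_1(A)=\{0\}$ by Corollary \ref{cor:rk1res}, so $H^1(A,d_{\pm\lambda\eta})=0$; hence $H^1(A\otimes V,d_\omega)=0$ and $\omega\notin\RR^1_1(A,\theta)$. This yields $\RR^1_1(A,\theta)=\Pi(A,\theta)$.

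For $g>1$: it suffices to show $\varphi^{!}(\F(H,\sl_2))\subseteq\RR^1_1(A,\theta)$, i.e.\ that $H^1(A\otimes V,d_\omega)\ne 0$ for every $\omega=\varphi^{!}(\omega_0)$ with $\omega_0\in\F(H,\sl_2)$. Because such an $\omega$ has trivial $t$-component and $d$ vanishes on $H$, the subspace $H\otimes V$ is a $d_\omega$-subcomplex of $A\otimes V$ on which $d_\omega$ restricts to $d_{\omega_0}$; moreover $(A\otimes V)/(H\otimes V)\cong (H\otimes V)[-1]$, which vanishes in degree $0$, so the associated long exact sequence gives an injection $H^1(H\otimes V,d_{\omega_0})\hookrightarrow H^1(A\otimes V,d_\omega)$. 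The complex $(H^{\hdot}(\Sigma_g)\otimes V,d_{\omega_0})$ is concentrated in degrees $0,1,2$ with respective dimensions $2,4g,2$, hence has Euler characteristic $4-4g=2\chi(\Sigma_g)$; therefore $\dim H^1(H\otimes V,d_{\omega_0})\ge 4g-4>0$, and $\omega\in\RR^1_1(A,\theta)$, as wanted.

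The arguments are short, so the real work is bookkeeping. In the $g>1$ case one must check honestly that $H\otimes V$ is a subcomplex with induced differential exactly $d_{\omega_0}$ (this is precisely where $\omega=\varphi^{!}(\omega_0)$ and $d|_H=0$ are used) and that $\omega_0$ is genuinely a flat connection on $H$ so that $d_{\omega_0}^2=0$; after that the Euler-characteristic estimate does everything. In the $g=1$ case the delicate point is the eigenline decomposition: one has to verify it is compatible with the $A^{\hdot}$-grading so as to produce a genuine direct sum of cochain complexes, and that the $\RR^1_1(A)=\{0\}$ being invoked is the rank $1$ resonance variety of Corollary \ref{cor:rk1res}. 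I expect this $g=1$ reduction to rank $1$ to be the main obstacle, mainly because it is the place where a careless argument would most easily slip.
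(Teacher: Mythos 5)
Your proof is correct, and it takes a genuinely different (and more self-contained) route than the paper's. The paper disposes of both cases by citing external machinery: for $g>1$ it invokes Proposition~4.1 of \cite{BMPP} applied to the one-element family $\{\varphi\colon H\inj A\}$, after verifying its two hypotheses ($\RR^1_1(A)=\im H^1(\varphi)$ via Corollary~\ref{cor:rk1res}, and equality in \eqref{eq:flataincl} via Proposition~\ref{prop:flataeq}), which yields $\RR^1_1(A,\theta)=\Pi(A,\theta)\cup\varphi^{!}(\F(H,\sl_2))$ and then collapses to the stated answer by Corollary~\ref{cor:flatag}; for $g=1$ it quotes Corollary~3.8 of \cite{MPPS}, using $\F(A,\sl_2)=\F^1(A,\sl_2)$ and $\RR^1_1(A)=\{0\}$. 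You instead prove both halves by hand: your genus-one argument (diagonalize the nonsingular traceless $g$, split $(A\otimes V,d_\omega)$ into two rank-one complexes $(A,d_{\pm\lambda\eta})$, and quote $\RR^1_1(A)=\{0\}$) is in effect an inline proof of the special case of \cite[Cor.~3.8]{MPPS} actually needed, and your genus-$>1$ argument replaces the appeal to \cite{BMPP} by the Euler-characteristic bound $\dim H^1(H\otimes V,d_{\omega_0})\ge 4g-4>0$ together with the injection coming from the subcomplex $H\otimes V\subseteq A\otimes V$. What the paper's route buys is uniformity with the surrounding results and reusability of the cited propositions elsewhere; what yours buys is independence from \cite{BMPP} and \cite{MPPS} beyond the basic containment $\Pi(A,\theta)\subseteq\RR^1_1(A,\theta)$, at the cost of the explicit bookkeeping you flag. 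All the delicate points you worry about do check out: the eigenspace splitting only touches the $V$-factor, so it respects the grading of $A$; and for $\omega=\varphi^{!}(\omega_0)$ one has $d_\omega(h\otimes v)=\sum_i\eta_i h\otimes\theta(g_i)v=d_{\omega_0}(h\otimes v)$ on $H\otimes V$ since $d$ vanishes on $H$, while the quotient vanishes in degree $0$ regardless of its induced differential, which is all the long exact sequence needs.
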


\begin{proof}
For $g>1$, we apply Proposition 4.1 from \cite{BMPP} to the one-element
family of \cdga maps $\{ \varphi : H\inj A \}$. We have to check that 
$\RR^1_1 (A)= \im H^1(\varphi)$ and that equality holds in \eqref{eq:flataincl}
for $\g= \sl_2$. The first property follows from Corollary \ref{cor:rk1res}, and
the second property is verified in Proposition \ref{prop:flataeq}. We infer that
$\RR^1_1(A,\theta) =\Pi(A,\theta) \cup \varphi^{!} (\F(H, \sl_2))$. Our claim
then follows from Corollary \ref{cor:flatag}.

The genus $1$ formula is a consequence of Corollary 3.8 from \cite{MPPS}, since
in this case $\F(A, \sl_2)=\F^1(A, \sl_2)$, by Corollary \ref{cor:flatag},
and $\RR^1_1 (A)= \{ 0\}$, by Corollary \ref{cor:rk1res}.
\end{proof}

We are now in a position to prove Theorem \ref{thm:intro7} from the 
Introduction, which may be rephrased as follows.

\begin{theorem}
\label{thm:mainsurf}
For a punctured, quasi-homogeneous surface with 
isolated singularity, 
Question \ref{quest:infrk2} has a positive answer.
\end{theorem}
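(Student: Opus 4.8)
The plan is to verify the two equalities of Question \ref{quest:infrk2} directly, reading them off from the explicit finite model for the punctured surface $X^*$ (this is the quasi-projective manifold that plays the role of $M$ in Question \ref{quest:infrk2}) produced in Proposition \ref{prop:lkmodel}, together with the computations of \S\S\ref{subsec:rk2flat}--\ref{subsec:rk2res}. Let $f\colon X^*\to \Sigma_g$ be the orbit map of the good $\C^*$-action, an admissible map with $g=\tfrac{1}{2} b_1(X^*)$. Recall from Proposition \ref{prop:lkmodel} that $A=(H^{\hdot}(\Sigma_g)\otimes_e\bwedge(t),d)$ --- with $e\ne 0$ the orientation class, $d$ vanishing on $H^{\hdot}(\Sigma_g)$ and $dt=e$ --- is a finite model with positive weights for $X^*$ and a $3$-\textsc{pd-cdga}; moreover the canonical inclusion $\varphi\colon (H^{\hdot}(\Sigma_g),d=0)\inj A$ models $f$ (cf.\ Remark \ref{rem=kpencil}), and $H^1(\varphi)$ is an isomorphism because $e\ne 0$. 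Since $\Sigma_g$ is projective it is its own convenient compactification, and $A(\Sigma_g,\emptyset)=(H^{\hdot}(\Sigma_g),d=0)=:H$ by \cite{Mo} (as in Example \ref{ex:proj}); thus in \eqref{eq:flatbara}--\eqref{eq:resbara} one has $F=\emptyset$, may take $A(\overline{S},F)=H$, and $f^!$ corresponds to $\varphi^!$. For $\overline{X^*}$ I would take a convenient compactification extending $f$, for instance the fibrewise projective completion of the Seifert $\C^*$-bundle $X^*\to \Sigma_g$; the one point needing care (discussed below) is that in \eqref{eq:flatbara}--\eqref{eq:resbara} the Gysin model $A(\overline{X^*},D)$ may be replaced by the concrete model $A$. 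Finally, Proposition \ref{prop:em} gives $\mathcal{E}(X^*)=\emptyset$ when $g=1$ and $\mathcal{E}(X^*)=\{f\}$ when $g>1$.

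With this in place I would split into two cases. If $g=1$, the right-hand sides of \eqref{eq:flatbara} and \eqref{eq:resbara} reduce to $\F^1(A,\g)$ and $\Pi(A,\theta)$, so the two equalities are exactly Corollary \ref{cor:flatag} and Proposition \ref{prop:resaformula} in the genus-one case. If $g>1$, then $H^1(\varphi)$ being an isomorphism forces $Z^1(A)=\varphi(Z^1(H))$, whence $\F^1(A,\g)=\varphi^!(\F^1(H,\g))$ and therefore $\Pi(A,\theta)\subseteq\F^1(A,\g)\subseteq\varphi^!(\F(H,\g))$; so both right-hand sides collapse to $\varphi^!(\F(H,\g))$. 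Corollary \ref{cor:flatag} gives $\F(A,\g)=\varphi^!(\F(H,\g))$, which is \eqref{eq:flatbara}, and Proposition \ref{prop:resaformula} gives $\RR^1_1(A,\theta)=\varphi^!(\F(H,\g))$, which is \eqref{eq:resbara}. I would also record that the hypotheses feeding Proposition \ref{prop:resaformula} --- namely $\RR^1_1(A)=\im H^1(\varphi)$, obtained from Corollary \ref{cor:rk1res} via Proposition \ref{prop:circleres}, and the equality in \eqref{eq:flataincl} for $\g$, proved in Proposition \ref{prop:flataeq} for every Lie subalgebra of $\sl_2$ --- let Proposition 4.1 of \cite{BMPP} cover an arbitrary finite-dimensional representation $\theta\colon\g\to\gl(V)$ and a Borel subalgebra $\g$ as well, so that nothing is lost relative to the statement of Theorem \ref{thm:intro7}.

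The hard part will be the bookkeeping from the first paragraph: exhibiting a convenient compactification $\overline{X^*}=X^*\cup D$ for which \eqref{eq:flatbara}--\eqref{eq:resbara} may legitimately be tested on $A$ rather than on $A(\overline{X^*},D)$. Since $A$ and $A(\overline{X^*},D)$ are both finite models of $X^*$, hence weakly equivalent, and since the varieties of flat connections, the degree-one resonance varieties, their trivial parts $\F^1$ and $\Pi$, and the pullback maps all depend on a finite model only through its $2$-truncation and transform compatibly under the relevant equivalences, it suffices to choose $\overline{X^*}$ so that the Gysin model $A(\overline{X^*},D)$ carries this structure identified with that of $A$ and $\varphi$. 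For the fibrewise projective completion this is a direct computation with the Gysin model of a (Seifert) $\PP^1$-bundle with its zero and infinity divisors removed, which returns a Hirsch extension of the form $H^{\hdot}(\Sigma_g)\otimes_{e'}\bwedge(t)$; after normalizing $e'$ to the orientation class one recovers $A$. Once that identification is recorded, the theorem is a formal consequence of Propositions \ref{prop:em}, \ref{prop:flataeq}, Corollary \ref{cor:flatag} and Proposition \ref{prop:resaformula}.
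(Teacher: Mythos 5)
Your model-level computations (the genus dichotomy via Proposition \ref{prop:em}, Corollary \ref{cor:flatag}, and Proposition \ref{prop:resaformula}, with the observation that $\Pi(A,\theta)\subseteq \F^1(A,\g)=\varphi^!(\F^1(H,\g))$ when $g>1$) match the paper and are fine. The gap is in the step you yourself flag as "the hard part": Question \ref{quest:infrk2} is a \emph{global} statement about the Gysin model $A(\overline{M},D)$, and neither of your two mechanisms for replacing it by the Hirsch extension $A$ of Proposition \ref{prop:lkmodel} works. First, weak equivalence of finite models does not identify the varieties $\F(\cdot,\g)$, $\F^1$, $\Pi$, $\RR^1_1(\cdot,\theta)$ globally: these live in different affine spaces ($A^1(\overline{M},D)\otimes\g$ versus $A^1\otimes\g$), and Theorem \ref{thm:germs} only identifies their \emph{germs at the origin} through the topology; "depends only on the $2$-truncation" is true for each fixed $\dga$ but does not transport across a zig-zag of quasi-isomorphisms. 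Second, your fallback — that the Gysin model of the fibrewise projective completion "returns a Hirsch extension $H^{\hdot}(\Sigma_g)\otimes_{e'}\bwedge(t)$" — is not correct as stated: the Morgan/Dupont model of a compactification whose boundary is the zero and infinity sections already has a degree-one piece of dimension $b_1(\Sigma_g)+2$ (one generator for each boundary component), so it is strictly larger than $A$, and in the genuinely Seifert case the smooth compactification has still more boundary components after resolving the multiple fibers. So the equalities you verify for $A$ do not, by your argument, yield the equalities \eqref{eq:flatbara}--\eqref{eq:resbara} for $A(\overline{M},D)$.

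The paper bridges exactly this gap with three ingredients you omit: (i) it reduces \eqref{eq:resbara} to \eqref{eq:flatbara} by Proposition 4.1 of \cite{BMPP} applied to the Gysin model; (ii) it reduces the global equality \eqref{eq:flatbara} to an equality of germs at $0$, using that the Gysin model has positive weights, so the induced $\C^*$-action leaves $\F(\overline{A},\g)$ invariant and every irreducible component of the varieties in sight passes through $0$ (the trivial parts being quadratically defined, hence cone-like); and (iii) it converts the abstract germ isomorphisms $\F(\overline{A},\g)_{(0)}\cong\F(A,\g)_{(0)}$, $\F^1(\overline{A},\g)_{(0)}\cong\F^1(A,\g)_{(0)}$, $f^!(\F(H,\g))_{(0)}\cong\varphi^!(\F(H,\g))_{(0)}$ coming from Theorem \ref{thm:germs} into an actual equality of germs via the Hopfian local-ring argument from the proof of Theorem \ref{thm:monotr} (the inclusion of the trivial/pulled-back part induces a surjection of isomorphic Noetherian analytic local rings, hence an isomorphism). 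Without some version of (ii) and (iii) — or an honest computation identifying the Gysin model of a specific convenient compactification compatibly with $f^!$, which is substantially harder than you suggest — your proof does not establish the theorem.
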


\begin{proof}
For the purpose of this proof, we will denote by $M$ the 
given punctured, quasi-homogeneous surface.
We will show that \eqref{eq:flatbara} and \eqref{eq:resbara} hold, 
for a convenient compactification $\overline{M}$ 
obtained by adding a normal crossings divisor $D$.  By Proposition 4.1 
from \cite{BMPP} and the discussion following it, it is enough 
to check only equality \eqref{eq:flatbara}.

Set $\overline{A}^{\,\bullet}=A^{\bullet}(\overline{M},D)$ and 
$H^{\bullet}=(H^{\bullet}(\Sigma_g),d=0)$, and note that 
$A^{\bullet}(\Sigma_g,\emptyset)=H^{\bullet}$.
We need to consider the two cases appearing in Proposition \ref{prop:em}. 
In the case when $g=1$, formula \eqref{eq:flatbara} reduces to 
\begin{equation}
\label{eq:flatg1}
\F(\overline{A}, \g) =  \F^1(\overline{A}, \g) .
\end{equation}

Since the map $f^*\colon H^1(\Sigma_g)\to H^1(M)$ is an isomorphism, 
$\F^1(\overline{A}^{\,\bullet}, \g)=f^{!}(\F^1(H^{\,\bullet}, \g))$. 
Hence, in genus $g>1$, formula \eqref{eq:flatbara} becomes 
\begin{equation}
\label{eq:flatg2}
\F(\overline{A}, \g) = f^{!}(\F (H^{\bullet}, \g)).
\end{equation}

Next, we claim it is enough to show that the backward inclusions 
from both \eqref{eq:flatg1} and  \eqref{eq:flatg2} become equalities 
around $0$.  This is due to the fact that all varieties in sight have 
the property that all their irreducible components pass through $0$. 
This property in turn is an easy consequence of the fact that all 
the above varieties are endowed with a positive weight $\C^*$-action. 
We recall from \cite[Example 5.3]{DP-ccm} that the Gysin model $\overline{A}$ 
has positive weights.  Moreover, as explained in  \cite[\S 9.17]{DP-ccm}, 
the associated $\C^*$-action on $\overline{A}\otimes \g$ leaves 
$\F(\overline{A}, \g)$ invariant. The varieties $\F^1(\overline{A}, \g)$ 
and $f^{!}(\F (H^{\bullet}, \g))$ are defined by quadratic equations; 
thus, they are also invariant with respect to the standard, weight-$1$ $\C^*$-action. 
Therefore, we have reduced our proof to  the corresponding germs at the origin.

Let $A^{\bullet}=(H^{\bullet} \otimes_e \bwedge(t),d)$ be the
finite model for $M$ from Proposition \ref{prop:lkmodel}. 
We know from Corollary \ref{cor:flatag} that 
$\F(A^{\bullet}, \g) =  \F^1(A^{\bullet}, \g)$ for $g=1$ and 
$\F(A^{\bullet}, \g) =  \varphi^{!}(\F (H^{\bullet}, \g))$ for $g>1$. 
In particular, these equalities hold in a neighborhood of $0$. 

By Theorem \ref{thm:germs}, 
$\F(\overline{A}^{\bullet}, \g)_{(0)} \cong  \F(A^{\bullet}, \g)_{(0)}$. 
Clearly, $\F^1(\overline{A}^{\bullet}, \g)_{(0)} \cong  \F^1(A^{\bullet}, \g)_{(0)}$
and $f^{!}(\F (H^{\bullet}, \g))_{(0)} \cong  \varphi^{!}(\F (H^{\bullet}, \g))_{(0)}$. 
Hence, $\F(\overline{A}^{\bullet}, \g)_{(0)} \cong  \F^1(\overline{A}^{\bullet}, \g)_{(0)}$ 
for $g=1$ and $\F(\overline{A}^{\bullet}, \g)_{(0)} \cong  f^{!}(\F (H^{\bullet}, \g))_{(0)}$ 
for $g>1$. In both cases, the Hopfian argument from the proof of 
Theorem \ref{thm:monotr} shows that equality holds in a neighborhood 
of $0$, in both \eqref{eq:flatg1} and  \eqref{eq:flatg2}.  This completes the proof. 
\end{proof}

\begin{ack}
This work was started at the Centro di Ricerca Matematica 
Ennio De Giorgi in Pisa, Italy, in February 2015. The authors wish to 
thank the organizers of the Intensive Period on Algebraic Topology, 
Geometric and Combinatorial Group Theory for 
providing an inspiring mathematical environment. 
The work was continued while the second author visited the 
Institute of Mathematics of the Romanian Academy in June, 
2015. He thanks IMAR for its support and warm hospitality. 
We acknowledge with thanks useful discussions with Anca M\u{a}cinic
(on partial formality) and Liviu Ornea (on Sasakian geometry).
\end{ack}

\newcommand{\arxiv}[1]
{\texttt{\href{http://arxiv.org/abs/#1}{arxiv:#1}}}
\newcommand{\arx}[1]
{\texttt{\href{http://arxiv.org/abs/#1}{arXiv:}}
\texttt{\href{http://arxiv.org/abs/#1}{#1}}}
\newcommand{\doi}[1]
{\texttt{\href{http://dx.doi.org/#1}{doi:#1}}}
\renewcommand{\MR}[1]
{\href{http://www.ams.org/mathscinet-getitem?mr=#1}{MR#1}}


\begin{thebibliography}{00}


\bibitem{AH} C.~Allday, S.~Halperin,
\href{http://dx.doi.org/10.1093/qmath/29.1.63}
{\em Lie group actions on spaces of finite rank}, 
Quart. J. Math. Oxford Ser. (2) \textbf{29} (1978), no.~113, 63--76. 
\MR{0501046}

\bibitem{ABC}  J.~Amor{\'o}s, M.~Burger, K.~Corlette, D.~Kotschick, 
D.~Toledo, 
\href{http://dx.doi.org/10.1090/surv/044}
{\em Fundamental groups of compact {K}\"{a}hler 
manifolds}, Math. Surveys Monogr., vol.~44, Amer. Math. Soc., 
Providence, RI, 1996. 
\MR{1379330}

\bibitem{Ar} D.~Arapura, 
\emph{Geometry of cohomology support loci for local systems  {\rm I}}, 
J. Algebraic Geom. \textbf{6} (1997), no.~3, 563--597.  
\MR{1487227}

\bibitem{BB+} G.~Bazzoni, I.~Biswas, M.~Fern\'{a}ndez, V.~Mu\~{n}oz, A.~Tralle, 
{\em Homotopic properties of {K}\"{a}hler orbifolds},
\arxiv{1605.03024v2}.

\bibitem{BMPP}  B.~Berceanu, A.~M\u{a}cinic, S.~Papadima,  
R.~Popescu,  
\href{http://dx.doi.org/10.2140/agt.2017.17.1163}
{\em On the geometry and topology of partial configuration 
spaces of Riemann surfaces}, Algebr. Geom. Topol. \textbf{17} (2017), 
no.~2, 1163--1188.
\MR{3623686}

\bibitem{Bez} R.~Bezrukavnikov, 
\href{http://dx.doi.org/10.1007/BF01895836} 
{\em Koszul {DG}-algebras arising from configuration spaces}, 
Geom. Funct. Anal. \textbf{4} (1994), no.~2, 119--135. 
\MR{1262702}

\bibitem{BFMT}  I.~Biswas, M.~Fern\'{a}ndez, V.~Mu\~{n}oz, A.~Tralle, 
 \href{http://dx.doi.org/10.1112/jtopol/jtv044}
{\em On formality of  Sasakian manifolds}, J. Topol. \textbf{9} (2016), 
no.~1, 161--180. 
\MR{3465845}

\bibitem{BMSS} R.~Body, M.~Mimura, H.~Shiga, D.~Sullivan,
 \href{http://dx.doi.org/10.1007/s000140050063}
{\em $p$-universal spaces and rational homotopy types},
Comment. Math. Helv. \textbf{73} (1998), 427--442.
\MR{1633367}  

\bibitem{Borel}  A.~Borel, 
\href{http://dx.doi.org/10.2307/1969728}
{\em Sur la cohomologie des espaces fibr\'e{e}s principaux et 
des espaces homog\`{e}nes de groupes de Lie compacts}, 
Ann. of Math. (2) \textbf{57} (1953), no.~1, 115--207. 
\MR{0051508} 

\bibitem{BG}  C.~Boyer,  K.~Galicki, 
\href{http://ukcatalogue.oup.com/product/9780198564959.do}
{\em Sasakian geometry}, Oxford Math. Monogr., 
Oxford University Press, Oxford, 2008.
\MR{2382957}

\bibitem{Br} K.~S.~Brown,
\href{http://dx.doi.org/10.1007/978-1-4684-9327-6}
{\em Cohomology of groups}, Graduate Texts in Mathematics, vol.~87,
Springer-Verlag, New York, 1982. 
\MR{0672956}

\bibitem{BW}  N.~Budur, B.~Wang, 
{\em Cohomology jump loci of quasi-compact {K}\"{a}hler manifolds},  
\arx{1702.02186v1}.

\bibitem{CDMY} B.~Cappelletti-Montano, A.~De Nicola, J. C. Marrero, I.~Yudin, 
\href{http://dx.doi.org/10.1093/imrn/rnu144}
{\em Sasakian nilmanifolds}, Int. Math. Res. Not. IMRN 
\textbf{2015} (2015), no.~15, 6648--6660. 
\MR{3384493}

\bibitem{Chen}  X.~Chen, 
\href{http://dx.doi.org/10.4310/MRL.2013.v20.n1.a3}
{\em On the fundamental groups of compact Sasakian manifolds}, 
Math. Res. Lett. \textbf{20} (2013), no.~1, 27--39.
\MR{3126719}

\bibitem{Chev} C.~Chevalley, 
\href{http://dx.doi.org/10.2307/2372597}
{\em Invariants of finite groups generated by reflections},  
Amer. J. Math. \textbf{77} (1955), no.~4, 778--782. 
\MR{0072877} 

\bibitem{DGMS}  P.~Deligne, P.~Griffiths, J.~Morgan, D.~Sullivan,
 \href{http://dx.doi.org/10.1007/BF01389853} 
{\em Real homotopy theory of {K}\"{a}hler manifolds},
Invent. Math. \textbf{29} (1975), no.~3, 245--274.
\MR{0382702}

\bibitem{DP-ccm} A.~Dimca, S.~Papadima,
\href{http://dx.doi.org/10.1142/S0219199713500259}
{\em Non-abelian cohomology jump loci from an analytic viewpoint}, 
Commun. Contemp. Math. \textbf{16} (2014), 
no.~4, 1350025 (47 p). 
\MR{3231055}

\bibitem{DPS-mz} A.~Dimca, S.~Papadima, A.I.~Suciu,
\href{http://dx.doi.org/10.1007/s00209-010-0664-y}
{\em Quasi-K\"{a}hler groups, $3$-manifold groups, and formality}, 
Math. Zeit. \textbf{268} (2011), no.~1-2, 169--186.  
\MR{2805428}

\bibitem{Du} C.~Dupont, 
 \href{http://dx.doi.org/10.5802/aif.2994}
{\em The Orlik--Solomon model for hypersurface arrangements}, 
Ann. Inst. Fourier (Grenoble) \textbf{65} (2015), no.~6, 2507--2545. 
\MR{3449588}

\bibitem{FHT}  Y.~F\'elix, S.~Halperin, J.-C.~Thomas,
 \href{http://dx.doi.org/10.1007/978-1-4613-0105-9}
{\em Rational homotopy theory}, 
Grad. Texts in Math., vol.~205. 
Springer-Verlag, New York, 2001. 
\MR{1802847}

\bibitem{FOT}  Y.~F\'elix, J.~Oprea, D.~Tanr\'e,
\href{http://ukcatalogue.oup.com/product/9780199206513.do}
{\em Algebraic models in geometry}, 
Oxf. Grad. Texts Math., vol.~17, 
Oxford University Press, Oxford, 2008. 
\MR{2403898}

\bibitem{GM} W.~Goldman, J.~Millson,
\href{http://www.numdam.org/item?id=PMIHES_1988__67__43_0}
{\em The deformation theory of representations of fundamental
groups of compact {K}\"{a}hler manifolds}, Inst. Hautes \'{E}tudes
Sci. Publ. Math. \textbf{67} (1988), 43--96.
\MR{0972343} 

\bibitem{Gor} M.~Goresky,
\href{http://dx.doi.org/10.1090/S0002-9939-1978-0500991-2}
{\em Triangulation of stratified objects}, 
Proc. Amer. Math. Soc. \textbf{72} (1978), no.~1, 193--200.
\MR{0500991}

\bibitem{Hal} S.~Halperin,
\href{http://dx.doi.org/10.1090/S0002-9947-1977-0461508-8}
{\em Finiteness in the minimal models of Sullivan}, 
Trans. Amer. Math. Soc. \textbf{230} (1977), 173--199.
\MR{0461508}

\bibitem{H87} S.~Halperin,
\href{http://www.numdam.org/item?id=AIF_1987__37_4_77_0}
{\em Le complexe de Koszul en alg\`{e}bre et topologie},
Ann. Inst. Fourier (Grenoble) \textbf{37} (1987), no.~4, 77--97.
\MR{0927392}

\bibitem{HS} S.~Halperin,  J.~Stasheff, 
 \href{http://dx.doi.org/10.1016/0001-8708(79)90043-4} 
{\em Obstructions to homotopy equivalences}, 
Adv. in Math. \textbf{32} (1979), no.~3, 233--279. 
\MR{539532}  

\bibitem{Ha} K.~Hasegawa, 
 \href{http://dx.doi.org/10.2307/2047375}
{\em Minimal models of nilmanifolds}, 
Proc. Amer. Math. Soc. \textbf{106} (1989), no.~1, 65--71. 
\MR{0946638} 

\bibitem{KM} M.~Kapovich, J.~Millson,
\href{http://www.numdam.org/item?id=PMIHES_1998__88__5_0}
{\em On representation varieties of {A}rtin groups, projective
arrangements and the fundamental groups of smooth complex
algebraic varieties}, Inst. Hautes \'{E}tudes Sci. Publ. Math.
\textbf{88} (1998), 5--95.
\MR{1733326} 

\bibitem{K-jdg} H.~Kasuya,  
\href{http://projecteuclid.org/euclid.jdg/1361800867}
{\em Minimal models, formality and Hard Lefschetz properties of
solvmanifolds with local systems},
J. Diff. Geom. \textbf{93} (2013), 269--298.
\MR{3024307}

\bibitem{Ka14} H.~Kasuya,  
{\em Mixed Hodge structures and Sullivan's minimal models of 
Sasakian manifolds}, \arx{1412.5825v2}.

\bibitem{LS}  P.~Lambrechts, D.~Stanley,
\href{http://smf4.emath.fr/Publications/AnnalesENS/4_41/html/ens_ann-sc_41_497-511.html}
{\em Poincar\'{e} duality and commutative differential graded algebras},
Ann. Scient. \'{E}c. Norm. Sup. \textbf{41} (2008), no.~4, 497--511.
\MR{2489632}

\bibitem{Le}  D.~Lehmann,
{\em Th\'{e}orie homotopique des formes dif\'{e}rentielles (d'apr\`{e}s D. Sullivan)}, 
Ast\'erisque, no.~45, Soci\'{e}t\'{e} Math\'{e}matique de France, Paris, 1977. 
\MR{0488041}

\bibitem{Ma}  A.~M\u{a}cinic, 
 \href{http://dx.doi.org/10.1016/j.jpaa.2009.12.025}
{\em Cohomology rings and formality properties of nilpotent groups}, 
J. Pure Appl. Algebra \textbf{214} (2010), no.~10, 1818--1826. 
\MR{2608110}

\bibitem{MPPS}  A.~M\u{a}cinic, S.~Papadima, R.~Popescu,  A.I.~Suciu, 
 \href{http://dx.doi.org/10.1090/tran/6799}
{\em Flat connections and resonance varieties: 
from rank one to higher ranks},  Trans. Amer. Math. Soc. 
\textbf{369} (2017), no.~2, 1309--1343.    
\MR{3572275}

\bibitem{Mo}  J.W.~Morgan,
\href{http://dx.doi.org/10.1007/BF02684316} 
{\em The algebraic topology of smooth algebraic varieties},
Inst. Hautes \'{E}tudes Sci. Publ. Math. \textbf{48} (1978), 137--204.
\MR{0516917}

\bibitem{MT} V.~Mu\~{n}oz, A.~Tralle, 
\href{http://dx.doi.org/10.1007/s00209-015-1494-8}
{\em Simply connected $K$-contact and Sasakian manifolds 
of dimension $7$}, Math. Z. \textbf{281} (2015), no.~1-2, 457--470. 
\MR{3384880} 

\bibitem{Op}  J.~Oprea,
\href{http://pubsys.ara-as.org/index.php/lm/article/view/300}
{\em Descent in rational homotopy theory}, 
Libertas Math. \textbf{6} (1986), 153--165. 
\MR{0848309}

\bibitem{OV} L.~Ornea, M.~Verbitsky, 
\href{http://dx.doi.org/10.1007/s10711-007-9149-5}
{\em Sasakian structures on CR-manifolds},  
Geom. Dedicata \textbf{125} (2007), no.~1, 159--173. 
\MR{2322546}

\bibitem{Pa} S.~Papadima, 
\href{http://dx.doi.org/10.1007/BF02567434}
{\em The rational homotopy of Thom spaces and the smoothing 
of homology classes}, Comment. Math. Helv. \textbf{60} (1985), 
no.~4, 601--614. 
\MR{0826873} 

\bibitem{PS09} S.~Papadima, A.~Suciu, 
\href{http://ssmr.ro/bulletin/pdf/52-3/Papadima.pdf}
{\em Geometric and algebraic aspects of $1$-formality}, Bull. 
Math. Soc. Sci. Math. Roumanie (N.S.) \textbf{52(100)}
(2009), no.~3, 355--375. 
\MR{2554494} 

\bibitem{PS-plms} S.~Papadima, A.I.~Suciu,
\href{http://dx.doi.org/10.1112/plms/pdp045}
{\em Bieri--{N}eumann--{S}trebel--{R}enz invariants and 
homology jumping loci}, Proc.~London Math.~Soc. 
\textbf{100} (2010), no.~3, 795--834.
\MR{2640291}  

\bibitem{PS-springer} S.~Papadima, A.I.~Suciu,
\href{http://dx.doi.org/10.1007/978-3-319-09186-0_17} 
{\em Non-abelian resonance: product and coproduct formulas}, 
in: Bridging algebra, geometry, and topology, pp.~269--280, 
Springer Proc. Math. Stat., vol. 96, Springer, Cham, 2014. 
\MR{3297121}

\bibitem{PS-natjump} S.~Papadima, A.I.~Suciu,
{\em Naturality properties and comparison results for topological and 
infinitesimal embedded jump loci},  \arxiv{1609.02768v1}.

\bibitem{PS-noshi} S.~Papadima, A.I.~Suciu,
{\em Rank two topological and infinitesimal embedded jump loci of
quasi-projec\-tive manifolds},  \arxiv{1702.05661v1}.

\bibitem{Qu} D.~Quillen, 
 \href{http://dx.doi.org/10.2307/1970725}
{\em Rational homotopy theory}, Ann. of Math. (2) \textbf{90}
(1969), 205--295. 
\MR{0258031}

\bibitem{SW} A.I.~Suciu, H.~Wang, 
{\em Formality properties of finitely generated groups 
and Lie algebras}, \arx{1504.08294v3}. 

\bibitem{Su77} D.~Sullivan, 
\href{http://dx.doi.org/10.1007/BF02684341}
{\em Infinitesimal computations in topology}, Inst. Hautes
\'Etudes Sci. Publ. Math. (1977), no.~47, 269--331. 
\MR{0646078}

\bibitem{Th} J.C.~Thomas, 
\href{http://dx.doi.org/10.5802/aif.838}
{\em Rational homotopy of Serre fibrations},
Ann. Inst. Fourier (Grenoble) \textbf{31} (1981), no.~3, 71--90.
\MR{0638617}

\bibitem{Ti} A.M.~Tievsky, \href{http://dspace.mit.edu/handle/1721.1/45349} 
{\em Analogues of K\"{a}hler geometry on Sasakian manifolds}, 
Ph.D. Thesis, Massachusetts Institute of Technology, 2008. 
\MR{2717624}

\bibitem{T} J.-Cl.~Tougeron,
{\em Id\'{e}aux de fonctions diff\'{e}rentiables}, Ergeb. Math. Grenzgeb., 
vol.~71, Springer-Verlag, Berlin-New York, 1972. 
\MR{0440598}

\bibitem{V} A.~Verona,
\href{http://dx.doi.org/10.1007/BF01301261}
{\em Triangulation of stratified fibre bundles},
Manuscr. Math. \textbf{30} (1979), no.~4, 425--445.
\MR{0567218}

\bibitem{W} C.T.C.~Wall,
\href{http://www.ams.org/books/surv/069/}
{\em Surgery on compact manifolds}, 
London Math. Soc. Monogr., vol.~1, Academic Press, London-New York, 1970. 
\MR{0431216}

\end{thebibliography}
\end{document}